\theoremstyle{plain}
\newtheorem{theorem}{Theorem}[section]
\newtheorem{lemma}[theorem]{Lemma}
\newtheorem{proposition}[theorem]{Proposition}
\newtheorem{corollary}[theorem]{Corollary}
\theoremstyle{definition}
\newtheorem{definition}[theorem]{Definition}
\newtheorem{example}[theorem]{Example}
\newtheorem{ipg}[theorem]{}
\theoremstyle{remark}
\newtheorem{remark}[theorem]{Remark}
\newcommand{\A}{\mathcal{A}}
\newcommand{\B}{\mathcal{B}}
\newcommand{\C}{\mathcal{C}}
\newcommand{\D}{\mathcal{D}}
\newcommand{\F}{\mathcal{F}}
\newcommand{\I}{\mathcal{I}}
\newcommand{\K}{\mathcal{K}}
\newcommand{\M}{\mathcal{M}}
\newcommand{\R}{\mathcal{R}}
\newcommand{\X}{\mathcal{X}}
\newcommand{\Y}{\mathcal{Y}}
\newcommand{\W}{\mathcal{W}}
\newcommand{\Obj}{\textnormal{Obj}\mspace{1mu}}
\DeclareMathOperator{\Ext}{Ext}
\DeclareMathOperator{\ext}{ext} 
\DeclareMathOperator{\Hom}{Hom}
\DeclareMathOperator{\Tor}{Tor}
\DeclareMathOperator{\tor}{tor} 
\DeclareMathOperator{\colim}{colim}
\DeclareMathOperator{\im}{Im}
\DeclareMathOperator{\coker}{coker}
\DeclareMathOperator{\Filt}{Filt}
\DeclareMathOperator{\End}{End}
\DeclareMathOperator{\rad}{rad}
\DeclareMathOperator{\sk}{sk}
\DeclareMathOperator{\cosk}{cosk}
\DeclareMathOperator{\res}{res}
\newcommand{\tens}[1]{\mathbin{\mathop{\otimes}\limits_{#1}}}
\newcommand{\rightperp}[1]{#1^{\perp}}
\newcommand{\leftperp}[1]{{}^\perp #1}
\newcommand{\class}{\mathcal}
\newcommand{\op}{^\mathrm{o}}
\newcommand{\id}{\mathrm{id}}
\newcommand{\Id}{\mathrm{Id}}
\begin{document}
\emergencystretch 3em

\title[Linear Reedy categories, qh algebras and model structures]%
{Linear Reedy categories, quasi-hereditary algebras and model structures}

\author{Georgios Dalezios}

\address{Department of Mathematics, Aristotle University of Thessaloniki, Thessaloniki 54124, Greece}
\email{gdalezios@math.uoa.gr}

\author{Jan \v{S}{\fontencoding{T1}\selectfont \v{t}}ov\'i\v{c}ek}

\address{Jan {\v S}{\v{t}}ov{\'{\i}}{\v{c}}ek, Charles University,
Faculty of Mathematics and Physics, Department of Algebra,
Sokolovsk\'a 83, 186 75 Praha, Czech Republic}

\email{stovicek@karlin.mff.cuni.cz}

\thanks{For the first-named author, the research project is implemented in the framework of H.F.R.I call ``Basic research Financing (Horizontal support of all Sciences)'' under the National Recovery and Resilience Plan “Greece 2.0” funded by the European Union – NextGenerationEU (H.F.R.I. Project Number: 16785).
The second-named author was supported by the grant GA~\v{C}R 23-05148S from the Czech Science Foundation.
}

\begin{abstract}
We study linear versions of Reedy categories in relation with finite dimensional algebras and abelian model structures. We prove that, for a linear Reedy category $\C$ over a field, the category of left $\C$--modules admits a highest weight structure, which in case $\C$ is finite corresponds to a quasi-hereditary algebra with an exact Borel subalgebra. We also lift complete cotorsion pairs and abelian model structures to certain categories of additive functors indexed by linear Reedy categories, generalizing analogous results from the hereditary case.
\end{abstract}

\subjclass[2010]{Primary 16G10, 18N40. Secondary 16W70, 16G20.}
\keywords{Reedy categories, Quasi-hereditary algebras, Cotorsion pairs, Abelian model structures}

\maketitle

\tableofcontents

\section{Introduction}
\label{sec:intro}

Reedy categories form a generalization of the category $\mathbf{\Delta}$, with objects all finite ordinals and morphisms the weakly monotone functions between them. Roughly speaking, a Reedy category is a small category $\mathcal{C}$ equipped with a degree function from its objects to natural numbers (or to some greater ordinal), such that every morphism in $\mathcal{C}$ factors uniquely as a morphism lowering the degree followed by a morphism raising the degree. One of the major aspects of Reedy categories, is a theorem of Kan on the existence of a Quillen model category structure on a category of diagrams (functors) with source a Reedy category and target a given model category \cite[Ch.~15]{Hir}. 

In this work, we consider an analogous class of categories which are linear over a field, that we call \textit{linear} Reedy categories.
For instance, if $k$ is a field then the $k$--linearization of the category $\mathbf{\Delta}$ is a $k$--linear Reedy category in our sense, but the class of $k$--linear Reedy categories is much richer than the class of $k$--linearizations of ordinary Reedy categories.  
The case of small preadditive categories enriched over a field is of interest to us also since it is closely related to finite dimensional algebras. Indeed, it is well known that to any finite dimensional algebra $A$ over a field $k$, we can associate a $k$--linear category $\C$ such that the category of left $\C$--modules, which are $k$--linear functors from $\C$ to $k$--vector spaces, is equivalent to the category of left $A$--modules. In this way, the module theory of the finite dimensional algebra $A$ is encoded in the $k$--linear category $\C$. 

Here we are interested, on the one hand, in understanding the structure and the representation theory of linear Reedy categories, especially those which are associated $k$--linear categories of finite dimensional algebras.
On the other hand, we want to obtain an analogue of the aforementioned result of Kan about lifting \textit{abelian} model structures to functor categories indexed by linear Reedy categories.

To describe the representation theoretic results of this paper in more detail, fix a $k$--linear Reedy category $\C$ (Definition~\ref{dfn:Reedy_cat}) and let $(\C,\K)$ be the category of $k$--linear functors from $\C$ to the category $\K$ of $k$-vector spaces (such functors will be called \textit{left $\C$--modules}). We exhibit a collection $\{\Delta_c\}_{c\in\Obj(\C)}$ of objects in $(\C,\K)$ that we call \textit{standard} left $\C$--modules (Definition~\ref{def:standards}) and we prove the following: 
\begin{itemize}
\item[-] Theorem~\ref{thm:Reedy_simples}: There is a bijection between the objects of $\C$ and the isomorphism classes of simple left $\C$--modules.
\item[-] Theorem~\ref{thm:Reedy_exceptional}: The family of standard left $\C$--modules satisfies certain $\Hom/\Ext^{1}$--vanishing properties which are akin to exceptional sequences when interpreted in the opposite order.
\item[-] Theorem~\ref{thm:proj_by_standards} and Corollary~\ref{cor:std_filtration_summands}: For any projective left $\C$--module $\C(c,-)$ there exists a short exact sequence $0\rightarrow \C_{<\alpha}(c,-)\rightarrow \C(c,-)\rightarrow \Delta_{c}\rightarrow 0$  in $(\C,\K)$ where $\C_{<\alpha}(c,-)$ admits a (continuous) filtration whose subquotients are isomorphic to standard left $\C$--modules of the form $\Delta_d$ for $d$ of degree smaller than that of $c$.
\end{itemize}

The above results show that the category of $k$--linear functors $(\C,\K)$ satisfies properties dual to those of highest weight categories, which were introduced by Cline, Parshall and Scott in \cite[\S3]{CPS}  and have been used extensively in representation theory and Lie theory. 

In the case of a \textit{finite} $k$--linear Reedy category $\C$, that is, $\C$ is hom-finite with finitely many objects, the above results together with \cite[Thm.~3.6]{CPS} show that $(\C,\K)$ is in fact equivalent to the module category of a \textit{quasi-hereditary algebra}. We prefer to give a detailed account of this fact by introducing the concept of \textit{Reedy algebras} (Definition~ \ref{def:Reedy_algebra}) which seems to be of independent interest. Reedy algebras are proved to be quasi-hereditary having an \textit{exact Borel subalgebra} in Theorem \ref{thm:Reedy_is_qh}. Exact Borel subagebras of quasi-hereditary algebras were introduced by Koenig~\cite{Koe} as certain directed subalgebras that better control the filtration of the regular module by the standard modules.
In a subsequent work~\cite{CDSnew}, Reedy algebras are even characterized as those quasi-hereditary algebras having a triangular decomposition (or Cartan decomposition) in the sense of Koenig~\cite{Koe2}. This among others provides us, in view of the results from~\cite{Koe,Koe2}, with classes of examples of finite $k$--linear Reedy categories which are not linearizations of classical Reedy categories.

The second part of the paper is concerned with \textit{abelian model structures}, as  introduced by Hovey \cite{hovey}. These are model structures on abelian categories where the cofibrations (resp., fibrations) are monomorphisms with cofibrant cokernels (resp., epimorphisms with fibrant kernels). 
Hovey's insight was that abelian model structures correspond bijectively to certain \textit{complete cotorsion pairs} in the underlying abelian category. A cotorsion pair in an abelian category is a pair of subcategories that are $\Ext^{1}(-,-)$--orthogonal to each other, while completeness of a cotorsion pair amounts to the existence of certain special approximations (short exact sequences) in the abelian category, in a similar fashion as one considers (co)fibrant replacements in a model category. From the point of view of (relative) homological algebra and representation theory, cotorsion pairs are much more widely used than model structures. Nevertheless, Hovey's results provide an interesting connection between the two seemingly different subjects. 
From this perspective, the question of lifting an abelian model structure to 
a category of functors
is closely related to the problem of lifting complete cotorsion pairs.

We prove that complete cotorsion pairs and abelian model structures lift canonically to functor categories indexed by $k$--linear Reedy categories in Theorem~\ref{thm:lifting_cot} and Theorem \ref{thm:Reedy_model} respectively.
Our results are a generalization of the main results of \cites{hj2016,odabasi} and this also explains the connection to the first part of this paper. Namely, it is proved in \textit{loc. cit.} that under mild assumptions a complete cotorsion pair can be lifted from an abelian category to functors indexed by left or right rooted quivers. In the finite case, such quivers model hereditary algebras and from our point of view, they form special cases of Reedy categories, see~Example~\ref{ex:rooted_quivers}. Since a finite quiver with relations whose associated $k$--linear category is Reedy forms a quasi-hereditary path algebra by Theorem~\ref{thm:Reedy_is_qh}, our Theorems  \ref{thm:lifting_cot} and \ref{thm:Reedy_model} actually extend those of \cites{hj2016,odabasi} to functors indexed by a large class of diagrams of quasi-hereditary shape, see Corollary~\ref{cor:generalize_HJ} and Example~\ref{ex:generalize_HJ}.

We also note that analogous results were recently obtained by Holm and J\o rgensen \cite{hj2019} for functor categories indexed by  ``self-injective quivers with relations'',
but these are somewhat perpendicular to our results, 
since a $k$--linear Reedy category with finitely many objects has finite global dimension.

\smallskip

The layout of the paper is as follows. In Section \ref{sec:preliminaries} we recall preliminaries on functor categories and abelian model structures. In Section \ref{sec:Reedy_cats} we define linear versions of direct, inverse and Reedy categories and we give some examples. In Section \ref{sec:QH}, for a linear Reedy category $\C$, we define the class of standard $\C$--modules and prove the results displayed above, culminating in a proof that Reedy algebras are quasi-hereditary (Theorem~\ref{thm:Reedy_is_qh}). Sections \ref{sec:Fun} and \ref{sec:cot_pairs} form the technical heart for lifting complete cotorsion pairs in the linear Reedy setting (Theorem \ref{thm:lifting_cot}), and Section \ref{sec:Reedy_model} contains the Reedy abelian model structure.

\smallskip

\textbf{Relations to the existing literature.}
We point out that Reedy categories have been studied in various forms, most notably in \cites{Angelt, Moerd, riehl, Shul}. In particular, our definition of a linear Reedy category is essentially the same as in \cite[Def.~9.11]{Shul}. Some of the results in Section \ref{sec:Fun} follow the footsteps of the sources listed above, especially of \cite{riehl}. In this paper we do not aim for great generality, we rather focus on the connection to highest weight structures and Ext-orthogonal classes.

\section{Preliminaries}
\label{sec:preliminaries}
In this section we assume that the following data is given:

\begin{itemize}
\item[•] $k$ is a field and $\mathcal{K}$ is the category of $k$--vector spaces.
\item[•] $\M$ is a bicomplete $k$--linear abelian category, i.e., $\M$ is a bicomplete abelian category, its hom-sets carry a structure of $k$-vector spaces such that the composition of morphisms in $\M$ is $k$--bilinear. For example, $\M$ can be the category of left $A$--modules over a $k$--algebra $A$.
\item[•] $\C$ is a small $k$-linear category.
\item[•] $\M^{\C}$ is the category of $k$--linear functors from $\C$ to $\M$. Sometimes the category $\M^{\C}$ will be denoted by $(\C,\M)$.
\end{itemize}

\begin{ipg}\textbf{(Enriched categories)}
\label{subsec:enriched}
The category $\mathcal{K}$ is a closed symmetric monoidal category with monoidal product the tensor product of $k$--vector spaces, which we denote by $\otimes$, and unit object the regular module $k$. In the language of enriched categories \cite{Kelly}, the categories $\mathcal{C}$ and $\M$, as well as $\M^{\C}$,  are enriched over $(\mathcal{K},\otimes,k)$ (also called $\mathcal{K}$--categories). This roughly means that all concepts involved here, such as functors, natural transformations and so on, are required to respect the underlying structure imposed by $(\mathcal{K},\otimes,k)$ whenever this exists.  

Recall also that since $\M$ is bicomplete, there is a closed action of $\mathcal{K}$ on $\M$. That is, there are functors
\[
\otimes\colon\K\times\M\to\M
\qquad\textrm{and}\qquad
\hom\colon\K\op\times\M\to\M
\]
and isomorphisms natural in all of $V\in\K$ and $M,N\in\M$,
\[
\K(V,\M(M,N))\cong\M(V\otimes M,N)\cong\M(M,\hom(V,N)).
\]
More specifically, if $V=k^{(I)}$ is a $k$--vector space with basis $I$, then $V\otimes M\cong M^{(I)}$ and $\Hom(V,M)\cong M^I$ as objects of $\M$, and the functors are defined in the obvious way on morphisms.
In the language of enriched category theory, $\otimes$ is called the \textit{tensor product} and $\Hom$ is called the \textit{cotensor product}, \cite[\S3.7]{Kelly}.

Note also that for any $M,N\in\M$, the counit $\M(M,N)\otimes M\to N$ of the adjunction $(-\otimes M,\M(M,-))\colon\K\rightleftarrows\M$ is an abstraction of the evaluation morphism $\phi\otimes m\mapsto\phi(m)$ from the case where $\M$ is the category of left $A$--modules over a $k$--algebra $A$.
Similarly, the unit $M\to\hom(\M(M,N),N)$ of the adjunction $(\M(-,N),\hom(-,N))\colon\K\op\rightleftarrows\M$ abstracts the evaluation morphism $m\mapsto(f\mapsto f(m))$.
\end{ipg}

\begin{ipg}\textbf{(Rings with several objects)}
\label{subsec:ringoids}
Now we recall some facts from Mitchell \cite{Mitchell} on the category of $k$--linear functors $\M^{\C}$, which can be thought of as a category of left $\class C$--modules with values in $\class M$.

In the special case where $\M=\K$, we can define a tensor product bifunctor,
\[-\tens{\class C}- \colon (\class C\op,\class K)\times (\class C,\class K)\rightarrow \class K.\]
The bifunctor is defined on objects as follows: If $G\in (\class C^{\mathrm{o}},\class K)$ and $F\in(\class C,\class K)$, then
\[G\tens{\class C}F:=\bigoplus_{c\in\mathrm{Obj}(\class C)} \left(G(c)\otimes_{k}F(c)\right) /S,\]
where $S$ is the subspace generated by elements of the form $G(r)(x)\otimes y-x\otimes F(r)(y)$, for all $r\colon a\rightarrow b, x\in G(b)$, and $y\in F(a)$. If $\phi\colon G\rightarrow G'$ is morphism in $(\class C^{\mathrm{o}},\class K)$ and $\psi\colon F\rightarrow F'$ is a morphism in $(\class C,\class K)$, then we define a morphism
\[\phi\tens{\class C}\psi\colon G\tens{\class C}F\rightarrow G'\tens{\class C} F',\]
as the one induced on the quotient by the diagonal morphism
 \[\bigoplus\limits_{c\in\mathrm{Obj}(\class C)}\phi_{c}\otimes_{k}\psi_{c}\colon \bigoplus\limits_{c\in\mathrm{Obj}(\class C)}G(c)\otimes_{k}F(c)\rightarrow \bigoplus\limits_{c\in\mathrm{Obj}(\class C)}G'(c)\otimes_{k}F'(c).\]
The tensor product functor commutes with colimits in both variables and for all objects $c\in\class C$, $F\in(\class C,\class K)$, and $G\in(\class C^{\mathrm{o}},\class K)$, it satisfies the formula ${\class C}(-,c)\otimes_{\class C}F\cong F(c)$, see \cite{O-R}. 

To define an analogous tensor product bifunctor
\[-\tens{\class C}- \colon (\class C\op,\class K)\times (\class C,\class M)\rightarrow \class M,\]
the language of enriched categories will be useful, since we can employ weighted limits and colimits. 
Given functors $X\in\M^{\class C}$, $U\in\class K^{\C}$ and $W\in\class K^{\C^{\mathrm{o}}}$, we define the \textit{colimit of $X$ weighted by $W$}, and the \textit{limit of $X$ weighted by $U$}, respectively, by the following (enriched over $\mathcal{K}$) coend and end formulas: 
\begin{equation}
W\tens{\class C} X:=\int^{c\in\C} W(c)\otimes X(c) \,\,\,\,\,\,\,\,\,\,\ \mbox{and}  \,\,\,\,\,\,\,\,\,\,\  \hom_{\class C}(U,X):=\int_{c\in\C}\Hom_{\class K}(U(c),X(c)).
\nonumber
\end{equation}
Both $W\otimes_{\class C} X$ and $\hom_{\class C}(U,X)$ are objects of $\M$;
for definitions and existence of these formulas the reader may consult \cite[\S\S7.4 and 7.6]{riehl_book}.
The tensor product functor again commutes with colimits in both variables and for all objects $c\in\class C$, $F\in(\class C,\class M)$, and $G\in(\class C^{\mathrm{o}},\class K)$, it satisfies the formula ${\class C}(-,c)\otimes_{\class C}F\cong F(c)$.
Analogously, the internal hom functor $\hom_\C$ sends colimits to limits in the first variable, preserves limits in the second variable, and it satisfies the formula $\hom_\C({\class C}(c,-),F)\cong F(c)$.

Since for any $X\in\M^\C$, the functors
\[ -\tens{\C}X\colon\K^\C\to\M \qquad\mbox{and}\qquad \hom_\C(-,X)\colon(\K^\C)^\mathrm{o}\to\M \]
are right and left exact, respectively, we can construct the corresponding derived functors (left and right, respectively) using the projective resolutions in $\K^\C$. In order to differentiate them from usual Tor and Ext functors in abelian categories, we denote them by
\[ \tor_n^{\C}(-,X)\colon\K^\C\to\M \qquad\mbox{and}\qquad \ext^n_\C(-,X)\colon(\K^\C)^\mathrm{o}\to\M. \]
Needless to say, if $\M$ is the category of left $A$--modules over a $k$--algebra $A$, then $\tor_n^{\C}(-,X)$ coincides with $\Tor_n^{\C}(-,X)$, where in the latter functor we interpret $X$ as the underlying $k$--linear functor $\C\to\K$, forgetting the $A$--module action. Analogously, $\ext^n_{\C}(-,X)$ coincides with $\Ext^n_{\C}(-,X)$ in that case.

In general, however, one should be a little careful as to when these derived functors give rise to long exact sequences. If $0\to U\to V\to W\to 0$ is a short exact sequence in $\K^{\C\op}$ and $X\in\M^\C$, we always get a long exact sequence
\[ \cdots\to \tor_1^\C(V,X)\to \tor_1^\C(W,X)\to U\tens{\C}X\to V\tens{\C}X\to W\tens{\C}X\to 0 \]
and similarly, if $0\to U\to V\to W\to 0$ is a short exact sequence in $\K^{\C}$, we obtain a long exact sequence
\[ 0\to \hom_\C(X,U)\to \hom_\C(X,V)\to \hom_\C(X,W)\to \ext^1_\C(X,U)\to \cdots \]
This is standard -- one constructs a degreewise split exact sequence $0\to P^U_\bullet\to P^V_\bullet\to P^W_\bullet\to 0$ of projective resolutions in $\K^{\C\op}$ or $\K^\C$ and applies $-\otimes_{\C}X$ or $\hom_\C(X,-)$ to it, respectively. More care is needed in the case when $0\to X\to Y\to Z\to 0$ is a short exact sequence in $\M^\C$ and $\M$ does not have exact coproducts or products.

\begin{definition}\label{def:tens-and-hom-exactness}
A short exact sequence $0\to X\to Y\to Z\to 0$ in $\M^\C$ will be called $\coprod$--\textit{exact} if for any collections of sets $I_c$ indexed by objects $c\in\C$, the coproduct
\[ 0 \to \coprod_{c\in\C} X(c)^{(I_c)} \to \coprod_{c\in\C}Y(c)^{(I_c)} \to \coprod_{c\in\C}Z(c)^{(I_c)} \to 0 \]
is exact in $\M$. Dually, $0\to X\to Y\to Z\to 0$ is called $\prod$--\textit{exact} if for any collections of sets $I_c$, $c\in\C$, the product
\[ 0 \to \prod_{c\in\C} X(c)^{I_c} \to \prod_{c\in\C}Y(c)^{I_c} \to \prod_{c\in\C}Z(c)^{I_c} \to 0 \]
is exact in $\M$\end{definition}

\begin{lemma}\label{lem:long-exact-seq}
Let $W\in\K^{\C\op}$ and $0\to X\to Y\to Z\to 0$ be a $\coprod$--exact short exact sequence in $\M^\C$. Then there is a long exact sequence
\[ \cdots\to \tor_1^\C(W,Y)\to \tor_1^\C(W,Z)\to W\tens{\C}X\to W\tens{\C}Y\to W\tens{\C}Z\to 0. \]
Dually, given $U\in\K^{\C}$ and $0\to X\to Y\to Z\to 0$ a $\prod$--exact short exact sequence in $\M^\C$, there is a long exact sequence
\[ 0\to \hom_\C(U,X)\to \hom_\C(U,Y)\to \hom_\C(U,Z)\to \ext^1_\C(U,X)\to \cdots \]
\end{lemma}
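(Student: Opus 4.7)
The plan is to compute $\tor$ and $\ext$ directly via projective resolutions in $\K^{\C\op}$ and $\K^{\C}$, and to observe that the $\coprod$- and $\prod$-exactness hypotheses are precisely what is needed to apply $-\tens{\C}(-)$ or $\hom_\C(-,-)$ termwise to the short exact sequence sitting in $\M^{\C}$.

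For the first statement, I would fix a projective resolution $P_\bullet \to W$ in $\K^{\C\op}$ in which each term is \emph{free}, meaning a coproduct of representables grouped by object: $P_n \cong \bigoplus_{c \in \C} \C(-, c)^{(I_{n,c})}$ for suitable sets $I_{n,c}$. Such free resolutions always exist: for any $F \in \K^{\C\op}$, choosing a $k$-basis $B_c$ of each $F(c)$ yields a canonical surjection $\bigoplus_c \C(-,c)^{(B_c)} \twoheadrightarrow F$, and one iterates on the kernel. Using the formula $\C(-,c)\tens{\C}X \cong X(c)$ from \S\ref{subsec:ringoids} together with the fact that $-\tens{\C}X$ preserves colimits in its first variable, we get
$$ P_n \tens{\C} X \;\cong\; \bigoplus_{c \in \C} X(c)^{(I_{n,c})}, $$
and likewise with $Y$ and $Z$ in place of $X$. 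By the $\coprod$-exactness hypothesis, applying $P_n \tens{\C}(-)$ to $0 \to X \to Y \to Z \to 0$ produces a short exact sequence in $\M$ in every degree $n$. Hence
$$ 0 \to P_\bullet \tens{\C} X \to P_\bullet \tens{\C} Y \to P_\bullet \tens{\C} Z \to 0 $$
is a short exact sequence of chain complexes in $\M$, and its associated long exact sequence in homology gives the claim, since $H_n(P_\bullet \tens{\C} X) = \tor^{\C}_n(W, X)$ by construction (and similarly for $Y$, $Z$).

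The second statement is formally dual. I would take a free projective resolution $P_\bullet \to U$ in $\K^{\C}$ with $P_n \cong \bigoplus_{c} \C(c,-)^{(I_{n,c})}$, use $\hom_\C(\C(c,-), X) \cong X(c)$, and exploit the fact that $\hom_\C$ converts coproducts in its first variable into products, to obtain
$$ \hom_\C(P_n, X) \;\cong\; \prod_{c \in \C} X(c)^{I_{n,c}}. $$
The $\prod$-exactness hypothesis then ensures degreewise short exactness of $0 \to \hom_\C(P_\bullet, X) \to \hom_\C(P_\bullet, Y) \to \hom_\C(P_\bullet, Z) \to 0$, and the long exact sequence in cohomology produces the desired sequence since $H^n(\hom_\C(P_\bullet, X)) = \ext^n_\C(U, X)$.

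The only mildly delicate point is to insist on \emph{free} (not merely projective) resolutions, packaged as coproducts of representables grouped by object $c$, so that the $\coprod$/$\prod$-exactness hypothesis applies verbatim to each $P_n$. Once that packaging is in place, the rest is standard homological algebra; no splitness of the resolution is required.
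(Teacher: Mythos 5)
Your proposal is correct and follows essentially the same route as the paper's proof: choose a projective resolution of $W$ (resp.\ $U$) whose terms are coproducts of representables grouped by object, identify $P_n\tens{\C}X$ with $\coprod_c X(c)^{(I_{n,c})}$ (resp.\ $\hom_\C(P_n,X)$ with $\prod_c X(c)^{I_{n,c}}$), invoke the $\coprod$-- (resp.\ $\prod$--) exactness hypothesis to get a degreewise short exact sequence of complexes, and pass to the long exact sequence in (co)homology. The paper's argument is exactly this, so no further comment is needed.
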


\begin{proof}
Let $P_\bullet\to W$ be a projective resolution of $W$ in $\K^{\C\op}$. We can construct it so that for each $n\ge 0$, the component $P_n$ is a coproduct of representable functors $P_n = \coprod_{c\in I_c^n}\C(-,c)^{(I_c^n)}$. Then $P_n\otimes_{\C}X\cong\coprod_{c\in\C} X(c)^{(I_c^n)}$ and similarly for $Y$ and~$Z$. Appealing to the $\coprod$--exactness, $0\to P_\bullet\otimes_{\C} X\to P_\bullet\otimes_{\C} Y\to P_\bullet\otimes_{\C} Z\to 0$ is an exact sequence of complexes in $\M$. This yields the desired long exact sequence of homologies. The other case is dual.
\end{proof}
\end{ipg}

\begin{ipg}\textbf{(Radicals)}
\label{subsec:radicals}
The radical of a small $k$--linear category $\C$, is the subfunctor $\rad_{\class C}(-,-)$ of ${\class C}(-,-)$ defined by:
\[\rad_{\class C}(c,d):=\{f\colon c\rightarrow d\,\, |\,\, \forall g\colon d\rightarrow c; \,\, \mbox{the map}\,\, \mathrm{id}_{c}-g\circ f\,\, \mbox{is invertible}\}.\]
If $c$ and $d$ are objects in $\class C$ such that the endomorphism rings $\mathrm{End}_{\class C}(c)$ and $\mathrm{End}_{\class C}(d)$ are local, then $\rad_{\class C}(c,d)$ is isomorphic to the $k$--vector space of all non-isomorphisms from $c$ to $d$, see \cite[Prop.~A.3.5]{elements} for a proof. In particular, if $c\ncong d$ then $\rad_{\class C}(c,d)\cong\Hom_{\class C}(c,d)$ as $k$--vector spaces.
\end{ipg}

\begin{ipg}\textbf{(Simple functors)}
\label{subsec:simple_functors_prelim}
In addition to the data given in the beginning of this section, we in some cases assume that for all $c\in\C$ the endomorphism ring $\mathrm{End}_{\class C}(c)$ is local. Then it is well known (but see also \cite[Prop.~17.19]{AF92}) that, for each object $c$, the contravariant functor $S^{c}:=\Hom_{\C}(-,c)/\rad_{\C}(-,c)$ and the covariant functor $S_{c}:=\Hom_{\C}(c,-)/\rad_{\C}(c,-)$ are simple objects of $\K^{\C}$; in fact all simple covariant/contravariant functors are of this form. 
\end{ipg}

\begin{ipg}\textbf{(Quivers)}
\label{quivers}
A \textit{quiver} $Q$ is a quadruple $(Q_0,Q_1,s,t)$ where $Q_0,Q_1$ are sets and $s,t\colon Q_1\to Q_0$ are maps.
Elements of $Q_0$ are called \textit{vertices} and elements of $Q_1$ are called \textit{arrows}.
An arrow $\alpha\in Q_1$ with $s(\alpha)=i$ and $t(\alpha)=j$ is usually depicted as $\alpha\colon i\rightarrow j$, the vertex $i$ is called the \textit{source} of $\alpha$ and the vertex $j$ the \textit{target} of $\alpha$.
Thus, a quiver is essentially a directed graph with possible multiple arrows or loops with the same source and target.

A \textit{path} of length $n\geqslant 1$ is a formal composite $p=\alpha_n\circ\alpha_{n-1}\circ\cdots\circ\alpha_1$ of arrows $\alpha_i$ with $s(\alpha_{i+1})=t(\alpha_i)$ for all $i=1,...,n-1$. To any vertex $i\in Q_0$ we associate a \textit{trivial path} $e_i$ of length zero having $i$ as its source and target; we assume that trivial paths act like identities when composing paths. Given a field $k$, a \textit{relation} in $Q$ is a formal $k$--linear combintation of paths having the same source and target. For a specified set $I$ of relations in $Q$, we call the pair $(Q,I)$, also denoted by $Q_I$, a \textit{quiver with relations}.

If $(Q,I)$ is a quiver with relations over a field $k$, we can construct  a $k$--linear category $kQ_I$; the so-called $k$--linearization of $(Q,I)$.
Its objects are the vertices of $Q$ and, for all vertices $q, q'$, $\Hom_{kQ_I}(q,q')$ is the free $k$--vector space on the set of paths from $q$ to $q'$ modulo the ideal generated by the relations of $I$.  That is, we factor out all linear combinations of paths from $q$ to $q'$ which are of the form $\sum_{j=1}^n \lambda_j \cdot p_j r_j q_j$ for some $n\ge 0$, relations $r_j\in I$, scalars $\lambda_j\in k$ and paths $p_j,q_j$. The composition rule is induced by concatenation of paths.
In the special case where $I=\emptyset$, we denote the free path category just by $kQ$.
\end{ipg}

\begin{ipg}\textbf{(Cotorsion pairs)}
\label{subsec:cotorsion_pairs}
We recall a key concept going back to~\cite{Salce-cotorsion}. For a class $\X$ in an abelian category $\M$, we denote classes orthogonal with respect to the Yoneda Ext-functor:
\begin{align*}
\rightperp{\X}&:=\{M\in\class M\,\,|\,\, \forall X\in\X,\,\, \Ext^{1}_{\M}(X,M)=0\}, \\
\leftperp{\X}&:=\{M\in\class M\,\,|\,\, \forall X\in\X,\,\, \Ext^{1}_{\M}(M,X)=0\}.
\end{align*}

\begin{definition}
\label{def:cot_pairs}
A pair of subcategories $(\X,\Y)$ in an abelian category $\M$ is a \textit{cotorsion pair} if $\X=\leftperp{\Y}$ and $\Y=\rightperp{\X}$. It is called \textit{complete} if for any object $M$ in $\M$, there exist short exact sequences $0\rightarrow M\rightarrow Y\rightarrow X \rightarrow 0$ and  $0\rightarrow Y'\rightarrow X'\rightarrow M \rightarrow 0$ with $X,X' \in \X$ and $Y,Y' \in\Y$.  It is called \textit{hereditary} if for all $X\in\X$, $Y\in\Y$ and $i\geqslant 1$, we have $\Ext_{\class M}^{i}(X,Y)=0$.
\end{definition}

\begin{remark}\label{rem:cotorsion-pairs-and-(co)products}
Clearly, the classes $\X$ and $\Y$ are closed under extensions in $\M$.
Unlike covariant Hom--functors, covariant $\Ext^1$--functors might not commute with products if products are not exact in $\M$. Still, the right hand side of a cotorsion pair is closed under all products that exist in $\M$, see~\cite[Prop.~8.3]{CF07} or \cite[Cor.~A.2]{CoupekStovicek}. Similarly, the left hand side of a cotorsion pair is closed under all coproducts that exist in $\M$.
\end{remark}

Hereditary cotorsion pairs are characterized under mild assuptions by the classes having more closure properties. We call a class $\X\subseteq\M$ \textit{generating} if each $M\in\M$ is a quotient of some $X\in\X$ and a class $\Y\subseteq\M$ \textit{cogenerating} if each $M\in\M$ is embeds into some $Y\in\Y$. These two conditions are always satisfied if $(\X,\Y)$ is a complete cotorsion pair.

\begin{lemma}
\label{lem:hereditary}
Let $(\A,\B)$ be a cotorsion pair such that $\A$ is generating and $\B$ is cogenerating in $\M$. Then the following are equivalent:
\begin{enumerate}
\item[(i)] $(\A,\B)$ is hereditary,
\item[(ii)] $\A$ is closed under kernels of epimorphisms,
\item[(iii)] $\B$ is closed under cokernels of monomorphisms.
\end{enumerate}
\end{lemma}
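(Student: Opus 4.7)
The plan is to establish the equivalence by proving the four implications $(i)\Rightarrow(ii)$, $(i)\Rightarrow(iii)$, $(ii)\Rightarrow(i)$, and $(iii)\Rightarrow(i)$; the generating hypothesis on $\A$ enters through the first and third, and the cogenerating hypothesis on $\B$ through the second and fourth.

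The implication $(i)\Rightarrow(ii)$ is immediate from the long exact sequence of $\Ext$. Given a short exact sequence $0\to A'\to A\to A''\to 0$ with $A,A''\in\A$ and any $B\in\B$, the segment
\[
\Ext^{1}_{\M}(A,B)\to \Ext^{1}_{\M}(A',B)\to \Ext^{2}_{\M}(A'',B)
\]
of the long exact sequence for $\Ext^{\bullet}_{\M}(-,B)$ has left endpoint zero by the cotorsion pair property and right endpoint zero by heredity, so $\Ext^{1}_{\M}(A',B)=0$. Since $B\in\B$ was arbitrary, $A'\in\leftperp{\B}=\A$. The implication $(i)\Rightarrow(iii)$ is entirely dual, obtained by applying $\Hom_{\M}(A,-)$ to a short exact sequence $0\to B'\to B\to B''\to 0$ with $B',B\in\B$.

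For $(ii)\Rightarrow(i)$ I would argue by induction on $n\geq 1$ that $\Ext^{n}_{\M}(A,B)=0$ for all $A\in\A$ and $B\in\B$; the base $n=1$ is the cotorsion pair property. The heart of the inductive step is the construction of an \emph{$\A$-resolution} of $A$: using that $\A$ is generating, one picks an epimorphism $P_{0}\twoheadrightarrow A$ with $P_{0}\in\A$, and by (ii) its kernel $K_{0}$ also lies in $\A$; iterating, one builds a resolution
\[
\cdots\to P_{1}\to P_{0}\to A\to 0
\]
whose every term $P_{i}$ and every syzygy $K_{i}=\ker(P_{i}\to P_{i-1})$ belongs to $\A$. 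For any $B\in\B$ the cotorsion pair property yields $\Ext^{1}_{\M}(K_{i-1},B)=0$, so each short exact sequence $0\to K_{i}\to P_{i}\to K_{i-1}\to 0$ in the resolution is $\Hom_{\M}(-,B)$-exact. Dimension shifting along these sequences produces an isomorphism $\Ext^{n}_{\M}(A,B)\cong \Ext^{1}_{\M}(K_{n-2},B)$ for $n\geq 2$, and the right-hand side vanishes because $K_{n-2}\in\A$. The implication $(iii)\Rightarrow(i)$ is the dual argument built from a $\B$-coresolution produced using the cogenerating hypothesis.

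The main obstacle is to justify the dimension shift, namely that an $\A$-resolution which is $\Hom_{\M}(-,B)$-exact on each of its constituent short exact sequences genuinely computes $\Ext^{n}_{\M}(A,-)$ on $\B$. When $\M$ has enough projectives this is transparent, since projectives automatically lie in $\A$ and a projective resolution is itself an $\A$-resolution of the required form. In the general bicomplete abelian setting, one identifies $\Ext^{n}_{\M}(A,B)$ with $H^{n}\bigl(\Hom_{\M}(P_{\bullet},B)\bigr)$ via the Yoneda description, using the $\Hom_{\M}(-,B)$-exactness of each $0\to K_{i}\to P_{i}\to K_{i-1}\to 0$ together with a comparison of $n$-extensions.
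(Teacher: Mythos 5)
The paper offers no argument of its own here (it cites \cite[Lemma 6.17]{Jan_survey} and \cite[Lemma 4.25]{SaSt11}), so your proposal has to stand on its own. Your implications (i)$\Rightarrow$(ii) and (i)$\Rightarrow$(iii) are correct: the long exact sequence for Yoneda $\Ext$ exists in any abelian category and the two-out-of-three vanishing argument is exactly right (no generating/cogenerating hypothesis is even needed there). The problem is in (ii)$\Rightarrow$(i), and you have put your finger on it yourself without resolving it. From $0\to K_0\to P_0\to A\to 0$ the long exact sequence reads
\[
\Ext^{n-1}_{\M}(P_0,B)\to\Ext^{n-1}_{\M}(K_0,B)\to\Ext^{n}_{\M}(A,B)\to\Ext^{n}_{\M}(P_0,B),
\]
and since $P_0$ is merely an object of $\A$, not a projective, the outer terms are precisely the groups whose vanishing is being proved; the inductive hypothesis $\Ext^{n-1}(K_0,B)=0$ only yields an injection $\Ext^{n}(A,B)\hookrightarrow\Ext^{n}(P_0,B)$, which is circular. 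The proposed rescue is also circular: the $\Hom_{\M}(-,B)$-exactness of the syzygy sequences does show $H^{n}\bigl(\Hom_{\M}(P_{\bullet},B)\bigr)=0$ for $n\geqslant 1$, but the identification of this cohomology with $\Ext^{n}_{\M}(A,B)$ is equivalent to $\Ext^{\geqslant 1}(P_i,B)=0$ and is false for general resolutions by non-projectives satisfying your conditions — e.g.\ the length-zero resolution $P_0=A$, $P_{i}=0$ for $i\geqslant1$ has all terms and syzygies in $\A$ and all syzygy sequences $\Hom(-,B)$-exact, yet its $\Hom(-,B)$-cohomology vanishes in positive degrees regardless of whether $\Ext^{n}(A,B)$ does.

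The standard repair replaces the fixed resolution by a Yoneda splicing with a cover chosen per extension class. Given $\epsilon\in\Ext^{n}_{\M}(A,B)$, $n\geqslant2$, represented by $0\to B\to X_n\to\cdots\to X_1\to A\to0$, write $\epsilon=\epsilon''\circ\epsilon'$ with $\epsilon'=[0\to C\to X_1\to A\to0]$ and $C=\im(X_2\to X_1)$. Since $\A$ is generating, choose an epimorphism $A'\twoheadrightarrow X_1$ with $A'\in\A$; the composite $A'\to A$ is an epimorphism with kernel $K\in\A$ by (ii), and the induced morphism of short exact sequences gives $\epsilon'=f_{*}(\eta)$ for $f\colon K\to C$ and $\eta=[0\to K\to A'\to A\to0]$. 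Then $\epsilon=(f^{*}\epsilon'')\circ\eta$ with $f^{*}\epsilon''\in\Ext^{n-1}_{\M}(K,B)$, which vanishes by induction on $n$ (base case $n=1$ being the cotorsion pair). The point is that the object $K\in\A$ through which one shifts depends on the representative of $\epsilon$, which is exactly what the fixed-resolution dimension shift cannot provide. The implication (iii)$\Rightarrow$(i) is repaired dually using that $\B$ is cogenerating.
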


\begin{proof}
This is \cite[Lemma 6.17]{Jan_survey} (see also~\cite[Lemma 4.25]{SaSt11} and, for complete cotorsion pairs, \cite[Prop.~1.1.11]{Becker}).
\end{proof}

Given a class of objects $\X$ in $\M$, we denote by $\mathrm{Mono}(\X)$ (resp., $\mathrm{Epi}(\X)$) the monomorphisms (resp., epimorphisms) in $\M$ with cokernel (resp., kernel) in $\X$ and call such morphisms $\X$--\textit{monomorphisms} (resp., $\X$--\textit{epimorphisms}). 
The following two lemmas are then mostly a variation of \cite[Ch.VIII, Lemma 3.1]{Bel}.

\begin{lemma}\label{lem:Ext-orthogonal}
If $\M$ is abelian, $\X$ is a class of objects in $\M$, and $Y\in\M$, then
\begin{enumerate}
\item $Y\in\rightperp{\X}$ if and only if $\M(f,Y)$ is surjective for any $f\in\mathrm{Mono}(\X)$,
\item $Y\in\leftperp{\X}$ if and only if $\M(Y,f)$ is surjective for any $f\in\mathrm{Epi}(\X)$.
\end{enumerate}
\end{lemma}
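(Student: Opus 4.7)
The plan is to prove each equivalence by applying the long exact sequence of Ext to the short exact sequences arising from $\X$--monomorphisms (for part (1)) and $\X$--epimorphisms (for part (2)), combined with the fact that vanishing of an Ext class corresponds to splitting of the associated short exact sequence. I shall treat only (1) in detail, since (2) follows by the dual argument in $\M\op$.

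For the forward implication of (1), suppose $Y\in\rightperp{\X}$ and let $f\colon A\to B$ be an $\X$--monomorphism with cokernel $X\in\X$. Applying $\M(-,Y)$ to the short exact sequence $0\to A\xrightarrow{f} B\to X\to 0$ yields the exact sequence
\[
\M(B,Y)\xrightarrow{\M(f,Y)}\M(A,Y)\to\Ext^{1}_{\M}(X,Y).
\]
Since $\Ext^{1}_{\M}(X,Y)=0$ by assumption, $\M(f,Y)$ is surjective.

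For the converse, suppose $\M(f,Y)$ is surjective for every $f\in\mathrm{Mono}(\X)$, and fix $X\in\X$. It suffices to show that every element of $\Ext^{1}_{\M}(X,Y)$ vanishes. Represent such an element by a short exact sequence $0\to Y\xrightarrow{i} E\to X\to 0$. Then $i$ is a monomorphism whose cokernel is $X\in\X$, so $i\in\mathrm{Mono}(\X)$. By hypothesis $\M(i,Y)\colon\M(E,Y)\to\M(Y,Y)$ is surjective, so there exists $r\colon E\to Y$ with $r\circ i=\id_{Y}$. Hence the sequence splits, and its class in $\Ext^{1}_{\M}(X,Y)$ is zero. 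Since $X\in\X$ was arbitrary, $Y\in\rightperp{\X}$.

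Part (2) is proved in exactly the same way, using that for an $\X$--epimorphism $f\colon B\to C$ with kernel $X\in\X$, the functor $\M(Y,-)$ produces the sequence $\M(Y,B)\xrightarrow{\M(Y,f)}\M(Y,C)\to\Ext^{1}_{\M}(Y,X)$, and reading a class $\xi\in\Ext^{1}_{\M}(Y,X)$ through a representative $0\to X\to E\to Y\to 0$ where the right-hand epimorphism is an $\X$--epimorphism. I do not anticipate any real obstacle; the only thing to be a little careful about is that both directions genuinely need the full defining condition (it is not enough to test on a single sequence), which is why we must work with an arbitrary representative of an arbitrary Ext class.
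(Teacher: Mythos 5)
Your proof is correct and takes essentially the same approach as the paper: the forward direction via the exact sequence obtained by applying $\M(-,Y)$ (resp.\ $\M(Y,-)$) to the defining short exact sequence, and the converse by exhibiting a retraction that splits an arbitrary representative of an $\Ext^1$-class.
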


\begin{proof}
We will only prove the first part, the second is similar. The `only if' part follows immediately if we apply $\M(-,Y)$ to the short exact sequence $0\to F\overset{f}\to E\to\coker(f)\to 0$. Conversely, suppose we have $X\in\X$ and a short exact sequence $0\to Y\overset{f}\to E\to X\to 0$. Then $f\in\mathrm{Mono}(\X)$ and if $\M(f,Y)$ is surjective, the sequence must split.
\end{proof}

\begin{remark}\label{rem:conditional-exactness-of-(co)products}
An immediate consequence of the lemma is that if $(\X,\Y)$ is a cotorsion pair in $\M$ and $\Y$ is cogenerating, then a coproduct of $\X$--monomorphisms is again an $\X$--monomorphism. This is despite the fact that $\M$ might not have exact coproducts.
Indeed, if $f_i\colon W_i\hookrightarrow Z_i$ is a collection of $\X$-monomorphism such that $\coprod W_i$ and $\coprod Z_i$ exist, then $\M(\coprod f_i,Y)\cong\prod\M(f_i,Y)$ is a surjective map of abelian groups for each $Y\in\Y$. Since $Y$ is cogenerating, there is a monomorphism of the form $j\colon\coprod W_i\hookrightarrow Y$ in $\M$ with $Y\in \Y$. This has a preimage under $\M(\coprod f_i,Y)$, so there is a morphism $k\colon\coprod Z_i\to Y$ such that $k\circ\coprod f_i= j$. It follows that $\coprod f_i$ is a monomorphism whose cokernel is isomorphic to $\coprod\coker(f_i)$, so belongs to $\X$ by Remark~\ref{rem:cotorsion-pairs-and-(co)products}.

Dually, if $\X$ is generating in $\M$, the class of $\Y$--epimorphisms is closed under products (even if $\M$ does not have exact products).
\end{remark}

On the other hand, we have the following observation by Hovey~\cite{hovey}, where we need to introduce corresponding terminology first.

\begin{definition}
Given two morphisms $l$ and $r$ is a category, we say that $l$ (resp., $r$) has the \textit{left} (resp., \textit{right}) \textit{lifting property} with respect to the morphism $r$ (resp., $l$), if for any commutative square in $\M$ given by the solid arrows:
\[
\xymatrix@C=2pc{
A \ar[r] \ar[d]_{l} & C \ar[d]^-{r} \\
B \ar[r] \ar@{-->}^-{h}[ur] & D
}
\]
there exists a dotted arrow $h$, as indicated, such that the two triangles commute.
\end{definition} 

\begin{lemma}\label{lem:Ext-to-box-orthogonal}
If $(\X,\Y)$ is a cotorsion pair in an abelian category $\M$ and we have $l\in\mathrm{Mono}(\X)$ and $r\in\mathrm{Epi}(\Y)$, then $l$ has the left lifting property with respect to $r$.
\end{lemma}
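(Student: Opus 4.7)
The plan is to reduce the lifting problem to finding a compatible splitting of a short exact sequence. Form the pullback $P = B \times_D C$ of $r$ and $g$, with projections $\pi_B\colon P \to B$ and $\pi_C\colon P \to C$. Since $r \in \mathrm{Epi}(\Y)$ has kernel $Y \in \Y$, base change produces a short exact sequence $0 \to Y \xrightarrow{\iota} P \xrightarrow{\pi_B} B \to 0$, while the commutativity $r\circ f = g\circ l$ of the original square yields a canonical map $(l,f)\colon A \to P$ satisfying $\pi_B\circ(l,f) = l$ and $\pi_C\circ(l,f) = f$. It therefore suffices to construct a section $s\colon B \to P$ of $\pi_B$ with $s\circ l = (l,f)$: the lift will then be $h := \pi_C\circ s$, and the pullback identity $r\circ\pi_C = g\circ\pi_B$ immediately gives both $h\circ l = f$ and $r\circ h = g$.

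For the mere existence of a section, pull the sequence above back along $l$, obtaining $0 \to Y \to E \to A \to 0$ with $E := \pi_B^{-1}(A)$; the map $A \to E$ induced by $(l,f)$ is a splitting. Set $X := \coker(l) \in \X$ and consider the long exact $\Ext$ sequence associated with $0 \to A \to B \to X \to 0$,
\[
\cdots \to \Ext^1_\M(X,Y) \to \Ext^1_\M(B,Y) \to \Ext^1_\M(A,Y) \to \cdots .
\]
By the defining orthogonality of the cotorsion pair, $\Ext^1_\M(X,Y) = 0$, so the right-hand restriction map is injective. The class $[P]\in\Ext^1_\M(B,Y)$ maps to the split class $[E] = 0 \in \Ext^1_\M(A,Y)$, and hence $[P]=0$; pick any section $s_0\colon B \to P$ of $\pi_B$.

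To make $s_0$ agree with $(l,f)$ on $A$, observe that $s_0\circ l - (l,f)\colon A \to P$ is annihilated by $\pi_B$, hence factors uniquely as $\iota\circ\alpha$ for a unique $\alpha\colon A \to Y$. By Lemma~\ref{lem:Ext-orthogonal}(1), applied with $Y \in \rightperp{\X}$ and $l \in \mathrm{Mono}(\X)$, the induced map $\Hom_\M(B,Y) \to \Hom_\M(A,Y)$ is surjective, so $\alpha$ lifts to some $\tilde\alpha\colon B \to Y$. Then $s := s_0 - \iota\circ\tilde\alpha$ is still a section of $\pi_B$ and satisfies $s\circ l = (l,f)$ by direct computation, so $h := \pi_C\circ s$ is the sought lift. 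No step is genuinely hard; the only subtlety is the two-fold bookkeeping of the cotorsion pair orthogonality, invoked once through $\Ext^1(X,Y) = 0$ to produce $s_0$ and once through Lemma~\ref{lem:Ext-orthogonal}(1) to correct it.
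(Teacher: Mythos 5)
Your argument is correct and complete: the pullback $P=B\times_D C$ yields $0\to Y\to P\to B\to 0$, the vanishing of $\Ext^1_\M(X,Y)$ forces $[P]=0$ because its restriction along $l$ is the split class $[E]$, and the correction of $s_0$ via the surjectivity of $\Hom_\M(l,Y)$ (Lemma~\ref{lem:Ext-orthogonal}(1)) is exactly the right way to enforce compatibility with $(l,f)$. The paper does not prove this lemma itself but defers to the references cited there, and your proof is essentially the standard argument found in those sources, so there is nothing to add.
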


\begin{proof}
The idea appears in the proof of~\cite[Prop.~4.2]{hovey}, and the statement is explicitly proved in~\cite[Ch.VIII, Lemma~3.1]{Bel}, \cite[Lemma~6.1.10]{enochs-jenda2} or~\cite[Lemma~5.14]{Jan_survey}.
\end{proof}
\end{ipg}

In fact, Hovey's insight in \cite{hovey} (presented in a more crystallized form \cite[Thm.~5.13]{Jan_survey}) was that there was a much tighter relation between complete cotorsion pairs $(\X,\Y)$ and pairs of classes of the form $(\mathrm{Mono}(\X),\mathrm{Epi}(\Y))$.

\begin{definition}\label{def:wfs}
A \textit{weak factorization system} in a category $\M$, is a pair $(\class L,\R)$ of classes of morphisms in $\M$ such that:
\begin{itemize}
\item[-] $\mathcal{L}$ is precisely the class of morphisms in $\M$ that have the left lifting property with respect to all morphisms in $\mathcal{R}$.
\item[-] $\R$ is precisely the class of morphisms in $\M$ that have the right lifting property with respect to all morphisms in $\mathcal{L}$.
\item[-] Any morphism $h$ in $\M$ admits a factorization as $h=f\circ g$, where $g$ is a morphism in $\mathcal{L}$ and $f$ is a morphism in $\R$.
\end{itemize}
\end{definition}

\begin{definition}
\label{def:abelian_wfs}
A weak factorization system $(\mathcal{L},\mathcal{R})$ in an abelian category $\M$ is called \textit{abelian} if the following hold:
\begin{itemize}
\item[(i)] A morphism $f$ is in $\mathcal{L}$ if and only if $f$ is a monomorphism and the morphism  $0\rightarrow\coker(f)$ is in $\mathcal{L}$.
\item[(ii)] A morphism $g$ is in $\mathcal{R}$ if and only if $g$ an epimorphism and the morphism $\ker(g)\rightarrow 0$ is in $\mathcal{R}$.
\end{itemize}
\end{definition}

If $\M$ is an abelian category, for a class $\A$ of morphisms in $\M$ we denote by 
$\coker(\mathcal{A})$ (resp., $\ker(\mathcal{A})$), the class of objects in $\M$ isomorphic to $\coker(f)$ (resp., $\ker(f)$) for some morphism $f$ in $\A$.

\begin{proposition}[{\cite{hovey}, \cite[Thm.~5.13]{Jan_survey}}]
\label{prop:mappings}
Let $\M$ be an abelian category. The mappings:
\[
(\mathcal{L},\mathcal{R}) \mapsto (\coker(\mathcal{L}),\ker(\R)) \qquad\mbox{and}\qquad (\X,\Y)\mapsto (\mathrm{Mono}(\X),\mathrm{Epi}(\Y))\]
define mutually inverse bijections between abelian weak factorization systems $(\mathcal{L},\mathcal{R})$  and complete cotorsion pairs $(\X,\Y)$ in $\M$.
\end{proposition}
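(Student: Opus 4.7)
The plan is to verify both directions of the bijection and then mutual inversion. Throughout I will freely use that, under the abelian axioms of Definition~\ref{def:abelian_wfs}, membership of a morphism in $\mathcal{L}$ (resp.\ $\mathcal{R}$) is equivalent to being a monomorphism with cokernel in $\X:=\coker(\mathcal{L})$ (resp.\ an epimorphism with kernel in $\Y:=\ker(\mathcal{R})$); equivalently $\X=\{X : (0\to X)\in\mathcal{L}\}$ and $\Y=\{Y: (Y\to 0)\in\mathcal{R}\}$.

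For the forward direction, given a complete cotorsion pair $(\X,\Y)$, set $\mathcal{L}:=\mathrm{Mono}(\X)$ and $\mathcal{R}:=\mathrm{Epi}(\Y)$. The lifting property between $\mathcal{L}$ and $\mathcal{R}$ is precisely Lemma~\ref{lem:Ext-to-box-orthogonal}. For the factorization axiom, given $h\colon A\to B$, I would combine the two completeness approximations: first choose $0\to K\to X\xrightarrow{\pi} B\to 0$ with $X\in\X$, $K\in\Y$, pull back $h$ along $\pi$ to obtain $P\twoheadrightarrow A$ with kernel $K\in\Y$, then apply the other completeness approximation $0\to P\to Y'\to X'\to 0$ with $Y'\in\Y$, $X'\in\X$, and take the pushout along $P\to A$ to produce an object $Z$; a routine diagram chase shows that $A\to Z$ lies in $\mathrm{Mono}(\X)$ (its cokernel is an extension of $X$ by $X'$, hence in $\X$ by Remark~\ref{rem:cotorsion-pairs-and-(co)products}) and $Z\to B$ lies in $\mathrm{Epi}(\Y)$. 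To see that $\mathcal{L}$ consists of \emph{exactly} those morphisms with the LLP against $\mathcal{R}$, I would invoke the classical retract argument: a morphism $f$ with such LLP factors as $f=r\circ l$ with $l\in\mathcal{L},\ r\in\mathcal{R}$; lifting $f$ against $r$ realises $f$ as a retract of $l$, forcing $f\in\mathrm{Mono}(\X)$ since $\X$ is closed under direct summands. The dual identification works for $\mathcal{R}$, and the abelian conditions (i), (ii) are tautologies from the definitions of $\mathrm{Mono}(\X)$ and $\mathrm{Epi}(\Y)$.

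For the backward direction, given an abelian WFS $(\mathcal{L},\mathcal{R})$, take $\X,\Y$ as above. Given $X\in\X$ and $Y\in\Y$, any short exact sequence $0\to Y\to E\to X\to 0$ yields an epimorphism $E\twoheadrightarrow X$ with kernel $Y$; since $(Y\to 0)\in\mathcal{R}$, abelian condition (ii) gives $(E\to X)\in\mathcal{R}$, and lifting $(0\to X)\in\mathcal{L}$ against it produces a splitting, whence $\Ext^{1}_{\M}(X,Y)=0$. Conversely, if $Z\in\leftperp{\Y}$, factor $0\to Z$ through the WFS as $0\xrightarrow{l} W\xrightarrow{r} Z$: then $l\in\mathcal{L}$ forces $W\in\X$, while $r\in\mathcal{R}$ yields a short exact sequence $0\to Y'\to W\to Z\to 0$ with $Y'\in\Y$; this splits since $\Ext^{1}_{\M}(Z,Y')=0$, realising $Z$ as a retract of $W\in\X$ and hence $Z\in\X$ because $\mathcal{L}$, being defined by a lifting property, is closed under retracts in the arrow category. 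The identity $\Y=\rightperp{\X}$ is dual, and completeness of $(\X,\Y)$ drops out of the WFS factorization applied to $0\to M$ and $M\to 0$, which directly produce the two required approximation sequences.

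Finally, for mutual inversion: starting from $(\X,\Y)$, the identities $\coker(\mathrm{Mono}(\X))=\X$ and $\ker(\mathrm{Epi}(\Y))=\Y$ are tautological, while starting from $(\mathcal{L},\mathcal{R})$ the abelian axioms of Definition~\ref{def:abelian_wfs} immediately give $\mathcal{L}=\mathrm{Mono}(\coker(\mathcal{L}))$ and $\mathcal{R}=\mathrm{Epi}(\ker(\mathcal{R}))$. The main obstacle I anticipate is the factorization step in the forward direction: unlike the lifting axiom, which follows immediately from Lemma~\ref{lem:Ext-to-box-orthogonal}, producing a factorization of an arbitrary morphism through $\mathrm{Mono}(\X)$ followed by $\mathrm{Epi}(\Y)$ requires genuinely combining both halves of the cotorsion-pair completeness via a pullback--pushout construction, and this is where the completeness hypothesis is essentially consumed.
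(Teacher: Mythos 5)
The paper offers no proof of Proposition~\ref{prop:mappings} at all --- it is stated with citations to Hovey and to \cite[Thm.~5.13]{Jan_survey} and the reader is referred there. Measured against the standard argument in those sources, your architecture is the right one: Lemma~\ref{lem:Ext-to-box-orthogonal} for the liftings, the retract argument together with closure of $\X$ and $\Y$ under direct summands to show $\mathcal{L}$ and $\mathcal{R}$ are \emph{exactly} the lifting classes, the tautological verification of the abelian axioms, and, in the reverse direction, the factorizations of $0\to M$ and $M\to 0$ to recover completeness. All of that, and the mutual-inversion statements, is correct.

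There is, however, a genuine error in the one step you yourself identify as the crux: the factorization of an arbitrary $h\colon A\to B$. With $P=A\times_B X$ and a special preenvelope $0\to P\to Y'\to X'\to 0$, you push out $P\hookrightarrow Y'$ along the \emph{epimorphism} $P\twoheadrightarrow A$. That pushout is $Z\cong Y'/\ker(P\to A)$; the map $A\to Z$ is indeed a monomorphism, but its cokernel is exactly $X'$ (not an extension involving $X$), and --- more seriously --- there is no induced map $Z\to B$: producing one is precisely the problem of extending $h$ along the $\X$--monomorphism $A\to Z$, whose obstruction lies in $\Ext^1_{\M}(X',B)$ and has no reason to vanish for arbitrary $B$. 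The correct construction (which your parenthetical about the cokernel being an extension of $X$ and $X'$ suggests you half-remember) pushes $P\hookrightarrow Y'$ out along the \emph{kernel inclusion} $P\hookrightarrow A\oplus X$ of the epimorphism $(h,\pi)\colon A\oplus X\to B$. Then $Y'\to Z$ is a monomorphism with $\coker(Y'\to Z)\cong\coker(P\to A\oplus X)\cong B$, so $Z\to B$ exists and lies in $\mathrm{Epi}(\Y)$; the composite $A\to A\oplus X\to Z$ is a monomorphism whose cokernel sits in $0\to X\to\coker(A\to Z)\to X'\to 0$ and hence lies in $\X$ by closure under extensions; and $A\to Z\to B$ equals $h$ by construction. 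With this correction the factorization, and with it the whole proof, goes through.
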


\begin{ipg}\textbf{(Abelian model structures)}
\label{subsec:abelian_models}
We recall the main aspects of the theory of abelian model structures, introduced by Hovey~\cite{hovey}. In order to state the main result of \cite{hovey} we need to briefly recall a few definitions from the theory of Quillen model categories; references for this material include \cites{hoveybook,Hir,riehl_book}. 

We now recall the definition of a Quillen model structure.

\begin{definition}
\label{def:model_category}
Let $\M$ be a complete and cocomplete category. A \textit{model structure} on $\M$ consists of three classes of morphisms, $\mathit{cof}, \mathit{fib}$ and $\mathit{weak}$, which are called \textit{cofibrations}, \textit{fibrations} and \textit{weak equivalences}, respectively, such that the following hold: 
\begin{enumerate}	
\item[(i)] The pairs  $(\mathit{cof}\cap\mathit{weak},\mathit{fib})$ and $(\mathit{cof},\mathit{weak}\cap\mathit{fib})$ are weak factorization systems in $\M$. 
\item[(ii)] The class $\mathit{weak}$ is closed under retracts and given a composable pair of morphisms $f$ and $g$ in $\M$, if two of $f, g$ and $f\circ g$ belong to the class $\mathit{weak}$, then so does the third.
\end{enumerate}
\end{definition}

The next definition is essentially due to Hovey~\cite{hovey}. We present them with the formalism introduced in \cite{Jan_survey}.

\begin{definition}
\label{def:abelian_model_category}
A model structure on a complete and cocomplete abelian category $\M$ is called an \textit{abelian model structure} if its weak factorzation systems $(\mathit{cof}\cap\mathit{weak},\mathit{fib})$ and $(\mathit{cof},\mathit{weak}\cap\mathit{fib})$ are abelian in the sense of Definition~\ref{def:abelian_wfs}.
\end{definition}

Notice that, if $\M$ is an abelian model structure and we apply Proposition~\ref{prop:mappings} to its abelian weak factorization systems $(\mathit{cof}\cap\mathit{weak},\mathit{fib})$ and $(\mathit{cof},\mathit{weak}\cap\mathit{fib})$, we get that the classes $\C=\coker(\mathit{cof})$ and $\F=\ker(\mathit{fib})$ of cofibrant, resp., fibrant objects in $\M$ and the class $\W$ of weakly trivial (i.~e.\ weakly equivalent to zero) objects
satisfy the following conditions:

\begin{enumerate}	
\item[(i)] The pairs $(\C,\W\cap\F)$ and $(\C\cap\W,\F)$ are complete cotorsion pairs in $\M$.
\item[(ii)] The class $\W$ is closed under direct summands and given a short exact sequence $0\to K\to L\to M\to 0$ in $\M$, if two of $K, L$ and $M$ belong in the class $\W$, then so does the third.
\end{enumerate}

Conversely, suppose we are given a triple $(\C,\W,\F)$ of classes of objects in $\class M$ satisfying the conditions above. 
Then Proposition~\ref{prop:mappings} produces an abelian model structure on $\M$. This is the form in which Hovey's original result  \cite[Thm.~2.2]{hovey} is stated and we refer for details there or to~\cite{Jan_survey}.

If $\class M$ is an abelian category with an abelian model structure, where $\C, \F$ and $\W$ denote the classes of cofibrant, fibrant and weakly trivial objects respectively, we abbreviate by saying that $(\C,\W,\F)$ is a \textit{Hovey triple} on $\class M$, which in addition is called \textit{hereditary} in case the associated  complete cotorsion pairs are hereditary. We point out that hereditariness is important in order to equip the homotopy category of $\M$ with a triangulated structure, for this the reader may consult \cite{Gil_survey} as we will not expand on it here.
\end{ipg}

\begin{ipg}\textbf{(Filtrations)}
In this part, let $\M$ be a cocomplete abelian category.
Our next steps are inspired by~\cite[\S4]{PositselskiRosicky}. 
First we define the notion of filtrations and filtered objects. As in~\cite{PositselskiRosicky}, we will not assume any exactness conditions on direct limits since we wish to apply our results to categories like $\M^\C$, where $\C$ is a small $k$-linear category.

\begin{definition}[{\cite[Def.~4.3]{PositselskiRosicky}}]
\label{def:filt}
Let $\sigma$ be an ordinal number and $\X$ a class of objects in $\M$. A well ordered direct system $\mathcal{D}:=
(M_{\alpha}\, |\, i_{\alpha,\beta}\colon M_{\alpha}\rightarrow M_{\beta})_{\alpha<\beta\leqslant\sigma}$ of objects and morphisms in $\M$ is called an $\X$--\textit{filtration} of an object $M\in\M$, if the following hold:
\begin{itemize}
\item[(i)] For each limit ordinal $\beta\leqslant\sigma$ we have $M_{\beta}=\colim_{\alpha<\beta}M_{\alpha}$.
\item[(ii)] For all $\alpha<\sigma$ the morphism $i_{\alpha,\alpha+1}$ is a monomorphism.
\item[(iii)] $M_0=0$ and $M=M_{\sigma}$. 
\item[(iv)] For all $\alpha<\sigma$ the cokernel of the monomorphism $i_{\alpha,\alpha+1}$ belongs in $\X$.
\end{itemize}
Property (i) is sometimes referred to as \textit{continuity} of the direct system or, in the homotopy-theoretic jargon, that the direct system is a \textit{$\sigma$-sequence} (see \cite[\S10.2]{Hir} or \cite[\S2.1.1]{hoveybook}).
We denote by $\Filt(\X)$ the class of objects of $\M$ that admit $\X$--filtrations.
\end{definition}

The key result is the so-called Eklof Lemma. It is known under this name for module categories~\cite[Lemma~6.2]{Gobel-Trlifaj}, while a version for opposite categories of module categories is sometimes called the Lukas Lemma \cite[Lemma~6.37]{Gobel-Trlifaj}.
Here we recover with a simpler proof a general version of this result from~\cite[Lemma 4.5]{PositselskiRosicky}, as well as \cite[Lemmas~6.6 and 6.8]{hj2016}, which unifies and vastly generalizes the cases for modules.

\begin{proposition}[The Eklof/Lukas Lemma] \label{prop:eklof}
Let $\M$ be a cocomplete abelian category and $\Y\subseteq\M$ be a class of objects. Then any $\leftperp{\Y}$--filtered object belongs to $\leftperp{\Y}$; so $\Filt(\leftperp{\Y})\subseteq\leftperp{\Y}$ in symbols.
\end{proposition}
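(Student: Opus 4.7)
The plan is to fix $Y\in\Y$ and an arbitrary short exact sequence $0\to Y\to E\overset{\pi}\to M\to 0$ in $\M$, and construct a splitting by transfinite recursion along a $\leftperp{\Y}$--filtration $(M_\alpha, i_{\alpha,\beta})_{\alpha\leqslant\beta\leqslant\sigma}$ of $M$. Concretely, I will build a compatible family of morphisms $s_\alpha\colon M_\alpha\to E$ for $\alpha\leqslant\sigma$ satisfying $\pi\circ s_\alpha=i_{\alpha,\sigma}$ and $s_\beta\circ i_{\alpha,\beta}=s_\alpha$ for all $\alpha\leqslant\beta\leqslant\sigma$. Once this is done, $s_\sigma\colon M\to E$ will be the desired splitting, proving $\Ext^1_\M(M,Y)=0$.

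The base case $\alpha=0$ is trivial since $M_0=0$. For the successor step, assume $s_\alpha$ has been constructed. Form the pullbacks $E_\alpha, E_{\alpha+1}$ of $\pi$ along $i_{\alpha,\sigma}$ and $i_{\alpha+1,\sigma}$; these fit into short exact sequences $0\to Y\to E_\alpha\to M_\alpha\to 0$ and $0\to Y\to E_{\alpha+1}\to M_{\alpha+1}\to 0$, connected by a comparison map $j_{\alpha,\alpha+1}\colon E_\alpha\to E_{\alpha+1}$. The snake lemma applied to this morphism of short exact sequences (identity on $Y$, $i_{\alpha,\alpha+1}$ on the right) shows that $j_{\alpha,\alpha+1}$ is a monomorphism with cokernel $C_\alpha:=\coker(i_{\alpha,\alpha+1})\in\leftperp{\Y}$. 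The map $s_\alpha$ corresponds, via the universal property of the pullback, to a retraction $r_\alpha\colon E_\alpha\to Y$ of the inclusion $Y\hookrightarrow E_\alpha$. Applying $\Hom_\M(-,Y)$ to $0\to E_\alpha\to E_{\alpha+1}\to C_\alpha\to 0$ and using $\Ext^1_\M(C_\alpha,Y)=0$ produces a retraction $r_{\alpha+1}\colon E_{\alpha+1}\to Y$ extending $r_\alpha$, which in turn determines the required $s_{\alpha+1}\colon M_{\alpha+1}\to E$.

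The limit step is where the proof requires its main trick, because in a general cocomplete abelian category filtered colimits need not be exact, and so for a limit ordinal $\beta$ one cannot in general identify $E_\beta$ with $\colim_{\alpha<\beta}E_\alpha$. The key observation is to work with the maps $s_\alpha\colon M_\alpha\to E$ directly, rather than with the retractions $r_\alpha$. By the inductive hypothesis, $(s_\alpha)_{\alpha<\beta}$ is a cocone on the system $(M_\alpha)_{\alpha<\beta}$ with vertex $E$; since $M_\beta=\colim_{\alpha<\beta}M_\alpha$ by Definition~\ref{def:filt}(i), the universal property provides a unique $s_\beta\colon M_\beta\to E$ with $s_\beta\circ i_{\alpha,\beta}=s_\alpha$ for all $\alpha<\beta$. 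The identity $\pi\circ s_\beta=i_{\beta,\sigma}$ follows because both morphisms restrict to $i_{\alpha,\sigma}=\pi\circ s_\alpha$ along every $i_{\alpha,\beta}$, so the uniqueness clause of the colimit forces them to coincide. This closes the transfinite induction and produces the splitting $s_\sigma$. The only homological ingredient used is the vanishing $\Ext^1_\M(C_\alpha,Y)=0$ at each successor stage; the whole construction sidesteps any AB5-style exactness of direct limits in $\M$, which is the essential advantage of tracking sections into $E$ rather than retractions out of $E_\alpha$.
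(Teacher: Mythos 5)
Your proof is correct, and it takes a genuinely different (more hands-on) route than the paper. The paper reduces the statement to the lifting-property formalism: by Lemma~\ref{lem:Ext-orthogonal}(2) one must show $0\to M$ has the left lifting property with respect to $\mathrm{Epi}(\Y)$; each $i_{\alpha,\alpha+1}$ has it by Lemma~\ref{lem:Ext-to-box-orthogonal}, and the class of morphisms with this lifting property is closed under transfinite composition by \cite[Lemma 10.3.1]{Hir}. You instead run the transfinite recursion by hand, in the spirit of the classical module-theoretic Eklof lemma: fix $0\to Y\to E\xrightarrow{\pi} M\to 0$ and build compatible sections $s_\alpha\colon M_\alpha\to E$ with $\pi s_\alpha=i_{\alpha,\sigma}$, handling the successor step via the pullbacks $E_\alpha\hookrightarrow E_{\alpha+1}$ and the vanishing $\Ext^1_\M(C_\alpha,Y)=0$, and the limit step via the colimit property of $M_\beta$. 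Your key observation — that one should glue maps \emph{into} the fixed object $E$ rather than attempt to form $\colim E_\alpha$ (which would require exactness of filtered colimits) — is exactly what makes the argument work without AB5, and it is in effect the content of the transfinite-composition lemma the paper cites; so the two proofs are morally the same induction, with the paper outsourcing it to a standard categorical fact. Your approach buys self-containedness and makes the splitting explicit; the paper's buys brevity and reuses machinery (abelian weak factorization systems) already set up for Hovey's correspondence. The only elided detail in your write-up is the verification that the section $s_{\alpha+1}$ obtained from the extended retraction $r_{\alpha+1}$ actually satisfies $s_{\alpha+1}\circ i_{\alpha,\alpha+1}=s_\alpha$; this follows from $r_{\alpha+1}\circ j_{\alpha,\alpha+1}=r_\alpha$ together with the identity $\tilde{s}\circ p=\mathrm{id}-\iota\circ r$ relating a retraction to its associated section, and the fact that $p_\alpha$ is an epimorphism, but it is worth stating since the compatibility of the $s_\alpha$ is what the limit step relies on.
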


\begin{proof}
Let $(M_{\alpha}\, |\, i_{\alpha,\beta}\colon M_{\alpha}\rightarrow M_{\beta})_{\alpha<\beta\leqslant\sigma}$ be an $\leftperp{\Y}$--filtration of some $M\in\M$. By Lemma~\ref{lem:Ext-orthogonal}(2), we need to prove that the morphism $0\to M$ has the left lifting property with respect to all $r\in\mathrm{Epi}(\Y)$. By the assumption and Lemma~\ref{lem:Ext-to-box-orthogonal}, we know that $i_{\alpha,\alpha+1}$ for each $\alpha<\sigma$ has the left lifting property with respect to all $r\in\mathrm{Epi}(\Y)$. The conclusion follows from the standard fact that the class of morphisms having the left lifting property with respect to all $r\in\mathrm{Epi}(\Y)$ is closed under so-called transfinite compositions; see e.g.~\cite[Lemma 10.3.1]{Hir}.
\end{proof}

\begin{remark} \label{rem:filt}
As mentioned, there is in general no reason why, given an $\X$--filtration $(M_{\alpha}\, |\, i_{\alpha,\beta})_{\alpha<\beta\leqslant\sigma}$, the morphisms $i_{\alpha\beta}$ would be monomorphisms unless $\beta=\alpha+n$ for $n$ finite. We refer to \cite[Examples 4.4]{PositselskiRosicky} for particular examples where this is not the case.

However, if $\rightperp\X$ is a cogenerating class in $\M$, then $i_{\alpha\beta}$ will be monomorphisms for each $\alpha<\beta\le\sigma$. This happens e.g.\ when $\M$ has enough injectives.

Indeed, by the proof of Proposition~\ref{prop:eklof} (and in particular by \cite[Lemma 10.3.1]{Hir}), each $i_{\alpha\beta}\colon M_{\alpha}\rightarrow M_{\beta}$ has the left lifting property with respect to all $r\in\mathrm{Epi}(\rightperp{\X})$. When $\rightperp{\X}$ is cogenerating, we have the following solid square with $Y\in\rightperp{\X}$ and a monomorphism $j$:

\[
\xymatrix@C=2pc{
M_{\alpha} \ar[r]^-{j} \ar[d]_{i_{\alpha\beta}} & Y \ar[d] \\
M_{\beta} \ar[r] \ar@{-->}^-{h}[ur] & 0
}
\]
Since the dotted arrow exists, $i_{\alpha\beta}$ must be a monomorphism, as required.

When we are in this situation that $\rightperp{\X}$ is cogenerating, we will think of all objects $M_\alpha$ as subobjects of $M_\sigma$ and  usually write
$0=M_0\subseteq \cdots\subseteq M_{\alpha}\subseteq M_{\alpha+1}\subseteq\cdots \subseteq M_{\sigma}=M$ for a filtration. 
\end{remark}
\end{ipg}

\begin{ipg}\textbf{(Fully faithful adjoint triples)}
Finally, we recall basic facts about adjoint triples involving fully faithful functors. To that end, suppose we have categories $\C$, $\D$ and two adjunctions $(L,M)\colon \D \rightleftarrows \C$ and $(M,R)\colon \C \rightleftarrows \D$,
\[
 \xymatrix@C=2pc{
\C \ar[rr]|-{M} & &  \D \ar@/_0.9pc/[ll]_-{L} \ar@/^0.9pc/[ll]^-{R}.
}
\]
We denote by $\lambda\colon LM\to\Id_\C$ and $\eta\colon\Id_\D\to ML$ the counit and the unit of the first adjunction and by $\epsilon\colon MR\to\Id_\D$ and $\mu\colon\Id_\C\to RM$ the counit and the unit of the second adjunction, respectively

It is a folklore result that $L$ is fully faithful if and only if $R$ is such, so that $\eta$ is an isomorphism if and only if $\epsilon$ is an isomorphism. In fact, we can be more precise.

\begin{lemma}[{\cite[Lemma 1.3]{DyckhoffTholen}}] \label{lem:inverse-unit}
In the situation above, if $\eta$ is invertible, so is $\epsilon$ and $\epsilon^{-1}$ is given by the composition
\[
\xymatrix@1{
\Id_\D \ar[rr]^-{\eta} && ML \ar[rr]^-{M\mu L} && MRML \ar[rr]^-{MR\eta^{-1}} && MR.
}
\]
Conversely, if $\epsilon$ is invertible, so is $\eta$ and $\eta^{-1}$ is given by the composition
\[
\xymatrix@1{
ML \ar[rr]^-{ML\epsilon^{-1}} && MLMR \ar[rr]^-{M\lambda R} && MR \ar[rr]^-{\epsilon} && \Id_\D.
}
\]
\end{lemma}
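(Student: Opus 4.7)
\emph{Proof plan.}
The strategy is to verify one of the two required compositions for the inverse formula in each direction, and then upgrade to a two-sided inverse by invoking the folklore biconditional stated just above the lemma: $\eta$ is invertible if and only if $\epsilon$ is invertible. In this way each of the two claims reduces to a single one-sided identity, which is a short naturality-plus-triangle-identity computation.

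For the first assertion, set $\phi := MR\eta^{-1} \circ M\mu L \circ \eta$ and compute $\epsilon \circ \phi$. Naturality of $\epsilon$ applied to the natural transformation $\eta^{-1}\colon ML \to \Id_\D$ gives $\epsilon \circ MR\eta^{-1} = \eta^{-1} \circ \epsilon_{ML}$. The triangle identity $\epsilon M \circ M\mu = \id_M$ of the adjunction $(M,R)$, whiskered on the right by $L$, gives $\epsilon_{ML} \circ M\mu L = \id_{ML}$. Combining these,
\[
\epsilon \circ \phi \;=\; \eta^{-1} \circ \epsilon_{ML} \circ M\mu L \circ \eta \;=\; \eta^{-1} \circ \id_{ML} \circ \eta \;=\; \id_{\Id_\D}.
\]
Since $\eta$ is assumed invertible, so is $\epsilon$ by the folklore result, and the one-sided identity $\epsilon \circ \phi = \id$ then forces $\phi = \epsilon^{-1}$.

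The converse is formally dual: set $\psi := \epsilon \circ M\lambda R \circ ML\epsilon^{-1}$ and compute $\psi \circ \eta$. Naturality of $\eta$ applied to $\epsilon^{-1}\colon \Id_\D \to MR$ yields $ML\epsilon^{-1} \circ \eta = \eta_{MR} \circ \epsilon^{-1}$, while the triangle identity $M\lambda \circ \eta M = \id_M$ for $(L,M)$, whiskered on the right by $R$, yields $M\lambda R \circ \eta_{MR} = \id_{MR}$. Hence
\[
\psi \circ \eta \;=\; \epsilon \circ M\lambda R \circ ML\epsilon^{-1} \circ \eta \;=\; \epsilon \circ M\lambda R \circ \eta_{MR} \circ \epsilon^{-1} \;=\; \epsilon \circ \epsilon^{-1} \;=\; \id_{\Id_\D},
\]
and invertibility of $\eta$ (guaranteed again by the folklore result) upgrades this to $\psi = \eta^{-1}$. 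The only real subtlety is bookkeeping: one must choose the correct side on which to whisker the triangle identity so that the middle term collapses to an identity, and one must remember to invoke the folklore biconditional so that a one-sided inverse actually coincides with the honest inverse.
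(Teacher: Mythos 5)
The paper does not prove this lemma itself but cites \cite[Lemma 1.3]{DyckhoffTholen}; your direct verification is correct. Both naturality squares and both whiskered triangle identities are applied on the correct sides, the computations $\epsilon\circ\phi=\id$ and $\psi\circ\eta=\id$ are valid, and invoking the folklore biconditional (stated in the paper immediately before the lemma) to upgrade a one-sided inverse of an invertible transformation to the two-sided inverse is legitimate.
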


For the rest of the subsection, we will assume that $L$ and $R$ are fully faithful. In that case, there is a distinguished natural transformation $\tau\colon L\to R$ given by the following lemma.

\begin{lemma}\label{lem:jump-over-middle-adjoint}
Given an adjunction triple $(L,M,R)$ with $L$ and $R$ fully faithful and the adjunction units and counits denoted as above, then the composition of natural transformations
\[
L \xrightarrow{\mu L} RML \xrightarrow{R\eta^{-1}} R
\]
coincides with the composition
\[
L \xrightarrow{L\epsilon^{-1}} LMR \xrightarrow{\lambda R} R.
\]
\end{lemma}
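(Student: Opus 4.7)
The plan is to exploit the adjunction bijection induced by $(L,M)$. Since $L$ is fully faithful, $\eta$ is invertible, so the assignment $\alpha\mapsto M\alpha\circ\eta$ defines an injective map from natural transformations $L\to R$ to natural transformations $\Id_\D\to MR$ (its inverse sends $\beta$ to $\lambda R\circ L\beta$, as one checks using the triangle identities for $(L,M)$). It therefore suffices to show that the two candidate composites have the same image under this map; I claim both images equal $\epsilon^{-1}$.

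For the composite $R\eta^{-1}\circ\mu L$, the assignment produces $MR\eta^{-1}\circ M\mu L\circ\eta$, which is precisely the expression supplied by the first part of Lemma~\ref{lem:inverse-unit} as a formula for $\epsilon^{-1}$. For the composite $\lambda R\circ L\epsilon^{-1}$, the assignment produces $M\lambda R\circ ML\epsilon^{-1}\circ\eta$. I would first invoke naturality of $\eta\colon\Id_\D\to ML$, whiskered against the components of $\epsilon^{-1}\colon\Id_\D\to MR$, to rewrite $ML\epsilon^{-1}\circ\eta$ as $\eta MR\circ\epsilon^{-1}$. The remaining prefix $M\lambda R\circ\eta MR=(M\lambda\circ\eta M)R$ then collapses to $\id_{MR}$ via the triangle identity $M\lambda\circ\eta M=\id_M$ for $(L,M)$, leaving just $\epsilon^{-1}$.

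Since both candidates map to the same natural transformation $\epsilon^{-1}$ under an injection, they coincide, proving the claim. The only delicate point is keeping track of the whiskering conventions -- which argument of $\eta$, $\epsilon$, $\lambda$, $\mu$ one is horizontally composing with -- and invoking naturality at exactly the right step; fortunately, Lemma~\ref{lem:inverse-unit} absorbs most of the combinatorial bookkeeping on one side, so that the other side only needs a single naturality square plus one triangle identity. A dual strategy would work equally well: use the bijection $\alpha\mapsto\epsilon\circ M\alpha$ coming from $(M,R)$ and show both composites map to $\eta^{-1}$, this time invoking the second part of Lemma~\ref{lem:inverse-unit} for the second composite and using naturality of $\epsilon$ together with the triangle identity for $(M,R)$ for the first.
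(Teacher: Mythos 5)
Your proof is correct and uses essentially the same ingredients as the paper's: the formula $\epsilon^{-1}=MR\eta^{-1}\circ M\mu L\circ\eta$ from Lemma~\ref{lem:inverse-unit}, one naturality square, and one triangle identity for $(L,M)$. The only difference is packaging: the paper whiskers with $L$, uses naturality of $\lambda$ and cancels the isomorphism $L\eta^{-1}=\lambda L$, whereas you apply the $(L,M)$-adjunction transpose $\alpha\mapsto M\alpha\circ\eta$ and use naturality of $\eta$ -- two sides of the same computation.
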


\begin{proof}
By the naturality of $\lambda$, the following diagram commutes, and the equality $L\eta^{-1}=\lambda L$ follows from the triangular identities for the adjunction $(L,M)$,
\[
\xymatrix{
LML \ar[rr]^-{LM\mu L} \ar[d]_-{L\eta^{-1}=\lambda L} && LMRML \ar[rr]^-{LMR\eta^{-1}} \ar[d]^-{\lambda RML} && LMR  \ar[d]^-{\lambda R} \\
L \ar[rr]^-{\mu L} && RML \ar[rr]^-{R\eta^{-1}} && R
}
\]
Using that $\epsilon^{-1} = MR\eta^{-1} \circ M\mu L \circ \eta$ by the previous lemma, the commutativity of the outer rectangle of the last diagram implies the desired equality.
\end{proof}

If we denote $\tau = R\eta^{-1} \circ \mu L = \lambda R \circ L\epsilon^{-1}$ as indicated, we have for each $Y\in\D$ a natural morphism $\tau_Y\colon L(Y) \to R(Y)$. If $Y$ is of the form $Y=M(X)$ for some $X\in\C$, we can say more. This will be the crux in Section~\ref{sec:Fun} allowing us, for very special adjunction triples, efficiently lift objects from $\D$ to $\C$ by constructing such factorisations of the natural morphism $\tau$.

\begin{lemma}\label{lem:abstract-latching-matching-factorisation}
Given an adjunction triple $(L,M,R)$ with $L$ and $R$ fully faithful and the adjunction units and counits and $\tau\colon L\to R$ denoted as above, then for any $X\in\C$ we have the following commutative triangle:
\[
\xymatrix{
LM(X) \ar[rr]^-{\tau_{M(X)}} \ar[dr]_-{\lambda_X} && RM(X). \\
& X \ar[ur]_-{\mu_X}
}
\]
\end{lemma}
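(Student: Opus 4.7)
The plan is to unpack both legs of the triangle using the formula $\tau = R\eta^{-1} \circ \mu L$ from the definition preceding the statement, and reduce the claim to one of the triangular identities for the adjunction $(L,M)$. Evaluating $\tau$ at $M(X)$ gives
\[
\tau_{M(X)} = R(\eta_{M(X)}^{-1}) \circ \mu_{LM(X)}.
\]
On the other hand, the naturality of $\mu \colon \Id_\C \to RM$ applied to the morphism $\lambda_X \colon LM(X) \to X$ yields
\[
\mu_X \circ \lambda_X = RM(\lambda_X) \circ \mu_{LM(X)}.
\]
Thus the desired commutativity reduces to the identity $R(\eta_{M(X)}^{-1}) = RM(\lambda_X)$, or equivalently (since $R$ need not be faithful one does not strictly need to strip it off, but it suffices to check it at the level before applying $R$) to $M(\lambda_X) = \eta_{M(X)}^{-1}$.

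This last equality is immediate: the triangular identity for the adjunction $(L,M)$ reads $M(\lambda_X) \circ \eta_{M(X)} = \id_{M(X)}$, and since $L$ is fully faithful, $\eta$ is a natural isomorphism (as recalled just before Lemma~\ref{lem:inverse-unit}), so $M(\lambda_X)$ must be the two-sided inverse of $\eta_{M(X)}$. Assembling the three ingredients gives
\[
\tau_{M(X)} = R(\eta_{M(X)}^{-1}) \circ \mu_{LM(X)} = RM(\lambda_X) \circ \mu_{LM(X)} = \mu_X \circ \lambda_X,
\]
which is precisely the commutativity of the triangle.

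The argument is a short diagram chase and presents no real obstacle; every ingredient (the explicit formula for $\tau$, invertibility of $\eta$, the triangular identity, and naturality of $\mu$) has already been set up in the preceding two lemmas. The only mild subtlety worth flagging is that one uses the formula $\tau = R\eta^{-1}\circ \mu L$ rather than the alternative $\tau = \lambda R \circ L\epsilon^{-1}$; either will work, but the former meshes more transparently with the unit $\mu_X$ appearing on the right-hand leg of the triangle.
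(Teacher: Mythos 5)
Your proof is correct and is essentially the mirror image of the paper's argument: the paper expands $\tau_{M(X)}$ using the other formula $\tau=\lambda R\circ L\epsilon^{-1}$ and then invokes naturality of $\lambda$ together with the triangle identity $\epsilon^{-1}M=M\mu$ for the adjunction $(M,R)$, whereas you use $\tau=R\eta^{-1}\circ\mu L$, naturality of $\mu$, and the triangle identity $M\lambda=\eta^{-1}M$ for $(L,M)$. Since Lemma~\ref{lem:jump-over-middle-adjoint} guarantees that the two expressions for $\tau$ coincide, both routes are equally valid and of the same length.
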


\begin{proof}
By the naturality of $\lambda$, the rectangle below commutes, and the equality $\epsilon^{-1}M=M\mu$ follows by the triangle identities for the adjunction $(M,R)$,
\[
\xymatrix{
LM(X) \ar[rr]^-{LM\mu_X=L\epsilon_{M(X)}^{-1}} \ar[d]_-{\lambda_X} && LMRM(X) \ar[d]^-{\lambda_{RM(X)}} \\
X \ar[rr]_-{\mu_X} && RM(X).
}
\]
The conclusion follows from the commutativity since $\tau_{M(X)} = \lambda_{RM(X)} \circ L\epsilon_{M(X)}^{-1}$ by the definition of $\tau$.
\end{proof}
\end{ipg}

\section{Linear Reedy categories}
\label{sec:Reedy_cats}

In this section we first define direct and inverse linear categories and discuss some examples. Linear Reedy categories are introduced in Definition \ref{dfn:Reedy_cat}.

\begin{definition}
\label{dfn:direct/inverse}
Let $k$ be a field and let $\class C$ be a small $k$--linear category. We call $\class C$ \textit{direct} (resp., \textit{inverse}) if the following hold:
\begin{itemize}
\item[-] For all objects $c$ in $\class C$; $\mathrm{End}_{\class C}(c)\cong k$, as $k$--vector spaces.
\item[-] There exists an ordinal number $\lambda$ and a degree function $\deg\colon\mathrm{Obj}(\class C)\rightarrow \lambda$, such that any non-zero, non-endomorphism $c\rightarrow c'$ in $\class C$ satisfies $\deg(c)<\deg(c')$ (resp., $\deg(c)>\deg(c')$).
\end{itemize}
\end{definition}

\begin{remark}
\label{rem:opp_direct/inverse}
It follows from Definition \ref{dfn:direct/inverse} that if $\C$ is a $k$--linear direct (resp., inverse) category then $\C^{\mathrm{o}}$ is a $k$--linear inverse (resp., direct) category.
\end{remark}

\begin{remark}
\label{rem:non_isos_rad}
Observe that if $\class C$ is a $k$--linear direct or inverse category and $f\colon c\rightarrow d$ is a non-zero, non-endomorphism in $\class C$, then $f$ is a non-isomorphism. Thus, in view of \S\ref{subsec:radicals}, for any two objects $c$ and $d$ in $\class C$ with $c\neq d$, we have ${\class C}(c,d)=\rad_{\class C}(c,d)$.
\end{remark}

\begin{example}
\label{ex:rooted_quivers}
(Left and right rooted quivers). 
Let $Q$ be a quiver. As in \cite{MR2100360}, we can associate to $Q$ a transfinite sequence of subsets of the vertex set $Q_{0}$, by putting $V_{0}:=\emptyset$, 
for any successor ordinal $\alpha=\beta +1$,
\[V_{\alpha}:=\{i\in Q_{0}\,|\, \mbox{each arrow with target}\,\, i \,\, \mbox{has source in}\,\,
V_\beta
\},
\]
and for any limit ordinal $\alpha$, $V_{\alpha}:=\cup_{\beta<\alpha}V_{\beta}$.

This sequence is in fact ascending, i.e., if $\gamma<\delta$ then $V_{\gamma}\subset V_{\delta}$, see \cite[Lemma~2.7]{hj2016}. The quiver $Q$ is called \textit{left rooted} if there exists an ordinal $\lambda$ with $V_{\lambda}=Q_{0}$. We observe that given a left rooted quiver $Q$, for every vertex $i$ in $Q_0$, there exists a unique ordinal $\beta$ such that $i\in V_{\beta+1}\setminus V_{\beta}$. This defines a function $\deg\colon Q_0\rightarrow \lambda$ with $\deg(i):=\beta$. In fact, we claim that the following hold for $Q$ left rooted:
\begin{itemize}
\item[(i)] The function $\deg$ is such that for any path $p$ in $Q$, starting at $i$ and ending at $j$, with $i\neq j$, we have $\deg(i)<\deg(j)$.
\item[(ii)] There is no non-trivial path from a vertex to itself.
\end{itemize}
In \cite[Corollary~2.8]{hj2016} it is proved that if $i\notin V_{\beta}$ and $j\in V_{\beta+1}$ then there is no arrow from $i$ to $j$ in $Q_1$. From this it follows that there are no loops in $Q$ and that given an arrow $a\colon i\rightarrow j$ (necessarily with $i\neq j$) the inequality $\deg(i)<\deg(j)$ necessarily holds. From these observations it follows by an inductive argument that (i) and (ii) hold.

From the above discussion it follows that given a left rooted quiver $Q$, with $Q=V_{\lambda}$, its $k$--linearization $kQ$ as in \S\ref{quivers} satisfies the following:
\begin{itemize}
\item[(i)] The function $\deg$, interpreted as $\deg\colon\mathrm{Obj}(kQ)\rightarrow \lambda$,
satisfies for any non-zero non-endomorphism $f\colon c\rightarrow d$ in $kQ$ the inequality $\deg(c)<\deg(d)$.
\item[(ii)] For all objects $c$ in $kQ$ we have $\End_{kQ}(c)\cong k$. 
\end{itemize}
Hence $kQ$ is a direct $k$--linear category in the sense of Definition~\ref{dfn:direct/inverse}.

Using the notion of \textit{right rooted} quivers, as in \cite[Sec.~9]{EEGR-injective-quivers}, and dual arguments, one can show the corresponding statement relating  $k$--linearizations of right rooted quivers and inverse $k$--linear categories.
\end{example}

Certain quivers with relations may also serve as examples of $k$--linear direct or inverse categories, as the next example shows.

\begin{example} (Quivers with relations).
Let $(Q,I)$ be a quiver with relations over a field $k$ and
recall the $k$--linearization $kQ_{I}$ of $(Q,I)$ from \ref{quivers}.
Assume further that $I$ consists of formal $k$--linear combintation of paths of length at least two.
We claim that $kQ_{I}$ is a $k$--linear direct (resp.\ inverse)  category if and only if $Q$ is left (resp.\ right) rooted.

Indeed, if $Q$ is left rooted, we may argue as in Example \ref{ex:rooted_quivers} and use the same degree function on the (common) set of objects of $kQ$ and $kQ_I$, which in particular will satisfy the conditions of Definition \ref{dfn:direct/inverse}, proving that $kQ_{I}$ is direct $k$-linear.
For the converse direction, if $kQ_{I}$ is direct with degree function $\mathrm{deg}\colon Q_0\rightarrow \lambda$, we may consider a (possibly transfinite) filtration of the vertex set $Q_0$ based on the degrees of the vertices. That is, for each $\alpha<\lambda$ we define $V_\alpha$ as the set of all vertices of $i\in Q_0$ with $\deg(i)<\alpha$. Since any arrow $a\colon i\to j$ of $Q$ induces a non-zero morphism $i\to j$ in $kQ_I$ (relations are linear combinations of paths of length at least two by assumption), it follows that each arrow with target $j$ of degree $\alpha$ has source in $V_\alpha$.
It follows that $Q$ is left rooted. 
The remaining assertion involving inverse categories and right rooted quivers can be seen using dual arguments.
\end{example}

The next definition encompasses the concepts of direct and inverse $k$--linear categories from Definition \ref{dfn:direct/inverse}. It is a linear analogue of the classic definition of a Reedy category \cite[Def.~2.1]{riehl}.

\begin{definition}
\label{dfn:Reedy_cat}
Let $k$ be a field and let $\class C$ be a small $k$--linear category. Then $\class C$ is called \textit{$k$--linear Reedy} if it satisfies the following:
\begin{itemize}
\item[-] There exists an ordinal number $\lambda$ and a degree function $\deg\colon\mathrm{Obj}(\class C)\rightarrow \lambda$, together with,
two $k$--linear subcategories $\class C^+$ and $\class C^-$ of $\class C$, both having the same objects as $\class C$, where $\class C^+$ is direct $k$--linear and $\class C^-$ is inverse $k$--linear (with respect to the same function $\deg$).
\item[-] Given any pair of objects $c,d \in\class C$, the composition in $\class C$ induces a 
$k$--linear isomorphism
\begin{equation}
\label{eq:Reedy-factorization}
\bigoplus_{e \in \mathrm{Obj}(\C)} \class C^+(e,d) \otimes_{k} \class C^-(c,e) \xrightarrow{\cong} \class C(c,d).
\end{equation}
\end{itemize}

From the defining properties of $\C^+$ and $\C^-$ it follows that the index set in the above displayed isomorphism can instead run through all objects $e$ of $\C$ such that $\deg(e)\leqslant\min\{\deg(c),\deg(d)\}$. 

Notice that  the isomorphism~\eqref{eq:Reedy-factorization} tells us that, for any (non-zero) morphism $f\colon c\rightarrow d$ in $\C$,
we require the existence of finitely many objects $e_1,\dots,e_n$, and morphisms $f_{i}^-\colon c\rightarrow e_i$ and $f_{i}^+\colon e_i\rightarrow d$ such that $f=f_1+\dots+f_n$ where $f_i=f_i^+\circ f_i^-$ for all $i$.
We refer to such an expression $f=\sum_{i}f_i^+\circ f_i^-$ as a \textit{Reedy factorisation} of $f$.
\end{definition}

\begin{remark}
\label{rem:opp_Reedy_cat}
If $\C$ is a $k$--linear Reedy category, its opposite category $\C^{\mathrm{o}}$ is a $k$--linear Reedy category where $(\C^{\mathrm{o}})^+=(\C^-)^{\mathrm{o}}$ and $(\C^{\mathrm{o}})^-=(\C^+)^{\mathrm{o}}$, cf.~Remark~\ref{rem:opp_direct/inverse}. 
\end{remark}

The next example describes direct and inverse linear categories as ``extreme'' examples of linear Reedy categories.

\begin{example} 
\label{ex:extreme Reedy cases} 
Let $\C$ be a $k$--linear inverse category. Then $\C$ is $k$--linear Reedy where $\C^-=\C$\, and\, $\C^+$ has the same objects as $\C$ and morphisms satisfying the following rule:
\begin{equation}
\Hom_{\C^+}(c,d)\cong 
\begin{cases}
k\cdot{\mathrm{id}_{c}} , & \text{if}\ c=d \\
0, & \text{otherwise.}
\end{cases}
\nonumber
\end{equation}
The dual observations apply in case $\C$ is $k$--linear direct.
\end{example}

\begin{example}
	\label{ex:classical_Reedy}
Suppose that $\class D$ is classical Reedy category as defined in~\cite[Def.~15.1.2]{Hir} for instance. That is (if we allow for transfinite degrees of objects), 
\begin{itemize}
	\item[-] There exists an ordinal number $\lambda$ and a degree function $\deg\colon\mathrm{Obj}(\class D)\rightarrow \lambda$, together with, two subcategories $\class D^+$ and $\class D^-$ of $\class D$, both having the same objects as $\class D$, such that non-identity morphisms in $\class D^+$ raise the degree and non-identity morphisms in $\class D^-$ lower the degree, and
	\item[-] each morphism $g\colon d\to d'$ in $\class D$ has a unique factorization $g=g^+\circ g^-$ such that $g^+$ is a morphism in $\class D^+$ and $g^-$ in $\class D^-$.
\end{itemize}
Then the $k$--linearization $\class C=k\class D$ (i.e.\ the $k$-linear category $\C$ having the same objects as $\class D$ and such that morphisms in $\class D$ form bases of homomorphism spaces of $\class C$) is linear Reedy.
\end{example}

\begin{example} \label{ex:linear-simplices}
As already mentioned, a standard example of a classical Reedy category is the category $\mathbf{\Delta}$ with finite non-zero ordinals (traditionally denoted as $[n]=\{0,1,\dots,n\}$ for $n\ge 0$) as objects, the weakly monotone functions between the ordinals as morphisms, and the degree function given by $\deg([n])=n$. In particular $k\mathbf{\Delta}$ is linear Reedy, and so is for each $N\ge 0$ the full subcategory $k\mathbf{\Delta}_{\le N}$ whose objects are only the ordinals $[0], [1], \dots, [N]$. For $N=1$, $k\mathbf{\Delta}_{\le 1}$ has an explicit description: It is isomorphic to $kQ_I$ (recall \S\ref{quivers}) for the quiver
\begin{equation}
\label{eq:qh_ex}
\xymatrix@C=2pc{
[0] \ar@/^0.8pc/[r]^-{d^0} \ar@/_0.8pc/[r]_-{d^1} & [1] \ar[l]|\hole|{s}
}
\nonumber
\end{equation}
with relations $\{s\circ d^0 -e_{[0]}, s\circ d^1-e_{[0]}\}$.
\end{example}

\begin{example}
\label{ex:qh}
Let $k$ be a field and let $Q$ be the quiver
\begin{equation}
\label{eq:qh_ex2}
 \xymatrix@C=2pc{
 0 \ar@/^0.4pc/[r]^-{\alpha} & 1 \ar@/^0.4pc/[l]^-{\beta}
 }
 \nonumber
 \end{equation}
together with the relation $I=\{\beta\circ \alpha\}$. It is easy to see that the  $k$--linear category $\C:=kQ_{I}$ (as in \S\ref{quivers}) is Reedy, where we associate to the objects of $\C$ (= vertices of $Q$) a degree function as labeled and the direct and inverse subcategories are as follows:
\[
\C^+=(0\xrightarrow{\alpha} 1) \qquad \mbox{and} \qquad \C^-=( 0\xleftarrow{\beta}1).
\]

By slightly abusing the notation we write again $I$ for the ideal of the algebra $kQ$ generated by the relation $\beta\circ \alpha$. The finite dimensional algebra $A=KQ/I$ is quasi-hereditary, with heredity chain $A e_0A\subseteq A(e_0+e_1)A=A$ (in the sense of~\cite{DR2}). Notice here that the idempotent of lowest degree sits at the bottom of the filtration. This indicates a possible  connection between Reedy categories and quasi-hereditary algebras; we explore this in Section \ref{sec:QH}.
\end{example}

\section{Reedy categories and quasi-hereditary algebras}
\label{sec:QH}
In this section we fix a field $k$ and a $k$--linear Reedy category $\C$, with a degree function $\mathrm{deg}\colon\mathrm{Obj}(\C)\rightarrow\lambda$, and we denote by $\mathcal{K}^{\mathcal{C}}$ the category of $k$--linear functors from $\C$ to the category $\mathcal{K}$ of $k$--vector spaces.

We define the class of standard functors (Definition~\ref{def:standards}) and prove the results mentioned in the introduction. We then specialize the situation further to Reedy algebras (Definition~\ref{def:Reedy_algebra}) which are shown to be quasi-hereditary with an exact Borel subalgebra (Theorem~\ref{thm:Reedy_is_qh}).

\subsection{Standard functors}
\label{subsec:standard}
For every pair of objects $c,d$ in $\C$ and an ordinal number $\alpha\leqslant\lambda$ we denote by $\I^\C_{<\alpha}(c,d)$ the subspace of $\C(c,d)$
which is the image of the map induced by composition in $\C$,
\begin{equation}
\label{eq:trace}
\bigoplus_{\deg(d_i)<\alpha} \class C^+(d_i,d) \otimes_{k} \class C^-(c,d_i) \to \C(c,d).
\end{equation}
In other words, $\I^\C_{<\alpha}(c,d)$ consists of those morphisms from $c$ to $d$ whose Reedy factorization is indexed by objects of degree strictly smaller than $\alpha$. 
It is straightforward to check that for any fixed $\alpha$, $\I^\C_{<\alpha}(-,-)$ is a two-sided ideal of $\C$ as a pre-additive category.

\begin{remark}
\label{rem:Reedy_quotient}
Given $\alpha<\lambda$, there is a Reedy category which we (by slightly abusing the notation) denote by $\C/\I^\C_{<\alpha}$. Its objects are those objects $c\in\mathrm{Obj}(\C)$ whose degree is $\geqslant\alpha$, and given two such objects $c,d$, we define $\C/\I^\C_{<\alpha}(c,d):=\C(c,d)/\I^\C_{<\alpha}(c,d)$. The composition is the one induced on cosets by the composition in $\C$. The degree function naturally restricts to $\deg\colon\Obj(\C/\I^\C_{<\alpha})\to\lambda\setminus\alpha$, where $\lambda\setminus\alpha$ is well-ordered with the ordering induced from $\lambda$, so that $\lambda\setminus\alpha$ is canonically order-isomorphic to a unique ordinal $\lambda'$.

Note that given $c,d\in\mathrm{Obj}(\C)$ of degree $\geqslant\alpha$, then $\C^+(c,d)\cap\I^\C_{<\alpha}(c,d)=0=\C^-(c,d)\cap\I^\C_{<\alpha}(c,d)$.
In particular, the full subcategory of $\C^+$ given by objects of degree $\geqslant\alpha$ can be identified with a $k$--linear subcategory of $\C/\I^\C_{<\alpha}$ which we denote by $(\C/\I^\C_{<\alpha})^+$, and we can define $(\C/\I^\C_{<\alpha})^-$ in the same vein. Finally we leave it for the reader to check that the composition in $\C/\I^\C_{<\alpha}$ induces for any $c,c'\in\mathrm{Obj}(\C)$ of degree $\geqslant\alpha$ a $k$--linear isomorphism
\[ 
\bigoplus_{c'' \in \mathrm{Obj}(\C), \deg(c'')\geqslant\alpha} (\C/\I^\C_{<\alpha})^+(c'',c') \otimes_{k} (\C/\I^\C_{<\alpha})^-(c,c'') \xrightarrow{\cong} \C/\I^\C_{<\alpha}(c,c').
\]
\end{remark}

\begin{definition}
\label{def:standards}
For every object $c$ of degree $\alpha$ in $\C$ we define the \textit{standard left $\C$--module of $c$} as $\Delta_{c}:=\C(c,-)/\I^\C_{<\alpha}(c,-)$
and the \textit{standard right $\C$--module of $c$} as $\Delta^{c}:=\C(-,c)/\I^\C_{<\alpha}(-,c)$
(both are naturally projective functors on $\C/\I^\C_{<\alpha}$).
\end{definition}

We consider the left adjoint to the restriction functor from left $\C$--modules to left $\C^-$--modules, which we denote by $\C\otimes_{\C^-}-$. Using this functor, in the next result we show that the inverse subcategory $\C^-$ plays a role analogous to exact Borel subalgebras of quasi-hereditary algebras, as defined in \cite{Koe}.

\begin{theorem}
\label{thm:Reedy_Borel}
For any object $c$ of $\C$ the following hold:
\begin{itemize}
\item[(i)]  There is an isomorphism $\C^-(c,-)/\I^{\C^-}_{<\mathrm{deg}(c)}(c,-)\cong S_{c}$, where $S_{c}$ is the simple left $\C^-$--module at $c$. In other words, the inverse subcategory $\C^-$ has all of its standard modules simple.
\item[(ii)] The induction functor $\C\otimes_{\C^-}-$ is exact.
\item[(iii)] There is an isomorphism $\Delta_{c}\cong\C\otimes_{\C^-}S_{c}$ of left $\C$--modules.
\end{itemize}
\end{theorem}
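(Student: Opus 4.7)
The plan is to handle the three parts in sequence, with each reducing to a direct unwinding of the Reedy factorisation of Definition~\ref{dfn:Reedy_cat}. The starting point I would use is the observation from Example~\ref{ex:extreme Reedy cases} that the inverse category $\C^-$ carries its own trivial Reedy structure, with $(\C^-)^+(e,d) = k\cdot\id_e$ when $e = d$ and $0$ otherwise. Consequently $\I^{\C^-}_{<\deg(c)}(c,d)$ is all of $\C^-(c,d)$ when $\deg(d) < \deg(c)$ and is $0$ otherwise.

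For part~(i), I would combine this with the inverse property: whenever $d \neq c$ and $\deg(d) \geqslant \deg(c)$ one has $\C^-(c,d) = 0$, while $\C^-(c,c) \cong k$. So $\C^-(c,-)/\I^{\C^-}_{<\deg(c)}(c,-)$ is supported only at $c$ with value $k$. In an inverse category distinct objects cannot be isomorphic (an isomorphism would both raise and strictly lower degree) and all endomorphism rings are copies of $k$, hence local with trivial radical; from \S\ref{subsec:radicals} this gives $\rad_{\C^-}(c,d) = \C^-(c,d)$ for $d \neq c$ and $\rad_{\C^-}(c,c) = 0$, so $S_c$ has precisely the same description.

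For part~(ii), my approach is to exhibit $\C(-,d)$, regarded as a right $\C^-$-module, as a direct sum of representable right $\C^-$-modules via the Reedy factorisation~\eqref{eq:Reedy-factorization}: namely $\bigoplus_e \C^+(e,d) \otimes_k \C^-(-,e)$. Evaluating $\C \otimes_{\C^-} -$ at $d$ then yields a natural isomorphism $(\C \otimes_{\C^-} M)(d) \cong \bigoplus_e \C^+(e,d) \otimes_k M(e)$ for $M \in \K^{\C^-}$, and exactness in $M$ is immediate since tensoring over $k$ and forming coproducts of $k$-vector spaces are exact.

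For part~(iii), I would apply the now-exact induction functor to the short exact sequence $0 \to \rad_{\C^-}(c,-) \to \C^-(c,-) \to S_c \to 0$ coming from part~(i). The middle term becomes $\C(c,-)$ by the enriched Yoneda isomorphism from \S\ref{subsec:ringoids}, and the formula from part~(ii) identifies the left term at $d$ with $\bigoplus_{e \neq c} \C^+(e,d) \otimes_k \C^-(c,e)$, which collapses under the inverse property to the sum over $\deg(e) < \deg(c)$; via~\eqref{eq:Reedy-factorization} this is exactly $\I^\C_{<\deg(c)}(c,d)$. Taking cokernels yields $\C \otimes_{\C^-} S_c \cong \Delta_c$. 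The only really delicate point is this last identification: one needs to verify that the induced map $\C \otimes_{\C^-} \rad_{\C^-}(c,-) \to \C(c,-)$ is injective with image exactly $\I^\C_{<\deg(c)}(c,-)$, and this rests squarely on the injectivity of~\eqref{eq:Reedy-factorization} as a whole, preventing any further collapse of the summation.
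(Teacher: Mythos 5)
Your proposal is correct and follows essentially the same route as the paper: part (i) by identifying $\I^{\C^-}_{<\deg(c)}(c,-)$ with $\rad_{\C^-}(c,-)$, part (ii) by decomposing $\C(-,d)$ as a right $\C^-$--module via the Reedy factorisation to get $(\C\otimes_{\C^-}M)(d)\cong\bigoplus_e\C^+(e,d)\otimes_k M(e)$, and part (iii) by inducing a presentation of $S_c$ and matching the resulting submodule of $\C(c,-)$ with $\I^\C_{<\deg(c)}(c,-)$. The only cosmetic difference is in (iii), where the paper induces the projective presentation $\bigoplus_{f\in\rad_{\C^-}(c,d)}\C^-(d,-)\to\C^-(c,-)\to S_c\to 0$ and computes the image of the first map, whereas you apply the (now exact) induction functor to $0\to\rad_{\C^-}(c,-)\to\C^-(c,-)\to S_c\to 0$ and use injectivity of~\eqref{eq:Reedy-factorization}; the underlying computation is the same.
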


\begin{proof}
(i) It suffices to prove that $\I^{\C^-}_{<\mathrm{deg}(c)}(c,-)=\rad_{\C^-}(c,-)$.

From the defining properties of inverse $k$--linear categories it follows that the isomorphisms of $\C^-$ are precisely the non-zero endomorphisms of $\C^-$.
In particular, the endomorphism rings of objects of $\C^-$ are local and so the radical morphisms in $\C^-$ are precisely the non-isomorphisms by \S\ref{subsec:radicals}.
Thus, any non-zero morphism in $\rad_{\C^-}(c,d)$ strictly lowers the degree, and conversely, any morphism from $c$ to $d$ in $\C^-$ that strictly lowers the degree is in $\rad_{\C^-}(c,d)$.

(ii) It is well known (see e.g. \cite[Sect.~4]{Murfet_ringoids}) that if $X$ is any functor in $\mathcal{K}^{\C^-}$ and $c$ is an object of $\C$, then  
\[(\C\otimes_{\C^-}X)(c)\, =\, \C(-,c)\otimes_{\C^-}X;\] a tensor product of functors as in Section \ref{sec:preliminaries}. 
Using the Reedy decomposition property we obtain an isomorphism of right $\C^-$--modules,
\[
\C(-,c)\,\, \cong\bigoplus\limits_{\mathrm{deg}(d)\leqslant\mathrm{deg}(c)}\C^+(d,c)\otimes_{k}\C^-(-,d).
\]
By putting the two identities together, we deduce that
\[
(\C\otimes_{\C^-}X)(c)\,\cong
\bigoplus\limits_{\mathrm{deg}(d)\leqslant\mathrm{deg}(c)}\C^+(d,c)\otimes_{k} X(d).
\]
Since the tensor product of vector spaces $\C^+(c,d)\otimes_{k}-$ is exact and so is the direct sum of vector spaces, the induction functor $\C\otimes_{\C^-}-$ is exact as well.

(iii) We consider a projective presentation of the simple left $\C^-$--module $S_c$, which is as follows (well known -- see e.g. \cite[Lemma~7.17]{HJ_qshaped}).
\[
\xymatrix@C=2pc{
\bigoplus\limits_{f\in\mathrm{rad_{\C^-}}(c,d)} \C^-(d,-)  \ar[r] & \C^-(c,-) \ar[r] & S_c \ar[r] & 0.  
}
\]
After applying on this the induction functor, which is
right exact,
we obtain an exact sequence of left $\C$--modules
\[
\xymatrix@C=2pc{
\bigoplus\limits_{f\in\mathrm{rad_{\C^-}}(c,d)} \C(d,-)  \ar[r] & \C(c,-) \ar[r] & \C\otimes_{\C^-}S_c \ar[r] & 0.  
}
\]
It now suffices to prove that the image of the left-most map, which is $\sum_{f\in\mathrm{rad_{\C^-}}(c,d)}\mathrm{Im}\, \C(f,-)$, is isomorphic to $\I^\C_{<\mathrm{deg}(c)}(c,-)$, as left $\C$--modules. In fact we will show that for every object $e$ of $\C$ we have an equality
\[\sum\limits_{f\in\mathrm{rad_{\C^-}}(c,d)}\mathrm{Im}\, \C(f,e) = \I^\C_{<\mathrm{deg}(c)}(c,e).\]
We observed in the proof of (i) that any non-zero non-isomorphism $f\colon c\rightarrow d$ in $\C^-$ (thus also non-endomorphism) lowers the degree, hence the Reedy factorization of any morphism in $\mathrm{Im}\, \C(f,e)$ is indexed by objects of degree $<\mathrm{deg}(c)$. This proves the containment ``$\subseteq$''. For the reverse direction, let $h\in\I^\C_{<\mathrm{deg}(c)}(c,e)$ and consider its Reedy factorization $h=\sum_{i}h_i^+\circ h_i^-$, where for every $i$, the target $d_i$ of $h_i^-$ (= source of $h_i^+$) has degree $<\mathrm{deg}(c)$. Notice in particular that for all $i$ the morphism $h_i^-$ belongs to $\mathrm{rad_{\C^-}}(c,d_i)$. Clearly $h\in\sum_{h_i^-:c\rightarrow d_i}\mathrm{Im}\, \C(h_i^-,e)$, thus $h$ belongs to the left hand side of the displayed equality.
\end{proof}

Our next goal is to show that the left $\C$--modules $\C(c,-)$ admit a (transfinite) filtration by the class of standard functors (recall Definition~\ref{def:filt}).
We first prove a special case for inverse categories.

\begin{proposition}
\label{prop:inverse_by_simples}
Assume that $\C$ is inverse $k$--linear with a degree function $\deg\colon\mathrm{Obj}(\C)\rightarrow\lambda$. Then any functor $X\in\mathcal{K}^{\C}$ is filtered by simple functors. In more detail, $X$ admits a transfinite filtration
\[0=X_{<0}\subseteq X_{<1}\subseteq \cdots\subseteq X_{<\alpha}\subseteq X_{<\alpha+1}\subseteq\cdots \subseteq X_{<\lambda}=X,\]
where for all $\alpha<\lambda$ the left $\C$--module $X_{\alpha}:=X_{<\alpha+1}/X_{<\alpha}$ is isomorphic to  $\oplus_{\mathrm{deg(d)}=\alpha}S_d^{(\dim X(d))}$.
\end{proposition}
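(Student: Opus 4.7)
The plan is to construct the required filtration directly by truncating $X$ pointwise according to degree. For each ordinal $\alpha\leqslant\lambda$ I would define
\[
X_{<\alpha}(c) := \begin{cases} X(c) & \text{if } \deg(c)<\alpha, \\ 0 & \text{if } \deg(c)\geqslant\alpha, \end{cases}
\]
with the morphisms acting by restriction of $X$. The only non-trivial point in checking that this is a subfunctor of $X$ concerns a morphism $f\colon c\to c'$ with $\deg(c)<\alpha\leqslant\deg(c')$: then $c\neq c'$, and because $\C$ is inverse, a non-zero morphism $c\to c'$ would force $\deg(c)>\deg(c')$; hence $f=0$ and $X(f)$ automatically sends $X_{<\alpha}(c)=X(c)$ into $0=X_{<\alpha}(c')$. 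In the other three cases the restriction is trivially well-defined.

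The filtration axioms of Definition~\ref{def:filt} are then immediate: $X_{<0}=0$, $X_{<\lambda}=X$, each inclusion $X_{<\alpha}\hookrightarrow X_{<\alpha+1}$ is a monomorphism, and for any limit ordinal $\beta\leqslant\lambda$ we have $X_{<\beta}=\colim_{\alpha<\beta}X_{<\alpha}$, since colimits in $\K^\C$ are computed pointwise and $\deg(c)<\beta$ is equivalent to $\deg(c)<\alpha$ for some $\alpha<\beta$.

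It remains to identify the successor subquotient $X_\alpha:=X_{<\alpha+1}/X_{<\alpha}$, which by construction is supported on the full subcategory $\C_\alpha\subseteq\C$ of objects of degree exactly $\alpha$. Inside $\C_\alpha$ the inverse property forbids any non-zero non-endomorphism (such a morphism would have to strictly decrease the degree), so the only non-zero morphisms there are scalar multiples of the identities. On the other hand, the simple functor $S_d=\C(d,-)/\rad_\C(d,-)$ is supported at $d$: for $c\neq d$, Remark~\ref{rem:non_isos_rad} gives $\rad_\C(d,c)=\Hom_\C(d,c)$, hence $S_d(c)=0$; and $S_d(d)=\End_\C(d)/\rad_\C(d,d)=k$ since $\End_\C(d)=k$ has zero radical. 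Choosing a $k$-basis of $X(d)$ for each object $d$ of degree $\alpha$ then yields the claimed natural isomorphism $X_\alpha\cong\bigoplus_{\deg(d)=\alpha}S_d^{(\dim X(d))}$; naturality collapses to the observation that morphisms in $\C_\alpha$ act by scalars on both sides.

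The main (very mild) obstacle is verifying that the pointwise truncation is genuinely a subfunctor; everything else reduces to the core combinatorial fact that non-trivial morphisms in an inverse category strictly decrease the degree, together with the standard support computation for $S_d$.
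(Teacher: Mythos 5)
Your proposal is correct and follows essentially the same route as the paper: pointwise truncation of $X$ by degree, with the inverse property guaranteeing that the truncations are subfunctors (the paper phrases this as the existence of the natural transformations $i_{\alpha,\alpha+1}$), and the identification of each successor subquotient as a direct sum of simples supported in a single degree. No gaps.
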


\begin{proof}
Fix a functor $X\in\mathcal{K}^{\C}$. We set $X_{<0}:=0$ and for any ordinal $1\leqslant\alpha\leqslant\lambda$, we define 
a $k$--linear functor $X_{<\alpha}\colon\C\rightarrow\mathcal{K}$, on objects by the formula
\[
X_{<\alpha}(d)=
\begin{cases}
X(d), &  \deg(d)<\alpha \\
0, & \text{otherwise,}
\end{cases}
\]
while for any morphism $f\colon d\rightarrow d'$, if both $\deg(d)$ and $\deg(d')$ are strictly smaller than $\alpha$, then $X_{<\alpha}(f):=X(f)$, otherwise $X(f):=0$.

Since $\C$ is inverse, for any ordinal $\alpha\ < \lambda$ there is a well defined natural transformation $i_{\alpha,\alpha+1}\colon X_{<\alpha}\rightarrow X_{<\alpha+1}$ where for any object $c\in\C$, we set $i_{\alpha,\alpha+1}^c:=\mathrm{id}_{X(c)}$ if $\deg(c)<\alpha$, otherwise $i_{\alpha,\alpha+1}^c:=0$  (cf. the proof of \cite[Thm.~7.9]{hj2016}). Notice that for all $\alpha < \lambda$ the morphism $i_{\alpha,\alpha+1}$ is a monomorphism and that for any limit ordinal $\sigma\leqslant\lambda$ we have $X_{<\sigma}=\cup_{\alpha<\sigma}X_{<\alpha}$. In this way we obtain a well-ordered direct system $(X_{<\alpha}\, |\, i_{\alpha\beta}\colon A_{<\alpha}\rightarrow A_{<\beta})_{\alpha<\beta\leqslant\lambda}$ satisfying conditions (i)--(iii) of Definition \ref{def:filt}.

Now, for an ordinal $\alpha<\lambda$ and $d\in\C$, the functor $X_{\alpha}:=X_{<\alpha+1}/X_{<\alpha}$ satisfies 
\[
X_{\alpha}(d)=
\begin{cases}
X(d), &  \deg(d)=\alpha \\
0, & \text{otherwise,}
\end{cases}
\]
while for any morphism $f$ we have $X_{\alpha}(f)\neq 0$ only when $f$ is an invertible endomorphism of an object of degree $\alpha$. Hence $X_{\alpha}\cong \oplus_{\mathrm{deg(d)}=\alpha}S_d^{(\dim X(d))}$ as needed.
\end{proof}

This brings us to the following result on filtrations of (restrictions of) representables by standards in the Reedy setting.

\begin{theorem}
\label{thm:proj_by_standards}
Let $\C$ be a $k$--linear Reedy category with degree function $\mathrm{deg}\colon\mathrm{Obj}(\C)\rightarrow\lambda$, let $c$ be an object in $\C$ of degree $\alpha$, and let $\beta\leqslant\lambda$ be an ordinal. Then the left $\C_{<\beta}$--module $\C(c,-)|_{\C_{<\beta}}$ is filtered by standard functors from $\K^{\C_{<\beta}}$ corresponding to objects of degrees $<\min(\alpha+1,\beta)$.
\end{theorem}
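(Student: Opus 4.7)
The plan is to transport a filtration by simples of the left $\C^-$--module $\C^-(c,-)$, obtained from Proposition~\ref{prop:inverse_by_simples}, along the induction functor $\C\otimes_{\C^-}-$, and then restrict to $\C_{<\beta}$. The essential inputs are the exactness of induction and the identification $\C\otimes_{\C^-}S_d\cong\Delta_d$ recorded in Theorem~\ref{thm:Reedy_Borel}.

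First I would apply Proposition~\ref{prop:inverse_by_simples} inside $\K^{\C^-}$ to the functor $\C^-(c,-)$. Since $\C^-$ is inverse and $c$ has degree $\alpha$, any morphism from $c$ in $\C^-$ is either a scalar multiple of $\id_c$ or strictly lowers the degree, so $\C^-(c,-)$ is supported on objects of degree $\leqslant\alpha$. Refining the resulting filtration so that each cokernel is a single simple yields a well-ordered filtration of $\C^-(c,-)$ whose subquotients are simples $S_d$ with $\deg(d)\leqslant\alpha$.

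Next I would apply $\C\otimes_{\C^-}-$ termwise. By Theorem~\ref{thm:Reedy_Borel}(ii) induction is exact, and as a left adjoint it preserves all colimits, so the image is again a well-ordered filtration, continuous at limit ordinals, with subquotients $\C\otimes_{\C^-}S_d\cong\Delta_d$ by Theorem~\ref{thm:Reedy_Borel}(iii). The total object $\C\otimes_{\C^-}\C^-(c,-)$ must equal $\C(c,-)$ by Yoneda together with the adjunction, since for any left $\C$--module $Y$,
\[
\Hom_\C\bigl(\C\otimes_{\C^-}\C^-(c,-),Y\bigr) \cong \Hom_{\C^-}\bigl(\C^-(c,-),Y|_{\C^-}\bigr) \cong Y(c) \cong \Hom_\C\bigl(\C(c,-),Y\bigr).
\]
I would then restrict this filtration along the inclusion $\C_{<\beta}\hookrightarrow\C$. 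Restriction is computed pointwise, hence exact and colimit-preserving, so it yields a well-ordered filtration of $\C(c,-)|_{\C_{<\beta}}$. A short computation with the Reedy decomposition shows $\Delta_d(e)\cong\C^+(d,e)$ (only the summand indexed by the middle object $d$ itself survives in the quotient by $\I^\C_{<\deg(d)}$), so $\Delta_d$ is supported in degrees $\geqslant\deg(d)$; therefore $\Delta_d|_{\C_{<\beta}}$ vanishes when $\deg(d)\geqslant\beta$, and otherwise coincides with the standard functor of $d$ in $\K^{\C_{<\beta}}$ (Reedy factorisations of morphisms between objects of degree $<\beta$ stay within $\C_{<\beta}$). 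Discarding the zero steps leaves precisely the asserted filtration, with surviving $d$ of degree $<\min(\alpha+1,\beta)$.

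The step requiring the most care is the continuity condition at limit ordinals in Definition~\ref{def:filt} --- this ultimately reduces to the colimit-preservation of both induction and restriction, but needs to be spelled out carefully in the transfinite bookkeeping --- together with pinning down the exact cutoff $\min(\alpha+1,\beta)$ rather than a looser bound such as $\beta$ or $\min(\alpha,\beta)$.
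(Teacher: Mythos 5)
Your proposal is correct and follows essentially the same route as the paper: both filter the restriction of the representable to the Borel subcategory $\C^-$ by simples via Proposition~\ref{prop:inverse_by_simples}, push the filtration through the exact induction functor, and identify the induced simples with standard functors via Theorem~\ref{thm:Reedy_Borel}. The only difference is one of ordering --- the paper restricts to $\C_{<\beta}$ \emph{before} inducting (working with $\C_{<\beta}\otimes_{\C_{<\beta}^-}-$ throughout, so the cutoff $\min(\alpha+1,\beta)$ appears automatically), whereas you induct in $\K^{\C}$ first and restrict at the end; your extra observation that $\Delta_d|_{\C_{<\beta}}$ vanishes for $\deg(d)\geqslant\beta$ and otherwise coincides with the standard functor of $d$ in $\K^{\C_{<\beta}}$ is precisely what is needed to bridge the two, and it is correct.
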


\begin{remark}
\label{rem:proj_by_standards}
If we take $\beta=\lambda$, the theorem simply says that the representable functor $\C(c,-)$ can be filtered by standard functors from $\K^\C$ corresponding to objects of degrees $\leqslant\alpha$.
Moreover, it will follow from the proof that the filtration can be taken such that it is indexed by an ordinal successor, the last filtration factor is isomorphic to $\Delta_c$, and all other filtration factors are isomorphic to standard functors of objects of degree $<\alpha$.
\end{remark}

\begin{proof}[Proof of Theorem~\ref{thm:proj_by_standards}]
Put $\gamma:=\min(\alpha+1,\beta)$.
Proposition~\ref{prop:inverse_by_simples} provides us with a continuous direct system of $\C_{<\beta}^-$--modules:
\begin{equation}
\label{eq:filtration of C^-}
0 \subseteq\, \I^{\C^-}_{<1}(c,-)|_{\C_{<\beta}^-}\subseteq \I^{\C^-}_{<2}(c,-)|_{\C_{<\beta}^-}\subseteq  \cdots\subseteq \I^{\C^-}_{<\gamma}(c,-)|_{\C_{<\beta}^-}=\C^-(c,-)|_{\C_{<\beta}^-}.
\end{equation}
Observe that this notation agrees precisely with the one from (\ref{eq:trace}); cf. Example~\ref{ex:extreme Reedy cases}.

For all ordinals $\delta<\gamma$ the quotient 
\[
Q^{\C^-}_{\delta}(c,-):=
\frac{\I^{\C^-}_{<\delta+1}(c,-)|_{\C_{<\beta}^-}}{\I^{\C^-}_{<\delta}(c,-)|_{\C_{<\beta}^-}}
\]
is isomorphic to a coproduct of simple $\C_{<\beta}$-modules $S_d$ where $d$ runs through a certain set of objects of degree $\delta$. Consider the induction functor $\C_{<\beta}\otimes_{\C_{<\beta}^-}-$ and note that for all $\delta<\gamma$ and $d\in\C_{<\beta}$ we have isomorphisms
\begin{eqnarray*}
\C_{<\beta} \otimes_{\C_{<\beta}^-} \I^{\C^{-}}_{<\delta}(c,-)|_{\C_{<\beta}^-} (d) & \cong &
\C_{<\beta}(-,d)\otimes_{\C^-_{<\beta}}\I^{\C^{-}}_{<\delta}(c,-)|_{\C_{<\beta}} \\
&\cong & \bigoplus_{d_i}\C^+_{<\beta}(d_i,d)\otimes_{k}\C_{<\beta}^-(-,d_i)\otimes_{\C^-_{<\beta}}\I^{\C^{-}}_{<\delta}(c,-)|_{\C_{<\beta}} \\
&\cong & \bigoplus_{d_i}\C^+_{<\beta}(d_i,d)\otimes_{k}\I^{\C^{-}}_{<\delta}(c,d_i)|_{\C_{<\beta}} \\
&\cong & \I^\C_{<\delta}(c,d). 
\end{eqnarray*}

Here, the first and the third isomorphism are standard properties of the induction functor, and the second and the fourth isomorphisms follow from the  Reedy decomposition.
Thus the induction functor, which is exact from Theorem \ref{thm:Reedy_Borel}(ii), when applied to the continuous direct system of $\C_{<\beta}^-(c,-)$--modules  displayed in (\ref{eq:filtration of C^-}), produces a continuous direct system of $\C_{<\beta}$--modules,   
\begin{equation}\label{eq:filtration_by_standards}
0 \subseteq\, \I^\C_{<1}(c,-)|_{\C_{<\beta}} \subseteq \I^\C_{<2}(c,-)|_{\C_{<\beta}} \subseteq \cdots\subseteq \I^\C_{<\gamma}(c,-)|_{\C_{<\beta}} = \C(c,-)|_{\C_{<\beta}},
\end{equation}
where all the successive quotients involved are isomorphic to  coproducts of functors of the form $\C_{<\beta}\otimes_{\C_{<\beta}^-}S_d$, for $\deg(d)<\gamma$. Hence the proof is finished once we employ Theorem \ref{thm:Reedy_Borel}(iii).
\end{proof}

In a $k$--linear abelian category a non-empty finite set of objects $\{
\Delta_i\}_{i=1}^{n}$ is called an \textit{exceptional collection} if $\mathrm{End}(\Delta_i)\cong k$, $\Hom(\Delta_i,\Delta_j)\neq 0$ implies $i\leqslant j$ and $\mathrm{Ext}^n(\Delta_i,\Delta_j)\neq 0$ implies $i<j$ for all $n\geqslant1$.
In the next result we show that $\mathcal{K}^{\mathcal{C}}$ admits an (infinite, a priori) collection of objects that satisfy dual conditions to the above on Hom/Ext spaces.
We also record a related vanishing result for tensor product and Tor spaces.

\begin{theorem}
\label{thm:Reedy_exceptional}
Let $\C$ be a $k$--linear Reedy category with degree function $\mathrm{deg}\colon\mathrm{Obj}(\C)\rightarrow\lambda$.
Then for any objects $c,d$ in $\C$ the following hold:
\begin{itemize}
\item[(i)] $\End_{\C}(\Delta_{c})\cong k \cong \Delta^{c}\otimes_{\K^\C}\Delta_{c}.$
\item[(ii)] $\Hom_{\C}(\Delta_{c},\Delta_{d})\neq 0 \Rightarrow \mathrm{deg}(d)<\mathrm{deg}(c)$ or $d=c$.
\item[(iii)] For all $n\geqslant1$, $\Ext^{n}_{\C}(\Delta_{c},\Delta_{d})\neq 0 \Rightarrow  \mathrm{deg}(d)<\mathrm{deg}(c)$.
\item[(iv)] For all $n\geqslant0$, $\Tor_{n}^{\C}(\Delta^{c},\Delta_{d})\neq 0 \Leftrightarrow n=0$ and $c=d$.
\end{itemize}
\end{theorem}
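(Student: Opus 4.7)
The strategy is to reduce each of the four statements to a computation over the inverse subcategory $\C^-$ via the induction--restriction adjunction, leveraging Theorem~\ref{thm:Reedy_Borel}. Write $\iota:=\C\otimes_{\C^-}-\colon\K^{\C^-}\to\K^\C$. By Theorem~\ref{thm:Reedy_Borel}(ii), $\iota$ is exact, and since it sends $\C^-(e,-)$ to $\C(e,-)$ it preserves projectives. Together with the isomorphism $\Delta_c\cong\iota(S_c)$ from Theorem~\ref{thm:Reedy_Borel}(iii) and the standard $(\iota,\res)$ adjunction, this will yield change-of-rings isomorphisms
\[
\Ext^n_{\K^\C}(\Delta_c,Y)\cong\Ext^n_{\K^{\C^-}}(S_c,Y|_{\C^-}),\qquad
\Tor_n^{\C}(M,\Delta_d)\cong\Tor_n^{\C^-}(M|_{\C^-},S_d)
\]
for every $Y\in\K^\C$, every right $\C$-module $M$, and every $n\geqslant 0$, obtained by applying $\iota$ degreewise to a projective resolution of $S_c$ or $S_d$ in $\K^{\C^-}$ and invoking adjunction.

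The second input will be a direct evaluation of the standard modules when restricted to $\C^-$. The Reedy decomposition $\C(d,e)\cong\bigoplus_{e'}\C^+(e',e)\otimes_k\C^-(d,e')$, combined with the fact that $\C^-(d,e')$ vanishes unless $\deg(e')<\deg(d)$ or $e'=d$, will identify $\Delta_d(e)=\C(d,e)/\I^\C_{<\deg(d)}(d,e)$ with the summand $\C^+(d,e)$; in particular $\Delta_d|_{\C^-}$ is supported on objects of degree $\geqslant\deg(d)$, with $\Delta_d(d)=k$. Dually, uniqueness of Reedy factorisations will give an isomorphism $\Delta^c|_{\C^-}\cong\C^-(-,c)$ of right $\C^-$-modules, so $\Delta^c|_{\C^-}$ is projective.

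With these tools, part (iv) and the second half of (i) are essentially free: the change-of-rings formula specialises to $\Tor_n^\C(\Delta^c,\Delta_d)\cong\Tor_n^{\C^-}(\C^-(-,c),S_d)$, which vanishes for $n\geqslant 1$ by projectivity of $\C^-(-,c)$ and reduces for $n=0$ to $S_d(c)$, equal to $k$ when $c=d$ and zero otherwise. For parts (i) and (ii), I would use that the presentation $\rad_{\C^-}(c,-)\hookrightarrow\C^-(c,-)\twoheadrightarrow S_c$ identifies $\Hom_{\C^-}(S_c,Y)$ with the subspace of $Y(c)$ annihilated by $\rad_{\C^-}(c,-)$. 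Applied to $Y=\Delta_d|_{\C^-}$, the space $Y(c)=\C^+(d,c)$ is already zero when $\deg(d)>\deg(c)$ (directness of $\C^+$), and at equal degrees is nonzero only for $d=c$; moreover, when it is nonzero the action of $\rad_{\C^-}(c,-)$ on it is forced to be zero because $\Delta_d|_{\C^-}$ vanishes in degrees $<\deg(c)$ while the radical lands there. This yields both (ii) and the remaining part of (i).

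For (iii), I would resolve $S_c$ in $\K^{\C^-}$ inductively, beginning with $P_0=\C^-(c,-)$. Since the kernel $\rad_{\C^-}(c,-)$ is supported on objects of degree $<\deg(c)$ (inverseness of $\C^-$), one can inductively choose the resolution so that for every $n\geqslant 1$, $P_n$ is a coproduct of representables $\C^-(e,-)$ indexed by objects $e$ of degree $<\deg(c)$. If $\deg(d)\geqslant\deg(c)$, then $\Delta_d|_{\C^-}(e)=\C^+(d,e)=0$ for all $e$ appearing in such $P_n$, so $\Hom_{\C^-}(P_\bullet,\Delta_d|_{\C^-})$ vanishes in positive degrees and therefore $\Ext^n_{\C^-}(S_c,\Delta_d|_{\C^-})=0$ for $n\geqslant 1$. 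The change-of-rings isomorphism then yields (iii). The main technical hurdle I expect is verifying the derived change-of-rings isomorphisms and the identifications of $\Delta_d|_{\C^-}$ and $\Delta^c|_{\C^-}$ carefully in this possibly transfinite setting; once those are in place, the remaining arguments are essentially bookkeeping on the Reedy decomposition.
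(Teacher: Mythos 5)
Your proposal is correct, and all the ingredients you invoke (exactness of $\C\otimes_{\C^-}-$, $\Delta_c\cong\C\otimes_{\C^-}S_c$, the identifications $\Delta_d(e)\cong\C^+(d,e)$ and $\Delta^c|_{\C^-}\cong\C^-(-,c)$, and the projective resolution of $S_c$ over $\C^-$ with $P_n$ concentrated in degrees $<\deg(c)$ for $n\geqslant1$) are exactly the ones the paper uses. The difference is in how they are packaged. For (i) and (ii) the paper does not use the adjunction at all: it restricts along the faithful functor $\K^\C\to\K^{\C^+}$ and bounds $\Hom_{\K^\C}(\Delta_c,\Delta_d)$ above by $\C^+(d,c)$ via Yoneda, whereas you compute $\Hom_{\C^-}(S_c,\Delta_d|_{\C^-})$ exactly; both work. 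For (iii) the paper applies the induction functor to $P_\bullet$ and then argues that the syzygies of $\Delta_c$ are filtered by standards of lower degree, finishing with the Hom-vanishing of (ii), while you stay over $\C^-$ via the Eckmann--Shapiro isomorphism and simply observe that $\Hom_{\C^-}(P_n,\Delta_d|_{\C^-})=0$ for $n\geqslant1$ -- a somewhat shorter route that avoids the filtration argument. For (iv) the gain is more substantial: the paper resolves $\Delta_d$ over $\C$, computes the complex $\Delta^c\otimes_{\C}Q_\bullet$ explicitly, and needs a separate case (swapping the roles of $\Delta^c$ and $\Delta_d$) when $\deg(c)<\deg(d)$, whereas your change-of-rings reduction to $\Tor_n^{\C^-}(\C^-(-,c),S_d)$ settles all cases at once by projectivity of $\C^-(-,c)$. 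The only points you flag as "technical hurdles" -- the derived change-of-rings isomorphisms and the restriction identifications -- are indeed the places where care is needed, but they go through: the Ext isomorphism follows since $\C\otimes_{\C^-}-$ is exact, preserves coproducts and sends $\C^-(e,-)$ to $\C(e,-)$, and the Tor isomorphism follows from associativity of the tensor product over a ringoid together with balancedness of Tor.
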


\begin{proof}
To prove that $\mathrm{End}_{\mathcal{K}^{\C}}(\Delta_{c})\cong k$ in (i) and also to prove statement (ii), we first notice that $\C^+(c,d)\cong\C(c,d)/\I^\C_{<\alpha}(c,d)$ as $k$--vector spaces, where $\alpha=\deg(c)$.
In fact, $\C^+(c,-)\cong\C(c,-)/\I^\C_{<\alpha}(c,-)$ also as left $\C^+$--modules (recall Remark~\ref{rem:Reedy_quotient}), so that $\Delta_c|_{\C^+}\cong\C^+(c,-)$ for each $c\in\C$. Since the restriction functor $\K^{\C}\to\K^{\C^+}$ is clearly faithful, we have for each $c,d\in\C$ an inclusion
\[
\Hom_{\C}(\Delta_c,\Delta_d) \subseteq
\Hom_{\C^+}(\Delta_c,\Delta_d) \cong
\C^+(d,c).
\]
Here, the last isomorphism is just the Yoneda lemma.
Now, if $c=d$ then $\C^+(c,c)\cong k$ which proves that $\mathrm{End}_{\mathcal{K}^{\C}}(\Delta_{c})\cong k$. In addition, for all $c \ne d$, if $\C^+(d,c)\neq 0$, then $\deg(d)<\deg(c)$. This proves statement (ii). 

In order to prove (iii), we consider a projective resolution $P_\bullet\to S_c$ of $S_c$ in $\K^{\C^-}$ as before. Since $\C^-$ is inverse, one can inductively construct the resolution in such a way that $P_0=\C^-(c,-)$ and for each $n\geqslant1$ we have $P_n\cong\bigoplus_{i\in I_n}\C^-(c_i,-)$ with $\deg(c_i)<\deg(c)$ for all $i\in I_n$. Since each $P_n$ then has a filtration by simple functors of degrees strictly smaller than $\deg(c)$ by Proposition~\ref{prop:inverse_by_simples}, the same is true for the syzygy modules $\Omega^n(S_c):=\im(P_{n}\to P_{n-1})$.
By Theorem~\ref{thm:Reedy_Borel}, $\C\otimes_{\C^-}P_\bullet$ is a projective resolution of $\Delta_c\cong\C\otimes_{\C^-}S_c$.
Moreover, again by Theorem~\ref{thm:Reedy_Borel}, the corresponding $n$-th syzygy $\C\otimes_{\C^-}\Omega^n(S_c)$ of $\Delta_c$ in $\K^\C$ is filtered by standard functors of objects of degree $<\deg(c)$ for each $n\geqslant1$.

Now we fix $n\geqslant1$ and an object $d$ with $\deg(d)\geqslant\deg(c)$ and prove that $\Ext^n_{\C}(\Delta_c,\Delta_d)=0$. Since every element of the $\Ext$-group is represented by a homomorphism $\C\otimes_{\C^-}\Omega^n(S_c)\to\Delta_d$ in $\K^\C$, it suffices to prove that $\Hom_{\C}(\C\otimes_{\C^-}\Omega^n(S_c),\Delta_d)=0$. Taking into account the filtration of $\C\otimes_{\C^-}\Omega^n(S_c)$ by standard functors $\Delta_e$ with $\deg(e)<\deg(c)\leqslant\deg(d)$, it suffices to prove that $\Hom_{\C}(\Delta_e,\Delta_d)=0$. However, the latter follows from part (ii).

Finally, we finish the proof of (i) and prove (iv). 
Consider a projective resolution $Q_\bullet\to\Delta_{d}$ with $Q_0=\C(d,-)$ and $Q_n\cong\bigoplus_{i\in I_n}\C(d_i,-)$ where $\deg(d_i)<\deg(d)$ for all $i\in I_n$ and $n\geqslant1$, as in the previous paragraph. Then
\[ \Delta^{c}\tens{\C}Q_n \cong \Delta^{c}\tens{\C}\bigoplus_{i\in I_n}\C(d_i,-) \cong \bigoplus_{i\in I_n}\Delta^{c}(d_i). \]
Since $\Delta^{c}\cong(\C^\mathrm{o})^+(c,-)\cong\C^{-}(-,c)$ as right $\C^-$--modules, the $k$--vector spaces $\Tor_{n}^{\C}(\Delta^{c},\Delta_{d})$ are computed as homologies of a complex of the form
\[ \cdots\to\bigoplus_{i\in I_3}\C^-(d_i,c)\to\bigoplus_{i\in I_2}\C^-(d_i,c)\to\bigoplus_{i\in I_1}\C^-(d_i,c)\to\C^-(d,c) \]
If $\deg(c)>\deg(d)$ or $\deg(c)=\deg(d)$, but $c\ne d$, then the entire complex vanishes and so do the Tor spaces for all $n\ge 0$.
If $c=d$, then $\C^-(d,c)\cong k$ and all the other terms of the complex vanish. Consequently, $\Delta^{c}\otimes_{\C}\Delta_{d}\cong k$ and $\Tor_{n}^{\C}(\Delta^{c},\Delta_{d})=0$ for all $n\geqslant1$. If $\deg(c)<\deg(d)$, we may work dually to the above and consider a projective resolution $P^{\bullet}\rightarrow \Delta^c$ with $P^0=\C(-,c)$ and $P^n\cong\bigoplus_{j\in J_n}\C(-,c_j)$ where $\deg(c_j)<\deg(c)$ for all $j\in J_n$ and $n\geqslant1$. Then
\[ P^n\tens{\C}\Delta_d \cong \bigoplus_{j\in J_n}\C(-,c_j) \tens{\C}\Delta_d \cong \bigoplus_{j\in J_n}\Delta_{d}(c_j). \]
Since $\Delta_d \cong\C^+(d,-)$ as left $\C^+$--modules, the $k$--vector spaces $\Tor_{n}^{\C}(\Delta^{c},\Delta_{d})$ may be computed as homologies of a complex of the form
\[ \cdots\to\bigoplus_{j\in J_3}\C^+(d,c_j)\to\bigoplus_{j\in J_2}\C^+(d,c_j)\to\bigoplus_{j\in J_1}\C^+(d,c_j)\to\C^+(d,c).\]
Since $\deg(c_j)<\deg(c)<\deg(d)$ we deduce that $\Tor_{n}^{\C}(\Delta^{c},\Delta_{d})=0$ for all $n\geqslant 0$ which finishes the proof.
\end{proof}

As a consequence, we can prove a characterization of left $\C$--modules filtered by standard functors analogous to~\cite[Lemma 1.4]{DR}. For that purpose, we denote for a left $\C$--module $M$ and an ordinal $\alpha\leqslant\lambda$ by $\operatorname{Tr}_{\alpha}M\subseteq M$ the $\C$--submodule $M$ generated by the images of all morphisms $\C(c,-)\to M$ with $\deg(c)<\alpha$.
It follows that for each object $c$ of $\C$ with $\deg(c)<\alpha$, we have $\operatorname{Tr}_{\alpha}M(c)=M(c)$, so that $(M/\operatorname{Tr}_{\alpha}M)(c)=0$.

Note that the assignment $M\mapsto\operatorname{Tr}_{\alpha}M$ defines an additive endofunctor of the category of left $\C$--modules which preserves direct unions and sends epimorphisms to epimorphisms.
Thus, given a left $\C$--module $M$ and a submodule $N\subseteq M$, we obtain a short exact sequence
\[ 0 \to \operatorname{Tr}_{\alpha}M\cap N \to \operatorname{Tr}_{\alpha}M \to \operatorname{Tr}_{\alpha}(M/N) \to 0 \]
and clearly $\operatorname{Tr}_{\alpha}(N)\subseteq \operatorname{Tr}_{\alpha}M\cap N$. In particular,
\begin{equation}\label{eq:property-of-Tr}
\operatorname{Tr}_{\alpha}(M/N)=(\operatorname{Tr}_{\alpha}M)/N
\end{equation}
whenever $\operatorname{Tr}_{\alpha}N=N$. On the other hand, if $\operatorname{Tr}_{\alpha}(M/N)=0$ then any morphism from $\C(c,-)$ to  $M$ with $\deg(c)<\alpha$ factors through $N$, hence $\operatorname{Tr}_{\alpha}M=\operatorname{Tr}_{\alpha}N$.
Finally,
\[ 0=\operatorname{Tr}_{0}M\subseteq \operatorname{Tr}_{1}M\subseteq\cdots\subseteq \operatorname{Tr}_{\alpha}M\subseteq \operatorname{Tr}_{\alpha+1}M\subseteq\cdots \subseteq \operatorname{Tr}_{\lambda}M=M \]
is a filtration of $M$ for each $M$.

In the proof of the next result we will make use of the Crawley--J\o{}nsson--Warfield theorem~\cite[Thm.~26.5]{AF92} which states that if a module is a direct sum of countably generated modules with local endomorphism rings, then so is every direct summand. It is originally proved for modules over associative rings, but the same proof works for the category of left $\C$--modules $\mathcal{K}^{\C}$.

\begin{proposition}\label{prop:std_filtration}
Let $\C$ be a $k$--linear Reedy category with degree function $\mathrm{deg}\colon\mathrm{Obj}(\C)\rightarrow\lambda$ and $M$ be a left $\C$--module.
Then $M$ is filtered by standard functors if and only if for each $\alpha<\lambda$, the factor module $\operatorname{Tr}_{\alpha+1}M/\operatorname{Tr}_{\alpha}M$ is a direct sum of copies of standard modules $\Delta_c$ with $\deg(c)=\alpha$.
\end{proposition}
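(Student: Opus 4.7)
For the reverse implication, the trace sequence $(\operatorname{Tr}_\alpha M)_{\alpha\le\lambda}$ is automatically a continuous chain of submodules of $M$: $\operatorname{Tr}_0 M=0$, each inclusion is a monomorphism, continuity at limit ordinals is immediate from the characterisation of $\operatorname{Tr}_\alpha M$ as the submodule generated by $\{M(d):\deg(d)<\alpha\}$, and $\operatorname{Tr}_\lambda M=M$ because every $m\in M(c)$ is the image of $\mathrm{id}_c$ under the Yoneda map $\C(c,-)\to M$ corresponding to $m$ (and $\deg(c)<\lambda$). By hypothesis each factor $\operatorname{Tr}_{\alpha+1}M/\operatorname{Tr}_\alpha M$ is a direct sum of copies of $\Delta_c$ with $\deg(c)=\alpha$; such a direct sum is a $\{\Delta_c\}$-filtered object, obtained by well-ordering the summands and taking partial sums. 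Concatenating these refinements yields a $\{\Delta_c\}_{c\in\C}$-filtration of $M$ in the sense of Definition~\ref{def:filt}.

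The forward implication rests on the following Ext vanishing observation: for any $c\in\C$ and any $\C$-module $X$ with $X(d)=0$ whenever $\deg(d)\le\deg(c)$, one has $\Ext^1_{\K^\C}(\Delta_c, X)=0$. Indeed, by Theorem~\ref{thm:proj_by_standards} and Remark~\ref{rem:proj_by_standards}, the standard module fits in a short exact sequence $0\to \I^\C_{<\deg(c)}(c,-)\to \C(c,-)\to \Delta_c\to 0$ in which the kernel is $\Delta$-filtered by $\Delta_d$'s with $\deg(d)<\deg(c)$. Applying $\Hom_{\K^\C}(-,X)$, Yoneda gives $\Hom_{\K^\C}(\C(c,-),X)\cong X(c)=0$, while $\Hom_{\K^\C}(\Delta_d,X)\subseteq X(d)=0$ for each factor of the filtration; a transfinite dévissage, using that contravariant Hom converts colimits to limits, then yields $\Hom_{\K^\C}(\I^\C_{<\deg(c)}(c,-),X)=0$. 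The long exact sequence forces $\Ext^1_{\K^\C}(\Delta_c,X)=0$.

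Now let $0=M_0\subseteq\cdots\subseteq M_\sigma=M$ be a $\Delta$-filtration with $M_{\tau+1}/M_\tau\cong\Delta_{c_\tau}$, and fix $\alpha<\lambda$. We prove by transfinite induction on $\tau\le\sigma$ the conjunction of (P1) $\operatorname{Tr}_\alpha M_\tau$ admits a $\Delta$-filtration by $\Delta_{c_\rho}$'s with $\rho<\tau$ and $\deg(c_\rho)<\alpha$, and (P2) $M_\tau/\operatorname{Tr}_\alpha M_\tau$ admits a $\Delta$-filtration by $\Delta_{c_\rho}$'s with $\rho<\tau$ and $\deg(c_\rho)\ge\alpha$. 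At the successor step, if $\deg(c_\tau)\ge\alpha$ then $\Delta_{c_\tau}$ is supported on $\deg\ge\alpha$, which forces $\operatorname{Tr}_\alpha M_{\tau+1}=\operatorname{Tr}_\alpha M_\tau$ and exhibits $M_{\tau+1}/\operatorname{Tr}_\alpha M_{\tau+1}$ as an extension of $\Delta_{c_\tau}$ by $M_\tau/\operatorname{Tr}_\alpha M_\tau$, so both (P1) and (P2) are preserved. If instead $\deg(c_\tau)<\alpha$, then $X=M_\tau/\operatorname{Tr}_\alpha M_\tau$ is supported on $\deg\ge\alpha>\deg(c_\tau)$ by (P2), and the Ext vanishing observation splits the sequence $0\to M_\tau/\operatorname{Tr}_\alpha M_\tau\to M_{\tau+1}/\operatorname{Tr}_\alpha M_\tau\to\Delta_{c_\tau}\to 0$; lifting the split summand to $\tilde\Delta_{c_\tau}\subseteq M_{\tau+1}$ we obtain the direct sum decomposition $\operatorname{Tr}_\alpha M_{\tau+1}=\operatorname{Tr}_\alpha M_\tau\oplus\tilde\Delta_{c_\tau}$ and an isomorphism $M_{\tau+1}/\operatorname{Tr}_\alpha M_{\tau+1}\cong M_\tau/\operatorname{Tr}_\alpha M_\tau$, verifying both (P1) and (P2).

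At the limit step, the explicit successor description shows that every transition map $M_{\tau'}/\operatorname{Tr}_\alpha M_{\tau'}\to M_{\tau''}/\operatorname{Tr}_\alpha M_{\tau''}$ is a monomorphism; since $\operatorname{Tr}_\alpha$ commutes with direct unions, both (P1) and (P2) extend to $M_\tau$ by taking directed unions of the constructed filtrations. Applying (P1) simultaneously at $\alpha$ and $\alpha+1$ and tracking the successor analysis shows that $\operatorname{Tr}_{\alpha+1}M_\tau/\operatorname{Tr}_\alpha M_\tau$ gains a direct summand $\Delta_{c_\tau}$ precisely when $\deg(c_\tau)=\alpha$ and is unchanged otherwise, whence $\operatorname{Tr}_{\alpha+1}M/\operatorname{Tr}_\alpha M=\bigoplus_{\tau:\deg(c_\tau)=\alpha}\Delta_{c_\tau}$, as desired. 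The principal obstacle is the splitting argument in the case $\deg(c_\tau)<\alpha$; the Ext vanishing lemma that provides it is in turn a direct consequence of the projective-like resolution of $\Delta_c$ supplied by Theorem~\ref{thm:proj_by_standards}.
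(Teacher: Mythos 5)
Your reverse implication and your support-based vanishing lemma are fine, and the latter is a genuinely nice, self-contained route: since only $\Hom_{\K^\C}(\I^\C_{<\deg(c)}(c,-),X)$ feeds into $\Ext^1_{\K^\C}(\Delta_c,X)$ via the presentation $0\to\I^\C_{<\deg(c)}(c,-)\to\C(c,-)\to\Delta_c\to0$, and contravariant Hom turns the filtration colimit into a limit, you avoid the delicate question of whether $\Ext^1(\Delta_c,-)$ commutes with filtrations in the \emph{second} variable, which is where the paper instead invokes Theorem~\ref{thm:Reedy_exceptional}(iii) together with the Eklof lemma. The induction maintaining (P1)--(P2) is also essentially sound, modulo the overclaim that $\operatorname{Tr}_\alpha M_{\tau+1}=\operatorname{Tr}_\alpha M_\tau\oplus\tilde\Delta_{c_\tau}$ as submodules of $M_{\tau+1}$: your splitting lives in $M_{\tau+1}/\operatorname{Tr}_\alpha M_\tau$, so inside $M_{\tau+1}$ you only get an extension $0\to\operatorname{Tr}_\alpha M_\tau\to\operatorname{Tr}_\alpha M_{\tau+1}\to\Delta_{c_\tau}\to0$; that is all (P1) needs, so no harm is done there.

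The genuine gap is in your last paragraph, in precisely the only case that produces the conclusion, namely $\deg(c_\tau)=\alpha$. There you must show that the extension $0\to\operatorname{Tr}_{\alpha+1}M_\tau/\operatorname{Tr}_\alpha M_\tau\to\operatorname{Tr}_{\alpha+1}M_{\tau+1}/\operatorname{Tr}_\alpha M_{\tau+1}\to\Delta_{c_\tau}\to0$ splits, i.e.\ that $\Ext^1_{\K^\C}(\Delta_{c_\tau},Z)=0$ for $Z=\operatorname{Tr}_{\alpha+1}M_\tau/\operatorname{Tr}_\alpha M_\tau$, which (inductively) is a direct sum of standards of degree exactly $\alpha=\deg(c_\tau)$. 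Your vanishing lemma does not apply: its hypothesis demands $Z(d)=0$ for all $\deg(d)\le\deg(c_\tau)$, but here $Z(d)$ is typically nonzero for $d$ of degree $\alpha$. The splitting you do have at level $\alpha+1$ is taken modulo the larger submodule $\operatorname{Tr}_{\alpha+1}M_\tau$ and does not descend to a splitting modulo $\operatorname{Tr}_\alpha M_\tau$, and ``tracking the successor analysis'' does not supply the missing retraction. The repair stays entirely within your method: in your vanishing lemma the hypothesis $X(c)=0$ is never used, since the long exact sequence only requires $\Hom_{\K^\C}(\I^\C_{<\deg(c)}(c,-),X)=0$, so the lemma holds under the weaker hypothesis that $X(d)=0$ for all $d$ with $\deg(d)<\deg(c)$ strictly; that weaker form does apply to $Z$ above and closes the gap. (This is exactly the juncture at which the paper appeals to Theorem~\ref{thm:Reedy_exceptional}(iii) and the Eklof lemma to split its sequence~\eqref{eq:std_filtration}.)
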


\begin{proof}
We only need to prove the `only if' part. Suppose $M$ has a filtration
\[ 0=M_0\subseteq M_1\subseteq \cdots\subseteq M_{\beta}\subseteq M_{\beta+1}\subseteq\cdots \subseteq M_{\sigma}=M.\]
with $M_{\beta+1}/M_\beta\in\operatorname{Add}(\{\Delta_c\mid c\in\operatorname{Obj}(c)\})$ for each $\beta<\sigma$. The various $\Delta_c$ have local endomorphism rings by Theorem~\ref{thm:Reedy_exceptional}(i), thus in fact, each $M_{\beta+1}/M_\beta$ is then a direct sum of standard modules by the Crawley--J\o{}nsson--Warfield theorem~\cite[Thm.~26.5]{AF92}, so we can refine the filtration and assume without loss of generality that each $M_{\beta+1}/M_\beta$ is isomorphic to $\Delta_{c_\beta}$ for some object $c_\beta$ of $\C$.

We prove the statement about $\operatorname{Tr}_{\alpha+1}M/\operatorname{Tr}_{\alpha}M$ by induction on the length $\sigma$ of the filtration. The case $\sigma=0$ being trivial, we first assume that $\sigma=\rho+1$ is an ordinal successor, so that we have a short exact sequence $0\to M_\rho\to M\to \Delta_{c_\rho}\to 0$ and also
\begin{equation}\label{eq:std_filtration}
0\to M_\rho/\operatorname{Tr}_{\deg(c_\rho)}M_\rho\to M/\operatorname{Tr}_{\deg(c_\rho)}M_\rho\to \Delta_{c_\rho}\to 0
\end{equation}
Note that $\operatorname{Tr}_{\deg(c_\rho)}(\Delta_{c_\rho}) = 0$.
Indeed, recall that $\Delta_{c_\rho}|_{\C^+}\cong\C^+(c_\rho,-)$ from the proof of Theorem~\ref{thm:Reedy_exceptional}, so that $\Hom_{\C}(\C(d,-),\Delta_{c_\rho})\cong\Delta_{c_\rho}(d)=0$ whenever $\deg(d)<\deg(c_\rho)$.
Thus, we have 
\begin{equation}\label{eq:std_filtration_unification}
\operatorname{Tr}_{\deg(c_\rho)}M_\rho=\operatorname{Tr}_{\deg(c_\rho)}M.
\end{equation}
Moreover, since by inductive hypothesis $M_\rho/\operatorname{Tr}_{\deg(c_\rho)}M_\rho$ is filtered by standard modules $\Delta_d$ with $\deg(d)\geqslant\deg(c_\rho)$, the sequence~\eqref{eq:std_filtration} splits thanks to Theorem~\ref{thm:Reedy_exceptional}(iii) and the Eklof lemma (recalled in Proposition~\ref{prop:eklof}). 
Thanks to~\eqref{eq:property-of-Tr} and~\eqref{eq:std_filtration_unification}, the split exact sequence~\eqref{eq:std_filtration} also induces a split exact sequence
\begin{equation}\label{eq:std_filtration_over_Tr_rho+1}
0\to \operatorname{Tr}_{\deg(c_\rho)+1}M_\rho/\operatorname{Tr}_{\deg(c_\rho)}M_\rho\to \operatorname{Tr}_{\deg(c_\rho)+1}M/\operatorname{Tr}_{\deg(c_\rho)}M\to \Delta_{c_\rho}\to 0
\end{equation}
and one also observes by comparing~\eqref{eq:std_filtration} and~\eqref{eq:std_filtration_over_Tr_rho+1} that the inclusion $M_\rho\subseteq M$ induces an isomorphism $M_\rho/\operatorname{Tr}_{\deg(c_\rho)+1}M_\rho\cong M/\operatorname{Tr}_{\deg(c_\rho)+1}M$.

All in all, we have proved that the canonical map
$\operatorname{Tr}_{\alpha+1}M_\rho/\operatorname{Tr}_{\alpha}M_\rho\to \operatorname{Tr}_{\alpha+1}M/\operatorname{Tr}_{\alpha}M$ is either isomorphism (if $\alpha\ne\deg(c_\rho)$) or a split inclusion with cokernel isomorphic to $\Delta_{c_\rho}$ (if $\alpha=\deg(c_\rho)$), so $\operatorname{Tr}_{\alpha+1}M/\operatorname{Tr}_{\alpha}M$ is a direct sum of copies of standard modules $\Delta_c$ with $\deg(c)=\alpha$ if $M_\rho$ has this property.

Let $\sigma$ is a limit ordinal and $\alpha<\lambda$. Then, by the previous paragraph and the fact that both $\operatorname{Tr}_\alpha$ and $\operatorname{Tr}_{\alpha+1}$ preserve direct unions, 
\[ 0=\frac{\operatorname{Tr}_{\alpha+1}M_0}{\operatorname{Tr}_{\alpha}M_0}\to \cdots\to \frac{\operatorname{Tr}_{\alpha+1}M_{\beta}}{\operatorname{Tr}_{\alpha}M_{\beta}}\to \frac{\operatorname{Tr}_{\alpha+1}M_{\beta+1}}{\operatorname{Tr}_{\alpha}M_{\beta+1}}\to\cdots\to \frac{\operatorname{Tr}_{\alpha+1}M_\sigma}{\operatorname{Tr}_{\alpha}M_\sigma}=\frac{\operatorname{Tr}_{\alpha+1}M}{\operatorname{Tr}_{\alpha}M}\]
is a filtration of $\operatorname{Tr}_{\alpha+1}M/\operatorname{Tr}_{\alpha}M$. All the inclusions $\operatorname{Tr}_{\alpha+1}M_{\beta}/\operatorname{Tr}_{\alpha}M_{\beta}\to \operatorname{Tr}_{\alpha+1}M_{\beta+1}/\operatorname{Tr}_{\alpha}M_{\beta+1}$ are split with cokernels isomorphic to zero or a standard module $\Delta_d$ with $\deg(d)=\alpha$. It follows (by transfinite induction on $\sigma$) that $\operatorname{Tr}_{\alpha+1}M/\operatorname{Tr}_{\alpha}M$ is a direct sum of such standard modules, as required.
\end{proof}

\begin{corollary}\label{cor:std_filtration_summands}
Let $\C$ be a $k$--linear Reedy category and suppose that $M\in\K^\C$ is filtered by standard modules. Then so is any direct summand of $M$.
\end{corollary}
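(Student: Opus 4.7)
The plan is to reduce the statement to the intrinsic characterisation of standardly filtered modules given by Proposition~\ref{prop:std_filtration}. Write $M=N\oplus N'$ with $M\in\K^\C$ filtered by standard modules, and fix an ordinal $\alpha<\lambda$. I want to show that $\operatorname{Tr}_{\alpha+1}N/\operatorname{Tr}_{\alpha}N$ is a direct sum of copies of standard modules $\Delta_c$ with $\deg(c)=\alpha$; by Proposition~\ref{prop:std_filtration} this will suffice.

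First I would observe that $\operatorname{Tr}_\alpha$ is an additive endofunctor of $\K^\C$ (this was noted just before Proposition~\ref{prop:std_filtration}), so it commutes with finite direct sums. Therefore
\[
\operatorname{Tr}_{\alpha+1}M/\operatorname{Tr}_{\alpha}M \;\cong\; (\operatorname{Tr}_{\alpha+1}N/\operatorname{Tr}_{\alpha}N) \,\oplus\, (\operatorname{Tr}_{\alpha+1}N'/\operatorname{Tr}_{\alpha}N'),
\]
exhibiting $\operatorname{Tr}_{\alpha+1}N/\operatorname{Tr}_{\alpha}N$ as a direct summand of $\operatorname{Tr}_{\alpha+1}M/\operatorname{Tr}_{\alpha}M$. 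Since $M$ is filtered by standards, the latter module is by Proposition~\ref{prop:std_filtration} a direct sum of standard modules $\Delta_c$ of degree $\alpha$.

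The main point is now to invoke Krull--Schmidt-type uniqueness: by Theorem~\ref{thm:Reedy_exceptional}(i), $\mathrm{End}_{\K^\C}(\Delta_c)\cong k$ is local for every $c$, so the Crawley--J\'onsson--Warfield theorem \cite[Thm.~26.5]{AF92} (which was already invoked in the proof of Proposition~\ref{prop:std_filtration}) applies to any direct sum of modules of the form $\Delta_c$. This gives that every direct summand of such a coproduct is again isomorphic to a direct sum of copies of the $\Delta_c$'s. Applied to $\operatorname{Tr}_{\alpha+1}N/\operatorname{Tr}_{\alpha}N$, we conclude that this factor is a direct sum of standard modules of degree $\alpha$.

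Since the conclusion holds for every $\alpha<\lambda$, Proposition~\ref{prop:std_filtration} (its nontrivial ``if'' direction, which unfolds an explicit continuous filtration out of the data $\{\operatorname{Tr}_{\alpha+1}N/\operatorname{Tr}_\alpha N\}_{\alpha<\lambda}$) yields that $N$ is filtered by standard modules, as required. The potential obstacle I would double-check is whether the Krull--Schmidt step really needs no cardinality hypothesis on the indexing sets, but this is exactly the content of the Crawley--J\'onsson--Warfield theorem for families with local endomorphism rings, so no extra assumption on $N$ or $M$ is needed.
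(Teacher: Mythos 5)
Your proposal is correct and follows essentially the same route as the paper: pass to the quotients $\operatorname{Tr}_{\alpha+1}(-)/\operatorname{Tr}_{\alpha}(-)$, observe that the quotient for $N$ is a summand of the one for $M$, and apply the Crawley--J\o{}nsson--Warfield theorem together with the characterisation in Proposition~\ref{prop:std_filtration}. Your write-up is slightly more explicit than the paper's (in particular in justifying why Crawley--J\o{}nsson--Warfield applies, via $\End_{\K^\C}(\Delta_c)\cong k$), but there is no substantive difference.
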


\begin{proof}
If $N$ is a summand of $M$, then $\operatorname{Tr}_{\alpha+1}N/\operatorname{Tr}_{\alpha}N$ is a summand of $\operatorname{Tr}_{\alpha+1}M/\operatorname{Tr}_{\alpha}M$ for each $\alpha$.
By virtue of Proposition~\ref{prop:std_filtration}, we must prove that $\operatorname{Tr}_{\alpha+1}N/\operatorname{Tr}_{\alpha}N$ is a direct sum of standard modules corresponding to objects of $\C$ of degree $\alpha$. By assumption and Proposition~\ref{prop:std_filtration}, we know that $\operatorname{Tr}_{\alpha+1}M/\operatorname{Tr}_{\alpha}M$ is of this form. Since the various standard modules corresponding to objects of $\C$ of degree $\alpha$ have local endomorphism rings, by the Crawley--J\o{}nsson--Warfield theorem~\cite[Thm.~26.5]{AF92} we deduce that the summand $\operatorname{Tr}_{\alpha+1}N/\operatorname{Tr}_{\alpha}N$ satisfies the desired property.
\end{proof}

\subsection{Simple functors}
\label{subsec:simple_functors}

In order to relate linear Reedy categories to quasi-hereditary algebras properly, we also need to understand simple left $\C$--modules. We first recall a characterization of projective modules with a unique simple quotient.

\begin{definition}
Given be a small $k$--linear category $\C$, a left $\C$--module $P$ and a submodule $M\subseteq P$, we call $M$ \textit{superfluous} in $P$ if it satisfies the following property:
Whenever $L\subseteq P$ is another $\C$--submodule and $L+M=P$, then necessarily $L=P$.
\end{definition}

\begin{lemma}
\label{lem:projective-unique-simple-quotient}
Let $\C$ be a small $k$--linear category and $P\in\K^\C$ be a non-zero projective left $\C$--module (i.e.\ a direct summand of a direct sum of representable functors). Then the following are equivalent: 
\begin{enumerate}
\item The endomorphism ring $\End_\C(P)$ is local.
\item $P$ has a submodule which is both maximal and superfluous.
\item $P$ has a unique maximal $\C$--submodule (and hence also a unique simple quotient).
\end{enumerate}
\end{lemma}

\begin{proof}
This is \cite[Prop.~17.19]{AF92} combined with standard facts about projective covers.

Indeed, the equivalence (a)$\Leftrightarrow$(c) in \cite[Prop.~17.19]{AF92}, whose proof works also for rings with several objects, says that $\End_\C(P)$ is local if and only if $P$ is a projective cover of a simple left $\C$--module in $\K^\C$. By unraveling the definition of projective cover on \cite[p.~199]{AF92}, one sees that $P$ is a projective cover of a simple $\C$--module if and only if there is a submodule $M\subseteq P$ which is both maximal and superfluous. This proves the equivalence between (1) and (2).

To see that (2) is equivalent to (3), assume first that $M\subseteq P$ is maximal and superfluous. If $M'\subseteq P$ is another maximal submodule, we cannot have $M'+M=P$ (as this would imply $M'=P$), so necessarily $M'=M$. Thus, if $M\subseteq P$ is maximal and superfluous, then there are no other maximal submodules of $P$ except for $M$. If conversely $M$ is a unique maximal submodule of $P$ and $L\subsetneq P$ is a proper submodule, then $L\subseteq M$ by the maximality of $M$, so $L+M\subsetneq P$. Hence $M$ is superfluous.
\end{proof}

As an immediate consequence, we obtain the following property of standard modules over linear Reedy categories.

\begin{lemma}
\label{lem:top_of_Delta}
Let $\C$ be a linear Reedy category. Then 
for any object $c\in\C$, the standard left $\C$--module $\Delta_c$ has a unique maximal $\C$--submodule (and hence also a unique simple quotient).
\end{lemma}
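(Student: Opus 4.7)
The plan is to exploit two simple observations: that $\Delta_c$ is a cyclic module generated by the class of the identity, and that $\Delta_c(c)$ is one-dimensional over $k$. Together these force any proper submodule to vanish at $c$, and from there the existence of a unique maximal submodule is immediate.

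First, I would record that $\Delta_c(c) \cong k$. Setting $\alpha=\deg(c)$, the Reedy decomposition~\eqref{eq:Reedy-factorization} at $(c,c)$ gives
\[
\C(c,c) \,\cong\, \bigl(\C^+(c,c) \otimes_k \C^-(c,c)\bigr) \,\oplus\, \bigoplus_{\substack{e\neq c\\ \deg(e)\le\alpha}} \C^+(e,c)\otimes_k\C^-(c,e).
\]
The first summand equals $k\cdot\id_c$, while the summands with $\deg(e)=\alpha$ and $e\neq c$ vanish because $\C^{\pm}$ are direct/inverse and $\deg(e)=\deg(c)$, and those with $\deg(e)<\alpha$ exhaust $\I^\C_{<\alpha}(c,c)$. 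Hence $\Delta_c(c) = \C(c,c)/\I^\C_{<\alpha}(c,c)\cong k$, spanned by $[\id_c]$. (This is also noted in the proof of Theorem~\ref{thm:Reedy_exceptional}, via $\Delta_c|_{\C^+}\cong\C^+(c,-)$.)

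Next, I would observe that $\Delta_c$ is generated as a $\C$-module by $[\id_c]$. Indeed, $\Delta_c$ is a quotient of the representable $\C(c,-)$, and any element of $\Delta_c(d)=\C(c,d)/\I^\C_{<\alpha}(c,d)$ has a lift $\tilde f\colon c\to d$, so it is the image of $[\id_c]$ under the action of $\tilde f$.

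Now let $N\subseteq\Delta_c$ be any proper subfunctor. Then $N(c)$ is a $k$-subspace of the one-dimensional space $\Delta_c(c)=k\cdot[\id_c]$, so either $N(c)=0$ or $N(c)=k\cdot[\id_c]$. In the latter case $[\id_c]\in N$; since $\Delta_c$ is generated by $[\id_c]$, this would force $N=\Delta_c$, contradicting properness. Hence every proper submodule satisfies $N(c)=0$. Setting
\[
N_{\max} := \sum\{N\subseteq\Delta_c \mid N \text{ is a proper subfunctor}\},
\]
we get $N_{\max}(c)=\sum N(c)=0$, so $N_{\max}$ is itself proper, and by construction it contains every proper submodule. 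Thus $N_{\max}$ is the unique maximal submodule of $\Delta_c$, and the quotient $\Delta_c/N_{\max}$ is the unique simple quotient.

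There is no real obstacle here: the entire argument rests on the one-dimensionality of $\Delta_c(c)$ (a direct consequence of the Reedy decomposition together with $\End_{\C^{\pm}}(c)\cong k$) and the fact that cyclic modules with a simple ``top value'' have a unique maximal submodule. In particular, one need not invoke locality of $\End_\C(c)$, which may not even hold when $\C$ has infinitely many objects of degree $<\alpha$.
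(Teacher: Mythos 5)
Your proof is correct, but it takes a genuinely different route from the paper's. The paper identifies $\Delta_c$ with the representable (hence projective) left $\C/\I^\C_{<\alpha}$--module $\C/\I^\C_{<\alpha}(c,-)$ (Remark~\ref{rem:Reedy_quotient}), notes that its endomorphism ring is $k$ and hence local (Theorem~\ref{thm:Reedy_exceptional}(i)), and then cites the standard fact that a projective module with local endomorphism ring has the radical $\rad_{\C/\I^\C_{<\alpha}}(c,-)$ as its unique maximal submodule. You instead argue directly: $\Delta_c(c)\cong k$ is spanned by $[\id_c]$ (which the paper also records, in Remark~\ref{rem:degree-of-std-composition-factors}), $\Delta_c$ is cyclic on $[\id_c]$, so every proper subfunctor vanishes at $c$, and the sum of all proper subfunctors is therefore itself proper and is the unique maximal one. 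All steps check out — in particular the vanishing of the summands $\C^+(e,c)\otimes_k\C^-(c,e)$ for $e\neq c$ of degree $\alpha$, and the fact that sums of subfunctors in $\K^\C$ are computed pointwise. Your approach is more elementary and self-contained; what it does not deliver, and what the paper's approach buys, is the explicit identification of the maximal submodule with the radical over $\C/\I^\C_{<\alpha}$ and the fact that $\Delta_c\to L_c$ is a projective cover in $\K^{\C/\I^\C_{<\alpha}}$ — both of which are reused later (e.g.\ in the proof of Theorem~\ref{thm:Reedy_simples} and Lemma~\ref{lem:Reedy-alg-decomposition-of-representables}). Your closing aside is slightly off target: the paper never invokes locality of $\End_\C(c)$, only of $\End_{\K^\C}(\Delta_c)\cong k$, which holds without any finiteness hypothesis.
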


\begin{proof}
Let $\alpha=\deg(c)$. By Remark~\ref{rem:Reedy_quotient}, $\Delta_c$ can be interpreted as the projective left $\C/\I^\C_{<\alpha}$--module $\C/\I^\C_{<\alpha}(c,-)$. Since $\End_{\C}(\Delta_c)=\End_{\C/\I^\C_{<\alpha}}(\Delta_c)$ is local by Theorem~\ref{thm:Reedy_exceptional}(i), it follows from Lemma~\ref{lem:projective-unique-simple-quotient} that $\Delta_c=\C/\I^\C_{<\alpha}(c,-)$ has a unique maximal submodule as a $\C/\I^\C_{<\alpha}$--module, and so also as a $\C$--module. In fact, this maximal submodule is by \S\ref{subsec:simple_functors_prelim} none other than $\rad_{\C/\I^\C_{<\alpha}}(c,-)$.
\end{proof}

\begin{remark}
As we already know, one can for each $d\in\mathrm{Obj}(\C)$ identify $\Delta_c(d)\cong\C^+(c,d)$ as $k$--vector spaces.
Under that identification, we claim that $\rad_{\C/\I^\C_{<\alpha}}(c,d)$ identifies with the subspace formed by those $f\colon c\to d$ such that $gf\in\I^\C_{<\alpha}(c,c)$ for each $g\in\C^-(d,c)$, that is, $gf=0$ in $\C/\I^\C_{<\alpha}(c,c)$.

Since any $g\colon d\rightarrow c$ in $\C/\I^\C_{<\alpha}(d,c)$ is represented by a morphism in $\C^-(d,c)$ by Remark~\ref{rem:Reedy_quotient}, this subspace is contained in $\rad_{\C/\I^\C_{<\alpha}}(c,d)$ by definition of the radical (recalled in \S\ref{subsec:radicals}).
On other hand, if $f\colon c\to d$ is contained in 
 $\rad_{\C/\I^\C_{<\alpha}}(c,d)\subseteq \C/\I^\C_{<\alpha}(c,d)\subseteq \C^+(c,d)$, then we have that $gf$ is non-invertible in $\End_{\C/\I^\C_{<\alpha}}(c)\cong k$ for each $g\in\C^-(d,c)$. 
Hence $gf$=0 in $\C/\I^\C_{<\alpha}(c,c)$ for each $g\in\C^-(d,c)$. 

\end{remark}

Given an object of $\C$, we denote the simple quotient of $\Delta_c$ by $L_c$. Now we can classify simple objects in a linear Reedy category.

\begin{theorem}
\label{thm:Reedy_simples}
Let $\C$ be a $k$--linear Reedy category. Then the assignment $c\mapsto L_c$ provides a bijection between
\begin{itemize}
\item[(i)] objects of $\C$ and
\item[(ii)] isomorphism classes of simple left $\C$--modules.
\end{itemize}
\end{theorem}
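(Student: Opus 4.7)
The plan is to verify the bijection in two stages: first compute the evaluations of $L_c$ at small-degree objects tightly enough to force injectivity, then produce the matching $c$ for an arbitrary simple $S$ via a Yoneda-type argument at an object of minimal degree in the support of $S$.

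First I would establish that $L_c(c)=k$ and that $L_c(d)=0$ for every $d\ne c$ with $\deg(d)\leqslant\deg(c)$. The vanishing rests on the identification $\Delta_c|_{\C^+}\cong\C^+(c,-)$ used in the proof of Theorem~\ref{thm:Reedy_exceptional}, so that $\Delta_c(d)\cong\C^+(c,d)=0$ by the directness of $\C^+$ whenever $d\ne c$ has degree $\leqslant\deg(c)$; since $L_c$ is a quotient of $\Delta_c$, it inherits this vanishing. The non-vanishing $L_c(c)=k$ amounts to the unique maximal submodule of $\Delta_c$ from Lemma~\ref{lem:top_of_Delta} having zero evaluation at $c$, which by the remark after that lemma reduces to the claim $\I^{\C}_{<\deg(c)}(c,c)=0$. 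This in turn follows from the Reedy decomposition applied to $\End_\C(c)\cong k$: the summand indexed by $e=c$ already contributes $\C^+(c,c)\otimes_k\C^-(c,c)\cong k$, so all remaining summands must vanish.

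Injectivity of $c\mapsto L_c$ is then immediate. If $L_c\cong L_d$, then the minimal degree on which the support is non-zero must coincide on both sides, so $\deg(c)=\deg(d)$, and at that common degree the support is $\{c\}$ for one and $\{d\}$ for the other, forcing $c=d$.

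For surjectivity, let $S$ be a simple left $\C$-module and let $\alpha$ be the least ordinal such that $S(c)\ne 0$ for some $c$ with $\deg(c)=\alpha$; fix such a $c$ together with a non-zero $x\in S(c)$. By the Yoneda lemma, $x$ corresponds to a morphism $\phi\colon\C(c,-)\to S$ which is non-zero at $c$, hence non-zero, hence surjective by simplicity of $S$. For any $f\in\I^{\C}_{<\alpha}(c,d)$, any Reedy factorization $f=\sum_i f_i^+\circ f_i^-$ passes through intermediate objects of degree $<\alpha$ on which $S$ vanishes by minimality of $\alpha$, so $\phi(f)=0$. Hence $\phi$ factors through a non-zero surjection $\Delta_c\twoheadrightarrow S$, whose kernel must equal the unique maximal submodule of $\Delta_c$, giving $S\cong L_c$. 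The main point to verify carefully is the vanishing of $\I^{\C}_{<\deg(c)}(c,c)$ needed for $L_c(c)=k$; this is the one place where the Reedy factorization of $\End_\C(c)$ must be invoked explicitly.
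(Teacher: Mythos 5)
Your overall architecture is sound, and your surjectivity argument is essentially the paper's: pick $c$ of minimal degree in the support of the simple module $S$, observe that the resulting surjection $\C(c,-)\to S$ kills $\I^\C_{<\alpha}(c,-)$ because every Reedy factorisation of a morphism in that ideal passes through objects of degree $<\alpha$, and conclude via the uniqueness of the maximal submodule of $\Delta_c$ (Lemma~\ref{lem:top_of_Delta}). Your injectivity argument via the support of $L_c$ is a genuinely different and more elementary route than the paper's, which instead compares $\Delta_c$ and $\Delta_d$ as projective covers of $L_c\cong L_d$ over $\C/\I^\C_{<\alpha}$ and applies the Yoneda lemma there; your version avoids projective covers entirely, provided the computation of the support is correct.

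That is where the one genuine problem lies. The step you yourself flag as the crucial one --- the claim $\I^{\C}_{<\deg(c)}(c,c)=0$, deduced ``from the Reedy decomposition applied to $\End_\C(c)\cong k$'' --- is false in general. A $k$--linear Reedy category only requires $\End_{\C^+}(c)\cong k\cong\End_{\C^-}(c)$; the full endomorphism space decomposes as $\C(c,c)\cong k\cdot\id_c\oplus\I^\C_{<\deg(c)}(c,c)$, and the second summand is typically non-zero. For instance, in $\C=k\mathbf{\Delta}_{\le 1}$ of Example~\ref{ex:linear-simplices} one has $\dim_k\End_\C([1])=3$, with $\I^\C_{<1}([1],[1])$ two-dimensional, spanned by $d^0\circ s$ and $d^1\circ s$ (cf.\ Remark~\ref{rem:Reedy-alg-non-primitive}). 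What your argument actually needs --- and what the direct-sum decomposition does give --- is only that $k\cdot\id_c\cap\I^\C_{<\deg(c)}(c,c)=0$, i.e.\ that $\Delta_c(c)=\C(c,c)/\I^\C_{<\deg(c)}(c,c)\cong k$. Then $\End_{\C/\I^\C_{<\deg(c)}}(c)\cong k$ is a field, its radical vanishes, so the unique maximal submodule $\rad_{\C/\I^\C_{<\deg(c)}}(c,-)$ of $\Delta_c$ is zero at $c$ and $L_c(c)\cong k$; this is exactly Remark~\ref{rem:degree-of-std-composition-factors}. With this local correction the rest of your proof goes through.
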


\begin{proof}
We must prove that each simple left $\C$--module $L$ is isomorphic to one of the form $L_c$ for a unique object $c$ of $\C$. 
To this end, let $c$ be an object of minimal degree such that there is a non-zero (hence surjective) homomorphism of left $\C$--modules $p\colon\C(c,-)\to L$.
Note that the submodule $\I^\C_{<\alpha}(c,-)$ of $\C(c,-)$ coincides with the sum of the images of all morphisms of left $\C$--modules $\C(d,-)\to\C(c,-)$, where $d$ runs over all objects of degree $<\alpha$. In particular, $p$ factors through the canonical surjection $\C(c,-)\to\Delta_c$ and there is an epimorphism $\Delta_c\to L$. This implies that $L_c\cong L$ by the uniqueness part of Lemma~\ref{lem:top_of_Delta}.

Next suppose that $L_c\cong L_d$ as left $\C$--modules. Then $c$ and $d$ have the same degree $\alpha$ by the previous paragraph. Moreover, both $\Delta_c$ and $\Delta_d$ are projective left $\C/\I^\C_{<\alpha}$--modules by Remark~\ref{rem:Reedy_quotient} and have local endomorphism rings by Theorem~\ref{thm:Reedy_exceptional}(i). Hence, by \cite[Prop.~17.19]{AF92} $\Delta_c$ and $\Delta_d$ are projective 
covers of $L_c\cong L_d$ in $\K^{\C/\I^\C_{<\alpha}}$, so they are isomorphic as left $\C/\I^\C_{<\alpha}$--modules. By the Yoneda lemma, it follows that $c\cong d$ in $\C/\I^\C_{<\alpha}$, but as both the objects are of the lowest degree in $\C/\I^\C_{<\alpha}$ as a linear Reedy category, this implies $c=d$ by the Reedy factorization.
Thus also $c=d$ in $\C$ by the construction of $\C/\I^\C_{<\alpha}$ (Remark~\ref{rem:Reedy_quotient}).
\end{proof}

\begin{remark}\label{rem:degree_of_simples}
The bijection of Theorem~\ref{thm:Reedy_simples} allows us to define a degree $\deg(L)$ of a simple left $\C$--module $L$ as the degree $\deg(c)$ of the object $c\in\mathrm{Obj}(\C)$ such that $L\cong L_c$.
\end{remark}

\begin{remark}\label{rem:degree-of-std-composition-factors}
Given $c\in\mathrm{Obj}(\C)$, we have $\Delta_c(c)\cong\C^+(c,c)\cong k$. Since $L_c$ is a factor of $\Delta_c$, we also have $L_c(c)\cong k$. Thus, the kernel $K_c$ of the canonical projection $\Delta_c\to L_c$ vanishes on all objects $d$ of $\C$ such that $\deg(d)\leqslant\deg(c)$. In particular, if $L_d$ is a simple subfactor of $K_c$, then $\deg(d)>\deg(c)$.
\end{remark}

\subsection{Finite Reedy categories}
\label{subsec:finite_Reedy}
We call a $k$--linear Reedy category \textit{finite} if it contains only a finite number of objects and is hom-finite (i.e. the $k$--vector space of morphisms between any two objects is finite dimensional). We assume that degree functions of finite Reedy categories map to non-negative integers.

For a finite Reedy category $\C$ with objects $c_0,...,c_n$ we write $P:=\oplus_{i=1}^{n}\C(c_i,-)$ for the finitely generated projective generator of the functor category $\mathcal{K}^{\C}$. It is well known, see e.g. \cite[II~Thm.~1.3]{Bass}, that the functor $\Hom_{\C}(P,-)$ induces an equivalence from the category $\mathcal{K}^{\C}$ to the category of left modules over the finite dimensional algebra $A:=\mathrm{End}(P)\op$.
This leads us to the following concept.

\begin{definition}
\label{def:Reedy_algebra}
Let $A$ be a finite dimensional $k$--algebra with a complete set of orthogonal idempotents $\{e_0,\dots,e_n\}$. 
Then $A$ is called \textit{Reedy} if there is a function $\mathrm{deg}\colon\{e_0,\dots,e_n\}\rightarrow\mathbb{N}$ and subalgebras $A^+$ and $A^-$ of $A$ containing the idempotents $e_0,\dots,e_n$, such that:
\begin{itemize}
\item[-] The subalgebra $A^+$ satisfies $e_iA^+e_i\cong k$ for all $i$, and for all $i\neq j$ the following implication holds:
$e_jA^+e_i\neq 0 \Rightarrow \deg(e_j)>\deg(e_i).$
\item[-] The subalgebra $A^-$ satisfies $e_iA^-e_i\cong k$ for all $i$, and for all $i\neq j$ the following implication holds:
$e_jA^-e_i\neq 0 \Rightarrow \deg(e_j)<\deg(e_i).$
\item[-] For any $i, j$ the multiplication in $A$ induces an isomorphism of $k$--vector spaces
\[
\bigoplus_{l=0}^{n} \,e_jA^+e_l\, \otimes_{k}\, e_lA^-e_i \xrightarrow{\cong} e_jAe_i.
\]
In other words, if we denote by $A^0$ the semisimple algebra $A^0 := A^+\cap A^-=\oplus_{i=0}^n k\cdot e_i$, the last condition says that the multiplication map induces an isomorphism of $k$--vector spaces
\[ A^+ \otimes_{A^0} A^- \xrightarrow{\cong} A. \]
\end{itemize}
\end{definition}

\begin{remark} \label{rem:Reedy-alg-non-primitive}
Let us emphasize here that the idempotents $e_0,\dots,e_n$ in the above definition need not be primitive and a Reedy algebra may easily be non-basic. This is because representable functors on a (even finite) $k$--linear Reedy category need not be indecomposable and they may have summands isomorphic to other representable functors.

The simplest example of this phenomenon is the finite $k$--linear Reedy category $\C=k\mathbf{\Delta}_{\le1}\cong kQ_I$ from Example~\ref{ex:linear-simplices}. In that case the morphism $\C(s,-)\colon\C([0],-)\to\C([1],-)$ is a section, so $\C([1],-)$ has a non-trivial direct summand isomorphic to $\C([0],-)$.
\end{remark}

We also remark that to any Reedy finite dimensional $k$--algebra $A$ with a complete set of idempotents $\{e_0,\dots,e_n\}$ and a degree function $\deg\colon\{e_0,\dots,e_n\}\to\mathbb{N}$ we can associate a $k$--linear Reedy category $\C_{A}$, whose objects are $e_0,...,e_n$ and morphisms given by the rule $\Hom_{\C_{A}}(e_i,e_j):=e_jAe_i$. Thus, following the comments before Definition~\ref{def:Reedy_algebra}, if $P:=\oplus_{i=0}^{n}\C_{A}(e_i,-)$ we obtain an equivalence of categories 
\begin{equation}
\label{eq:equivalence}
\Hom_{\C_{A}}(P,-)\colon \mathcal{K}^{\C_A}\rightarrow\mathrm{Mod}(A).
\end{equation}

In this way, we can transfer the  results already obtained for the category $\mathcal{K}^{\C_A}$ to the category of (left) modules over the Reedy algebra $A$. We will apply this observation in Theorem \ref{thm:Reedy_is_qh} to prove that Reedy algebras are quasi-hereditary algebras with exact Borel subalgebras.
We first recall a few concepts from the theory of quasi-hereditary algebras \cites{CPS,DR}.

We fix a finite dimensional $k$--algebra $A$ and denote by $L(i)$, where $i$ runs through a suitable finite set $\Lambda$, representatives of isomorphism classes of all simple $A$--modules and by $P(i)$ their projective covers.
Moreover, following~\cites{DR,Conde18}, we assume that $\Lambda$ is endowed with a partial ordering denoted by $\unlhd$. For each $i\in\Lambda$, we define the \textit{standard module} $\Delta(i)$ as the largest quotient of $P(i)$ with composition factors only of the form $L(j)$ for $j\unlhd i$. We point out that $\Delta(i)=P(i)/\sum_{j\not\unlhd i}\operatorname{Tr}_{P(j)} P(i)$, where the module in the denominator denotes the submodule of $P(i)$ generated by the images of all homomorphisms from $P(j)$ to $P(i)$. We denote by $\pi_i\colon P(i)\rightarrow\Delta(i)$ the natural surjection.
With this notation we give the following:

\begin{definition}[{\cite[Def.~2.11]{Conde18}}]\label{def:qh}
The algebra $(A,\unlhd)$ is called \textit{quasi-hereditary} if for all $i\in\Lambda$ the following hold:
\begin{itemize}
\item[(i)] $L(i)$ occurs exactly once in the composition series of $\Delta(i)$, i.e., the kernel of the canonical surjection $\Delta(i)\to L(i)$ admits a finite filtration $0=M_{-1}\subseteq M_0\subseteq M_1\subseteq\cdots\subseteq M_k$ where for all $\alpha=0,...,k$ the module $M_{\alpha}/M_{\alpha-1}$ is isomorphic to some $L(j)$, for $j\lhd i$.
\item[(ii)] The kernel of the surjection $\pi_i\colon P(i)\rightarrow\Delta(i)$ is filtered by $\{\Delta(j)\, |\, j\rhd i\}$, i.e., it admits a finite filtration $0=N_{-1}\subseteq N_0\subseteq N_1\subseteq\cdots\subseteq N_\ell$ where for all $\alpha=0,...,\ell$ the module $N_{\alpha}/N_{\alpha-1}$ is isomorphic to some $\Delta(j)$, for $j\rhd i$.
\end{itemize}
\end{definition}

The following concept was introduced in \cite[\S2]{Koe}.

\begin{definition}
\label{def:Borel}
Let $(A,\unlhd)$ be a quasi-hereditary finite dimensional $k$--algebra and let $B$ be a subalgebra of $A$ having the same number of simple modules as $A$; we denote them $S(i)$, $i\in\Lambda$. Then $B$ is called an \textit{exact Borel subalgebra} of $A$ if the following hold:
\begin{itemize}
\item[(i)] The algebra $(B,\unlhd)$ is quasi-hereditary with simple standard modules.
\item[(ii)] The induction functor $A\otimes_{B}-$ from $B$--modules to $A$--modules is exact.
\item[(iii)] For each simple $B$--module $S(i)$ there is an isomorphism $\Delta(i)\cong A\otimes_{B}S(i)$.
\end{itemize}
\end{definition}

We point out that quasi-hereditary algebras can be also defined via the concept of \textit{heredity chains}, which are certain filtrations of the algebra by two-sided ideals generated by idempotents, see \cite{DR2}. In a heredity chain, the idempotents appearing at the bottom of the filtration are maximal in the quasi-hereditary ordering. 
We are going to prove in Theorem~\ref{thm:Reedy_is_qh} below that Reedy algebras are quasi-hereditary.
However, for Reedy algebras it will be more convenient to have the idempotents of lowest degree at the bottom of the filtration, cf.~Example~\ref{ex:qh}. This justifies the ordering introduced in Theorem~\ref{thm:Reedy_is_qh}.

Let now $A$ be a Reedy algebra with a function $\deg\colon\{e_0,...,e_n\}\rightarrow\mathbb{N}$. We temporarily denote the standard $A$-modules, that is images of the standard functors from Definition~\ref{def:standards} under the equivalence of categories displayed in~\eqref{eq:equivalence}, by $\tilde\Delta(i)$. That is, $\tilde{\Delta}(i)=Ae_i/\sum_{\deg(e_j)<\deg(e_i)}\operatorname{Tr}_{Ae_j} Ae_i$. We know from Theorem~\ref{thm:Reedy_simples} that each $\tilde{\Delta}(i)$ has a unique simple factor which we denote by $L(i)$ (as for quasi-hereditary algebras) and all simple left $A$--modules arise in this way. Again, the projective cover of $L(i)$ will be denoted by $P(i)$. As we know from Remark~\ref{rem:Reedy-alg-non-primitive}, the projective left $A$--modules $Ae_i$ may be decomposable, so $P(i)$ is in general only a direct summand of $Ae_i$. The following lemma makes more precise what the complement looks like.

\begin{lemma}\label{lem:Reedy-alg-decomposition-of-representables}
Let $A$ be a Reedy algebra with a degree function $\deg\colon\{e_0,...,e_n\}\rightarrow\mathbb{N}$, let $i\in\{0,\dots,n\}$, and let $P(i)$ be a projective cover of $L(i)$ (using the notation just above). Then we have a direct sum decomposition $Ae_i \cong P(i) \oplus Q(i)$, where all indecomposable summands of $Q(i)$ must be of the form $P(j)$ with $\deg(j)<\deg(i)$.
\end{lemma}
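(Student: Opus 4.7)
The plan is to decompose $Ae_i$ into indecomposables by Krull--Schmidt and count multiplicities. Since $A$ is finite-dimensional, one can write $Ae_i\cong\bigoplus_{j=0}^{n}P(j)^{m_{ij}}$ for some non-negative integers $m_{ij}$, and it suffices to show that $m_{ii}=1$ and $m_{ij}=0$ whenever $j\neq i$ and $\deg(e_j)\geqslant\deg(e_i)$; the remaining summands will then automatically satisfy $\deg(e_j)<\deg(e_i)$, yielding the desired $Q(i)$.

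By the standard formula via Schur's lemma, $m_{ij}=\dim_k\Hom_A(Ae_i,L(j))/\dim_k\End_A(L(j))$. Under the equivalence~\eqref{eq:equivalence}, the hom identifies with the evaluation $L(j)(e_i)\cong e_iL(j)$. The key observation is that $L(j)(e_i)\cong e_iA^+e_j$ whenever $\deg(e_i)\leqslant\deg(e_j)$: from the proof of Theorem~\ref{thm:Reedy_exceptional} one has $\tilde\Delta(j)|_{A^+}\cong A^+e_j$, so that $\tilde\Delta(j)(e_i)\cong e_iA^+e_j$; and by Remark~\ref{rem:degree-of-std-composition-factors} the surjection $\tilde\Delta(j)\twoheadrightarrow L(j)$ is an isomorphism at every $e_i$ with $\deg(e_i)\leqslant\deg(e_j)$. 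Directness of $A^+$ then gives $e_iA^+e_j=0$ for $i\neq j$ in this range, and $e_iA^+e_i\cong k$ for $i=j$.

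To conclude $m_{ii}=1$, it remains to verify that $\End_A(L(i))=k$. Since $L(i)$ is simple with $e_iL(i)\cong k$ non-zero, $L(i)$ is generated over $A$ by this one-dimensional subspace; any $A$-linear endomorphism preserves $e_iL(i)$ and acts on it by a scalar, and this scalar determines the endomorphism globally. Hence $\End_A(L(i))=k$ and $m_{ii}=1$, finishing the argument. I do not foresee a serious obstacle, since the main technical work was already done in Subsections~\ref{subsec:standard} and~\ref{subsec:simple_functors}; the most delicate point is the identification $L(j)(e_i)\cong e_iA^+e_j$, which neatly packages the interplay among the standard modules, their simple tops, and the direct subalgebra $A^+$.
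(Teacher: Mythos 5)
Your proof is correct, but it takes a genuinely different route from the paper's. The paper works with the short exact sequence $0\to K(i)\to Ae_i\to\tilde{\Delta}(i)\to 0$, invokes Theorem~\ref{thm:proj_by_standards} to see that $K(i)$ is filtered by standard modules of strictly smaller degree, applies the right-exact top functor $M\mapsto M/\rad M$ together with Lemma~\ref{lem:top_of_Delta} to conclude that $K(i)/\rad K(i)$ only involves simples $L(j)$ with $\deg(j)<\deg(i)$, and then splits off $P(i)$ as the projective cover of $\tilde{\Delta}(i)$, so that the complement $Q(i)$ lies inside $K(i)$ and inherits the restriction on its top. You bypass the filtration entirely and count multiplicities via Krull--Schmidt: $m_{ij}=\dim_k e_iL(j)/\dim_k\End_A(L(j))$, combined with the identification $L(j)(e_i)\cong\tilde{\Delta}(j)(e_i)\cong e_iA^+e_j$ in the range $\deg(e_i)\leqslant\deg(e_j)$ (Remark~\ref{rem:degree-of-std-composition-factors} plus the computation $\tilde{\Delta}(j)|_{A^+}\cong A^+e_j$ from the proof of Theorem~\ref{thm:Reedy_exceptional}) and the directness of $A^+$. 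Both arguments lean on the structure theory of Section~\ref{sec:QH}, just on different pieces of it. Your version has the advantage of producing the exact multiplicities $m_{ij}$ for $\deg(e_j)\geqslant\deg(e_i)$ and of recording explicitly that $\End_A(L(i))\cong k$ (which, as you note, also follows at once from the equation $m_{ii}\cdot\dim_k\End_A(L(i))=1$, so your separate verification is not strictly needed); the paper's version avoids the Krull--Schmidt bookkeeping and stays closer to the filtration techniques (Theorem~\ref{thm:proj_by_standards}, Corollary~\ref{cor:std_filtration_summands}) that are reused immediately afterwards in the proof of Theorem~\ref{thm:Reedy_is_qh}.
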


\begin{proof}
Recall that we have a short exact sequence of the form
\[
\xymatrix@1{
0 \ar[r] & K(i) \ar[r]^-{\subseteq} & Ae_i \ar[r]^-{\rho_i} & \tilde{\Delta}(i) \ar[r] & 0
},
\]
where $K(i):=\sum_{\deg(e_j)<\deg(e_i)}\operatorname{Tr}_{Ae_j} Ae_i$ is filtered by $\tilde{\Delta}(j)$'s for some $j\in\{0,\dots,n\}$ with $\deg(j)<\deg(i)$ by Theorem~\ref{thm:proj_by_standards}.
Since the functor assigning to a left $A$--module $M$ the semisimple quotient factor $M/\rad(M)$ is right exact and $\tilde{\Delta}(j)/\rad\tilde{\Delta}(j)\cong L(j)$ for each $j$ by Lemma~\ref{lem:top_of_Delta}, it follows that $K(i)/\rad K(i)$ has only composition functors of the form $L(j)$ with $\deg(j)<\deg(i)$.

The projective cover $P(i)$ of $L(i)$ is also a projective cover of $\tilde{\Delta}(i)$ (again by Lemma~\ref{lem:top_of_Delta}); write $f\colon P(i)\rightarrow \tilde{\Delta}(i)$ for this projective cover. 
Hence there exists a homomorphism $\pi\colon Ae_i\rightarrow P(i)$ such that $f\circ \pi =\rho_i$. Since $f$ is a projective cover and $\rho_i$ is surjective we deduce that $\pi$ is also surjective by \cite[Corollary~5.15]{AF92}. 
Thus, we have a decomposition $Ae_i=P(i)\oplus Q(i)$, where we can without loss of generality assume that $P(i)$ is a submodule of $Ae_i$, and $Q(i)$ is a complement of $P(i)$ contained in $K(i)$. In particular, $Q(i)$ is a summand of $K(i)$, so $Q(i)/\rad Q(i)$ only has composition factors of the form $L(j)$ with $\deg(j)<\deg(i)$. However, $Q(i)$ being projective, we know that $Q(i)\twoheadrightarrow Q(i)/\rad Q(i)$ is a projective cover, so all indecomposable summands of $Q(i)$ must be of the form $P(j)$ with $\deg(j)<\deg(i)$.
\end{proof}

Now we can prove one of the main theorems of this paper.
We would like to warn the reader the established ordering conventions between Reedy categories and quasi-hereditary algebras do not agree.
The natural ordering on the simples of a Reedy algebra is that imposed by the degree, while in the quasi-hereditary ordering a simple of lowest Reedy degree will be of maximal weight and vice versa, cf. Example~\ref{ex:qh}.

\begin{theorem}
\label{thm:Reedy_is_qh}
Let $A$ be a Reedy finite dimensional algebra as in Definition~\ref{def:Reedy_algebra} and order the simple modules of $A$ by setting $L(i)\lhd L(j)$ if and only if $\deg(e_i)>\deg(e_j)$. Then $(A,\unlhd)$ is a quasi-hereditary algebra and $A^-$ is an exact Borel subalgebra.
\end{theorem}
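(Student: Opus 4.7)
My plan is to transport the functorial results of Section~\ref{sec:QH} to the algebra $A$ along the equivalence \eqref{eq:equivalence}, after reconciling the two \emph{a priori} different notions of ``standard module'' that appear. Write $\tilde\Delta(i)$ for the image under \eqref{eq:equivalence} of the standard functor $\Delta_{e_i}$, so that $\tilde\Delta(i)=Ae_i/K(i)$ with $K(i)=\sum_{\deg(e_j)<\deg(e_i)}\operatorname{Tr}_{Ae_j}Ae_i$, and write $\Delta(i)=P(i)/J(i)$ with $J(i)=\sum_{j\not\unlhd i}\operatorname{Tr}_{P(j)}P(i)$ for the standard module of Definition~\ref{def:qh}.

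The heart of the proof is the identification $\tilde\Delta(i)\cong\Delta(i)$. By the proof of Theorem~\ref{thm:Reedy_exceptional} we have $\tilde\Delta(i)|_{A^+}\cong A^+e_i$; since non-identity morphisms in $\C_A^+$ strictly raise the degree, the composition factors of $\tilde\Delta(i)$ are $L(i)$ (appearing with multiplicity one, by Remark~\ref{rem:degree-of-std-composition-factors}) together with some $L(k)$'s satisfying $\deg(k)>\deg(i)$, so all of them satisfy $k\unlhd i$. The universal property of $\Delta(i)$ then yields a surjection $\Delta(i)\twoheadrightarrow\tilde\Delta(i)$, whence $J(i)\subseteq K(i)\cap P(i)$. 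For the converse, Lemma~\ref{lem:Reedy-alg-decomposition-of-representables} applied to each $Ae_j$ appearing in $K(i)$ shows that every indecomposable summand of $Ae_j$ is some $P(k)$ with $\deg(k)<\deg(i)$, hence $K(i)\cap P(i)\subseteq\sum_{\deg(k)<\deg(i)}\operatorname{Tr}_{P(k)}P(i)\subseteq J(i)$, giving the desired equality.

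Granting this, condition~(i) of Definition~\ref{def:qh} is now immediate from Remark~\ref{rem:degree-of-std-composition-factors}. For condition~(ii), Theorem~\ref{thm:proj_by_standards} together with Remark~\ref{rem:proj_by_standards} produces a short exact sequence $0\to N\to Ae_i\to\Delta(i)\to 0$ in which $N$ is filtered by standards $\Delta(j)$ with $\deg(j)<\deg(i)$, i.e.\ $j\rhd i$. Writing $Ae_i=P(i)\oplus Q(i)$ as in Lemma~\ref{lem:Reedy-alg-decomposition-of-representables} and observing that $\Hom(Q(i),\Delta(i))=0$ (since $\Delta(i)(c_j)\cong\C_A^+(c_i,c_j)=0$ whenever $\deg(j)<\deg(i)$), the summand $Q(i)$ is contained in $N$ with complement $\ker(P(i)\twoheadrightarrow\Delta(i))$; by Corollary~\ref{cor:std_filtration_summands} and Proposition~\ref{prop:std_filtration} this complement itself admits a filtration by $\Delta(j)$'s with $j\rhd i$, verifying condition~(ii). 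Thus $(A,\unlhd)$ is quasi-hereditary.

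For the exact Borel part, I would first observe that $A^-$ is itself a Reedy algebra in the sense of Definition~\ref{def:Reedy_algebra}, corresponding to the inverse Reedy category $\C_A^-$ equipped with the extreme Reedy structure of Example~\ref{ex:extreme Reedy cases}. Applying the portion of the theorem already proved to $A^-$ shows that $A^-$ is quasi-hereditary (with the same ordering inherited from the degree function), and by Theorem~\ref{thm:Reedy_Borel}(i) its standard modules coincide with the simples. Conditions~(ii) and~(iii) of Definition~\ref{def:Borel} then follow from Theorem~\ref{thm:Reedy_Borel}(ii) and~(iii), once one observes that the equivalence \eqref{eq:equivalence} and its analogue for $A^-$ intertwine the restriction $A\textnormal{-mod}\to A^-\textnormal{-mod}$ with the restriction $\K^{\C_A}\to\K^{\C_A^-}$, and dually the induction $A\otimes_{A^-}-$ with $\C_A\otimes_{\C_A^-}-$. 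The main obstacle throughout is the identification $\tilde\Delta(i)=\Delta(i)$ in the second paragraph: the two definitions factor out by quite different sets of traces, and the reconciliation rests on the somewhat delicate fact from Lemma~\ref{lem:Reedy-alg-decomposition-of-representables} that the representables $Ae_j$ may be decomposable, and can only decompose into projective covers of degrees at most $\deg(j)$.
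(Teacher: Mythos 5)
Your proposal is correct and follows essentially the same route as the paper's own proof: everything is reduced to the identification $\tilde\Delta(i)\cong\Delta(i)$ resting on Lemma~\ref{lem:Reedy-alg-decomposition-of-representables}, after which condition (i) follows from Remark~\ref{rem:degree-of-std-composition-factors}, condition (ii) from Theorem~\ref{thm:proj_by_standards} and (the proof of) Corollary~\ref{cor:std_filtration_summands}, and the exact Borel statement from Theorem~\ref{thm:Reedy_Borel} applied through the equivalence~\eqref{eq:equivalence}. The only (harmless) deviation is in the inclusion $J(i)\subseteq K(i)\cap P(i)$, which you obtain from the maximality property of $\Delta(i)$ combined with the composition-factor analysis of $\tilde\Delta(i)$, whereas the paper instead computes $e_jAe_i\cong\bigoplus_{\deg(l)<\deg(i)}e_jA^+e_l\otimes_k e_lA^-e_i$ for $j\ne i$ of the same degree to absorb those traces into the lower-degree ones.
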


\begin{proof}
Under the equivalence of categories displayed in~\eqref{eq:equivalence}, for each $i=0,...,n$ the $\C_A$--modules $\C_{A}(e_i,-)$, $\C_{A}^-(e_i,-)$, $\Delta_{e_i}$ and $L_{e_i}$ are sent to the $A$--modules $Ae_i$, $A^-e_i$, $\tilde{\Delta}(i)$ and $L(i)$, respectively.

We claim that $\tilde{\Delta}(i)\cong\Delta(i)$, where the latter has been introduced in the paragraph just above Definition~\ref{def:qh}. Recall that
\[
\tilde{\Delta}(i) = Ae_i/\sum_{\deg(e_j)<\deg(e_i)}\operatorname{Tr}_{Ae_j} Ae_i
\]
while
\[
\Delta(i) = P(i)/\sum_{j\not\unlhd i}\operatorname{Tr}_{P(j)} P(i),
\]
where $P(i)$ is a projective cover of the simple module $L(i)$.
First of all, notice that if $j\in\{0,\dots,n\}$ is distinct from $i$ and $\deg(e_j)=\deg(e_i)$, then $\Hom_A(Ae_j,Ae_i)\cong e_jAe_i=\bigoplus_{\deg(l)<\deg(i)} \,e_jA^+e_l\, \otimes_{k}\, e_lA^-e_i$, so $\operatorname{Tr}_{Ae_j}Ae_i\subseteq\sum_{\deg(l)<\deg(i)}\operatorname{Tr}_{Ae_l}Ae_i$. Hence
\[
\tilde{\Delta}(i) = Ae_i/\sum_{j\not\unlhd i}\operatorname{Tr}_{Ae_j} Ae_i
\]
by the definition of the partial order $\lhd$.
Next, if we decompose $Ae_i=P(i)\oplus Q(i)$ as in Lemma~\ref{lem:Reedy-alg-decomposition-of-representables}, then clearly $Q(i)\subseteq\sum_{j\not\unlhd i}\operatorname{Tr}_{Ae_j} Ae_i$, so that
\[ \tilde{\Delta}(i) = P(i)/\sum_{j\not\unlhd i}\operatorname{Tr}_{Ae_j} P(i). \]

By Lemma~\ref{lem:Reedy-alg-decomposition-of-representables}, the direct sum $Q=\oplus_{j\not\unlhd i}Ae_j$ is isomorphic to a direct sum of copies of the projective modules $P(j)$ with ${j\not\!\!\unlhd\,\,i}$, and each such $P(j)$ indeed appears as a summand of $Q$. Hence $\sum_{j\not\unlhd i}\operatorname{Tr}_{Ae_j} P(i) = \operatorname{Tr}_{Q} P(i) = \sum_{j\not\unlhd i}\operatorname{Tr}_{P(j)} P(i)$. This proves the claim.

Keeping this in mind, condition (i) in Definition~\ref{def:qh} follows immediately from Remark~\ref{rem:degree-of-std-composition-factors}. Regarding condition (ii), recall that the projective cover $P(i)$ of $L(i)$ is also a projective cover of $\Delta(i)$ by Lemma~\ref{lem:top_of_Delta}. Moreover, if $f\colon P(i)\to\Delta(i)$ is the projective cover as a morphism, we can identify $f$ with a restriction of the canonical projection $Ae_i\to\Delta(i)$ in such a way that the kernel of $f$ is a summand of the kernel of $Ae_i\to\Delta(i)$.
We discussed this in the proof of Lemma~\ref{lem:Reedy-alg-decomposition-of-representables}; see also~\cite[Lemma~17.17]{AF92}. 
Since the kernel of the canonical projection $Ae_i\to\Delta(i)$ has a filtration by standard modules $\Delta(j)$ with $j\rhd i$ by Theorem~\ref{thm:proj_by_standards},
the kernel of $f$ has such a filtration too, by (the proof of) Corollary~\ref{cor:std_filtration_summands}. This shows that $(A,\unlhd)$ is quasi-hereditary.
 
We may apply the same proof to the Reedy algebra $A^-$, whose simple modules are indexed by the same set indexing the simple modules of $A$ by Theorem~\ref{thm:Reedy_Borel}(i), Theorem~\ref{thm:Reedy_simples} and the equivalence of categories displayed in~\eqref{eq:equivalence}. Hence, $(B,\unlhd)$ is quasi-hereditary with simple standard modules.
The induction functor $A\otimes_{A^-}-$ is exact by Theorem~\ref{thm:Reedy_Borel}(ii) and sends simple $A^-$--modules to standard $A$--modules by Theorem~\ref{thm:Reedy_Borel}(iii). Hence, $A^-$ is an exact Borel subalgebra of $A$.
\end{proof}

\section{Functors indexed by linear Reedy categories}
\label{sec:Fun}

In the classical (non-additive) theory of Reedy categories, functors with source Reedy categories are usually constructed by induction on the degree, see for instance \cite[Ch.~15]{Hir}. Here we transfer some of these basic facts to the realm of $k$--linear categories. We follow closely the elegant exposition of Riehl and Veriti \cite{riehl}.

\begin{definition}\textnormal{(Filtrations of Reedy categories).}
\label{def:filtrations}
Let $\class C$ be a small $k$--linear Reedy category with a degree function $\deg\colon\mathrm{Obj}(\class C)\rightarrow \lambda$. For every (non-zero) ordinal $\alpha\leqslant\lambda$ we define $\C_{<\alpha}$ as the full subcategory of $\C$ formed by objects of degree strictly smaller than $\alpha$. 
In the case of a successor ordinal $\alpha=\beta+1$ sometimes we also denote $\C_{<\alpha}$ by $\C_{\leqslant\beta}$.
\end{definition}

It is not hard to see that the subcategories just defined are in fact full Reedy subcategories of $\C$. 
In this way we obtain a filtration of $\C$ by full $k$--linear Reedy subcategories, which can be (partially) depicted by 
  \[\C_{\leqslant 0}\subseteq\C_{\leqslant 1}\subseteq\cdots\C_{<\alpha}\subseteq\C_{<\alpha+1}\subseteq\cdots\subseteq\C_{<\lambda}=\C.\]

\begin{ipg}\textbf{Setup.}
\label{setup}
In the rest of the paper we fix the following data, which is a special case of what we considered in Section~\ref{sec:preliminaries}.
\begin{itemize}
\item[•] $k$ is a field and $\M$ is a bicomplete $k$--linear abelian category.
\item[•] $\C$ is a $k$--linear Reedy category with a degree function $\deg\colon\mathrm{Obj}(\C)\rightarrow \lambda$.
\item[•] $\M^{\C}$ is the category of $k$--linear functors from $\C$ to $\M$.
\end{itemize}
\end{ipg}

For an ordinal $\alpha <\lambda$, we consider the fully faithful embedding $i\colon\class C_{<\alpha}\rightarrow \class C$. The restriction functor $\res_{\alpha}\colon \mathcal{M}^{\mathcal{C}}\rightarrow \mathcal{M}^{\mathcal{C_{<\alpha}}}$, admits a left adjoint $\sk_{\alpha}$, and a right adjoint $\cosk_{\alpha}$ (the Kan extension functors), see for instance \cite[Sect.~4]{Murfet_ringoids}. Pictorially:
\begin{equation}
\label{eq:adj_triple}
 \xymatrix@C=2pc{
\mathcal{M}^{\mathcal{C}} \ar[rr]|-{\res_{\alpha}} & &  \mathcal{M}^{\mathcal{C_{<\alpha}}} \ar@/_0.9pc/[ll]_-{\mathrm{sk}_{\alpha}} \ar@/^0.9pc/[ll]^-{\mathrm{cosk}_{\alpha}}.
}
\end{equation}
We briefly recall how these functors are defined. Fix an object $c$ in $\mathcal{C}$. If $X$ is a functor in $\class M^{\class C_{<\alpha}}$, then we define
\begin{equation}
\label{eq:sk}
\sk_{\alpha}X(c) := \C(i(-),c)\tens{\class C_{<\alpha}}X,
\end{equation}
where the right hand side of (\ref{eq:sk}) is just the tensor product of functors as in \S\ref{subsec:ringoids}; taken over the subcategory $\class C_{<\alpha}$. To further illustrate the situation, if $\M$ is the category of left $A$--modules over a $k$--algebra $A$, the counit $l^{\alpha}$ of the adjunction $(\sk_{\alpha},\res_{\alpha})$, evaluated at any $Y\in\M^\C$ and $c\in\C$, is simply the morphism 
\begin{equation}
\label{eq:counit}
l^{\alpha,Y}_{c}:=l_{c}^{Y}\colon\C(i(-),c)\tens{\class C_{<\alpha}} \res_\alpha(Y) \longrightarrow Y(c); \quad \phi\otimes\xi\mapsto Y(\phi)(\xi).
\end{equation}
For general $\M$, the counit can be similarly constructed using for each object $d$ of $\class C_{<\alpha}$ the composition $\C(c,d)\otimes Y(c)\to \M(Y(c),Y(d))\otimes Y(c)\to Y(d)$, where the first part is given by the action of $Y$ on morphisms, $\C(c,d)\to\M(Y(c),Y(d))$, and the second one is the evaluation morphism as constructed in~\S\ref{subsec:ringoids}. We will leave out details as we will obtain a more convenient description of the counit in Theorem~\ref{thm:standard-(co)induction}.

Analogously, the right adjoint to the restriction can be defined as
\begin{equation}
\label{eq:cosk}
\cosk_{\alpha}X(c):=\hom_{\mathcal{C_{<\alpha}}}(\class C(c,i(-)),X),
\end{equation}
where $\hom_{\mathcal{C_{<\alpha}}}$ denotes the weighted limit construction as in \S\ref{subsec:ringoids}.
If $\M$ happens to be the category of left $A$--modules over a $k$--algebra $A$, $\hom_{\mathcal{C_{<\alpha}}}$ coincides with the vector space of all natural transformations from $\class C(c,i(-))$ to $X$, which are both functors with source the subcategory $\C_{<\alpha}$. 
The unit $m^{\alpha}$ of the adjunction $(\res_{\alpha},\cosk_{\alpha})$, evaluated at $Y\in\M^\C$ and $c\in\C$, is in this case the morphism
\begin{equation}
\label{eq:unit}
m^{\alpha,Y}_{c}:=m^{Y}_{c}\colon Y(c)\rightarrow \hom_{\mathcal{C_{<\alpha}}} \left(\class C(c,i(-)),Y \right); \quad \xi \mapsto [f\mapsto Y(f)(\xi)].
\end{equation}
For an abstract bicomplete $k$--linear category $\M$, it is possible to construct the unit using the compositions
$Y(c)\to\hom\big(\M(Y(c),Y(d)),Y(d)\big)\to\hom\big(\C(c,d),Y(d)\big)$ of the unit of adjunction mentioned at the end of~\S\ref{subsec:enriched} with the action of $Y$ on morphisms. 
A more convenient general description of $m^{\alpha,Y}_{c}$ will be again obtained in Theorem~\ref{thm:standard-(co)induction}.

Since the functor $i\colon\class C_{<\alpha}\rightarrow \class C$ is fully faithful, the functors $\sk_{\alpha}$ and $\cosk_{\alpha}$ are also fully faithful \cite[Prop.~4.23]{Kelly}. Thus the unit $\eta^{\alpha}$ of $(\sk_{\alpha},\res_{\alpha})$ and the counit $\epsilon^{\alpha}$ of $(\res_{\alpha},\cosk_{\alpha})$ are both isomorphisms. They induce a natural transformation $\tau^{\alpha}\colon \sk_{\alpha}\rightarrow \cosk_{\alpha}$, which can be, by Lemma~\ref{lem:jump-over-middle-adjoint}, equivalently given as the map
\[\sk_{\alpha}\xrightarrow{m^{\alpha}\circ\,\sk_{\alpha}}\cosk_{\alpha}\circ\res_{\alpha}\circ\sk_{\alpha}\xrightarrow{\cosk_{\alpha}\circ\,(\eta^{\alpha})^{-1}}\cosk_{\alpha},\]
or the map
\[\sk_{\alpha}\xrightarrow{\sk_{\alpha}\circ\,(\epsilon^{\alpha})^{-1}}\sk_{\alpha}\circ\res_{\alpha}\circ\cosk_{\alpha}\xrightarrow{l^{\alpha}\circ\,\cosk_{\alpha}}\cosk_{\alpha}.\]

\begin{remark}
\label{rem:restricted_adj}
For an ordinal $\alpha <\lambda$, we may use a restricted version of the adjoint triple (\ref{eq:adj_triple}) relative to the embedding $\C_{<\alpha}\rightarrow \C_{<\alpha+1}$, that is, an adjoint triple
\begin{equation}
\label{eq:adj_triple_relative}
 \xymatrix@C=2pc{
\mathcal{M}^{\mathcal{C}_{<\alpha+1}} \ar[rr]|-{\res_{\alpha}} & &  \mathcal{M}^{\mathcal{C}_{<\alpha}} \ar@/_0.9pc/[ll]_-{\mathrm{sk}_{\alpha}} \ar@/^0.9pc/[ll]^-{\mathrm{cosk}_{\alpha}},
}
\end{equation}
where we keep the same notation as in (\ref{eq:adj_triple}). The above discussion for the adjunction (\ref{eq:adj_triple}) carries over. It will be clear from context whether we use (\ref{eq:adj_triple}) or (\ref{eq:adj_triple_relative}) when referring to the functors $\mathrm{sk}_{\alpha}$, $\mathrm{cosk}_{\alpha}$ and the natural transformation $\tau^{\alpha}_c\colon \sk_{\alpha}\rightarrow \cosk_{\alpha}$.
\end{remark}

Given a functor from $(\C_{<\alpha+1},\M)$, we have a factorisation $\sk_{\alpha}X\rightarrow X\rightarrow \cosk_{\alpha}X$ of $\tau^{\alpha,X}$ by Lemma~\ref{lem:abstract-latching-matching-factorisation}. In particular, for each object $c$ of degree $\alpha$, this restricts to a factorisation $\sk_{\alpha}X(c)\rightarrow X(c)\rightarrow \cosk_{\alpha}X(c)$ of the map $\tau^{\alpha,X}_{c}$.

Now we are going to focus on when and in how many ways we can extend a functor from $(\C_{<\alpha},\M)$ to $(\C_{<\alpha+1},\M)$. We show that all that is necessary is, for every object $c$ of degree $\alpha$, to choose such a factorisation $\sk_{\alpha}X(c)\rightarrow X(c)\rightarrow \cosk_{\alpha}X(c)$ of the canonical map $\tau^{\alpha,X}_{c}$. In the case of ordinary Reedy categories this condition is sufficient, see \cite[Theorem~15.2.1]{Hir} or \cite[Lemma~3.10]{riehl}. We need to carry over this fact to the $k$--linear context.

\begin{proposition}\textnormal{(cf.~\cite[3.10/3.11]{riehl})}
\label{prop:Riehl_Veriti}
Let $X$ be a $k$--linear functor in $(\class C_{<\alpha},\M)$ for some ordinal $\alpha<\lambda$. Then a family of factorizations $\sk_{\alpha}X(c)\rightarrow X(c)\rightarrow \cosk_{\alpha}X(c)$ of the canonical maps $\tau^{\alpha,X}_{c}$, indexed by all objects $c$ of degree $\alpha$, uniquely determines a functor in $(\class C_{<\alpha+1},\M)$ that coincides with $X$ when restricted to $\class C_{<\alpha}$.

In addition, given $X$ and $Y$ in $(\C,\M)$, an extension of a natural transformation $\phi\colon X_{<\alpha}\rightarrow Y_{<\alpha}$ to a natural transformation $\phi\colon X_{\leqslant\alpha}\rightarrow Y_{\leqslant\alpha}$ uniquely corresponds to a family $\{\phi_{c}\colon X(c)\rightarrow Y(c)\, |\, \deg(c)=\alpha\}$ of morphisms in $\M$ such that the following diagrams commute:
\begin{displaymath}
 \xymatrix@C=2pc{
 \sk_{\alpha}X_{<\alpha}(c)  \ar[r]^-{l_c^X} \ar[d]_-{\sk_{\alpha}\phi_{c}} & X(c)\ar[r]^-{m_c^X} \ar[d]^-{\phi_c} & \cosk_{\alpha}X_{<\alpha}(c)\ar[d]^-{\cosk_{\alpha}\phi_{c}} \\
\sk_{\alpha}Y_{<\alpha}(c) \ar[r]^-{l_c^Y} & Y(c)\ar[r]^-{m_c^Y} & \cosk_{\alpha}Y_{<\alpha}(c).
 }
 \end{displaymath}
\end{proposition}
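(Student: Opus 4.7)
The plan is to explicitly construct $\tilde X$ on $\C_{<\alpha+1}$ using the universal properties of the coend $\sk_\alpha X(c) = \int^{d\in\C_{<\alpha}}\C(d,c)\otimes X(d)$ and the end $\cosk_\alpha X(c) = \int_{d\in\C_{<\alpha}}\hom(\C(c,d),X(d))$, and then to verify functoriality. I would set $\tilde X|_{\C_{<\alpha}} := X$ and $\tilde X(c) := X(c)$ at every object $c$ of degree $\alpha$. For $f\colon d\to c$ with $\deg(d)<\alpha=\deg(c)$, the given map $l^X_c\colon \sk_\alpha X(c)\to X(c)$ is equivalent (by the coend's universal property) to a dinatural family of maps $\C(d,c)\otimes X(d)\to X(c)$; evaluating at $f$ defines $\tilde X(f)$. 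Dually, $m^X_c\colon X(c)\to\cosk_\alpha X(c)$ provides $\tilde X(g)$ for $g\colon c\to d$ with $\deg(d)<\alpha$. For $h\colon c\to c'$ between degree-$\alpha$ objects, the Reedy factorisation~\eqref{eq:Reedy-factorization} of $\C(c,c')$ writes $h = \lambda\cdot\mathrm{id}_c + \sum_i h^+_i\circ h^-_i$ with the $h^\pm_i$ factoring through objects of lower degree (and $\lambda=0$ when $c\neq c'$), and I set $\tilde X(h) := \lambda\cdot\mathrm{id}_{X(c)} + \sum_i\tilde X(h^+_i)\circ\tilde X(h^-_i)$; this is well-defined by bilinearity of composition in $\M$.

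The main step is to verify that this $\tilde X$ respects composition, which I would handle by case analysis on how many of the three objects in a composable pair have degree $\alpha$. The cases with at most one degree-$\alpha$ object follow from the dinaturality built into the coend/end formulas and the functoriality of $X$ on $\C_{<\alpha}$. The hard case, which is the main obstacle, is $d\xrightarrow{f}c\xrightarrow{g}d'$ with $\deg(d),\deg(d')<\alpha=\deg(c)$: here $gf\in\C(d,d')$ lies in $\C_{<\alpha}$, so one needs $\tilde X(g)\circ\tilde X(f) = X(gf)$. This is precisely what the hypothesis $m^X_c\circ l^X_c = \tau^{\alpha,X}_c$ guarantees: by transposing across the relevant adjunctions, $m^X_c\circ l^X_c$ corresponds to the family $(f,g)\mapsto\tilde X(g)\circ\tilde X(f)$, while $\tau^{\alpha,X}_c$, built from the units and counits of the adjoint triple as in Lemma~\ref{lem:abstract-latching-matching-factorisation}, corresponds to $(f,g)\mapsto X(gf)$. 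The remaining case, where all three objects have degree $\alpha$, reduces to the previous cases by inserting Reedy factorisations of the morphisms between degree-$\alpha$ objects and using bilinearity.

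For uniqueness, any extension $\tilde X$ determines its factorisations $l^X_c$ and $m^X_c$ as the $c$-components of the counit $\sk_\alpha\res_\alpha\tilde X\to\tilde X$ and the unit $\tilde X\to\cosk_\alpha\res_\alpha\tilde X$, respectively, and the reconstruction above is then forced, establishing a bijection between the two data. For the statement on natural transformations, I would observe that extending $\phi|_{<\alpha}$ to $\phi\colon X_{\leqslant\alpha}\to Y_{\leqslant\alpha}$ amounts to choosing $\phi_c$ at each $\deg(c)=\alpha$ subject to naturality against every morphism of $\C_{\leqslant\alpha}$ touching $c$. By the same coend/end universal properties used above, naturality against incoming (resp.\ outgoing) morphisms from $\C_{<\alpha}$ is equivalent to commutativity of the left (resp.\ right) square, while naturality for morphisms between degree-$\alpha$ objects reduces to these two by Reedy decomposition.
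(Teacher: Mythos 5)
Your overall strategy is essentially the paper's: keep $X$ on $\C_{<\alpha}$, define the extension on morphisms touching a degree-$\alpha$ object $c$ by transposing the given maps $l^X_c$ and $m^X_c$ across the coend/end universal properties of $\sk_\alpha X(c)$ and $\cosk_\alpha X(c)$, handle morphisms between two degree-$\alpha$ objects via Reedy factorisation, and use the hypothesis $m^X_c\circ l^X_c=\tau^{\alpha,X}_c$ precisely to prove $\widetilde{X}(g)\circ\widetilde{X}(f)=X(gf)$ for $d\xrightarrow{f}c\xrightarrow{g}d'$ with $\deg(c)=\alpha$ and $\deg(d),\deg(d')<\alpha$. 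That identification of where the factorisation hypothesis enters is correct (and your unwinding of $\tau^{\alpha,X}_c$ as $(f,g)\mapsto X(gf)$ is right), as are your uniqueness argument and the treatment of natural transformations.

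The gap is in the case analysis for compatibility with composition. Besides the endpoint cases, the middle-object case, and the all-three-degree-$\alpha$ case, there remain the configurations with exactly two degree-$\alpha$ objects, and the problematic one is $a\xrightarrow{f}b\xrightarrow{g}c$ with $\deg(a)=\deg(c)=\alpha>\deg(b)$. Here neither $f$ nor $g$ is a morphism between degree-$\alpha$ objects, so your proposed reduction (``insert Reedy factorisations of the morphisms between degree-$\alpha$ objects'') does not apply; yet $gf$ \emph{is} such a morphism, and $\widetilde{X}(gf)$ is defined through its own Reedy factorisation $gf=\sum_i h_i^+\circ h_i^-$. One must prove $\widetilde{X}(g)\circ\widetilde{X}(f)=\sum_i\widetilde{X}(h_i^+)\circ\widetilde{X}(h_i^-)$, and this does not follow from the coend relation alone, since that relation is imposed only over $\C_{<\alpha}$ while $f$ and the $h_i^-$ start at $a\notin\C_{<\alpha}$. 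Closing it requires exactly the computation in the second half of the paper's proof: Reedy-decompose $f=\sum_j f_j^+\circ f_j^-$ and $g=\sum_k g_k^+\circ g_k^-$, use the end/coend compatibilities at $a$ and $c$ to get $\widetilde{X}(f)=\sum_j X(f_j^+)\circ\widetilde{X}(f_j^-)$ and $\widetilde{X}(g)=\sum_k\widetilde{X}(g_k^+)\circ X(g_k^-)$, Reedy-decompose each $g_k^-\circ f_j^+$ in $\C_{<\alpha}$ and absorb the pieces using those same compatibilities, and finally invoke the uniqueness of the Reedy decomposition of $gf$ (it is obtained by refining the decompositions of the factors) to identify the result with $\widetilde{X}(gf)$. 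The same refinement is what actually underlies your appeals to ``bilinearity'' in the remaining multi-$\alpha$ configurations. All the tools are present in your sketch, but this step is where the real work of the proposition lies and it needs to be carried out explicitly.
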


\begin{proof}
From the given data the values $X(c)$ for $c$ of degree $\alpha$ are given. We need to  define how $X$ acts on the morphisms of $\C_{\leqslant\alpha}(c,d)$ where $\deg(c)=\alpha$ or $\deg(d)=\alpha$.
If both the degrees equal $\alpha$, then both $\C^+(c,d)$ and $\C^-(c,d)$ are either $0$ or $k$ and we have no choice for the action of $X$ on them from $k$--linearity.

Consider a non-zero non-endomorphism $f\colon c\rightarrow d$ in $\C^+(c,d)$ with $\deg(c)<\alpha=\deg(d)$.
Then we define $X(f)$ by the commutativity of the following diagram: 
\[
 \xymatrix@C=2pc{
 \sk_{\alpha}X(c)  \ar[r]^-{\cong} \ar[d]_-{(\sk_{\alpha}X)(f)} & X(c) \ar@{-->}[d]^-{X(f)}  \\
\sk_{\alpha}X(d) \ar[r] & X(d). 
 }
\]
Here, the unlabeled morphisms are part of the data given and the top one is an isomorphism since $\deg(d)<\alpha$. The resulting assignment $\C^+(c,d)\to\M(X(c),X(d))$ is clearly $k$--linear, i.e.\ $X(f+g)=X(f)+X(g)$ and $X(\lambda\cdot f)=\lambda\cdot X(f)$, for all $f,g\in\C^+(c,d)$ and $\lambda\in k$.
For the dual case where $f$ belongs to $\C^-(c,d)$ with $\deg(c)=\alpha>\deg(d)$, by a similar argument we define $X(f)$ by the commutativity of the following diagram: 
\[
 \xymatrix@C=2pc{
X(c)  \ar[r] \ar@{-->}[d]_-{X(f)}  &  \cosk_{\alpha}X(c)  \ar[d]^-{(\cosk_{\alpha}X)(f)}  \\
X(d) \ar[r]^-{\cong} & \cosk_{\alpha}X(d).
 }
\]
One readily checks that in both cases, the following diagram commutes since the outer rectangle commutes (the horizontal compositions are components of the natural transformation $\tau^{\alpha,X}$):
\begin{equation}
\label{eq:combined-sk-cosk}
\vcenter{
\xymatrix@C=2pc{
 \sk_{\alpha}X(c) \ar[r] \ar[d]_-{(\sk_{\alpha}X)(f)} & X(c) \ar[r] \ar[d]^-{X(f)} & \cosk_{\alpha}X(c)  \ar[d]^-{(\cosk_{\alpha}X)(f)} \\
\sk_{\alpha}X(d) \ar[r] & X(d) \ar[r] & \cosk_{\alpha}X(d). 
}}
\end{equation}

Given a general morphism $f\in\C_{\leqslant\alpha}(c,d)$ with $\deg(c)=\alpha$ or $\deg(d)=\alpha$, we consider a Reedy factorization $f=\sum_{i}\lambda_{i}\cdot f_{i}^{+}\circ f_{i}^{-}$ using the isomorphism $\bigoplus_{e \in \mathrm{Obj}(\C_{\leqslant\alpha})} \C^+(e,d) \otimes_{k} \C^-(c,e) \xrightarrow{\cong} \class C(c,d)$, and put
\[ X(f) = \sum_i \lambda_{i}\cdot X(f_{i}^{+})\circ X(f_{i}^{-})\colon X(c)\longrightarrow X(d). \]
The resulting assignment $\C_{\leqslant\alpha}(c,d)\to\M(X(c),X(d))$ is again $k$--linear by the universal property of $\otimes_k$ and, clearly, the diagram~\eqref{eq:combined-sk-cosk} commutes for any $f\in\C_{\leqslant\alpha}(c,d)$ with any $c,d\in\mathrm{Obj}(\C_{\leqslant\alpha})$.

It remains to show that $X$ respects compositions.
To this end, consider morphisms $f\colon c\rightarrow d$ and $g\colon d\rightarrow e$ in $\C_{\leqslant\alpha}$ and their Reedy factorizations $f=\sum_{i}\lambda_{i}\cdot f_{i}^{+}\circ f_{i}^{-}$ and $g=\sum_{j}\lambda_{i}\cdot g_{j}^{+}\circ g_{j}^{-}$ respectively. By the additivity of $X$ it suffices to prove that $X(g_j)\circ X(f_i) = X(g_j\circ f_i)$ for all $i,j$; where $f_i=f_i^+\circ f_i^-$ and $g_j=g_j^+\circ g_j^-$. We consider the following diagram:  
\[
 \xymatrix@C=2pc{
 c\ar[rr]^-{f_i} \ar[dr]_-{f_i^-} & & d \ar[rr]^-{g_j} \ar[dr]_-{g_j^-} & &  e \\
 & c' \ar[ur]_-{f_i^+} \ar[rr]_-{h_{ij}} & & d' \ar[ur]_-{g_j^+} &  
 }
\]
In case the degree of $c'$ (resp. $d'$) is equal to $\alpha$, the morphism $f_i$ (resp. $g_j$) is a multiple of the identity and what we want to prove is clear. So assume that $\deg(c')$ and $\deg(d')$ are smaller than $\alpha$.
In that, case, both $X(g_j^-)\circ X(f_i^+)$ and $X(g_j^-\circ f_i^+)$ fit as the middle vertical arrow of the diagram
\[
\xymatrix@C=2pc{
 \sk_{\alpha}X(c') \ar[r]^-{\cong} \ar[d]_-{(\sk_{\alpha}X)(h_{ij})} & X(c') \ar[r]^-{\cong} \ar@{-->}[d] & \cosk_{\alpha}X(c')  \ar[d]^-{(\cosk_{\alpha}X)(h_{ij})} \\
\sk_{\alpha}X(d') \ar[r]^-{\cong} & X(d') \ar[r]^-{\cong} & \cosk_{\alpha}X(d')
}
\]
(recall diagram \eqref{eq:combined-sk-cosk}). Hence $X(g_j^-)\circ X(f_i^+)=X(g_j^-\circ f_i^+)$. For a similar reason, if we write $h_{ij}=\sum_{w}\mu_{w}\cdot h_{ijw}^+ \circ h_{ijw}^-$ for a Reedy factorization of $h_{ij}$, we also have: 
\begin{align*}
X(g_j^+)\circ  X(h_{ij}) \circ X(f_i^-) &= \sum_{w}\mu_{w}\cdot X(g_j^+)\circ X(h_{ijw}^+) \circ X(h_{ijw}^-)\circ X(f_i^-) \\
&= \sum_{w}\mu_{w}\cdot X(g_j^+\circ h_{ijw}^+) \circ X(h_{ijw}^-\circ f_i^-) \\
&= X(g_j^+\circ h_{ij}\circ f_i^-)  \\
&= X(g_j \circ f_i).
\end{align*}

Finally, as in \cite{riehl} the last part of the statement follows if we apply the first one to the $k$--linear functor $\tilde{X}\colon \C_{<a}\rightarrow \mathrm{Arr}(\M)$ which sends an object $c$ to the morphism $\phi_c\colon X(c)\rightarrow Y(c)$; here $\mathrm{Arr}(\M)$ denotes the arrow category of $\M$ (which is $k$--linear bicomplete abelian just as $\M$). 
\end{proof}

\subsection{Cofinality}
\label{subsec:cofinality}

In Theorem \ref{thm:cofinality} below we show that for a functor $X\colon\C\rightarrow \M$ and an object $c$ of degree $\alpha$, the (weighted) colimit $\sk_{\alpha}X(c)$ from (\ref{eq:sk}) is isomorphic to a colimit of $X$ weighted by certain morphisms that depend only on the direct subcategory $\C^+$. A dual result concerning  $\cosk_{\alpha}X(c)$ is also given. The analogous statement in the non-additive case can be found in \cite[Cor.~15.2.9]{Hir}.

For an object $c$ in $\C$ and an ordinal $\alpha<\lambda$ we consider the inclusion $i\colon \C_{<\alpha}\rightarrow \C$ and the following functors:
\begin{equation}
\C^+(i(-),c)\colon(\C^{+}_{<\alpha})^\mathrm{o}\rightarrow \mathcal{K},\qquad\qquad
\C^-(c,i(-))\colon\C^-_{<\alpha} \rightarrow \mathcal{K}.
\end{equation}

Note that in case $c$ is of degree $\alpha$ in $\C$, in view of Remark \ref{rem:non_isos_rad} there exist isomorphisms:
\begin{equation}
\C^+(i(-),c)\cong \rad_{\C^+}(i(-),c),
\qquad\qquad
\C^-(c,i(-))\cong \rad_{\C^-}(c,i(-)).
\end{equation}

\begin{definition}
\label{def:latching_matching}
If $c$ is an object of degree $\alpha$ in $\class C$ and $X\colon\C\rightarrow \M$ is a $k$--linear functor, then we call the \textit{latching object of $X$ at $c$} the object of $\M$ defined as
\[L_{c}X:=\C^{+}(i(-),c)\tens{\class C_{<\alpha}^+}\res_\alpha(X),\]
and we call the \textit{latching morphism of $X$ at $c$}, the following morphism in $\M$, 
\[
l^{\alpha,X}_{c}:=l_{c}^{X}\colon L_cX\longrightarrow X(c),
\]
which is analogous to the morphism in~\eqref{eq:counit} (see also the discussion below it).

Dually, we call the \textit{matching object of $X$ at $c$} the object of $\M$ defined as
\[M_{c}X:=\hom_{\C^-_{<\alpha}}\left( \C^{-}(c,i(-)),\res_\alpha(X)\right),\]
and we call the \textit{matching morphism of $X$ at $c$}, the following morphism in $\M$ analogous to~\eqref{eq:unit},
\[m^{\alpha,X}_{c}:=m^{X}_{c}\colon X(c)\rightarrow M_cX;\,\,\, \xi \mapsto [f\mapsto X(f)(\xi)].\]
\end{definition}

Our aim is to prove the following:

\begin{theorem}\textnormal{(Cofinality)}
\label{thm:cofinality}
Let $\C$ be a linear Reedy category, $c$ be an object in $\C$ of degree $\alpha$ and $X\in\M^\C$.
There exist the following natural isomorphisms in the abelian category $\M$,
\begin{itemize}
\item[(i)] $(\sk_{\alpha}\res_\alpha X)(c)\cong L_{c}X$
\item[(ii)] $(\cosk_{\alpha}\res_\alpha X)(c)\cong M_{c}X.$
\end{itemize}
\end{theorem}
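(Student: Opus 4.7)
The plan is to establish the natural isomorphism in part (i) by exhibiting an explicit inverse to the canonical comparison map, and then deduce part (ii) by the dual argument. In fact, part (ii) can be obtained formally from part (i) applied to the opposite Reedy category $\C^\op$ (via Remark~\ref{rem:opp_Reedy_cat}) and the opposite functor $X^\op\colon \C^\op \to \M^\op$, which interchanges coends with ends and $\sk$ with $\cosk$.

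For part (i), both sides are weighted colimits (coends) described as coequalizers in $\M$:
\[(\sk_\alpha \res_\alpha X)(c) = \int^{d\in\C_{<\alpha}} \C(d,c)\otimes_k X(d), \qquad L_c X = \int^{d\in\C^+_{<\alpha}} \C^+(d,c)\otimes_k X(d).\]
The inclusions $\C^+_{<\alpha}\hookrightarrow \C_{<\alpha}$ and $\C^+(d,c)\hookrightarrow \C(d,c)$ induce a canonical morphism $\phi\colon L_c X \to (\sk_\alpha \res_\alpha X)(c)$, well-defined because the coequalizer relations for $L_c X$ form a subset of those for the right-hand side.

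To construct an inverse $\psi$, I would use the Reedy factorization to ``project out'' the $\C^+$-part. For a generator $g \otimes x$ with $g \in \C(d,c)$ and $x \in X(d)$ (where $\deg(d) < \alpha = \deg(c)$), the Reedy decomposition
\[\bigoplus_{e\in\Obj(\C)} \C^+(e,c)\otimes_k \C^-(d,e) \xrightarrow{\cong} \C(d,c)\]
writes $g = \sum_i g_i^+ \circ g_i^-$ uniquely, with intermediate objects $e_i$ of degree $\leq \deg(d) < \alpha$; I then set
\[\psi(g \otimes x) := \sum_i g_i^+ \otimes X(g_i^-)(x) \in L_c X.\]
The main obstacle, and the heart of the argument, is verifying that $\psi$ factors through the coequalizer defining $(\sk_\alpha \res_\alpha X)(c)$, i.e., that $\psi(g \circ f \otimes x) = \psi(g \otimes X(f)(x))$ for any $f \in \C(d',d)$ and $x \in X(d')$. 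For this one Reedy-factors $f = \sum_j f_j^+ \circ f_j^-$, then Reedy-factors each composite $g_i^- \circ f_j^+ = \sum_k \alpha_{ijk}^+ \circ \alpha_{ijk}^-$, and checks that both sides coincide with the double sum $\sum_{i,j,k}(g_i^+ \circ \alpha_{ijk}^+) \otimes X(\alpha_{ijk}^- \circ f_j^-)(x)$ in $L_c X$; the key step uses that both $g_i^+$ and $\alpha_{ijk}^+$ lie in $\C^+$, so the relation $g_i^+ \otimes X(\alpha_{ijk}^+)(y) = (g_i^+ \circ \alpha_{ijk}^+) \otimes y$ already holds inside $L_c X$ itself.

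Once $\psi$ is known to be well-defined, the composites $\psi \circ \phi$ and $\phi \circ \psi$ are easily seen to be identities: the former because any $g \in \C^+(d,c)$ has trivial Reedy decomposition $g = g \circ \id_d$, so $\psi(\phi(g \otimes x)) = g \otimes x$; the latter because the coequalizer relations in $(\sk_\alpha \res_\alpha X)(c)$ collapse $\sum_i g_i^+ \otimes X(g_i^-)(x)$ back to $\sum_i (g_i^+ \circ g_i^-) \otimes x = g \otimes x$. Naturality of the resulting isomorphism in $X$ is transparent from the explicit formulas, which finishes part (i); part (ii) follows by the dual construction outlined above.
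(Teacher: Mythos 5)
Your proposal is correct and its combinatorial core coincides with the paper's: the explicit inverse $\psi$ defined by Reedy-factoring $g=\sum_i g_i^+\circ g_i^-$, and the well-definedness check via the double refactorization of $g_i^-\circ f_j^+$, is exactly the content of the paper's Lemma~\ref{lem:map_for_cofinality} (there the map $v\colon\phi\otimes\psi\mapsto\sum_i\phi_i^+\otimes\phi_i^-\psi$), and part (ii) is likewise obtained by duality. The only organizational difference is that the paper performs this computation purely at the level of $\K$-valued hom-functors and then transports the isomorphism to $(\sk_\alpha\res_\alpha X)(c)$ for arbitrary bicomplete $k$-linear $\M$ via Lemma~\ref{lem:weighted_Kan}, Fubini and Yoneda, whereas you chase elements $x\in X(d)$ directly in the coend over $\M$; for a general $\M$ (not a module category) your formula $\psi(g\otimes x)=\sum_i g_i^+\otimes X(g_i^-)(x)$ should be rephrased element-free, e.g.\ as the composite $\C(d,c)\otimes X(d)\to\bigoplus_e\C^+(e,c)\otimes\C^-(d,e)\otimes X(d)\to\bigoplus_e\C^+(e,c)\otimes X(e)$ using the evaluation morphisms of \S\ref{subsec:enriched}, but this is a routine adjustment and does not affect the argument.
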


Before proving Theorem~\ref{thm:cofinality}, we will establish preparatory results.

\begin{lemma}\cite[Lemma~3.5]{riehl}
\label{lem:weighted_Kan}
Let $c$ be an object in $\C$ of degree $\alpha$. There exist the following natural isomorphisms in the abelian category $\M$, 
\begin{itemize}
\item[(i)] $(\sk_{\alpha}\res_{\alpha}X)(c)\cong \sk_{\alpha}\C(i(-),c)\otimes_{\C} X$,
\item[(ii)] $(\cosk_{\alpha}\res_{\alpha}X)(c)\cong \hom_{\C}\left( \cosk_{\alpha}\C(c,i(-)), X\right) $
\end{itemize}
(notice that here we have used contravariant versions of the functors $\sk_{\alpha}$ and $\cosk_{\alpha}$).
\end{lemma}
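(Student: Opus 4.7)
My plan is to verify both isomorphisms using the pointwise (co)end formulas for Kan extensions in the $k$-enriched setting, following the strategy of Riehl~\cite{riehl}.

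For (i), I would begin from the pointwise coend expression of the left Kan extension,
\[
\sk_{\alpha}\C(i(-),c)(d) \cong \int^{a\in\C_{<\alpha}} \C(d,i(a)) \otimes_{k} \C(i(a),c),
\]
and substitute this into the coend defining the tensor product $\sk_{\alpha}\C(i(-),c)\tens{\C}X$. An application of Fubini interchanges the resulting iterated coend over $(d,a)\in\C\times\C_{<\alpha}$, and the enriched co-Yoneda lemma then collapses the inner coend via $\int^{d\in\C}\C(d,i(a))\otimes X(d)\cong X(i(a))$. What remains is $\int^{a\in\C_{<\alpha}}\C(i(a),c)\otimes X(i(a)) = \C(i(-),c)\tens{\C_{<\alpha}}\res_{\alpha}X$, which coincides with $(\sk_{\alpha}\res_{\alpha}X)(c)$ by~\eqref{eq:sk}.

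For (ii), the strategy is formally dual. I would use the pointwise end formula
\[
\cosk_{\alpha}\C(c,i(-))(d) \cong \int_{a\in\C_{<\alpha}} \hom_{k}(\C(d,i(a)),\C(c,i(a))),
\]
and, to identify $\hom_{\C}(\cosk_{\alpha}\C(c,i(-)),X)$ with $(\cosk_{\alpha}\res_{\alpha}X)(c) = \hom_{\C_{<\alpha}}(\C(c,i(-)),\res_{\alpha}X)$, I would test against an arbitrary $N\in\M$. By the universal property of weighted limits, both sides applied to $\M(N,-)$ unfold to $\K$-natural transformation spaces, and the identification reduces to exhibiting a natural bijection between $\K^{\C}(\cosk_{\alpha}\C(c,i(-)),Y)$ and $\K^{\C_{<\alpha}}(\C(c,i(-)),\res_{\alpha}Y)$ for $Y=\M(N,X)\in\K^{\C}$. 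I would establish this by invoking enriched Yoneda for the representable $\C(c,-)\in\K^{\C}$ together with the adjunction $\res_{\alpha}\dashv\cosk_{\alpha}$ and the unit map $\C(c,-)\to\cosk_{\alpha}\C(c,i(-))$ produced by Lemma~\ref{lem:jump-over-middle-adjoint}.

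The hard part will be (ii): the straightforward dual of the coend/Fubini manipulation used in (i) is unavailable, since $\hom_{k}(-,W)$ converts coends into ends but does not interact usefully with ends in its first variable. The Yoneda-based workaround above is designed to bypass this difficulty, and crucially exploits the hypothesis that $c$ has degree precisely $\alpha$, so that $\C(c,-)$ is aligned with the cutoff imposed by $i\colon\C_{<\alpha}\hookrightarrow\C$. I expect the verification ultimately to reduce to the triangular identities of the adjoint triple $\sk_{\alpha}\dashv\res_{\alpha}\dashv\cosk_{\alpha}$ together with the fully faithfulness of $\cosk_{\alpha}$.
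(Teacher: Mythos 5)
Your argument for (i) is correct and is essentially the paper's own proof read from right to left: both rest on the co-Yoneda identification $X(i(a))\cong\C(?,i(a))\otimes_{\C}X$ together with the Fubini interchange of coends.

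The gap is in (ii), and it sits in your first displayed formula there. The object $\cosk_{\alpha}\C(c,i(-))$ in the statement is \emph{not} the right Kan extension $\int_{a\in\C_{<\alpha}}\Hom_{\K}(\C(d,i(a)),\C(c,i(a)))$ that you write down; it is the \emph{left} Kan extension $\int^{a\in\C_{<\alpha}}\C(i(a),d)\otimes_{k}\C(c,i(a))$ of the covariant weight $\C(c,i(-))$ along $i$. This is what the parenthetical about ``contravariant versions'' is flagging: since $\hom_{\C}(-,X)$ is contravariant in the weight, the coskeleton of a diagram is the limit weighted by the \emph{skeleton} of the corepresentable --- this is exactly how the cited Lemma~3.5 of Riehl--Verity reads, and it is confirmed by Theorem~\ref{thm:standard-(co)induction}(ii), where this weight is identified with $\I^{\C}_{<\alpha}(c,-)$. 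Under your reading, the ``natural bijection'' you propose to exhibit, $\K^{\C}(\cosk_{\alpha}\C(c,i(-)),Y)\cong\K^{\C_{<\alpha}}(\C(c,i(-)),\res_{\alpha}Y)$, is precisely the universal property of the left Kan extension and fails for the right one: a right Kan extension classifies maps \emph{into} it, and the unit $\C(c,-)\to\cosk_{\alpha}\res_{\alpha}\C(c,-)$ you invoke gives no control over maps \emph{out of} it. Indeed, already when $\C_{<\alpha}$ consists of a single object $e$, the two candidate weights at $d$ are $\C(e,d)\otimes_{k}\C(c,e)$ and $\Hom_{\K}(\C(d,e),\C(c,e))$, which differ, so (ii) with the genuine right Kan extension is false in general. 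Once the weight is read correctly, the difficulty you anticipate evaporates and (ii) is the exact formal dual of (i), which is all the paper does:
\[
(\cosk_{\alpha}\res_{\alpha}X)(c)\cong\hom_{\C_{<\alpha}}\bigl(\C(c,i(-)),\hom_{\C}(\C(i(-),?),X)\bigr)\cong\hom_{\C}\bigl(\C(i(-),?)\otimes_{\C_{<\alpha}}\C(c,i(-)),X\bigr),
\]
using the Yoneda formula $X(i(a))\cong\hom_{\C}(\C(i(a),?),X)$ and then the interchange $\hom_{\C_{<\alpha}}(V,\hom_{\C}(U,X))\cong\hom_{\C}(U\otimes_{\C_{<\alpha}}V,X)$, which holds because $\Hom_{\K}(-,M)$ turns the coend in the weight variable into an end. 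No end ever appears in the first variable of a $\Hom$, so the obstruction you describe does not arise.
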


\begin{proof}
It is not hard to see that the argument given by Riehl and Veriti \cite[Lemma~3.5]{riehl} carries over to our context, but we include it here for convenience. We only prove (i) as the proof of (ii) is dual. We have: 
\begin{align*}
(\sk_{\alpha}\res_\alpha X)(c) & \cong \C(i(-),c)\otimes_{\C_{<\alpha}} \mathrm{res}_{\alpha}X \\
 & \cong \C(i(-),c)\otimes_{\C_{<\alpha}} \left( \C(?,i(-))\otimes_{\C}X \right)  \\
 & \cong \left( \C(i(-),c)\otimes_{\C_{<\alpha}}  \C(?,i(-)) \right) \otimes_{\C}X  \\
 & \cong \sk_{\alpha}\C(i(-),c)\otimes_{\C} X. 
\end{align*}
The first isomorphism is by definition of $\sk_{\alpha}X(c)$, while the second isomorphism follows from the fact that $X(d)\cong\C(-,d)\otimes_{\C}X$ as recalled in \S\ref{subsec:ringoids}. The third isomorphism is a $k$--linear version of the Fubini Theorem \cite[eq.~(2.9)]{Kelly} and the last is given by the definition of a contravariant version of the functor $\sk_{\alpha}$.
\end{proof}

\begin{lemma}
\label{lem:map_for_cofinality}
Let $i\colon \C_{<\alpha}\rightarrow \C$ be the full embedding as before and let $c,d\in\Obj(\C)$. Then the canonical map
\begin{equation}
\label{eq:map_for_cofinality}
\C^+(i(-),c)\tens{\C_{<\alpha}^+} \C(d,i(-))
\stackrel{u}\longrightarrow 
\C(i(-),c)\tens{\C_{<\alpha}} \C(d,i(-))
\end{equation}
is an isomorphism.
\end{lemma}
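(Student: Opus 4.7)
The idea is to route both sides of the map $u$ through the subspace $\I^\C_{<\alpha}(d,c)\subseteq\C(d,c)$. Associativity and $k$-bilinearity of composition in $\C$ yield a well-defined map
\[
\mu\colon \C(i(-),c)\tens{\C_{<\alpha}}\C(d,i(-)) \longrightarrow \C(d,c), \qquad [f\otimes_{e}g]\longmapsto f\circ g,
\]
on the coend. Any representative $f\circ g$ with $e\in\C_{<\alpha}$ factors through an object of degree~$<\alpha$, so the image of $\mu$ lies in $\I^\C_{<\alpha}(d,c)$. Precomposing with $u$ produces an analogous map $\mu'\colon \C^{+}(i(-),c)\tens{\C^{+}_{<\alpha}}\C(d,i(-))\to\I^\C_{<\alpha}(d,c)$ satisfying $\mu'=\mu\circ u$. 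The plan is to prove that $\mu'$ is an isomorphism onto $\I^\C_{<\alpha}(d,c)$, so that $u$ is injective, and independently to show that $u$ is surjective.

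For injectivity, I would decompose the functor $\C(d,-)$ restricted to $\C^{+}_{<\alpha}$. The Reedy isomorphism \eqref{eq:Reedy-factorization} provides, for each $e\in\C_{<\alpha}$, an isomorphism $\C(d,e)\cong\bigoplus_{g}\C^{+}(g,e)\otimes_{k}\C^{-}(d,g)$, where degree considerations force $g$ to range over $\Obj(\C_{<\alpha})$. Uniqueness of Reedy factorisations guarantees that the left $\C^+_{<\alpha}$-action $r\cdot y=r\circ y$ respects this splitting summand by summand, yielding an isomorphism of left $\C^+_{<\alpha}$-modules
\[
\C(d,-)|_{\C^{+}_{<\alpha}} \;\cong\; \bigoplus_{g\in\Obj(\C_{<\alpha})} \C^{-}(d,g)\otimes_{k}\C^{+}_{<\alpha}(g,-).
\]
Applying $\C^+(-,c)\tens{\C^+_{<\alpha}}(-)$ and using the coend-Yoneda identity $\C^+(-,c)\tens{\C^+_{<\alpha}}\C^+_{<\alpha}(g,-)\cong\C^+(g,c)$ then identifies the left-hand side of \eqref{eq:map_for_cofinality} with $\bigoplus_{g\in\Obj(\C_{<\alpha})}\C^{-}(d,g)\otimes_{k}\C^+(g,c)$. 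Tracing through the definitions shows that $\mu'$ corresponds under this identification to the restriction of the Reedy isomorphism \eqref{eq:Reedy-factorization} to the summands indexed by $g\in\Obj(\C_{<\alpha})$, and by the very definition of $\I^\C_{<\alpha}(d,c)$ this is an isomorphism onto $\I^\C_{<\alpha}(d,c)$.

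For surjectivity, given a generator $[x\otimes_{e}y]$ of the right-hand side with $e\in\C_{<\alpha}$, I would take a Reedy factorisation $x=\sum_{i}x_{i}^{+}\circ x_{i}^{-}$ with $x_i^-\in\C^{-}(e,e_i)$, $x_i^+\in\C^{+}(e_i,c)$ and $e_i\in\C_{<\alpha}$. Since each $x_i^-$ is a morphism in $\C_{<\alpha}(e,e_i)$, the coend relation gives $[x_i^+\circ x_i^-\otimes_{e}y]=[x_i^+\otimes_{e_i}x_i^-\circ y]$, exhibiting $[x\otimes_{e}y]$ as a sum of elements in the image of $u$. Combining with injectivity finishes the proof. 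The main obstacle is the module-theoretic decomposition of $\C(d,-)|_{\C^{+}_{<\alpha}}$: one must check that the $\C^+_{<\alpha}$-action genuinely respects the Reedy splitting, which in turn rests on the uniqueness inherent in \eqref{eq:Reedy-factorization}.
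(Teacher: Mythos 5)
Your proof is correct, but it is organised differently from the paper's. The paper proves the lemma by writing down an explicit candidate inverse $v(\phi\otimes\psi)=\sum_i\phi_i^+\otimes\phi_i^-\psi$ (using a Reedy factorisation $\phi=\sum_i\phi_i^+\phi_i^-$) and spends essentially all of its effort verifying that this formula descends to the coend, i.e.\ that $w(\phi\rho\otimes\psi)=w(\phi\otimes\rho\psi)$ for every $\rho$ in $\C_{<\alpha}$; once well-definedness is established, $v u=\id$ and $uv=\id$ are immediate. You instead split the statement into surjectivity and injectivity. Your surjectivity step is exactly the paper's formula for $v$ read as a coend relation, $[x_i^+\circ x_i^-\otimes_e y]=[x_i^+\otimes_{e_i}x_i^-\circ y]$, but used only on representatives, so no well-definedness issue arises. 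Your injectivity step is the genuinely different (and correct) part: the decomposition $\C(d,-)|_{\C^+_{<\alpha}}\cong\bigoplus_{g}\C^+_{<\alpha}(g,-)\otimes_k\C^-(d,g)$ as left $\C^+_{<\alpha}$--modules — whose compatibility with the action you rightly reduce to uniqueness of Reedy factorisations — together with co-Yoneda identifies the source of $u$ with $\bigoplus_g\C^+(g,c)\otimes_k\C^-(d,g)\cong\I^\C_{<\alpha}(d,c)$ and shows $\mu'=\mu\circ u$ is injective. This is in fact the computation the paper performs later, in the proof of Theorem~\ref{thm:standard-(co)induction}, to invert the canonical map onto $\C^+(i(-),c)\tens{\C^+_{<\alpha}}\C(d,i(-))$; so your argument proves Lemma~\ref{lem:map_for_cofinality} and the identification~\eqref{eq:shulman_iso} in one pass, at the cost of routing through the composition map $\mu$, whereas the paper's route yields a ready-made explicit inverse at the level of representatives.
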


\begin{proof}
We will define an explicit inverse
\begin{equation}
\label{eq:inverse_for_cofinality}
\C(i(-),c)\tens{\C_{<\alpha}} \C(d,i(-))
\stackrel{v}\longrightarrow 
\C^+(i(-),c)\tens{\C_{<\alpha}^+} \C(d,i(-))
\end{equation}
as follows. Given an element
\[ \phi\otimes\psi\in\C(i(-),c)\tens{\C_{<\alpha}} \C(d,i(-)), \]
we consider a Reedy factorization $\phi=\sum_{i} \phi_i^+\phi_i^-$ and put $v(\phi\otimes\psi):=\sum_{i} \phi_i^+\otimes\phi_i^-\psi$. Once we prove that the map is well defined, it is completely straightforward to check that it is inverse to $u$ and the proof will be finished.

To this end, note that the assignment $\phi\otimes\psi \mapsto \sum_{i} \phi_i^+\otimes\phi_i^-\psi$ gives a well-defined map
\[
\bigoplus\limits_{\deg(e)<\alpha} \C(i(e),c)\otimes_{k}\C(d,i(e)) \stackrel{w}\longrightarrow \C^+(i(-),c)\tens{\C_{<\alpha}^+} \C(d,i(-)).
\]
Indeed, the map $w$ equals the composition
\begin{multline*}
\bigoplus\limits_{\deg(e)<\alpha} \C(i(e),c)\otimes_{k}\C(d,i(e))\longrightarrow \\
\bigoplus\limits_{\deg(e),\deg(f)<\alpha} \C^+(i(f),c)\otimes_{k}\C^-(i(e),i(f))\otimes_{k}\C(d,i(e)) \longrightarrow \\
\bigoplus\limits_{\deg(e),\deg(f)<\alpha} \C^+(i(f),c)\otimes_{k}\C(d,i(f)) \longrightarrow
\C^+(i(-),c)\tens{\C_{<\alpha}^+} \C(d,i(-)),
\end{multline*}
where the first arrow is given by the Reedy factorisation, the second one by the composition in $\C$ and the last one by the definition of the tensor product $\otimes_{\C^+_{<\alpha}}$.
In order to prove that $w$ factorizes through the surjection 
\[
\bigoplus\limits_{\deg(e)<\alpha} \C(i(e),c)\otimes_{k}\C(d,i(e)) \to
\C(i(-),c)\tens{\C_{<\alpha}} \C(d,i(-))
\]
to a map $v$ as in \eqref{eq:inverse_for_cofinality}, we need to prove that for each chain of maps of the form
\[
\xymatrix@1{ \,c\, & \,i(f)\, \ar[l]_-\phi & \,i(e)\, \ar[l]_-\rho & \,d\,\ar[l]_-\psi, }
\]
we have $w(\phi\rho\otimes\psi) = w(\phi\otimes\rho\psi)$. For that, we consider a Reedy factorisation $\phi=\sum_{i}\phi_i^+\circ\phi_i^-$ and, for each $i$, a Reedy factorisation $\phi_i^-\rho=\sum_{j}\tau_{ij}^+\circ\tau_{ij}^-$. Note that $\phi\rho=\sum_{ij}(\phi_j^+\tau_{ij}^+)\circ\tau_{ij}^-$ is then also a Reedy factorisation. Thus,
\[
w(\phi\rho\otimes\psi) = \sum_{ij} \phi_j^+\tau_{ij}^+\otimes\tau_{ij}^-\psi.
\]
On the other hand, we have
\[
w(\phi\otimes\rho\psi) =
\sum_{i}\phi_i^+\otimes\phi_i^-\rho\psi =
\sum_{ij}\phi_i^+\otimes\tau_{ij}^+\tau_{ij}^-\psi = 
\sum_{ij}\phi_i^+\tau_{ij}^+\otimes\tau_{ij}^-\psi.
\]
The first equality is just the definition of $w$, the second one uses the Reedy decomposition of $\phi_i^-\rho$ and the last one is a defining relation in the tensor product $\C^+(i(-),c)\otimes_{\C_{<\alpha}^+} \C(d,i(-))$.
\end{proof}

Now we can give a proof of Theorem~\ref{thm:cofinality}.

\begin{proof}[Proof of~Theorem~\ref{thm:cofinality}]
We prove (i) since the proof of (ii) is similar. By combining the isomorphisms from the preceding lemmas, we obtain:
\begin{align*}
(\sk_{\alpha} \res_\alpha X)(c) &\cong \sk_{\alpha}\C(i(-),c)\otimes_{\C} X \\
&\cong (\C(i(-),c)\otimes_{\C_{<\alpha}} \C(?,i(-))) \otimes_{\C} X \\
&\cong (\C^+(i(-),c)\otimes_{\C_{<\alpha}^+} \C(?,i(-)) \otimes_{\C} X \\
&\cong \C^+(i(-),c)\otimes_{\C_{<\alpha}^+} (\C(?,i(-)) \otimes_{\C} X) \\
&\cong \C^+(i(-),c)\otimes_{\C_{<\alpha}^+} \res_{\alpha}X \\
&\cong L_cX(c).
\end{align*}
The first isomorphism is by Lemma~\ref{lem:weighted_Kan}(i), the second by definition of $\sk_{\alpha}\C(i(-),c)$, the third by~\eqref{eq:map_for_cofinality}, the fourth by Fubini \cite[eq.~(2.9)]{Kelly}, the fifth by Yoneda and the last by Definition~\ref{def:latching_matching}.
\end{proof}

In the $k$--linear setting, the following result is a more practical consequence of the arguments behind the proof of Theorem~\ref{thm:cofinality}.

\begin{theorem}~\label{thm:standard-(co)induction}
Let $\C$ be a linear Reedy category, $c$ be an object in $\C$ of degree $\alpha$ and $X\in\M^\C$. Recall the ideal $\I^\C_{<\alpha}\subseteq\C$ from \S\ref{subsec:standard}. Then:
\begin{itemize}
\item[(i)] The counit of adjunction $l^X\colon \sk_\alpha\res_\alpha X\to X$ is naturally isomorphic to the obvious morphism $\I^\C_{<\alpha}\otimes_\C X\to\C\otimes_\C X\cong X$.
\item[(ii)] The unit of adjunction $m^X\colon X\to\cosk_\alpha\res_\alpha X$ is naturally isomorphic to the obvious morphism $X\cong \hom_\C(\C,X)\to\hom_\C(\I^\C_{<\alpha},X)$.
\end{itemize}
\end{theorem}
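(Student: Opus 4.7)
The plan for part (i) is to realise the counit $l^X\colon\sk_\alpha\res_\alpha X\to X$, after a canonical isomorphism, as the natural map $\I^\C_{<\alpha}\otimes_\C X\to X$ induced by the inclusion $\I^\C_{<\alpha}\hookrightarrow\C$. The strategy mirrors the proof of Theorem~\ref{thm:cofinality}. By Lemma~\ref{lem:weighted_Kan}(i),
\[
(\sk_\alpha\res_\alpha X)(c) \;\cong\; \sk_\alpha\C(i(-),c)\otimes_\C X,
\]
so the key step is to prove that the right $\C$-module $\sk_\alpha\C(i(-),c)$ is naturally isomorphic to $\I^\C_{<\alpha}(-,c)$, compatibly with the natural map to $\C(-,c)$.

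For each $b\in\C$ the formula gives $\sk_\alpha\C(i(-),c)(b)=\C(i(-),c)\otimes_{\C_{<\alpha}}\C(b,i(-))$. I would first invoke Lemma~\ref{lem:map_for_cofinality} to rewrite this as $\C^+(i(-),c)\otimes_{\C_{<\alpha}^+}\C(b,i(-))$. Next, using the Reedy factorisation~\eqref{eq:Reedy-factorization} to decompose $\C(b,a)\cong\bigoplus_f \C^+(f,a)\otimes_k \C^-(b,f)$ and Yoneda over $\C_{<\alpha}^+$, this collapses to $\bigoplus_{\deg(e)<\alpha}\C^+(e,c)\otimes_k\C^-(b,e)$; a further application of~\eqref{eq:Reedy-factorization} identifies the direct sum with $\I^\C_{<\alpha}(b,c)$, the composite map into $\C(b,c)$ being the natural inclusion. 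Substituting back then yields
\[
(\sk_\alpha\res_\alpha X)(c)\;\cong\;\I^\C_{<\alpha}(-,c)\otimes_\C X\;=\;(\I^\C_{<\alpha}\otimes_\C X)(c),
\]
and unravelling the definitions confirms that the counit $l^X_c$ matches, under this identification, the map $\phi\otimes x\mapsto X(\phi)(x)$ for $\phi\in\I^\C_{<\alpha}(b,c)$ and $x\in X(b)$, which is precisely the map induced by $\I^\C_{<\alpha}\hookrightarrow\C$.

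Part (ii) is proved by the dual argument. One first establishes a dual of Lemma~\ref{lem:map_for_cofinality} for weighted limits, built from an explicit inverse using the Reedy factorisation in the opposite direction, and combines it with Lemma~\ref{lem:weighted_Kan}(ii) and the Reedy decomposition of $\C(i(-),b)$ to obtain $(\cosk_\alpha\res_\alpha X)(c)\cong\hom_\C(\I^\C_{<\alpha}(c,-),X)$, compatibly with the unit $m^X$ and the restriction map $\hom_\C(\C,X)\to\hom_\C(\I^\C_{<\alpha},X)$. The main obstacle I anticipate is not conceptual but the bookkeeping needed to verify that each intermediate isomorphism is natural in both arguments and is compatible with the canonical maps to or from $X$; once this is in place, the passage from representables to arbitrary $X\in\M^\C$ via the coend/end formulas, and the dual half, are essentially formal.
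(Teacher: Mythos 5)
Your proposal is correct and follows essentially the same route as the paper: both reduce to Lemma~\ref{lem:weighted_Kan}, apply Lemma~\ref{lem:map_for_cofinality}, and then use the Reedy factorisation together with Yoneda over $\C^+_{<\alpha}$ to collapse the weight to $\bigoplus_{\deg(e)<\alpha}\C^+(e,c)\otimes_k\C^-(b,e)\cong\I^\C_{<\alpha}(b,c)$, after which the identification of the counit (and, dually, the unit) with the map induced by the inclusion $\I^\C_{<\alpha}\hookrightarrow\C$ is a matter of tracing the isomorphisms. The paper's proof is exactly this computation, packaged as the isomorphism it labels \eqref{eq:shulman_iso} and combined with the chain of identifications already carried out in the proof of Theorem~\ref{thm:cofinality}.
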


Before proving the theorem, we point up the following corollary, which is an immediate consequence of Theorems~\ref{thm:cofinality} and~\ref{thm:standard-(co)induction} (recall also the functors $\tor$ and $\ext$ from \S\ref{subsec:ringoids} and left and right standard modules from Definition~\ref{def:standards}). 

\begin{corollary}\label{cor:latching-and-matching-via-standards}
In the setting of Theorem~\ref{thm:standard-(co)induction}, the following hold:
\begin{itemize}
\item[(i)] The cokernel of the latching morphism $l^X_c\colon L_cX\to X(c)$ of $X$ at $c$ is isomorphic to $\Delta^c\otimes_\C X\in\M$ and $l^X_c$ is a monomorphism if and only if $\tor_1^\C(\Delta^c,X)=0$.
\item[(ii)] The kernel of the matching morphism $m^X_c\colon X(c)\to M_cX$ of $X$ at $c$ is isomorphic to $\hom_\C(\Delta_c,X)$ in $\M$ and $m^X_c$ is an epimorphism if and only if $\ext^1_\C(\Delta_c,X)=0$.
\end{itemize}
\end{corollary}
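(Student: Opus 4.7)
The plan is to realize both parts as immediate consequences of the long exact sequences for $\tor^\C_\ast$ and $\ext^\ast_\C$ applied to the defining short exact sequences of the standard modules, using Theorem~\ref{thm:standard-(co)induction} to identify the resulting canonical map with the latching/matching morphism.

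For part (i), set $\alpha=\deg(c)$. By definition of $\Delta^c$, we have a short exact sequence in $\K^{\C\op}$,
\[
0\to \I^\C_{<\alpha}(-,c)\to \C(-,c)\to \Delta^c\to 0,
\]
to which I apply $-\tens{\C}X$. Since $\C(-,c)$ is projective in $\K^{\C\op}$ and Yoneda gives $\C(-,c)\tens{\C}X\cong X(c)$, the long exact sequence recalled in \S\ref{subsec:ringoids} collapses to
\[
0\to \tor_1^\C(\Delta^c,X)\to \I^\C_{<\alpha}(-,c)\tens{\C}X\to X(c)\to \Delta^c\tens{\C}X\to 0.
\]
By Theorem~\ref{thm:standard-(co)induction}(i) the middle arrow is naturally isomorphic to $l^X_c\colon L_cX\to X(c)$, so $\ker l^X_c\cong\tor_1^\C(\Delta^c,X)$ and $\coker l^X_c\cong\Delta^c\tens{\C}X$. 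Both assertions of (i) are then immediate.

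Part (ii) is entirely dual. I apply $\hom_\C(-,X)$ to the short exact sequence in $\K^\C$,
\[
0\to \I^\C_{<\alpha}(c,-)\to \C(c,-)\to \Delta_c\to 0,
\]
and use projectivity of $\C(c,-)$ together with $\hom_\C(\C(c,-),X)\cong X(c)$ to obtain
\[
0\to \hom_\C(\Delta_c,X)\to X(c)\to \hom_\C(\I^\C_{<\alpha}(c,-),X)\to \ext^1_\C(\Delta_c,X)\to 0.
\]
The middle arrow is $m^X_c$ by Theorem~\ref{thm:standard-(co)induction}(ii), so $\ker m^X_c\cong\hom_\C(\Delta_c,X)$ and $\coker m^X_c\cong\ext^1_\C(\Delta_c,X)$.

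There is essentially no obstacle: the only point worth a sanity check is that the isomorphisms supplied by Theorem~\ref{thm:standard-(co)induction} fit naturally into these long exact sequences, i.e.\ that the identification $(\sk_\alpha\res_\alpha X)(c)\cong\I^\C_{<\alpha}(-,c)\tens{\C}X$ (and its dual for matching objects) commutes with the respective structure maps into, respectively out of, $X(c)$. This is built into the statement of the theorem, so no further work is required beyond evaluating at $c$.
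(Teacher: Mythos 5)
Your proof is correct and matches the paper's intent exactly: the paper declares this corollary an immediate consequence of Theorems~\ref{thm:cofinality} and~\ref{thm:standard-(co)induction}, and your argument simply makes that immediacy explicit by applying $-\tens{\C}X$ and $\hom_\C(-,X)$ to the defining short exact sequences of $\Delta^c$ and $\Delta_c$ and using projectivity of the representables to truncate the long exact sequences from \S\ref{subsec:ringoids}. Nothing further is needed.
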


\begin{proof}[Proof of Theorem~\ref{thm:standard-(co)induction}]
Note that the canonical map 
\[
\bigoplus\limits_{\mathrm{deg(e)<\alpha}}  \C^+(i(e),c)\otimes_{k} \C^-(d,i(e)) \longrightarrow
\C^+(i(-),c)\tens{\C_{<\alpha}^+} \C(d,i(-))
\]
is invertible. Indeed, the inverse is given as a composition
\begin{align*}
\C^+(i(-),c)\tens{\C_{<\alpha}^+} \C(d,i(-)) &\cong
\bigoplus\limits_{\deg(e)<\alpha} \left( \C^+(i(-),c)\tens{\C_{<\alpha}^+} \C^+_{<\alpha}(i(e),i(-)) \right) \otimes_{k}\C^-(d,i(e)) \\ 
&\cong \bigoplus\limits_{\mathrm{deg(e)<\alpha}}  \C^+(i(e),c)\otimes_{k} \C^-(d,i(e)),
\end{align*}
where the second isomorphism occurs by Yoneda's lemma since $\deg(e)<\alpha$.
More explicitly, the inverse sends an element $\phi\otimes\psi\in\C^+(i(e),c)\otimes_{k}\C(d,i(e))$ to $\sum \phi\psi_i^+ \otimes \psi_i^-$, where $\psi=\sum_{i} \psi^+_i \circ \psi^-_i$ is a Reedy factorization.
Combining this isomorphism with \eqref{eq:map_for_cofinality} from Lemma~\ref{lem:map_for_cofinality}, we obtain a canonical isomorphism
\begin{equation}
\label{eq:shulman_iso}
\I^\C_{<\alpha}(d,c) = \bigoplus\limits_{\mathrm{deg(e)<\alpha}}  \C^+(i(e),c)\otimes_{k} \C^-(d,i(e)) \longrightarrow \C(i(-),c)\tens{\C_{<\alpha}} \C(d,i(-))
\end{equation}

Now the computation in the proof of Theorem~\ref{thm:cofinality} tells us that $\sk_\alpha\res_\alpha X\cong\I^\C_{<\alpha}\otimes_\C X$ and it is not difficult to follow the isomorphism there to see that the counit $l^X\colon\sk_\alpha\res_\alpha X\to X$ comes from tensoring the inclusion $\I^\C_{<\alpha}\to\C$ by $X$.
The argument for the adjunction unit $X\to\cosk_\alpha\res_\alpha X$ is analogous.
\end{proof}

\begin{remark}
Shulman defined Reedy $\mathcal{V}$--categories for general bicomplete symmetric monoidal categories $\mathcal{V}$ and in~\cite[Thm.~9.12, eq.~9.13]{Shul} obtained an analogous isomorphism to~\eqref{eq:shulman_iso} in the case where $\deg(d) \geqslant \alpha$ in his setup. When specializing to the monoidal category of vector spaces over $k$, we can recover~\eqref{eq:shulman_iso} for $\deg(d) \geqslant \alpha$ also from this result.
\end{remark}

\section{Lifting cotorsion pairs}
\label{sec:cot_pairs}

In this Section, we are still working under Setup \ref{setup} and prove lifting of cotorsion pairs and their completeness and heredity from the ``ground'' category $\M$ to the functor category $\M^{\C}$.

Recall that if $X$ is a $k$--linear functor in $\M^{\C}$ and $c$ is an object in $\class C$ of degree $\alpha$, we have defined the latching morphism $l_{c}^{X}\colon L_cX\rightarrow X(c)$ and the matching morphism $m_{c}^{X}\colon X(c)\rightarrow M_cX$ and characterized their properties by homological methods (Corollary~\ref{cor:latching-and-matching-via-standards}).
This allows us to introduce the following classes of objects in~$\M^\C$.

\begin{definition}
\label{dfn:phi_psi}
If $\class S$ is a class of objects in $\class M$, we define the following classes of $k$--linear functors:
\begin{align*}
\Phi(\class S)&:=\{X\in \M^{\C}\,\, |\,\, \forall c\in\class C, \,\, l^{X}_{c}\,\, \mbox{is a monomorphism and}\,\, \coker l_{c}^{X}\in\class S\} \\
&\phantom{:}= \{X\in \M^{\C}\,\, |\,\, \forall c\in\class C, \,\, \Delta^c\otimes_{\C}X\in\class S\,\, \mbox{and}\,\,\tor^{\C}_1(\Delta^c,X)=0\},
\\
\Psi(\class S)&:=\{X\in \M^{\C}\,\, |\,\, \forall c\in\class C, \,\, m^{X}_{c}\,\, \mbox{is an epimorphism and}\,\, \ker m_{c}^{X}\in\class S\} \\
&\phantom{:}= \{X\in \M^{\C}\,\, |\,\, \forall c\in\class C, \,\, \hom_\C(\Delta_c,X)\in\class S\,\, \mbox{and}\,\,\ext_{\C}^1(\Delta_c,X)=0\}.
\end{align*}
\end{definition}

Here is our main result, which will be proved in the rest of the section:

\begin{theorem}
\label{thm:lifting_cot}
Let $(\A,\B)$ be a cotorsion pair in $\M$ such that $\A$ is generating and $\B$ is cogenerating. Then

\begin{enumerate}
\item\label{thm-item:lifting_cot_plain} The pair $(\Phi(\A),\Psi(\B))$ is a cotorsion pair in $\M^{\C}$ with $\Phi(\A)$ generating and $\Psi(\B)$ cogenerating.
\item\label{thm-item:lifting_cot_complete} If $(\A,\B)$ is a complete cotorsion pair, so is $(\Phi(\A),\Psi(\B))$.
\item\label{thm-item:lifting_cot_hereditary} If $(\A,\B)$ is a hereditary cotorsion pair, so is $(\Phi(\A),\Psi(\B))$.
\end{enumerate}
\end{theorem}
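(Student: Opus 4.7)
The plan is to reduce each assertion to the corresponding statement for $(\A,\B)$ in $\M$ by a transfinite induction on the degree filtration of $\C$. The engine is Proposition~\ref{prop:Riehl_Veriti} combined with Theorem~\ref{thm:cofinality}: extending a natural transformation or constructing a factorisation from $\C_{<\alpha}$ to $\C_{<\alpha+1}$ amounts, for each $c$ of degree~$\alpha$, to solving a lifting or factorisation problem in $\M$ between the latching data on the left and the matching data on the right.

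For the orthogonality in~(\ref{thm-item:lifting_cot_plain}), fix $X\in\Phi(\A)$, $Y\in\Psi(\B)$ and an extension $0\to Y\to Z\xrightarrow{p}X\to 0$ in $\M^\C$; I construct a splitting $s\colon X\to Z$ by induction on the degree. Having built $s_{<\alpha}$, the inductive data yields sections $L_cs\colon L_cX\to L_cZ$ of $L_cp$ and $M_cs\colon M_cX\to M_cZ$ of $M_cp$ for every $c$ of degree~$\alpha$, since $L_c$ and $M_c$ only depend on the restriction to $\C_{<\alpha}$. Form the pushout $T_c=L_cZ\cup_{L_cX}X(c)$ along $l^X_c$ and $L_cs$, and the pullback $R_c=X(c)\times_{M_cX}M_cZ$ along $m^X_c$ and $M_cp$. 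Using the naturality of $\tau$ from Lemma~\ref{lem:jump-over-middle-adjoint}, one checks that constructing $s_c$ compatibly with the data of Proposition~\ref{prop:Riehl_Veriti} and with $p_cs_c=\id_{X(c)}$ is equivalent to solving the lifting problem
\[
\xymatrix@C=3pc{
L_cZ \ar[r]^-{l^Z_c}\ar[d]_-{\iota_1} & Z(c)\ar[d]^-{h_c}\\
T_c \ar[r]\ar@{-->}[ur] & R_c.
}
\]
Pushout/pullback calculus identifies $\coker\iota_1\cong\coker l^X_c\in\A$ and $\ker h_c\cong\ker m^Y_c\in\B$, so $\iota_1\in\mathrm{Mono}(\A)$ and $h_c\in\mathrm{Epi}(\B)$, and Lemma~\ref{lem:Ext-to-box-orthogonal} delivers the dotted arrow; taking colimits at limit ordinals assembles~$s$. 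That $\Phi(\A)$ is generating and $\Psi(\B)$ is cogenerating in $\M^\C$ follows from the ``free'' and ``cofree'' computations: for $A\in\A$ and $c\in\C$, the functor $\C(c,-)\otimes A$ lies in $\Phi(\A)$ (projectivity of $\C(c,-)$ in $\K^\C$ combined with Theorem~\ref{thm:Reedy_exceptional}(iv) kills the higher $\tor^\C(\Delta^d,-)$, and the cokernel of the latching map is a $\C^-(c,d)$-fold coproduct of copies of~$A$, in $\A$ by Remark~\ref{rem:cotorsion-pairs-and-(co)products}), and dually $\hom(\C(-,c),B)\in\Psi(\B)$ for $B\in\B$. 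A Salce-type argument combined with the completeness below upgrades Ext-vanishing to the equalities $\Phi(\A)=\leftperp{\Psi(\B)}$ and $\Psi(\B)=\rightperp{\Phi(\A)}$.

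For completeness~(\ref{thm-item:lifting_cot_complete}), I construct the two approximations degree by degree: at each successor stage and each $c$ of degree~$\alpha$, use completeness of $(\A,\B)$ in $\M$ to factorise the canonical map $T_c\to R_c$ produced from the partial data through an appropriate $(\A,\B)$-pair, and reassemble via Proposition~\ref{prop:Riehl_Veriti}. The resulting object carries an induced filtration whose factors are ``free'' functors $\C(c,-)\otimes A$, so Proposition~\ref{prop:eklof} places it in $\Phi(\A)$; the other approximation is constructed dually. For heredity~(\ref{thm-item:lifting_cot_hereditary}), Lemma~\ref{lem:hereditary} reduces the claim to showing $\Phi(\A)$ is closed under kernels of epimorphisms in $\M^\C$. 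Given $0\to X'\to X\to X''\to 0$ with $X,X''\in\Phi(\A)$, the long exact sequence for $\tor^\C_*(\Delta^c,-)$ (Lemma~\ref{lem:long-exact-seq}, applied after checking the relevant $\coprod$-exactness) combined with heredity of $(\A,\B)$ in $\M$ yields $\Delta^c\otimes_\C X'\in\A$ and $\tor^\C_1(\Delta^c,X')=0$ for every $c$, hence $X'\in\Phi(\A)$. The main obstacle is the careful bookkeeping in the orthogonality step: identifying $\coker\iota_1$ and $\ker h_c$ correctly and verifying that the auxiliary square really captures all the compatibility conditions needed to extend $s_{<\alpha}$, despite the fact that $L_c$ and $M_c$ are not exact on arbitrary short exact sequences in $\M^\C$.
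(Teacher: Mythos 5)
Your treatment of the Ext-orthogonality is essentially the paper's argument in a cleaner, symmetric packaging: the paper also splits the extension degree by degree via Proposition~\ref{prop:Riehl_Veriti}, forms a pushout along the latching data, and invokes Lemma~\ref{lem:Ext-to-box-orthogonal} against the matching map; your single lifting square $\iota_1\colon L_cZ\to T_c$ versus $h_c\colon Z(c)\to R_c$ encodes the same data, and the identifications $\coker\iota_1\cong\coker l^X_c$ and $\ker h_c\cong\ker m^Y_c$ do go through (the latter using left exactness of $M_c$ and surjectivity of $m^Y_c$). The completeness construction likewise matches the paper's.

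However, there is a genuine gap in your proof of part~(\ref{thm-item:lifting_cot_plain}). You obtain the equalities $\Phi(\A)=\leftperp{\Psi(\B)}$ and $\Psi(\B)=\rightperp{\Phi(\A)}$ by ``a Salce-type argument combined with the completeness below,'' but part~(\ref{thm-item:lifting_cot_plain}) is asserted under the hypotheses that $\A$ is merely generating and $\B$ merely cogenerating; completeness of $(\A,\B)$ is only assumed in part~(\ref{thm-item:lifting_cot_complete}). Without the approximation sequences in $\M^\C$ the Salce argument cannot be run, so your route proves part~(\ref{thm-item:lifting_cot_plain}) only for complete pairs. The paper closes this by a direct argument: given $X\in\leftperp{\Psi(\B)}$, one tests $\Delta^c\tens{\C}X\in\leftperp{\B}$ and $\tor_1^\C(\Delta^c,X)=0$ using the adjunctions~\eqref{eq:adj_with_Delta} and the facts that $\hom(\Delta^c,-)$ sends $\B$--epimorphisms to $\Psi(\B)$--epimorphisms and that $\hom(l^c,B)$ is a $\Psi(\B)$--epimorphism (Proposition~\ref{prop:hom_tens_Delta} and Lemma~\ref{lem:induction-to-Reedy}); you would need to supply this or an equivalent. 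Two smaller points: in the heredity step, the long exact sequence gives $\tor_1^\C(\Delta^c,X')$ as a quotient of $\tor_2^\C(\Delta^c,X'')$, and the vanishing of the latter for $X''\in\Phi(\A)$ is not part of the definition of $\Phi(\A)$ --- it requires knowing that the syzygies of $\Delta^c$ are filtered by contravariant standard modules and then Lemma~\ref{lem:enriched_dual_eklof} (the paper instead runs the Snake Lemma on the latching squares via Proposition~\ref{prop:hom_tens_Delta_over_C}(ii)). And in the completeness step, the constructed approximation need not be filtered by free functors $\C(c,-)\otimes A$; membership in $\Phi(\A)$ should instead be read off directly from the construction, since the latching morphisms are built to be $\A$--monomorphisms.
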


Before giving a proof, let us put the theorem in some context. Among others, it generalizes the main results of~\cite{hj2016}. Although these are stated for shape categories $\C$ which are non-linear, it is clear that our methods encompass those of \cite{hj2016}. More precisely, we have the following:

\begin{corollary}
\label{cor:generalize_HJ}
Let $(\class A,\class B)$ be a complete cotorsion pair in $\M$. Let $Q$ be a quiver and $\C:=kQ$ its $k$--linearization. Then the following hold:
\begin{itemize}
\item[(i)] If $Q$ is left-rooted then $(\Phi(\A),\B^{\C})$ is a complete cotorsion pair in $\M^{\C}$.
\item[(ii)] If $Q$ is right-rooted then $(\A^{\C},\Psi(\B))$ is a complete cotorsion pair in $\M^{\C}$.
\end{itemize}
These cotorsion pairs will be hereditary in case $(\A,\B)$ is.
\end{corollary}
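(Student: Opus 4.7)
The plan is to derive Corollary~\ref{cor:generalize_HJ} directly from Theorem~\ref{thm:lifting_cot} by identifying both situations as extremal cases of the Reedy setup in which one of the classes $\Phi(\A)$ or $\Psi(\B)$ collapses to the pointwise class. By Example~\ref{ex:rooted_quivers} combined with Example~\ref{ex:extreme Reedy cases}, $\C=kQ$ is a $k$--linear Reedy category with $\C^+=\C$ and $\C^-$ trivial (when $Q$ is left-rooted) or $\C^-=\C$ and $\C^+$ trivial (when $Q$ is right-rooted). Since $(\A,\B)$ is a complete cotorsion pair in $\M$, it is automatically generating and cogenerating, so the hypotheses of Theorem~\ref{thm:lifting_cot} are satisfied.

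The key step is to verify that in these extreme cases the latching or matching constructions degenerate, allowing $\Phi(\A)$ or $\Psi(\B)$ to be rewritten as a pointwise class. Fix case (i) and let $c\in\C$ have degree $\alpha$. Since $\C^-$ has only scalar multiples of identities as non-zero morphisms, $\C^-(c,d)=0$ for every object $d$ of degree $<\alpha$, so the weight $\C^-(c,i(-))$ entering the definition of $M_cX$ vanishes identically on $\C^-_{<\alpha}$. Hence $M_cX=0$, and the matching morphism $m^X_c\colon X(c)\to 0$ is trivially an epimorphism with kernel $X(c)$; this unfolds $\Psi(\B)$ to $\B^\C$. Equivalently, one may observe that $\I^\C_{<\alpha}(c,-)=0$ from the Reedy factorisation isomorphism~\eqref{eq:Reedy-factorization}, so that $\Delta_c\cong\C(c,-)$ is representable, and Corollary~\ref{cor:latching-and-matching-via-standards}(ii) reaches the same conclusion via Yoneda and the projectivity of $\C(c,-)$. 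The dual argument in case (ii) gives $L_cX=0$ and hence $\Phi(\A)=\A^\C$.

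Applying parts~(\ref{thm-item:lifting_cot_complete}) and~(\ref{thm-item:lifting_cot_hereditary}) of Theorem~\ref{thm:lifting_cot} to the data just described then yields the claimed complete (and, when applicable, hereditary) cotorsion pairs $(\Phi(\A),\B^\C)$ and $(\A^\C,\Psi(\B))$ in $\M^{\C}$. The main conceptual point is just to correctly identify the Reedy structure of $kQ$ in the two rooted cases; once this is done, the collapse of $\Phi$ or $\Psi$ to a pointwise class is an immediate consequence of the triviality of $\C^-$ or $\C^+$, so I do not anticipate any serious obstacle beyond a careful bookkeeping of the definitions.
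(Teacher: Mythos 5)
Your argument is correct and is essentially the paper's own proof: identify $kQ$ as a direct (resp.\ inverse) $k$--linear Reedy category via Examples~\ref{ex:rooted_quivers} and~\ref{ex:extreme Reedy cases}, observe that all matching (resp.\ latching) objects vanish so that $\Psi(\B)=\B^\C$ (resp.\ $\Phi(\A)=\A^\C$), and then apply Theorem~\ref{thm:lifting_cot}. The extra observation via $\I^\C_{<\alpha}(c,-)=0$ and Corollary~\ref{cor:latching-and-matching-via-standards} is a harmless alternative justification of the same collapse.
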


\begin{proof}
(i) If $Q$ is left-rooted then $\C$ is direct $k$--linear by Example~\ref{ex:rooted_quivers}; hence Reedy, cf.~Example~\ref{ex:extreme Reedy cases}. In view of Theorem~\ref{thm:lifting_cot} it suffices to show that $\Psi(\B)=\B^{\C}$. But this follows easily from the definition of the class $\Psi(\B)$, since all the matching objects of any functor in $\M^{\C}$ are zero as $\C$ is direct. The proof of (ii) is dual.
\end{proof}

In the next example we describe the classes $\Phi(\A)$ and $\Psi(\B)$ for a quiver which is neither left nor right rooted.

\begin{example}
\label{ex:generalize_HJ}
Let $(\class A,\class B)$ be a complete cotorsion pair in $\class M$. 
Consider the quiver 
\[
 \xymatrix@C=2pc{
 0 \ar@/^0.4pc/[r]^-{\alpha} & 1 \ar@/^0.4pc/[l]^-{\beta}
 }
\]
with relation $I=\{\beta\circ \alpha\}$ and its associated $k$--linear Reedy category $\C$ as in Example~\ref{ex:qh}. According to Theorem~\ref{thm:lifting_cot} there exists a complete cotorsion pair $(\Phi(\A),\Psi(\B))$ in the category of representations $\M^{\C}$. By Definition~\ref{def:latching_matching} we know that a representation $X$ is in $\Phi(\A)$ if and only if there exist short exact sequences
$0\rightarrow \rad_{\C^+}(-,0)\otimes_{\C^+}X\rightarrow X(0)\rightarrow C_0\rightarrow 0$ and
$0\rightarrow \rad_{\C^+}(-,1)\otimes_{\C^+}X\rightarrow X(1)\rightarrow C_1\rightarrow 0$ where the cokernels $C_0$ and $C_1$ belong in $\A$. By an easy computation we deduce that  $X$ is in $\Phi(\A)$ if and only if there exists a short exact sequence $0\rightarrow X(0)\rightarrow X(1)\rightarrow C_1\rightarrow 0$ with all terms in $\A$. Similarly, one has that $Y$ is in $\Psi(\B)$ if and only if there exists a short exact sequence $0\rightarrow K_1\rightarrow Y(1)\rightarrow Y(0)\rightarrow 0$ with all terms in $\B$.
\end{example}

\subsection{Analogues of the Eklof Lemma for the action}

The first technical tool which we will need is an analogue of Proposition~\ref{prop:eklof}, but for orthogonals with respect to the functor $\ext^1_\C\colon(\K^\C)\op\times\M^\C\to\M$, as introduced in \S\ref{subsec:ringoids}.

\begin{lemma}\label{lem:enriched_eklof}
Suppose that $\A$ is a generating class of objects of $\M$, and let $Y\in\M^\C$ and $V\in\K^{\C}$ be such that 
\begin{enumerate}
\item[(i)] $V$ admits a filtration $(V_\alpha\mid\iota_{\alpha\beta})_{\alpha<\beta\le\sigma}$ with 
\item[(ii)] $\hom_\C(\coker(\iota_{\alpha,\alpha+1}),Y)\in\rightperp{\A}$ for each $\alpha<\sigma$ and
\item[(iii)] $\ext^1_\C(\coker(\iota_{\alpha,\alpha+1}),Y)=0$ for each $\alpha<\sigma$.
\end{enumerate}
Then $\hom(V,Y)\in\rightperp{\A}$ and $\ext^1_\C(V,Y)=0$.
\end{lemma}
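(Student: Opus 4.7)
My plan is a transfinite induction on $\alpha\leqslant\sigma$, establishing simultaneously that $\hom_\C(V_\alpha,Y)\in\rightperp\A$ and $\ext^1_\C(V_\alpha,Y)=0$. The base case $V_0=0$ is trivial.

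For the successor case $\alpha=\beta+1$ I would feed the short exact sequence $0\to V_\beta\to V_{\beta+1}\to W_\beta\to 0$ in $\K^\C$ through the long exact sequence of $\ext^{*}_\C(-,Y)$ (produced from a horseshoed, degreewise-split short exact sequence of projective resolutions in $\K^\C$, exactly in the style of the paragraph preceding Lemma~\ref{lem:long-exact-seq}). Hypothesis (iii) and the inductive hypothesis immediately force $\ext^1_\C(V_{\beta+1},Y)=0$, and applying $\Hom_\M(A,-)$ for $A\in\A$ to the resulting short exact sequence
\[
0\to \hom_\C(W_\beta,Y)\to \hom_\C(V_{\beta+1},Y)\to \hom_\C(V_\beta,Y)\to 0
\]
in $\M$, together with (ii), the inductive hypothesis and closure of $\rightperp\A$ under extensions, yields $\hom_\C(V_{\beta+1},Y)\in\rightperp\A$.

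For the limit case my plan is to reduce to Proposition~\ref{prop:eklof} applied inside the Grothendieck abelian category $\K^\C$. For each $A\in\A$ the functor $\M(A,Y(-))$ lies in $\K^\C$, and the enriched adjunction from~\S\ref{subsec:enriched} gives the natural isomorphism $\M(A,\hom_\C(V',Y))\cong\hom_{\K^\C}(V',\M(A,Y(-)))$. Tracing this identity on projective resolutions in $\K^\C$ produces a Grothendieck-type spectral sequence
\[
E_2^{p,q}=\Ext^p_\M(A,\ext^q_\C(V',Y))\;\Longrightarrow\; \Ext^{p+q}_{\K^\C}(V',\M(A,Y(-))).
\]
Plugging in $V'=W_\beta$, (ii) kills $E_2^{1,0}$ and (iii) kills every $E_2^{p,1}$, forcing $\Ext^1_{\K^\C}(W_\beta,\M(A,Y(-)))=0$ for every $\beta<\sigma$ and every $A\in\A$. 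Proposition~\ref{prop:eklof} applied in $\K^\C$ to the filtration of $V_\alpha$ and to the class $\Y'=\{\M(A,Y(-)) : A\in\A\}$ then yields $\Ext^1_{\K^\C}(V_\alpha,\M(A,Y(-)))=0$, and the edge map of the same spectral sequence recovers $\Ext^1_\M(A,\hom_\C(V_\alpha,Y))=0$, i.e.\ the first conclusion.

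The main obstacle I anticipate is the second conclusion $\ext^1_\C(V_\alpha,Y)=0$ in the limit case, because the spectral sequence only places $\M(A,\ext^1_\C(V_\alpha,Y))$ inside $\Ext^2_\M(A,\hom_\C(V_\alpha,Y))$, an object the hypotheses do not control. I would close this gap with a complementary Lukas-style argument on the inverse system $(\hom_\C(V_\beta,Y))_{\beta<\alpha}$, whose transitions are epimorphisms with kernels $\hom_\C(W_\beta,Y)\in\rightperp\A$ by the successor step and whose limit is $\hom_\C(V_\alpha,Y)$ by continuity of the filtration. Lifting any prescribed class in $\M(A,\ext^1_\C(V_\alpha,Y))$ back through the tower one filtration step at a time -- solving the requisite extension problem at each successor using $\Ext^1_\M(A,\hom_\C(W_\beta,Y))=0$ and concatenating at limits using the universal property of the inverse limit -- displays the class as zero. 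Since $\A$ is generating in $\M$, vanishing of $\M(A,\ext^1_\C(V_\alpha,Y))$ for all $A\in\A$ gives the required $\ext^1_\C(V_\alpha,Y)=0$.
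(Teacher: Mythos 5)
Your successor step is sound (the long exact sequence of $\ext^\ast_\C(-,Y)$ in the weight variable is available for any short exact sequence in $\K^\C$), but the limit case -- which is the whole content of an Eklof-type lemma -- does not go through as written. The Grothendieck spectral sequence $E_2^{p,q}=\Ext^p_\M(A,\ext^q_\C(V',Y))\Rightarrow\Ext^{p+q}_{\K^\C}(V',\M(A,Y(-)))$ does not exist under the standing hypotheses: $\M$ is only assumed to be a bicomplete $k$--linear abelian category, so it need not have enough injectives, and even when it does, the composite-functor acyclicity condition fails, since $\hom_\C(\C(c,-),Y)\cong Y(c)$ has no reason to be $\Hom_\M(A,-)$--acyclic. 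Without the spectral sequence there is not even a natural comparison map between $\Ext^1_\M(A,\hom_\C(V',Y))$ and $\Ext^1_{\K^\C}(V',\M(A,Y(-)))$, so both the vanishing of $\Ext^1_{\K^\C}(W_\beta,\M(A,Y(-)))$ and the ``edge map'' step are unjustified. This detour is also unnecessary for the first conclusion: since $\ext^1_\C(\coker\iota_{\alpha,\alpha+1},Y)=0$, the functor $\hom_\C(-,Y)$ turns the filtration of $V$ into a $\rightperp{\A}$--filtration of $\hom_\C(V,Y)$ in $\M\op$ (epimorphic transition maps with kernels $\hom_\C(\coker\iota_{\alpha,\alpha+1},Y)\in\rightperp{\A}$, continuous at limits because $\hom_\C(-,Y)$ sends colimits to limits), and Proposition~\ref{prop:eklof} applied in $\M\op$ gives $\hom_\C(V,Y)\in\rightperp{\A}$ directly.

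The deeper gap is your treatment of $\ext^1_\C(V_\alpha,Y)$ at limit ordinals. The object $\ext^1_\C(V_\alpha,Y)$ is computed from a projective resolution of $V_\alpha$ in $\K^\C$; it is not a derived limit of the tower $(\hom_\C(V_\beta,Y))_{\beta<\alpha}$, and relating the two would require a Milnor/$\varprojlim^1$ analysis that is unavailable in a general $\M$ (products need not be exact). Your ``lift the class one step at a time and concatenate at limits'' sketch also hides a coherence problem: the lift chosen at stage $\beta+1$ must restrict to the one chosen at stage $\beta$ in order to pass through limit stages, and arranging this is exactly the content of the transfinite-composition lemma for lifting properties. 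The paper's proof resolves this by reformulating the second conclusion: $\ext^1_\C(V,Y)=0$ iff $\hom_\C(f,Y)$ is an epimorphism for a monomorphism $f\colon K\to L$ in $\K^\C$ with $L$ projective and $\coker f\cong V$. Pulling the filtration of $V$ back along $L\twoheadrightarrow V$ yields a filtration of $L$ starting at $K$ with the same subquotients; applying $\hom_\C(-,Y)$ exhibits $\hom_\C(f,Y)$ as a transfinite composition in $\M\op$ of morphisms each having the left lifting property against $\A$--epimorphisms of $\M\op$ (Lemma~\ref{lem:Ext-to-box-orthogonal}), so $\hom_\C(f,Y)$ inherits that lifting property by \cite[Lemma~10.3.1]{Hir}, and testing against an epimorphism $A\twoheadrightarrow\hom_\C(K,Y)$ with $A\in\A$ (here the generating hypothesis enters) shows it is an epimorphism. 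You need this reformulation, or an equivalent device, to make the limit case work.
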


\begin{remark}\label{rem:enriched_eklof}
If $\M$ has enough projective objects, Lemma~\ref{lem:enriched_eklof} takes a significantly easier form, for then we choose $\A$ to be the class of projectives. Then it simply says that given any $Y\in\M^\C$, the class $\ker\ext^1_\C(-,Y)\subset\K^\C$ is closed under filtrations, in complete analogy with Proposition~\ref{prop:eklof}.
\end{remark}

\begin{proof}[Proof of Lemma~\ref{lem:enriched_eklof}]
Denote $\B=\rightperp{\A}\subseteq\M$.
When we interpret $\hom_\C(-,Y)$ as a functor $\K^\C\to\M\op$, it turns the filtration of $V$ to a $\B$--filtration $(X_\alpha\mid\pi_{\alpha\beta}\colon X_\alpha\to X_\beta)$ of $X:=\hom_\C(V,Y)$ in $\M\op$, where $X_\alpha:=\hom_\C(V_\alpha,Y)$ and $\pi_{\alpha\beta}:=\hom_\C(\iota_{\alpha\beta},Y)$ for each $\alpha<\beta\le\sigma$. In particular, $X\in\B$ by Proposition~\ref{prop:eklof} applied to this filtration.

In order to prove that $\ext^1_\C(V,Y)=0$, it suffices to show that $\hom_\C(f,Y)$ is an epimorphism in $\M$ whenever $f\colon K\to L$ is a monomorphism in $\K^\C$ with $\coker f\cong V$ (in fact, we only need to consider one such $f$ with $L$ projective in $\K^\C$).
So let $f$ be such a monomorphism and consider a direct system $(L_\alpha\mid\lambda_{\alpha\beta})_{\alpha<\beta\le\sigma}$ constructed by pulling back the filtration of $V$ along the surjection $L\to V$:
\[
\xymatrix{
L_\alpha \ar[r]^{\lambda_{\alpha\beta}} \ar[d] & L_\beta \ar[r]^{\lambda_{\beta\sigma}} \ar[d] & L \ar[d] \\
V_\alpha \ar[r]^{\iota_{\alpha\beta}} & V_\beta \ar[r]^{\iota_{\beta\sigma}} & V.
}
\]
Clearly $\coker\iota_{\alpha\beta}\cong\coker\lambda_{\alpha\beta}$ for each $\alpha<\beta\le\sigma$, and since $\K^\C$ is a Grothendieck category, we also have that all the $\lambda_{\alpha\beta}$ are monomorphisms and $L_\beta=\colim_{\alpha<\beta}L_\alpha$ for each limit ordinal $\beta\le\sigma$.
Applying $\hom_\C(-,Y)\colon\K^\C\to\M\op$ to $(L_\alpha\mid\lambda_{\alpha\beta})_{\alpha<\beta\le\sigma}$, we obtain a direct system $(\hom_\C(L_\alpha,Y)\mid\hom_\C(\lambda_{\alpha\beta},Y))_{\alpha<\beta\le\sigma}$ in $\M\op$ which satisfies all the axioms of a $\B$--filtration from Definition~\ref{def:filt} except that possibly $\hom_\C(L_0,Y)\ne 0$. In fact, since $\lambda_{0\sigma}=f$ by construction, we have $\hom_\C(\lambda_{0\sigma},Y)=\hom_\C(f,Y)$.

Now, since each $\hom_\C(\lambda_{\alpha,\alpha+1},Y)$ is a $\B$--epimorphism in $\M$, it has the right lifting property with respect to any $\A$--monomorphisms in $\M$ (and so the left lifting property with respect to any $\A$--epimorphism in $\M\op$) by Lemma~\ref{lem:Ext-to-box-orthogonal}. It follows from \cite[Lemma 10.3.1]{Hir} that also $\hom_\C(f,Y)$ has the right lifting property with respect to any $\A$--monomorphisms in $\M$. Specializing this to a square in $\M$ of the form
\[
\xymatrix{
0 \ar[r] \ar[d] & \hom_\C(L,Y) \ar[d]^-{\hom_\C(f,Y)} \\
A \ar[r] \ar@{-->}[ur]^-{h} & \hom_\C(K,Y)
}
\]
with the lower horizontal row an epimorphism (which is possible since we assume $\A$ to be generating), we see that $\hom_\C(f,Y)$ must be an epimorphism, as required.
\end{proof}

A very similar argument shows also the following:

\begin{lemma}\label{lem:enriched_dual_eklof}
Suppose that $\B$ is a cogenerating class of objects of $\M$, and let $Y\in\M^\C$ and $V\in\K^{\C\op}$ be such that 
\begin{enumerate}
\item[(i)] $V$ admits a filtration $(V_\alpha\mid\iota_{\alpha\beta})_{\alpha<\beta\le\sigma}$ with 
\item[(ii)] $\coker(\iota_{\alpha,\alpha+1})\otimes_{\C}Y\in\leftperp{\B}$ for each $\alpha<\sigma$ and
\item[(iii)] $\tor_1^\C(\coker(\iota_{\alpha,\alpha+1}),Y)=0$ for each $\alpha<\sigma$.
\end{enumerate}
Then $V\otimes_{\C}Y\in\leftperp{\B}$ and $\tor_1^\C(V,Y)=0$.
\end{lemma}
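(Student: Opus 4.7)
The plan is to run the dual of the argument for Lemma~\ref{lem:enriched_eklof}, replacing the left-exact $\hom_\C(-,Y)\colon\K^\C\to\M\op$ with the right-exact $-\tens{\C}Y\colon\K^{\C\op}\to\M$ (a left adjoint, so colimit-preserving). For the first conclusion $V\tens{\C}Y\in\leftperp{\B}$, apply $-\tens{\C}Y$ to each short exact sequence $0\to V_\alpha\to V_{\alpha+1}\to\coker(\iota_{\alpha,\alpha+1})\to 0$ in $\K^{\C\op}$; the always-available long exact sequence (cf.~the discussion preceding Lemma~\ref{lem:long-exact-seq}) combined with hypothesis (iii) makes each $\iota_{\alpha,\alpha+1}\tens{\C}Y$ a monomorphism whose cokernel $\coker(\iota_{\alpha,\alpha+1})\tens{\C}Y$ lies in $\leftperp{\B}$ by hypothesis (ii). Colimit-preservation of $-\tens{\C}Y$ gives $V_\beta\tens{\C}Y=\colim_{\alpha<\beta}V_\alpha\tens{\C}Y$ at limit ordinals, so $(V_\alpha\tens{\C}Y\mid\iota_{\alpha\beta}\tens{\C}Y)$ is a $\leftperp{\B}$--filtration of $V\tens{\C}Y$ in $\M$, and Proposition~\ref{prop:eklof} yields $V\tens{\C}Y\in\leftperp{\B}$.

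For $\tor_1^\C(V,Y)=0$, it suffices to exhibit some short exact sequence $0\to K\xrightarrow{f} P\to V\to 0$ in $\K^{\C\op}$ with $P$ projective for which $f\tens{\C}Y$ is a monomorphism. Mimicking the proof of Lemma~\ref{lem:enriched_eklof}, pull back the filtration of $V$ along $P\to V$ to obtain an ascending chain $K=P_0\subseteq P_1\subseteq\cdots\subseteq P_\sigma=P$ in the Grothendieck category $\K^{\C\op}$ with $\coker(\lambda_{\alpha,\alpha+1})\cong\coker(\iota_{\alpha,\alpha+1})$ and $P_\beta=\colim_{\alpha<\beta}P_\alpha$ at limit ordinals. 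Tensoring by $Y$ and running the previous paragraph's argument exhibits $f\tens{\C}Y$ as the transfinite composition (in $\M$) of $\leftperp{\B}$--monomorphisms $\lambda_{\alpha,\alpha+1}\tens{\C}Y$.

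By Lemma~\ref{lem:Ext-to-box-orthogonal}, each $\lambda_{\alpha,\alpha+1}\tens{\C}Y$ has the left lifting property with respect to every $\B$--epimorphism in $\M$, and by~\cite[Lemma~10.3.1]{Hir} this property is preserved under transfinite composition, so $f\tens{\C}Y$ enjoys it as well. Since $\B$ is cogenerating, fix a monomorphism $j\colon K\tens{\C}Y\hookrightarrow W$ with $W\in\B$ and apply the lifting property to the square with left vertical $f\tens{\C}Y$, right vertical the $\B$--epimorphism $W\to 0$, top arrow $j$, and bottom arrow zero; the resulting diagonal $h\colon P\tens{\C}Y\to W$ satisfies $h\circ(f\tens{\C}Y)=j$, which being a monomorphism forces $f\tens{\C}Y$ to be one too. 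The main obstacle is precisely this last step: one cannot naively argue that a transfinite composition of monomorphisms is a monomorphism, since $\M$ is only cocomplete abelian and need not have exact filtered colimits (cf.~Remark~\ref{rem:filt}) -- this is exactly the subtlety the lifting-property machinery circumvents, just as in the proof of Lemma~\ref{lem:enriched_eklof}.
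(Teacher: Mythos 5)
Your proposal is correct and follows essentially the same route as the paper, which proves this lemma precisely by dualizing the argument of Lemma~\ref{lem:enriched_eklof}: replace $\hom_\C(-,Y)$ by the colimit-preserving $-\tens{\C}Y$, observe that hypotheses (ii)--(iii) make the tensored system a $\leftperp{\B}$--filtration so that Proposition~\ref{prop:eklof} applies, and reduce $\tor_1^\C(V,Y)=0$ to showing $f\tens{\C}Y$ is a monomorphism for one $f\colon K\to P$ with $P$ projective and $\coker f\cong V$, via the pulled-back filtration, Lemma~\ref{lem:Ext-to-box-orthogonal}, transfinite composition of lifting properties, and the cogenerating hypothesis on $\B$. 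Your closing remark correctly identifies the subtlety (non-exactness of filtered colimits in $\M$) that the lifting-property machinery is designed to circumvent.
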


\begin{proof}
Use the same argument as for Lemma~\ref{lem:enriched_eklof}. Just replace the occurrences of the functor $\hom_\C(-,Y)\colon\K^\C\to\M\op$ by $-\otimes_{\C}Y\colon\K^{\C\op}\to\M$ and note that $\tor_1^\C(V,Y)=0$ if and only if $f\otimes_{\C}Y$ is a monomorphism in $\M$ whenever $f\colon K\to L$ is a monomorphism in $\K^{\C\op}$ with $\coker f\cong V$ (in fact, we again only need to consider one such $f$ with $L$ projective in $\K^{\C\op}$).
\end{proof}

\subsection{Cotorsion pairs and diagrams}

Now we will continue our investigation of how cotorsion pairs in $\M$ relate to those in $\M^\C$.
Recall that by Theorem~\ref{thm:proj_by_standards} and Remark~\ref{rem:proj_by_standards}, we have a filtration of $\C(c,-)$ by standard functors of degree $\leqslant\alpha$, where $\alpha:=\deg(c)$, such that the last filtration factor is $\C(c,-)/\I^\C_{<\alpha}(c,-)\cong\Delta_c$, (see~\eqref{eq:filtration_by_standards}). Moreover, there is also an analogous filtration of $\C(-,c)$ by contravariant standard functors thanks to Remark~\ref{rem:opp_Reedy_cat}. As an immediate consequence we have the following lemma.

\begin{lemma}
\label{lem:Reedy_objects}
Let $(\A,\B)$ be a cotorsion pair in $\M$. Then the following hold:
\begin{itemize}
\item[(i)] If $X\in\Phi(\class A)$ then for all $c$ in $\C$, the objects $L_{c}X$ and $X(c)$ belong in $\class A$. 
\item[(ii)] If $X\in\Psi(\class B)$ then for all $c$ in $\C$, the objects $M_{c}X$ and $X(c)$ belong in $\class B$.
\end{itemize}
\end{lemma}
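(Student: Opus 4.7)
The plan is to derive (ii) from (i) by dualization using Remark~\ref{rem:opp_Reedy_cat} and the dual identification in Corollary~\ref{cor:latching-and-matching-via-standards}, so I focus on~(i). Fix $c \in \C$ of degree $\alpha$. By Theorem~\ref{thm:standard-(co)induction}(i) and the definition of $\Delta^c$, the latching morphism fits into a natural short exact sequence
\[ 0 \longrightarrow L_c X \,\cong\, \I^\C_{<\alpha}(-,c) \otimes_\C X \longrightarrow X(c) \longrightarrow \Delta^c \otimes_\C X \longrightarrow 0. \]
Since $X \in \Phi(\A)$ we have $\Delta^c \otimes_\C X \in \A$, and $\A$ is closed under extensions in $\M$; so it suffices to prove $L_cX \in \A$.

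To this end I filter $V := \I^\C_{<\alpha}(-,c)$ by contravariant standards. Applying Theorem~\ref{thm:proj_by_standards} with $\beta = \lambda$ combined with Remark~\ref{rem:proj_by_standards} to the opposite Reedy category $\C^\op$ (cf.~Remark~\ref{rem:opp_Reedy_cat}) yields a filtration of the representable $\C(-,c)$ whose last factor is $\Delta^c$ and whose remaining factors are coproducts of right standards $\Delta^d$ with $\deg(d) < \alpha$. Truncating away the top factor gives a filtration of $V$ whose successive quotients are all of this latter form.

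Since $X \in \Phi(\A)$, for each such $d$ we have $\Delta^d \otimes_\C X \in \A = \leftperp{\B}$ and $\tor_1^\C(\Delta^d, X) = 0$; both properties pass to coproducts, because $-\otimes_\C X$ and $\tor_1^\C(-,X)$ commute with coproducts in the first variable and $\A$ is closed under any coproducts that exist in $\M$ (Remark~\ref{rem:cotorsion-pairs-and-(co)products}). Lemma~\ref{lem:enriched_dual_eklof} applied to this filtration of $V$ then gives $L_cX = V \otimes_\C X \in \leftperp{\B} = \A$, and extension closure of $\A$ yields $X(c) \in \A$.

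The main conceptual point -- rather than a real obstacle -- is to recognise that the latching object is exactly $\I^\C_{<\alpha}(-,c)\otimes_\C X$, and that this ideal is precisely the functor filtered by right standards via Theorem~\ref{thm:proj_by_standards} applied to $\C^\op$. Once these identifications are in place, the argument is a straightforward application of the Eklof-type Lemma~\ref{lem:enriched_dual_eklof}, and in particular no exactness assumption on coproducts in $\M$ is required.
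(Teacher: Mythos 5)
Your overall strategy is the same as the paper's: identify $L_cX$ with $\I^\C_{<\alpha}(-,c)\otimes_{\C}X$ via Theorem~\ref{thm:standard-(co)induction}, filter the ideal by contravariant standard functors using Theorem~\ref{thm:proj_by_standards} for $\C\op$, tensor with $X$, and conclude by an Eklof-type argument (your preliminary reduction to $L_cX$ via the short exact sequence and extension-closure of $\A$ is a harmless variation; the paper filters $\C(-,c)$ and $\I^\C_{<\alpha}(-,c)$ directly). However, two steps are not justified under the stated hypotheses, which assume only that $(\A,\B)$ is a cotorsion pair in a bicomplete $\M$ --- no exactness of coproducts and no cogenerating assumption on $\B$.

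First, Lemma~\ref{lem:enriched_dual_eklof} has the hypothesis that $\B$ is cogenerating, which is not available here. You only use the conclusion $V\otimes_{\C}X\in\leftperp{\B}$, which indeed does not need that hypothesis, but the correct reference is then Proposition~\ref{prop:eklof} applied to the induced filtration of $V\otimes_{\C}X$ (as in the paper's proof). Second, and more substantively, the claim that $\tor_1^\C(-,X)$ commutes with coproducts in the first variable is unjustified: homology does not commute with coproducts in $\M$ unless coproducts are exact, and concretely the vanishing of $\tor_1^\C\bigl(\coprod_j\Delta^{d_j},X\bigr)$ amounts to a coproduct of $\A$--monomorphisms being a monomorphism, which the paper only guarantees under a cogenerating hypothesis (Remark~\ref{rem:conditional-exactness-of-(co)products}). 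Your closing assertion that no exactness of coproducts is required is exactly what this step fails to deliver. The repair is immediate and is what the paper intends: refine the filtration so that every successive quotient is a \emph{single} standard $\Delta^d$ (any filtration whose quotients are coproducts of standards refines to one whose quotients are standards). Then $\Delta^d\otimes_{\C}X\in\A$ and $\tor_1^\C(\Delta^d,X)=0$ are literally the defining conditions of $\Phi(\A)$, the tensored system is an $\A$--filtration of $L_cX$ (and of $X(c)$), and Proposition~\ref{prop:eklof} finishes the proof.
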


\begin{proof}
We only prove (i) as the proof of (ii) is similar.
Let $X$ be a $k$--linear functor in $\Phi(\A)$ and let $c$ be an object of $\class C$. Then both $\C(-,c)$ and $\I^\C_{<\alpha}(-,c)$ admit a filtration by contravariant standard functors of degree $\leqslant\alpha$ by the comment before the lemma. Hence $\I^\C_{<\alpha}(-,c)\otimes_{\C}X$ and $\C(-,c)\otimes_{\C}X$ are $\A$--filtered by the very definition of the class $\Phi(\A)$ and belong to $\A$ by Proposition~\ref{prop:eklof}. Now it remains to note that $\I^\C_{<\alpha}(-,c)\otimes_{\C}X\cong L_cX$ by Theorem~\ref{thm:standard-(co)induction} and also $\C(-,c)\otimes_{\C}X\cong X(c)$.
\end{proof}

The next proposition is a key step towards proving Theorem~\ref{thm:lifting_cot}(\ref{thm-item:lifting_cot_plain}).

\begin{proposition}
\label{prop:ext_vanishing}
Let $(\class A,\class B)$ be a cotorsion pair in $\class M$. Then for any $X\in\Phi(\A)$ and $Y\in\Psi(\B)$ we have $\Ext^{1}_{\M^{\C}}(X,Y)=0.$
\end{proposition}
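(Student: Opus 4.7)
The strategy is to show that any short exact sequence $0 \to Y \to E \xrightarrow{p} X \to 0$ in $\M^\C$ splits, by transfinite induction on the degree function $\deg\colon\mathrm{Obj}(\C)\to\lambda$. Specifically, I would build a compatible family of natural transformations $s_{<\alpha}\colon X|_{\C_{<\alpha}} \to E|_{\C_{<\alpha}}$ sectioning $p|_{\C_{<\alpha}}$ for each ordinal $\alpha\leqslant\lambda$; limit stages glue automatically since $\C_{<\alpha} = \bigcup_{\beta<\alpha}\C_{<\beta}$, and $s := s_{<\lambda}$ is the required splitting. For the successor step and a fixed $c\in\C$ of degree $\alpha$, Proposition~\ref{prop:Riehl_Veriti} reduces the problem to finding $s_c\colon X(c)\to E(c)$ satisfying (a) $s_c\circ l_c^X = l_c^E\circ L_c s_{<\alpha}$, (b) $m_c^E\circ s_c = M_c s_{<\alpha}\circ m_c^X$, and additionally (c) $p_c\circ s_c = \id_{X(c)}$ so that the extended section still splits $p$ at $c$.

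Writing $u := l_c^E\circ L_c s_{<\alpha}$ and $v := M_c s_{<\alpha}\circ m_c^X$, the identity $m_c^E\circ u = v\circ l_c^X$ follows from naturality of $\tau^{\alpha}\colon\sk_\alpha\to\cosk_\alpha$ applied to $s_{<\alpha}$ (combined with the identification $m_c^X \circ l_c^X = (\tau^{\alpha}_{\res_\alpha X})_c$ from Lemma~\ref{lem:abstract-latching-matching-factorisation}). I would then package conditions (b) and (c) into a single lifting problem using the pullback $Q := E(c)\times_P X(c)$, where $P := X(c)\times_{M_c X} M_c E$ and the structure maps are $(p_c, m_c^E)\colon E(c)\to P$ and $(\id_{X(c)}, v)\colon X(c)\to P$. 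A key diagram chase shows that $(p_c, m_c^E)$ is an epimorphism with kernel $K := \ker m_c^Y\cong\hom_\C(\Delta_c, Y)$, where surjectivity crucially uses that $m_c^Y$ is an epimorphism (since $Y\in\Psi(\B)$) and that $M_c$ preserves monomorphisms as a weighted limit. Consequently $0 \to K \to Q \to X(c) \to 0$ is exact in $\M$ with $K\in\B$, and sections of $Q\to X(c)$ correspond bijectively to morphisms $s_c$ satisfying (b) and (c).

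The commuting square ensures $u$ lifts canonically to a monomorphism $\tilde{u}\colon L_cX \to Q$ sectioning $Q\to X(c)$ over $l_c^X\colon L_cX\hookrightarrow X(c)$, and condition (a) becomes the requirement that the sought section of $Q \to X(c)$ extends $\tilde{u}$. Forming the quotient $Q/\tilde{u}(L_cX)$ yields the short exact sequence $0 \to K \to Q/\tilde{u}(L_cX) \to C \to 0$, where $C := \coker l_c^X \cong \Delta^c\otimes_\C X \in \A$ by $X\in\Phi(\A)$ and Corollary~\ref{cor:latching-and-matching-via-standards}; the existence of $s_c$ extending $\tilde{u}$ is equivalent to splitting this sequence, which is possible because $\Ext^1_\M(C, K) = 0$ by the cotorsion pair property. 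The principal technical obstacle is verifying the kernel and surjectivity of $(p_c, m_c^E)\colon E(c)\to P$ together with the checks that the pullback really produces the claimed short exact sequence and that $\tilde{u}$ is a well-defined monomorphism; note that when $\alpha = 0$ one has $L_cX = M_cX = 0$, so $P\cong X(c)$, $Q \cong E(c)$, and the whole construction degenerates to the straightforward fact that $\Ext^1_\M(X(c), Y(c)) = 0$ handling the initial step of the induction.
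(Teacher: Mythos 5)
Your argument is correct, and at the level of strategy it is the paper's own proof run in the mirror: both proceed by transfinite induction on degree, both use Proposition~\ref{prop:Riehl_Veriti} to reduce the successor step to a single object $c$ of degree $\alpha$, and both ultimately exploit exactly the two facts $\coker(l^X_c)\cong\Delta^c\otimes_\C X\in\A$ and $\ker(m^Y_c)\cong\hom_\C(\Delta_c,Y)\in\B$. The difference is one of orientation. The paper constructs a \emph{retraction} $\delta_c\colon W(c)\to Y(c)$: it forms a single pushout along the latching map of $Y$, observes that the induced map $\gamma_c\colon Q\to W(c)$ is an $\A$--monomorphism, and then produces $\delta_c$ in one stroke from the lifting property of Lemma~\ref{lem:Ext-to-box-orthogonal} against the $\B$--epimorphism $m^Y_c$. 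You construct a \emph{section} $s_c\colon X(c)\to E(c)$: two pullbacks package conditions (b) and (c) into the sequence $0\to\ker m^Y_c\to Q\to X(c)\to 0$, and condition (a) becomes an extension problem along $l^X_c$, solved by splitting $0\to\ker m^Y_c\to Q/\tilde u(L_cX)\to\coker l^X_c\to 0$ via $\Ext^1_\M(\A,\B)=0$. Your route buys nothing extra but is equally valid; it costs slightly more bookkeeping, which the paper's pushout-plus-lifting formulation hides inside Lemma~\ref{lem:Ext-to-box-orthogonal}. Two small points you should make explicit: surjectivity of $(p_c,m^E_c)\colon E(c)\to P$ needs not merely that $M_c$ preserves monomorphisms but that it is left exact (so that $\ker M_cp=M_cY$, which holds since $M_c$ is a weighted limit), together with surjectivity of $m^Y_c$; and the passage from ``the quotient sequence splits'' to ``some section of $Q\to X(c)$ extends $\tilde u$'' deserves the one-line torsor argument (pull a splitting of $Q/\tilde u(L_cX)\to C$ back along $X(c)\to C$, or equivalently adjust an arbitrary section of $Q\to X(c)$ by a map $X(c)\to\ker m^Y_c$ using $\Ext^1_\M(C,\ker m^Y_c)=0$). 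Neither point is a gap, just a place where the write-up should be completed.
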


\begin{proof}
Let $X\in\Phi(\A)$, $Y\in\Psi(\B)$ and consider a short exact sequence in $\M^{\C}$ 
\[
 \xymatrix@C=2pc{
0 \ar[r] & Y\ar[r]^-{\iota} & W\ar[r]^-{\epsilon} & X\ar[r] & 0.
 }
\] 
We will prove that for any (non-zero) $\alpha\leqslant\lambda$ there is a splitting of the short exact sequence in $\M^{\C^{<\alpha}}$,
\begin{equation}
\Sigma_\alpha\colon
\label{eq:Sigma}
 \xymatrix@C=2pc{
0 \ar[r] & Y_{<\alpha}\ar[r]^-{\iota_{<\alpha}} & W_{<\alpha}\ar[r]^-{\epsilon_{<\alpha}} & X_{<\alpha}\ar[r] & 0.
 }
 \end{equation}
Here by $(-)_{<\alpha}$ we denote restriction to the full Reedy subcategory $\C_{<\alpha}$, as in Definition \ref{def:filtrations}. 
For $\alpha=1$, since any morphism in $\C$ between two objects of degree zero is either zero or a non-zero multiple of the identity, we deduce that a splitting of \eqref{eq:Sigma} exists if and only if for any object $c$ of degree zero, the associated short exact sequence $0\rightarrow Y(c)\rightarrow W(c)\rightarrow X(c)\rightarrow 0$ splits in $\M$, which is the case since $X(c)\in\A$ and $Y(c)\in\B$ by Lemma~\ref{lem:Reedy_objects}.

For the successor ordinal case, we assume that we have constructed a splitting of $\Sigma_\alpha$ for some $\alpha$, and we will construct one for $\Sigma_{\alpha+1}$. From Proposition~\ref{prop:Riehl_Veriti} we know that the desired splitting of $\Sigma_{\alpha+1}$ is equivalent to the existence, for all objects $c$ of degree $\alpha$, of a natural morphism $\delta$ such that the following diagram
\begin{equation}
\label{eq:3by3}
\vcenter{
 \xymatrix@C=3pc{
\sk_{\alpha}Y_{<\alpha}(c)  \ar[d]_-{\iota_{<\alpha}^{c}} \ar[r] ^-{l_{c}^{Y}}   &   Y(c) \ar[d]^-{\iota_{c}} \ar[r]^-{m_{c}^{Y}} & \cosk_{\alpha}Y_{<\alpha}(c) \ar[d]^-{\iota_{c}^{<\alpha}} \\
\sk_{\alpha}W_{<\alpha}(c)\ar[r]^-{l_{c}^{W}}   \ar[d]_-{s_{<\alpha}^{c}}   &   W(c) \ar@{-->}[d]^-{\delta_{c}} \ar[r]^-{m_{c}^{W}} &  \cosk_{\alpha}W_{<\alpha}(c) \ar[d]^-{s_{c}^{<\alpha}} \\
\sk_{\alpha}Y_{<\alpha}(c)\ar[r]^-{l_{c}^{Y}}    &   Y(c) \ar[r]^-{m_{c}^{Y}} &  \cosk_{\alpha}Y_{<\alpha}(c)
  }
}
\end{equation}
is commutative and all vertical composites are identity morphisms. The outer vertical morphisms of this diagram, by abusing the notation, are the induction and coinduction of  $\iota_{<\alpha}$ and of its section $s_{<\alpha}$, when evaluated at $c$, which exist by the induction hypothesis. 
We consider the pushout given by the solid arrows,
\[
\vcenter{
 \xymatrix@C=3pc{
\sk_{<\alpha}Y(c)  \ar[d]_-{\iota_{<\alpha}^{c}} \ar[r] ^-{l_c^{Y}}   &   Y(c) \ar[d]_-{g_c} \\
\sk_{<\alpha}W(c)   \ar[r]^-{h_c}  & Q, \ar@{-->}@/_3ex/[u]_-{g'_c}
  }
}
\]
where $g_c$ is a split monomorphism (since $\iota_{<\alpha}^{c}$ is) with cokernel isomorphic to $\sk_{\alpha}X(c)$. 
By the pushout property and the fact that $l_c^Y \circ s^{c}_{<\alpha}\circ\iota^{c}_{<\alpha}=l_c^Y$, there is a unique left inverse $g'_c$ of $g_c$ such that $l_c^Y \circ s^{c}_{<\alpha}=g_c' \circ h_c$.
By the commutativity of the upper left square in~\eqref{eq:3by3} and the pushout property, there exists also a unique map $\gamma_c\colon Q\rightarrow W(c)$ such that $\gamma_c\circ g_c=\iota_c$ and $\gamma_c\circ h_c=l_c^W$. 
In particular, we obtain a commutative diagram with exact rows 
\[
\vcenter{
 \xymatrix@C=2pc{
 0\ar[r] &  \sk_{<\alpha}Y(c) \ar[r]^-{\iota^{c}_{<\alpha}} \ar[d]_-{l_c^Y} & \sk_{<\alpha}W(c) \ar[d]_-{h_c} \ar[r]  & \sk_{<\alpha}X(c) \ar@{=}[d] \ar[r] & 0 \\
 0\ar[r] &  Y(c) \ar[r]^-{g_c} \ar@{=}[d]& Q \ar[d]_-{\gamma_c} \ar[r]^-{}   & \sk_{<\alpha}X(c) \ar[d]^-{l_c^X} \ar[r] & 0 \\
0\ar[r] & Y(c)   \ar[r]^-{\iota_{c}} & W(c) \ar[r]&  X(c) \ar[r] & 0,
  }
}
\]
where $l_c^X$ is a monomorphism with $\coker(l_c^X)\in\A$ by the assumption that $X\in\Phi(\A)$. Hence $\gamma_c$ is a monomorphism with cokernel in $\A$.
Moreover, the following solid square commutes by the pushout property of $Q$, since the right action of $g_c\colon Y(c)\rightarrow Q$ on the difference $m_c^Y \circ g'_c - s_c^{<\alpha}\circ m_c^W \circ \gamma_c$ is zero, and the same holds for the morphism $h_c\colon \sk_{\alpha}W_{<\alpha}(c)\rightarrow Q$.
\begin{equation}
\label{eq:diagonal_delta}
\vcenter{
 \xymatrix@C=3pc{
Q \ar[d]_-{\gamma_c} \ar[r]^-{g'_{c}}   &   Y(c) \ar[d]^-{m_c^Y} \\
W(c)   \ar[r]_-{s_c^{<\alpha}\circ m_c^W} \ar@{-->}[ur]^-{\delta_c} &  \cosk_{<\alpha}Y(c). 
  }
}
\end{equation}
Since in addition, by assumption we have that $m_c^{Y}$ is an epimorphism with kernel in $\B$, we obtain by Lemma~\ref{lem:Ext-to-box-orthogonal} the existence of a diagonal morphism $\delta_c$, depicted by the dashed arrow in~\eqref{eq:diagonal_delta}, such that the two triangles there commute.

The morphism $\delta_c$ is a left inverse of $\iota_c$. Indeed, we have: 
\begin{align*}
\delta_c\circ\iota_c \; & = \; \delta_c \circ\gamma_c\circ g_c  \\
& = \; g_c' \circ g_c  \\
& = \; \mathrm{id}_{Y(c)}.
\end{align*}
In addition, the lower triangle of (\ref{eq:diagonal_delta}) implies commutativity of the bottom right square in (\ref{eq:3by3}), while commutativity of the bottom left square in (\ref{eq:3by3}) holds from the following computation:
\begin{align*}
\delta_c \circ l_c^W \; & = \; \delta_c \circ \gamma_c \circ h_c \\
& = \; g'_c \circ h_c \\
& = \; l_c^Y \circ s_{<\alpha}^{c}.
\end{align*}
This finishes the passage from $\alpha$ to $\alpha+1$.

In the limit ordinal case, the various splittings of the sequences $\Sigma_{\alpha'}$ for $\alpha'<\alpha$ glue together to produce a splitting of $\Sigma_{\alpha}$.
\end{proof}

In order to finish the proof of Theorem~\ref{thm:lifting_cot}(\ref{thm-item:lifting_cot_plain}), we consider two adjunctions as follows, where the left adjoints are depicted on top:
\begin{equation}
\label{eq:adj_with_Delta}
 \xymatrix@C=3pc{
\M \ar@/^0.5pc/[rr]^-{\Delta_c\otimes-} & &  \mathcal{M}^{\mathcal{C}} \ar@/^0.5pc/[ll]^-{\hom_\C(\Delta_c,-)}
}
\,\,\,\,\,\,\,\,\,\,\,\,\,\,
 \xymatrix@C=3pc{
\mathcal{M}^{\mathcal{C}} \ar@/^0.5pc/[rr]^-{\Delta^c\tens{\C}-} & &  \M \ar@/^0.5pc/[ll]^-{\hom(\Delta^c,-)}.
}
\end{equation}
Here, given $M\in\M$, $\Delta_c\otimes M\colon\C\to\M$ is the functor sending $d\in\C$ to $\Delta_c(d)\otimes M$. Similarly, $\hom(\Delta^c, M)\colon\C\to\M$ sends $d\in\C$ to $\hom(\Delta^c(d),M)$.
For a proof of these adjunctions we refer for instance to \cite[Prop.~3.8]{HJ_qshaped} -- their argument works in the present context.
It turns out that all the four functors in~\eqref{eq:adj_with_Delta} interact nicely with cotorsion pairs under mild assumptions.

\begin{proposition}
\label{prop:hom_tens_Delta_over_C}
Let $(\A,\B)$ be a cotorsion pair in $\M$. Then the following hold:
\begin{itemize}
\item[(i)]
If $\A$ is generating, the functors $\hom_\C(\Delta_c,-)\colon\M^\C\to\M$ and $M_c\cong\hom_\C(\I^\C_{<\alpha}(c,-),-)\colon\M^\C\to\M$ send $\Psi(\B)$--epimorphisms to $\B$--epimorphisms.
\item[(ii)] If $\B$ is cogenerating, the functors $\Delta^c\otimes_{\C}-\colon\M^\C\to\M$ and $L_c\cong$ \mbox{$\I^\C_{<\alpha}(-,c)\otimes_{\C}-\colon\M^\C\to\M$} send $\Phi(\A)$--monomorphisms to $\A$--mono\-morph\-isms.
\end{itemize}
\end{proposition}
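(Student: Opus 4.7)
The plan is to apply the long exact sequences of Lemma~\ref{lem:long-exact-seq} to the defining short exact sequences of the input morphisms, combined with the filtrations of $\I^\C_{<\alpha}(c,-)$ and $\I^\C_{<\alpha}(-,c)$ by standard functors (from Theorem~\ref{thm:proj_by_standards} and its opposite counterpart via Remark~\ref{rem:opp_Reedy_cat}) and the Eklof--Lukas-type Lemmas~\ref{lem:enriched_eklof} and~\ref{lem:enriched_dual_eklof}.

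For (i), let $p\colon Y\twoheadrightarrow Y'$ be a $\Psi(\B)$-epimorphism with $K:=\ker p\in\Psi(\B)$, and set $\alpha:=\deg(c)$. By Lemma~\ref{lem:Reedy_objects}(ii), $K(d)\in\B$ for every $d\in\C$, so each pointwise short exact sequence $0\to K(d)\to Y(d)\to Y'(d)\to 0$ is a $\B$-epimorphism in $\M$; by Remark~\ref{rem:conditional-exactness-of-(co)products} (which uses that $\A$ is generating), these pass through products, making $0\to K\to Y\to Y'\to 0$ a $\prod$-exact sequence in the sense of Definition~\ref{def:tens-and-hom-exactness}. Applying Lemma~\ref{lem:long-exact-seq} with $U=\Delta_c$, together with the vanishing $\ext^1_\C(\Delta_c,K)=0$ and the membership $\hom_\C(\Delta_c,K)\in\B$ (both from the definition of $\Psi(\B)$), yields the claim for $\hom_\C(\Delta_c,-)$. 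For $M_c\cong\hom_\C(\I^\C_{<\alpha}(c,-),-)$, the same lemma with $U=\I^\C_{<\alpha}(c,-)$ applies once we verify the analogous vanishing of $\ext^1_\C(\I^\C_{<\alpha}(c,-),K)$ and the membership $\hom_\C(\I^\C_{<\alpha}(c,-),K)\in\B$; this is a direct application of Lemma~\ref{lem:enriched_eklof} to the filtration of $\I^\C_{<\alpha}(c,-)$ by standard modules $\Delta_d$ with $\deg(d)<\alpha$ (the partial filtration~\eqref{eq:filtration_by_standards} obtained by stopping short of the top factor $\Delta_c$), the hypotheses on each filtration factor being supplied by $K\in\Psi(\B)$.

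Part (ii) is dual. If $j\colon X\hookrightarrow X'$ is a $\Phi(\A)$-monomorphism with $C:=\coker j\in\Phi(\A)$, then $C(d)\in\A$ for all $d$ by Lemma~\ref{lem:Reedy_objects}(i), and Remark~\ref{rem:conditional-exactness-of-(co)products} (using that $\B$ is cogenerating) gives that $0\to X\to X'\to C\to 0$ is $\coprod$-exact. We then invoke Lemma~\ref{lem:long-exact-seq} with $W=\Delta^c$ to handle $\Delta^c\tens{\C}-$, and with $W=\I^\C_{<\alpha}(-,c)$ to handle $L_c$; the second case requires Lemma~\ref{lem:enriched_dual_eklof} applied to the filtration of $\I^\C_{<\alpha}(-,c)$ by contravariant standard functors of degree $<\alpha$, with the hypotheses coming from $C\in\Phi(\A)$.

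The main technical step is ensuring that the input short exact sequence is $\prod$-exact (resp.\ $\coprod$-exact), which is where the generating (resp.\ cogenerating) assumption on the cotorsion pair enters crucially: without exactness of products or coproducts in $\M$, the long exact sequences of Lemma~\ref{lem:long-exact-seq} are not automatic. Once this is handled, the argument reduces to combining the orthogonality built into $\Psi(\B)$ and $\Phi(\A)$ at each standard functor $\Delta_d$ (resp.\ $\Delta^d$) with the transfinite induction packaged inside the Eklof/Lukas lemmas.
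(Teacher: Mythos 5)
Your proof is correct and takes essentially the same route as the paper's: establish $\coprod$--exactness (resp.\ $\prod$--exactness) of the defining short exact sequence via Lemma~\ref{lem:Reedy_objects} and Remark~\ref{rem:conditional-exactness-of-(co)products}, then apply Lemma~\ref{lem:long-exact-seq}, using the definition of $\Phi(\A)$ (resp.\ $\Psi(\B)$) for the standard functors and Lemma~\ref{lem:enriched_dual_eklof} (resp.\ Lemma~\ref{lem:enriched_eklof}) for the filtered ideals $\I^\C_{<\alpha}$. The paper writes out only part (ii) and declares (i) dual; your explicit dualization of (i) is accurate.
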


\begin{proof}
We will only prove (ii) as the proof of (i) is given by dual arguments. 
Let $f\colon X\hookrightarrow Y$ be a $\Phi(\A)$--monomorphism and $Z:=\coker f\in\Phi(\A)$. In particular, the component maps $f_c\colon X(c)\to Y(c)$ are $\A$--monomorphisms by Lemma~\ref{lem:Reedy_objects}(i).
Since we assume that $\B$ is cogenerating, a coproduct of $\A$--monomorphisms in $\M$ is an $\A$--monomorphism again by Remark~\ref{rem:conditional-exactness-of-(co)products}, so the exact sequence $0\to X\to Y\to Z\to 0$ is $\coprod$--exact in $\M^\C$ in the sense of Definition~\ref{def:tens-and-hom-exactness}. Hence we have an exact sequence
\[
0 = \tor_1^\C(\Delta^c,Z) \to \Delta^c\tens{\C}X \to \Delta^c\tens{\C}Y \to \Delta^c\tens{\C}Z \to 0
\]
by Lemma~\ref{lem:long-exact-seq} and so $\Delta^c\otimes_{\C}f$ is also an $\A$--monomorphism. Similarly, we have an exact sequence
\[
\tor_1^\C(\I^\C_{<\alpha}(-,c),Z) \to \I^\C_{<\alpha}(-,c)\tens{\C}X \to \I^\C_{<\alpha}(-,c)\tens{\C}Y \to \I^\C_{<\alpha}(-,c)\tens{\C}Z \to 0
\]
Recall again the filtration of $\I^{\C}_{<\alpha}(-,c)$ by contravariant standard modules of degrees $<\alpha$ from~\eqref{eq:filtration_by_standards} in the proof of Theorem~\ref{thm:proj_by_standards} for $\C\op$. Then $\I^\C_{<\alpha}(-,c)\otimes_{\C}Z\in\A$ and $\tor_1^\C(\I^\C_{<\alpha}(-,c),Z)=0$ by Lemma~\ref{lem:enriched_dual_eklof}.
\end{proof}

For the remaining pair of functors in~\eqref{eq:adj_with_Delta}, we have the following lemma and proposition

\begin{lemma} \label{lem:induction-to-Reedy}
Let $(\A,\B)$ be a cotorsion pair in $\M$ and $c\in\C$.
Then $\Delta_c\otimes A\in\Phi(\A)$ for each $A\in\A$ and $\hom(\Delta^c,B)\in\Psi(\B)$ for each $B\in\B$.
\end{lemma}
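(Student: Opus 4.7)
The plan is to use the reformulations of $\Phi(\A)$ and $\Psi(\B)$ given in Definition~\ref{dfn:phi_psi} (in terms of vanishing of $\tor_1^\C$ and $\ext_\C^1$ against standard modules, combined with containment of the $0$-th functor in $\A$ or $\B$), and then reduce both statements to the strong orthogonality of standards established in Theorem~\ref{thm:Reedy_exceptional}(iv). The key observation that makes this work is that the functors $-\otimes A\colon\K\to\M$ and $\hom(-,B)\colon\K\op\to\M$ are both exact, since every short exact sequence of $k$-vector spaces splits, and they are respectively a left and a right adjoint, so they commute with the coends and ends that define $\tens{\C}$ and $\hom_\C$.

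For the first assertion, let $A\in\A$, set $X:=\Delta_c\otimes A$ and fix an object $d$ of $\C$. I would compute $\tor_n^\C(\Delta^d,X)$ by picking a projective resolution $Q_\bullet\to\Delta^d$ in $\K^{\C\op}$ formed of coproducts of representables. Because $-\otimes A$ preserves coends, there is a natural isomorphism of complexes in $\M$,
\[ Q_\bullet\tens{\C}(\Delta_c\otimes A)\cong (Q_\bullet\tens{\C}\Delta_c)\otimes A. \]
The right-hand side is a complex of $k$-vector spaces tensored with $A$; since $-\otimes A$ is exact, it preserves homology, and we obtain $\tor_n^\C(\Delta^d,\Delta_c\otimes A)\cong \Tor_n^\C(\Delta^d,\Delta_c)\otimes A$. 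By Theorem~\ref{thm:Reedy_exceptional}(iv), this vanishes for $n\geqslant 1$, while for $n=0$ it equals $A$ if $d=c$ and $0$ otherwise. In both cases the value lies in $\A$, so $X\in\Phi(\A)$.

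For the second assertion, let $B\in\B$ and set $Y:=\hom(\Delta^c,B)$. A direct manipulation of ends using the tensor-cotensor adjunction of $\K$ acting on $\M$ (\S\ref{subsec:enriched}) yields, for any $U\in\K^\C$, a natural isomorphism
\[ \hom_\C(U,\hom(\Delta^c,B))\cong\hom(\Delta^c\tens{\C}U,B). \]
Applying this to the terms of a projective resolution $P_\bullet\to\Delta_d$ in $\K^\C$, the complex $\Delta^c\tens{\C}P_\bullet$ lies in $\K$ and has homology $\Tor_n^\C(\Delta^c,\Delta_d)$, vanishing in positive degrees by Theorem~\ref{thm:Reedy_exceptional}(iv). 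Since $\hom(-,B)$ is exact on $\K\op$, it commutes with cohomology, giving
\[ \ext_\C^n(\Delta_d,\hom(\Delta^c,B))\cong\hom(\Tor_n^\C(\Delta^c,\Delta_d),B). \]
This vanishes for $n\geqslant 1$, and for $n=0$ it equals $B$ or $0$, both in $\B$. Hence $Y\in\Psi(\B)$.

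The only mildly subtle points will be the two coend/end identities, i.e.\ pushing $-\otimes A$ inside $\tens{\C}$ and the enriched hom-tensor identity for $\hom_\C(U,\hom(\Delta^c,B))$; these are however routine formal consequences of the adjunctions recalled in \S\ref{subsec:ringoids} and \S\ref{subsec:enriched} together with the Fubini interchange for (co)ends. Everything else is direct application of Theorem~\ref{thm:Reedy_exceptional}(iv).
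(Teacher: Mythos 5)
Your proof is correct and uses essentially the same ingredients as the paper's: Theorem~\ref{thm:Reedy_exceptional}(iv), the Fubini interchange of $-\otimes A$ (resp.\ $\hom(-,B)$) with $\tens{\C}$ (resp.\ $\hom_\C$), and the exactness of these functors over a field. The only cosmetic difference is that you verify the $\tor_1/\ext^1$-form of Definition~\ref{dfn:phi_psi} by running a whole projective resolution through the interchange, whereas the paper verifies the equivalent latching/matching form by computing the single map $l^d\tens{\C}\Delta_c$ directly from the short exact sequence $0\to\I^\C_{<\deg(d)}(-,d)\to\C(-,d)\to\Delta^d\to 0$.
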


\begin{proof}
We have short exact sequences $0\to\I^\C_{<\deg(d)}(d,-)\overset{l_d}\to\C(d,-)\to\Delta_d\to 0$ and $0\to\I^\C_{<\deg(d)}(-,d)\overset{l^d}\to\C(-,d)\to\Delta^d\to 0$ for each $d\in\D$ and, by Theorem~\ref{thm:Reedy_exceptional}(iv), $\Delta^c\otimes_{\C}l_d$ and $l^d\otimes_{\C}\Delta_c$ are isomorphisms whenever $d\ne c$. In order to understand the case where $d=c$, note that $\Delta_c(c)\cong k\cong\Delta^c(c)$ (see the proof of Theorem~\ref{thm:Reedy_exceptional}). Then both $\Delta^c\otimes_{\C}l_c$ and $l^c\otimes_{\C}\Delta_c$ are isomorphic to the map $0\to k$.

In order to prove that $\Delta_c\otimes A\in\Phi(\A)$, we need to show that $l^d\otimes_{\C}(\Delta_c\otimes A)$ is an $\A$--monomorphism for each $d\in\C$. By the Fubini Theorem, this is the same as proving that $(l^d\otimes_{\C}\Delta_c)\otimes A$ is an $\A$--monomorphism, which is clear from our computation of $l^d\otimes_{\C}\Delta_c$. 

Similarly, in order to prove that $\hom(\Delta^c,B)\in\Psi(\B)$, we need to show that $\hom_\C(l_d,\hom(\Delta^c,B))\cong\hom(\Delta^c\otimes_{\C}l_d,B)$ is a $\B$--epimorphism. This is again clear from our computation of $\Delta^c\otimes_{\C}l_d$.
\end{proof}

\begin{proposition}
\label{prop:hom_tens_Delta}
Let $(\A,\B)$ be a cotorsion pair in $\M$.
\begin{itemize}
\item[(i)] If $\B$ is cogenerating, then $\Delta_c\otimes-\colon\M\to\M^\C$ sends $\A$--monomorphisms to $\Phi(\A)$--monomorphisms.
\item[(ii)] If $\A$ is generating, then $\hom(\Delta^c,-)\colon\M\to\M^\C$ sends $\B$--epimorphisms to $\Psi(\B)$--epimorphisms.
\end{itemize}
\end{proposition}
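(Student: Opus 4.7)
The plan is to deduce both (i) and (ii) essentially as bookkeeping corollaries of Lemma~\ref{lem:induction-to-Reedy} together with the explicit description of the tensor $\otimes\colon\K\times\M\to\M$ and the cotensor $\hom\colon\K\op\times\M\to\M$ recalled in~\S\ref{subsec:enriched}, namely that if $V\cong k^{(I)}$, then $V\otimes M\cong M^{(I)}$ and $\hom(V,M)\cong M^{I}$. The key point is that Lemma~\ref{lem:induction-to-Reedy} already tells us the expected cokernel (resp.\ kernel) lies in $\Phi(\A)$ (resp.\ $\Psi(\B)$), so all that remains is to check that $\Delta_c\otimes f$ is actually a monomorphism (resp.\ $\hom(\Delta^c,g)$ is actually an epimorphism) in $\M^\C$, and this reduces to a componentwise check.

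I would carry out (i) as follows. Let $f\colon X\hookrightarrow Y$ be an $\A$-monomorphism with $A:=\coker f\in\A$. The functor $\Delta_c\otimes-\colon\M\to\M^\C$ is a left adjoint (cf.~\eqref{eq:adj_with_Delta}), hence right exact, so we obtain an exact sequence $\Delta_c\otimes X\to\Delta_c\otimes Y\to\Delta_c\otimes A\to 0$. To finish, it suffices to show that $\Delta_c\otimes f$ is componentwise a monomorphism. Evaluating at $d\in\C$ and choosing a $k$-basis of $\Delta_c(d)$, the map $(\Delta_c\otimes f)_d\cong\Delta_c(d)\otimes f$ is isomorphic to a coproduct of $(\dim_k\Delta_c(d))$-many copies of $f$ in $\M$. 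By Remark~\ref{rem:conditional-exactness-of-(co)products}, since $\B$ is cogenerating, coproducts of $\A$-monomorphisms in $\M$ remain $\A$-monomorphisms; in particular they are monomorphisms. Hence $\Delta_c\otimes f$ is a monomorphism in $\M^\C$ with cokernel $\Delta_c\otimes A\in\Phi(\A)$ by Lemma~\ref{lem:induction-to-Reedy}, making it a $\Phi(\A)$-monomorphism, as desired.

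Part (ii) is strictly dual. Given a $\B$-epimorphism $g\colon Y\twoheadrightarrow X$ with $B:=\ker g\in\B$, apply the left exact functor $\hom(\Delta^c,-)$ to get $0\to\hom(\Delta^c,B)\to\hom(\Delta^c,Y)\to\hom(\Delta^c,X)$. Componentwise, $(\hom(\Delta^c,g))_d\cong\hom(\Delta^c(d),g)$ is a product of copies of $g$ indexed by a $k$-basis of $\Delta^c(d)$; the dual half of Remark~\ref{rem:conditional-exactness-of-(co)products} (products of $\B$-epimorphisms are $\B$-epimorphisms when $\A$ is generating) ensures each component is an epimorphism, so $\hom(\Delta^c,g)$ is an epimorphism in $\M^\C$. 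Its kernel $\hom(\Delta^c,B)$ lies in $\Psi(\B)$ by Lemma~\ref{lem:induction-to-Reedy}, completing the argument. There is no real obstacle here: all genuine work is concentrated in Lemma~\ref{lem:induction-to-Reedy} and in Remark~\ref{rem:conditional-exactness-of-(co)products}; the present proposition only requires combining them with the description of the $\K$-action on $\M$.
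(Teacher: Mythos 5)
Your proof is correct and follows essentially the same route as the paper: evaluate at each object $d\in\C$ to see that $\Delta_c\otimes f$ (resp.\ $\hom(\Delta^c,g)$) is a coproduct (resp.\ product) of copies of $f$ (resp.\ $g$), invoke Remark~\ref{rem:conditional-exactness-of-(co)products} for exactness, and identify the cokernel $\Delta_c\otimes A$ (resp.\ kernel $\hom(\Delta^c,B)$) as lying in $\Phi(\A)$ (resp.\ $\Psi(\B)$) via Lemma~\ref{lem:induction-to-Reedy}. No gaps.
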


\begin{proof}
We will only prove (i); the proof of (ii) is similar.
Let $f\colon K\hookrightarrow L$ be an $\A$--monomorphism in $\M$ and put $A:=\coker f\in\A$. If we apply $\Delta_c\otimes-\colon\M\to\M^\C$ to the short exact sequence $0\to K\to L\to A\to 0$ and evaluate at $d\in\C$, we obtain $0\to \C(c,d)\otimes K\to \C(c,d)\otimes L\to \C(c,d)\otimes A\to 0$, which is a coproduct of copies of the original sequence and is exact by Remark~\ref{rem:conditional-exactness-of-(co)products}. Thus, $\Delta_c\otimes_{\C}f$ is a monomorphism with cokernel $\Delta_c\otimes A$. The latter belongs to $\Phi(\A)$ by Lemma~\ref{lem:induction-to-Reedy}(i).
\end{proof}

Finally, we can give a proof of the first part of Theorem~\ref{thm:lifting_cot}.

\begin{proof}[Proof of Theorem~\ref{thm:lifting_cot}(\ref{thm-item:lifting_cot_plain}) and~(\ref{thm-item:lifting_cot_hereditary})]
Suppose that $(\A,\B)$ is a cotorsion pair in $\M$ with $\A$ generating and $\B$ cogenerating. We already know that $\Ext^1_{\M^\C}(\Phi(\A),\Psi(\B))=0$ from Proposition~\ref{prop:ext_vanishing}. On the other hand, given $X\in\leftperp{\Psi(\B)}$, we want to prove that $X\in\Phi(\A)$, i.e.\ that $\Delta^c\otimes_{\C}X\in\A$ and $\tor_1^\C(\Delta^c,X)=0$ for each $c\in\C$.
For the former, we will use the fact that $\A=\leftperp{\B}$ and Lemma~\ref{lem:Ext-orthogonal}. Given any $g\in\operatorname{Epi}(\B)$, we need to prove that $\M(\Delta^c\otimes_{\C}X,g)$ is surjective. By the second adjunction in~\eqref{eq:adj_with_Delta}, this is the same as proving that $\M^\C(X,\hom(\Delta_c,g))$ is surjective. However, this is the case by our assumption on $X$ since $\A$ is generating and so $\hom(\Delta_c,g)\in\operatorname{Epi}(\Psi(\B))$ by Proposition~\ref{prop:hom_tens_Delta}.
Showing that $\tor_1^\C(\Delta^c,X)=0$ is the same as showing that the latching map $l_c^X\cong l^c\otimes_{\C}X\colon\I^\C_{<\alpha}(-,c)\otimes_{\C}X\to\C(-,c)\otimes_{\C}X$ is a monomorphism, where $l^c\colon\I^\C_{<\alpha}(-,c)\hookrightarrow\C(-,c)$ is the inclusion and $\alpha=\deg(c)$. Since $\B$ is cogenerating, it suffices to show that $\M(l^c\otimes_{\C}X,B)$ is surjective for each $B\in\B$. Using the argument of \cite[Prop.~3.8]{HJ_qshaped}, we obtain natural isomorphisms
\begin{equation} \label{eq:adj-2-var-M-and-MC}
\begin{split}
\M(Y\tens{\C}X,Z) &\cong \M^\C(X,\hom(Y,Z)) \qquad \textrm{and} \\
\M^\C(Y'\otimes Z,X) &\cong \M(Z,\hom_\C(Y',X))
\end{split}
\end{equation}
in all the variables $Y\in\K^{\C\op}$, $Y'\in\K^\C$, $X\in\M^\C$ and $Z\in\M$ (the adjunctions~\eqref{eq:adj_with_Delta} are special cases).
So we are left with proving that $\M^\C(X,\hom(l^c,B))$ is surjective for each $B\in\B$. To see this, note $\hom(l^c,B)$ is an epimorphism (as $l^c_d\colon\I^\C_{<\alpha}(d,c)\hookrightarrow\C(d,c)$ is a split monomorphism since $\K$ is semisimple) and $\ker\hom(l^c,B)\cong\hom(\coker l^c,B)\cong\hom(\Delta^c,B)$ belongs to $\Psi(\B)$ by Lemma~\ref{lem:induction-to-Reedy}. Hence $\M^\C(X,\hom(l^c,B))$ is surjective since $X\in\leftperp{\Psi(\B)}$. This finishes the proof that $\Phi(\A)=\leftperp{\Psi(\B)}$ and the argument for the fact that $\rightperp{\Phi(\A)}=\Psi(\B)$ is similar.

Next, note that since representable functors $\C(c,-)$ are filtered by standard functors by Theorem~\ref{thm:proj_by_standards}, it follows by Lemma~\ref{lem:induction-to-Reedy} that $\C(c,-)\otimes A$ is $\Phi(\A)$-filtered for each $A\in\A$. Hence $\C(c,-)\otimes A\in\Phi(\A)$ for each $A\in\A$ for each $c\in\C$ by Proposition~\ref{prop:eklof}, and dually $\hom(\C(-,c),B)\in\Psi(\B)$ for each $B\in\B$ and $c\in\C$. Now, let $X\in\M^\C$. Then, for each $Z\in\M$ and $c\in\C$, the second isomorphism in~\eqref{eq:adj-2-var-M-and-MC} specializes to $\M^\C(\C(c,-)\otimes Z,X)\cong\M(Z,X(c))$ and one can check that $f\colon\C(c,-)\otimes Z\to X$ corresponds to the restriction of the component $f_c\colon\C(c,c)\otimes Z\to X(c)$ to $Z\cong(k\cdot\id_c)\otimes Z$ under this isomorphism. In particular, given an epimorphism $g\colon Z\to X(c)$ in $\M$, the isomorphism produces a morphism $f\colon\C(c,-)\otimes Z\to X$ in $\M^\C$ whose component $f_c$ is an epimorphism. If $\A$ is generating, we can for each $c\in\C$ find a morphism $\C(c,-)\otimes A_c\to X$ such that $A_c\in\A$ and the component at $c$ is an epimorphism. Thus, the induced map $\bigoplus_{c\in\C}\C(c,-)\otimes A_c\to X$ is an epimorphism in $\M^\C$ and $\bigoplus_{c\in\C}\C(c,-)\otimes A_c\in\Phi(\A)$ (cf.\ Remark~\ref{rem:cotorsion-pairs-and-(co)products}), so $\Phi(\A)$ is generating. By a dual argument, $\Psi(\B)$ is cogenerating.

Finally, suppose that $(\A,\B)$ is hereditary. To prove that $(\Phi(\A),\Psi(\B))$ is hereditary, it suffices to prove by Lemma~\ref{lem:hereditary} that $\Phi(\A)$ is closed under kernels of epimorphisms. To that end, 
consider a short exact sequence $0\to K\to Y\to X\to 0$ with $Y$ and $X$ in $\Phi(\A)$. In particular the inclusion $K\to Y$ is a $\Phi(\A)$--monomorphism, so we have the following commutative diagram with exact rows 
\[
 \xymatrix@C=2pc{
0 \ar[r] & L_cK \ar^{l_c^K}[d] \ar[r] &  L_cY \ar[d]^-{l_c^Y} \ar[r]  &  L_cX  \ar[r] \ar[d]^-{l_c^{X}} & 0 \\
0 \ar[r] & K(c) \ar[r] &Y(c) \ar[r] & X(c)  \ar[r] & 0, 
 }
\]
where the top row is exact by Proposition~\ref{prop:hom_tens_Delta_over_C}(ii) (together with Theorem~\ref{thm:standard-(co)induction}). Thus $l_c^K$ is a monomorphism and, by the Snake Lemma, we have a short exact sequence $0\rightarrow \coker(l_c^K)\rightarrow \coker(l_c^Y) \rightarrow \coker(l_c^X)\rightarrow 0$. Hence, from Lemma~\ref{lem:hereditary} applied to $(\A,\B)$, we deduce that $\coker(l_c^K)\in\A$ and that $K$ is in $\Phi(\A)$. 
One could also argue dually and use by Proposition~\ref{prop:hom_tens_Delta_over_C}(i) to prove that $\Psi(\B)$ is closed under cokernels of monomorphisms.
\end{proof}

\subsection{Lifting of approximation sequences}
\label{subsec:lifting}
In this subsection we are going to complete the proof of Theorem~\ref{thm:lifting_cot}(\ref{thm-item:lifting_cot_complete}).

\begin{proof}[Proof of Theorem~\ref{thm:lifting_cot}(\ref{thm-item:lifting_cot_complete})]
Consider a $k$--linear functor $X\colon \C\to \M$ and let $(\A,\B)$ be a complete cotorsion pair in $\M$. We must construct short exact sequences
\begin{equation}
\label{eq:left_approximation}
0\rightarrow Z\rightarrow Y\rightarrow X\rightarrow 0,
\end{equation}
where $Y$ is in $\Phi(\class A)$ and $Z$ is in $\Psi(\B)$, and dually
\begin{equation}
\label{eq:right_approximation}
0\rightarrow X\rightarrow Y'\rightarrow Z'\rightarrow 0,
\end{equation}
where $Y'$ is in $\Psi(\class B)$ and $Z'$ is in $\Phi(\A)$.
We will prove the existence of the short exact sequence~\eqref{eq:left_approximation}
and construct the desired approximation of $X$ by transfinite induction on $\operatorname{len}(\C):=\sup\{\deg(c)\mid c\in\C\}$, using Proposition~\ref{prop:Riehl_Veriti}.
Sequences of the type~\eqref{eq:right_approximation} are obtained by dual arguments.

If $\operatorname{len}(\C)=0$, then there are only objects of degree zero and any morphism between objects is either zero or an invertible endomorphism. In particular, $\M^\C\simeq \prod_{c\in\C}\M$.
Thus, since $(\A,\B)$ is a complete cotorsion pair in $\M$, there exists a short exact sequence $0\to Z_{c}\to Y_{c}\to X(c)\to 0$ with $Y_{c}$ in $\A$ and $Z_{c}$ in $\B$ and these sequences constiture a short eact sequence $0\to Z\to Y\to X\to 0$ in $\M^\C$ such that for any object $c$ in $\C$, $Y(c)=Y_{c}$ and $Z(c)=Z_{c}$.
Clearly, the latching map of $Y$ at $c$ is $0\rightarrow Y(c)$ and and the matching map of $Z$ at $c$ is the map $Z(c)\rightarrow 0$ for any objects $c\in\C$, so $Y\in\Phi(\A)$ and $Z\in\Psi(\B)$.

Assume now that $\alpha:=\operatorname{len}(\C)$ is a successor ordinal, i.e. $\alpha=\beta+1$, and that a desired approximation of $X_{<\alpha}\in\M^{\C_{<\alpha}}$ has been constructed,
\begin{equation}
\label{eq:given_ses}
0\to Z_{<\alpha}\to Y_{<\alpha}\to X_{<\alpha}\to 0
\end{equation}
with $Y_{<\alpha}$ is in $\Phi_{\C_{<\alpha}}(\class A)$ and $Z_{<\alpha}$ is in $\Psi_{\C_{<\alpha}}(\B)$. We will construct one for $X$ (note that we are abusing notation in the sense that $Y_{<\alpha}$ and $Z_{<\alpha}$ are not restrictions of any functors $\C\to\M$ at the moment, these are to be constructed yet).

To that end, we first claim that
\[
0\to \cosk_\alpha(Z_{<\alpha})(c)\to \cosk_\alpha(Y_{<\alpha})(c)\to \cosk_\alpha(X_{<\alpha})(c)\to 0
\]
is exact in $\M$ for each $c\in\C$ of degree $\beta$ (where $\cosk_\alpha(X_{<\alpha})(c)\cong M_cX$ by Theorem~\ref{thm:cofinality}, cf.~Remark~\ref{rem:restricted_adj}). Indeed, $\cosk_{\alpha}(Z)(c)=\hom_{\C_{<\alpha}}(\C(c,-)|_{\C_{<\alpha}},Z)$ (see~\eqref{eq:cosk}) and since $\C(c,-)|_{\C_{<\alpha}}$ is filtered by standard $\C_{<\alpha}$--modules by Theorem~\ref{thm:proj_by_standards}, we have $\cosk_{\alpha}(Z)(c)\in\B$ and $\ext^1_{\C_{<\alpha}}(\C(c,-)|_{\C_{<\alpha}},Z)=0$ by Lemma~\ref{lem:enriched_eklof}. Since $Y_{<\alpha}\to X_{<\alpha}$ is a $\B$--epimorphism, an analogous argument to the proof of Proposition~\ref{prop:hom_tens_Delta_over_C} tells us that~\eqref{eq:given_ses} is $\prod$--exact in the sense of Definition~\ref{def:tens-and-hom-exactness} and there is by Lemma~\ref{lem:long-exact-seq} an exact sequence
\begin{multline*}
0\to \cosk_\alpha(Z_{<\alpha})(c)\to \cosk_\alpha(Y_{<\alpha})(c)\to \cosk_\alpha(X_{<\alpha})(c)\to \\ \to \ext^1_{\C_{<\alpha}}(\C(c,-)|_{\C_{<\alpha}},Z)=0.
\end{multline*}
This proves the claim.

Similarly, we can apply $\sk_{\alpha}(-)(c)$ to~\eqref{eq:given_ses}, which is a right exact functor by~\eqref{eq:sk} and by using the ideas from the proof of Proposition~\ref{prop:hom_tens_Delta_over_C} again, we observe that $\sk_{\alpha}(Y)(c)\in\A$. As there is always a canonical natural transformation $\tau^\alpha\colon\sk_\alpha\to\cosk_\alpha$, we obtain in the end a commutative diagram with exact rows of the from:
\[
 \xymatrix@C=2pc{
& \sk_\alpha(Z_{<\alpha})(c) \ar^{\tau^{\alpha,Z}_{c}}[d] \ar[r]^-{\iota} & \sk_\alpha(Y_{<\alpha})(c) \ar[r]^-{\pi} \ar^{\tau^{\alpha,Y}_{c}}[d] & L_cX  \ar[r] \ar^{\tau^{\alpha,X}_{c}}[d] & 0 \\
0 \ar[r] & \cosk_\alpha(Z_{<\alpha})(c) \ar[r]^-{\iota'}  & \cosk_\alpha(Y_{<\alpha})(c) \ar[r]^-{\pi'} & M_{c}X  \ar[r] & 0. \\
 }
\]

Now, we consider the pullback of the diagram $\cosk_\alpha(Y_{<\alpha})(c)\to M_{c}X \xleftarrow{m^{X}_{c}} X(c)$, and we obtain a commutative diagram:
\[
\xymatrix@C=2pc{
\sk_\alpha(Y_{<\alpha})(c) \ar@/_/[ddr]_{\tau^{\alpha,Y}_{c}} \ar@/^/[drr]^{l^{X}_{c}\circ\pi} \ar@{-->}^-{{\exists ! u} }[dr]\\
& P \ar_-{\epsilon_{Y}}[d] \ar^-{\epsilon_{X}}[r] & X(c) \ar[d]^-{m^X_c} \\
& \cosk_\alpha(Y_{<\alpha})(c) \ar[r] & M_{c}X.
}
\]

The completeness of the cotorsion pair $(\A,\B)$ implies from Proposition~\ref{prop:mappings}, that there is a factorization of the morphism $u$,
\[
\label{eq:factor}
\xymatrix@C=2pc{
\sk_\alpha(Y_{<\alpha})(c) \ar_-{i}[dr] \ar^-{u}[rr]& & P, \\
& T_c \ar_-{p}[ur] & 
}
\]
where $i$ is a monomorphism with cokenel in $\class A$ and $p$ is an epimorphism with kernel in $\B$. Since we know that $\sk_\alpha(Y_{<\alpha})(c)\in\A$, we also have $T_{c}\in\A$. In total, we have a commutative diagram with exact rows,

\[
 \xymatrix@C=2pc{
& \sk_\alpha(Z_{<\alpha})(c) \ar@{-->}[d]^-{j} \ar[r]^-{\iota} & \sk_\alpha(Y_{<\alpha})(c) \ar[r]^-{\pi} \ar^{i}[d] & L_cX  \ar[r] \ar^{l_{c}^{X}}[d]  & 0 \\
0 \ar[r] & \ker(\epsilon_{X}\circ p) \ar[r] \ar@{-->}^-{\psi^{Z}_c}[d] & T_{c}  \ar^-{p}[d]  \ar^-{\epsilon_{X}\circ p}[r] & X(c) \ar@{=}[d] \ar[r] & 0 \\  
0 \ar[r] &  \cosk_\alpha(Z_{<\alpha})(c) \ar@{=}[d] \ar[r]  & P  \ar^-{\epsilon_{Y}}[d]  \ar^-{\epsilon_{X}}[r] & X(c) \ar^-{m^{X}_{c}}[d] \ar[r] & 0 \\  
0 \ar[r] & \cosk_\alpha(Z_{<\alpha})(c) \ar[r]^-{\iota'}  & \cosk_\alpha(Y_{<\alpha})(c) \ar[r]^-{\pi'} & M_{c}X  \ar[r] & 0. \\
 }
\]
where the dashed morphisms are naturally induced on the kernels. Also, from the Snake Lemma, for instance, we can see that $\psi^{Z}_{c}$ is an epimorphism with $\ker(\psi^{Z}_{c})\cong\ker(p)$, which belongs in $\B$.

We then define $Z(c):= \ker(\epsilon_{X}\circ p)$ and $Y(c):=T_{c}$. 
In a more compact form we have obtained the commutative diagram with exact rows
\[
 \xymatrix@C=2pc{
& \sk_\alpha(Z_{<\alpha})(c) \ar@{-->}[d]^-{j} \ar[r]^-{\iota} & \sk_\alpha(Y_{<\alpha})(c) \ar[r]^-{\pi} \ar^{i}[d] & L_cX  \ar[r] \ar^{l_{c}^{X}}[d]  & 0 \\
0 \ar[r] & Z(c) \ar[r] \ar@{-->}^-{\psi^{Z}_c}[d] & Y(c)  \ar^-{\epsilon_{Y}\circ p}[d]  \ar^-{\epsilon_{X}\circ p}[r] & X(c) \ar[d]^-{m_c^X} \ar[r] & 0 \\  
0 \ar[r] & \cosk_\alpha(Z_{<\alpha})(c) \ar[r]^-{\iota'}  & \cosk_\alpha(Y_{<\alpha})(c) \ar[r]^-{\pi'} & M_{c}X  \ar[r] & 0. \\
 }
\]

From Proposition~\ref{prop:Riehl_Veriti} we deduce that the construction given above defines functors $Z\colon \C\to \M$ and $Y\colon\C\to\M$ that fit into a short exact sequence 
\begin{equation}
\label{eq:obtained_ses}
0\to Z\to Y\to X\to 0.
\end{equation}
Note that $Y$ has $i$ as its latching morphism at $c$, which is a monomorphism with cokernel in $\A$, and that $Z$ 
has $\psi^Z_c$ as its matching morphism at $c$, which is an epimorphism with kernel in $\B$, by construction. By Definition~\ref{def:latching_matching}, the latching morphisms of $Y$ and the matching morphisms of $Z$ at objects $d\in\C_{<\alpha}$ agree with those of $Y_{<\alpha}$ and $Z_{<\alpha}$, respectively.
Hence $Y$ is in $\Phi_{\C}(\A)$ and $Z$ is in $\Psi_{\C}(\B)$ and~\eqref{eq:obtained_ses} is a sequence of the form that we were looking for.

Finally, we treat the case where $\alpha$ is a limit ordinal. Note that given $X\in\M^\C$, the various approximations $0\rightarrow Z_{<\alpha'}\rightarrow Y_{<\alpha'}\rightarrow X_{\alpha'}\rightarrow 0$ given for all $\alpha'<\alpha$ are compatible and fit together to an approximation $0\rightarrow Z_{<\alpha}\rightarrow Y_{<\alpha}\rightarrow X_{<\alpha}\rightarrow 0$ of $X_{<\alpha}$ that has the desired properties. 
\end{proof}

\section{The Reedy abelian model structure}
\label{sec:Reedy_model}

In this section we prove that, under Setup~\ref{setup}, hereditary Hovey triples (and thus abelian model structures) lift from $\M$ to $\M^{\C}$. 
Recall that from Theorem \ref{thm:lifting_cot} hereditary and complete cotorsion pairs lift from $\M$ to $\M^{\C}$. In order to apply this to abelian model structures we need the following:

\begin{proposition}
\label{prop:intersection-with-W}
Let $\A,\W\subseteq\M$ be full subcategories. If
\begin{itemize}
\item $\A\cap\W$ is closed under filtrations in $\M$ (i.e.\ $\Filt(\A\cap\W)\subseteq\A\cap\W$), and
\item $\W$ is closed under cokernels of monomorphisms in $\M$,
\end{itemize}
then $\Phi(\class A \cap\class W)=\Phi(\class A)\cap\class W^{\class C}$.
\end{proposition}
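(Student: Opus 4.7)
The proof will split into two inclusions. For the direction $\Phi(\A \cap \W) \subseteq \Phi(\A) \cap \W^\C$, the only non-trivial point is that each value $X(c)$ lies in $\W$, and this will follow from a filtration argument on $X(c) \cong \C(-,c) \otimes_\C X$. For the reverse direction, the challenge is to upgrade the condition $X(c) \in \W$ into $\W$-membership for the cokernels of latching maps, which will require a transfinite induction on the degree of $c$.

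For the first inclusion, I will fix $X \in \Phi(\A \cap \W)$ and argue that $X(c) \in \W$ for each $c$. By the opposite-category version of Theorem~\ref{thm:proj_by_standards}, the representable $\C(-,c)$ admits in $\K^{\C\op}$ a continuous filtration whose successive quotients are direct sums of contravariant standards $\Delta^d$ for objects $d$ with $\deg(d) \le \deg(c)$. Tensoring with $X$ and using Lemma~\ref{lem:enriched_dual_eklof} together with the $\tor_1^\C$--vanishing built into the definition of $\Phi(\A)$, I obtain a filtration of $X(c)$ whose successive quotients are direct sums of $\coker(l_d^X) = \Delta^d \otimes_\C X$, each lying in $\A \cap \W$. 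Refining this filtration so that only one summand is added at each step (which is possible since $\K^{\C\op}$ is Grothendieck and the relevant $\tor_1$--vanishing passes to summands), the closure hypothesis $\Filt(\A \cap \W) \subseteq \A \cap \W$ forces $X(c) \in \A \cap \W \subseteq \W$.

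For the reverse inclusion, I will fix $X \in \Phi(\A) \cap \W^\C$ and prove by transfinite induction on $\alpha = \deg(c)$ that $\coker(l_c^X) \in \A \cap \W$. For the base $\alpha = 0$ there are no objects of strictly smaller degree, so $L_c X = 0$ and $\coker(l_c^X) = X(c)$, which lies in $\W$ by assumption and in $\A$ since $X \in \Phi(\A)$. For the inductive step, assuming the result for all degrees strictly less than $\alpha$, the same filtration argument as above applied to $L_c X \cong \I^\C_{<\alpha}(-,c) \otimes_\C X$ exhibits $L_c X$ as a filtered object in $\M$ with successive quotients of the form $\coker(l_d^X)$ for $\deg(d) < \alpha$. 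By induction these lie in $\A \cap \W$, so $L_c X \in \Filt(\A \cap \W) \subseteq \A \cap \W$. The short exact sequence
\[
0 \longrightarrow L_c X \xrightarrow{l_c^X} X(c) \longrightarrow \coker(l_c^X) \longrightarrow 0
\]
then has $L_c X$ and $X(c)$ both in $\W$, so closure of $\W$ under cokernels of monomorphisms places $\coker(l_c^X) \in \W$. Combined with $\coker(l_c^X) \in \A$ from $X \in \Phi(\A)$, this yields the required membership in $\A \cap \W$ and completes the induction.

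The principal technical obstacle is ensuring that the standard filtrations of $\C(-,c)$ and of $\I^\C_{<\alpha}(-,c)$ transfer cleanly to genuine filtrations in $\M$ after tensoring with $X$; this is exactly what the $\tor_1^\C$--vanishing in the definition of $\Phi(\A)$, invoked via Lemma~\ref{lem:enriched_dual_eklof}, guarantees. A secondary but necessary step is refining direct-sum filtration factors into one-summand factors, since the hypothesis $\Filt(\A \cap \W) \subseteq \A \cap \W$ does not on its own provide closure under arbitrary coproducts.
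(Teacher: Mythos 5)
Your proof is correct and takes essentially the same route as the paper's: the forward inclusion is the filtration argument of Lemma~\ref{lem:Reedy_objects}(i) applied to $\A\cap\W$, and the reverse inclusion is the same transfinite induction on $\deg(c)$ using the exact sequence $0\to L_cX\to X(c)\to\Delta^c\otimes_{\C}X\to 0$ together with closure of $\W$ under cokernels of monomorphisms. Your extra step of refining direct-sum filtration factors into single summands is harmless but not needed, since a coproduct of objects of $\A\cap\W$ is itself $(\A\cap\W)$--filtered and hence already lies in $\A\cap\W$ under the stated hypothesis.
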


\begin{proof}
If $X$ is in $\Phi(\A\cap\W)$, then clearly $X\in\Phi(\A)$. Moreover, $X(c)\in\A\cap\W$ for each $c\in\C$ by the same proof as for Lemma~\ref{lem:Reedy_objects}(i). 
In particular, $X\in\Phi(\A)\cap\W^{\C}$.

Conversely, let $X$ be a functor in $\Phi(\A)\cap\W^{\C}$. We need to prove that $\Delta^c\otimes_{\C}X\in\A\cap\W$ for each $c\in\C$. We will prove that by transfinite induction on $\alpha:=\deg(c)$. If $\alpha=0$, then $\Delta^c\cong\C(-,c)$ and $\Delta^c\otimes_{\C}X\cong X(c)\in\A\cap\W$, as required.
Suppose now that $\alpha>0$ and $\Delta^d\otimes_{\C}X\in\A\cap\W$ whenever $\deg(d)<\alpha$. Then we have a short exact sequence
\[ 0\to \I^\C_{<\alpha}(-,c)\tens{\C}X \to X(c) \to \Delta^c\tens{\C}X \to 0  \]
and $\I^\C_{<\alpha}(-,c)$ is filtered by standard functors of objects of degree $<\alpha$ by Theorem~\ref{thm:proj_by_standards} and Remark~\ref{rem:proj_by_standards}. If we apply the functor $-\otimes_{\C}X\colon \K^{\C\op}\to\M$ to the latter filtration, we see that $\I^\C_{<\alpha}(-,c)\otimes_{\C}X$ is $(\A\cap\W)$--filtered in $\M$, and so $\I^\C_{<\alpha}(-,c)\otimes_{\C}X\in\A\cap\W$ by assumption. Now $\Delta^c\otimes_{\C}X\in\A$ since we assume that $X\in\Phi(\A)$. On the other hand, $X(c)\in\W$ since $X\in\W^\C$, and as $\W$ is closed under cokernels of monomorphisms, we also get that $\Delta^c\otimes_{\C}X\in\W$.
\end{proof}

\begin{theorem}
\label{thm:Reedy_model}
Let $(\A,\W,\B)$ be a hereditary Hovey triple on $\M$. Then there exists a hereditary Hovey triple
$(\Phi(\A),\W^{\C},\Psi(\B))$ on $\M^{\C}$.
\end{theorem}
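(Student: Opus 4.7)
The plan is to apply Theorem~\ref{thm:lifting_cot} to each of the two complete hereditary cotorsion pairs $(\A, \W \cap \B)$ and $(\A \cap \W, \B)$ comprising the Hovey triple $(\A, \W, \B)$. This immediately produces complete hereditary cotorsion pairs $(\Phi(\A), \Psi(\W \cap \B))$ and $(\Phi(\A \cap \W), \Psi(\B))$ in $\M^\C$. What remains is to identify these with the two pairs $(\Phi(\A), \W^\C \cap \Psi(\B))$ and $(\Phi(\A) \cap \W^\C, \Psi(\B))$ predicted by the candidate lifted Hovey triple, and to verify the additional axioms for $\W^\C$.

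The equality $\Phi(\A \cap \W) = \Phi(\A) \cap \W^\C$ is a direct application of Proposition~\ref{prop:intersection-with-W}. Both of its hypotheses hold in any hereditary Hovey triple: $\A \cap \W = \leftperp{\B}$ is closed under filtrations by the Eklof Lemma (Proposition~\ref{prop:eklof}), while $\W$ is closed under cokernels of monomorphisms via the 2-out-of-3 property.

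The dual equality $\Psi(\W \cap \B) = \W^\C \cap \Psi(\B)$ will be the main technical obstacle, as there is no straightforward dualisation of Proposition~\ref{prop:intersection-with-W} available in this generality (the right class of a cotorsion pair need not be closed under the relevant inverse limits). The inclusion $\Psi(\W \cap \B) \subseteq \W^\C \cap \Psi(\B)$ is immediate from Lemma~\ref{lem:Reedy_objects}(ii). For the converse, given $X \in \W^\C \cap \Psi(\B)$, the plan is to show by transfinite induction on $\alpha = \deg(c)$ that $\hom_\C(\Delta_c, X) \in \W \cap \B$ for every object $c \in \C$. The base case $\alpha = 0$ is clear since $\Delta_c \cong \C(c,-)$ forces $\hom_\C(\Delta_c, X) \cong X(c)$, which lies in $\W \cap \B$ by Lemma~\ref{lem:Reedy_objects}(ii). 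For the inductive step, the filtration of $\I^\C_{<\alpha}(c,-)$ by standard modules $\Delta_d$ with $\deg(d) < \alpha$ (Theorem~\ref{thm:proj_by_standards} and Remark~\ref{rem:proj_by_standards}), together with the inductive hypothesis and the vanishing of $\ext^1_\C(\Delta_d, X)$ guaranteed by $X \in \Psi(\B)$, allows an appeal to Lemma~\ref{lem:enriched_eklof} (taking the generating class to be $\A$, so that $\rightperp{\A} = \W \cap \B$) to conclude that $M_c X \cong \hom_\C(\I^\C_{<\alpha}(c,-), X) \in \W \cap \B$. Applying $\hom_\C(-, X)$ to $0 \to \I^\C_{<\alpha}(c,-) \to \C(c,-) \to \Delta_c \to 0$ then yields the short exact sequence $0 \to \hom_\C(\Delta_c, X) \to X(c) \to M_c X \to 0$, in which $X(c)$ and $M_c X$ both lie in $\W$, so 2-out-of-3 gives $\hom_\C(\Delta_c, X) \in \W$; membership in $\B$ is exactly the hypothesis $X \in \Psi(\B)$.

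Finally, $\W^\C$ is closed under direct summands and satisfies 2-out-of-3 for short exact sequences essentially by inspection, since both exactness and direct sum decompositions in $\M^\C$ are detected pointwise and $\W$ enjoys the corresponding properties in $\M$. Hereditariness of the two lifted cotorsion pairs is provided directly by Theorem~\ref{thm:lifting_cot}(\ref{thm-item:lifting_cot_hereditary}), so $(\Phi(\A), \W^\C, \Psi(\B))$ is the desired hereditary Hovey triple on $\M^\C$.
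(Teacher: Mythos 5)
Your proof is correct, and its overall architecture (lift the two cotorsion pairs via Theorem~\ref{thm:lifting_cot}, identify $\Phi(\A\cap\W)=\Phi(\A)\cap\W^\C$ via Proposition~\ref{prop:intersection-with-W}, check thickness of $\W^\C$ pointwise) coincides with the paper's. The one place where you diverge is the identification $\Psi(\W\cap\B)=\W^\C\cap\Psi(\B)$. The paper obtains this by pure duality: it applies Proposition~\ref{prop:intersection-with-W} to the classes $\B\op,\W\op$ in the bicomplete abelian category $\M\op$ and the opposite Reedy category $\C\op$, noting that a complete hereditary cotorsion pair dualizes to one and that $\Phi$ for $(\C\op,\M\op)$ is exactly $\Psi$ for $(\C,\M)$. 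Your stated reason for avoiding this route --- that the right-hand class of a cotorsion pair need not be closed under the relevant inverse limits --- is not actually an obstruction: the hypothesis needed in $\M\op$ is that $\B\op\cap\W\op=\leftperp{(\A\op)}$ be closed under filtrations in $\M\op$, and this is precisely Proposition~\ref{prop:eklof} applied to the cocomplete category $\M\op$ (the ``Lukas lemma'' direction). That said, your replacement argument is sound: the transfinite induction on $\deg(c)$, using Lemma~\ref{lem:enriched_eklof} with generating class $\A$ (so $\rightperp{\A}=\W\cap\B$) to get $M_cX\cong\hom_\C(\I^\C_{<\alpha}(c,-),X)\in\W\cap\B$ and $\ext^1_\C(\I^\C_{<\alpha}(c,-),X)=0$, followed by the long exact sequence for $0\to\I^\C_{<\alpha}(c,-)\to\C(c,-)\to\Delta_c\to 0$ and 2-out-of-3 in $\W$, is a legitimate direct proof of the dual of Proposition~\ref{prop:intersection-with-W} and arguably makes the mechanism more transparent, at the cost of repeating an argument that duality gives for free. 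One small point to keep in mind: the filtration of $\I^\C_{<\alpha}(c,-)$ produced by Theorem~\ref{thm:proj_by_standards} should be taken in its refined form with subquotients individual standard modules $\Delta_d$ (rather than coproducts thereof), so that condition (ii) of Lemma~\ref{lem:enriched_eklof} is exactly your inductive hypothesis; this refinement is available and is what the theorem asserts.
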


\begin{proof}
We are given two hereditary and complete cotorsion pairs $(\A\cap\W,\B)$ and $(\A,\W\cap\B)$ in $\M$. From Theorem \ref{thm:lifting_cot} we obtain two  complete and hereditary cotorsion pairs $(\Phi(\A\cap\W),\Psi(\B))$ and $(\Phi(\A),\Psi(\W\cap\B))$ in $\M^{\C}$. Recall that $\A\cap\W$ is closed under filtrations by Proposition~\ref{prop:eklof}, so applying Proposition~\ref{prop:intersection-with-W} to $\A,\W\subseteq\M$ leads to a complete and hereditary cotorsion pair $(\Phi(\A)\cap\W^{\C},\Psi(\B))$ in $\M^\C$. Applying Proposition~\ref{prop:intersection-with-W} to $\B\op,\W\op\subseteq\M\op$ and the opposite $k$--linear Reedy category $\C\op$ results by the same token in a complete and hereditary cotorsion pair $(\Phi(\B\op)\cap(\W\op)^{\C},\Psi(\A\op))$ in $(\C\op,\M\op)\cong(\C,\M)\op$, which is the same datum as a complete and hereditary cotorsion pair $(\Phi(\A),\W^{\C}\cap\Psi(\B))$ in $\M^{\C}$.
Finally, it is easy to see that $\W^{\C}$ is a thick subcategory of $\M^{\C}$.
\end{proof}

\section*{Acknowledgements}
The first-named author is grateful to Steffen Koenig and Teresa Conde for interesting discussions on the relation between Reedy categories and quasi-hereditary algebras, especially for pointing out the connection with exact Borel subalgebras.

\begin{bibdiv}
\begin{biblist}[\normalsize]

\bib{AF92}{book}{
	AUTHOR = {Anderson, F. W.},
	AUTHOR = {Fuller, K. R.},
	TITLE = {Rings and categories of modules},
	SERIES = {Graduate Texts in Mathematics},
	VOLUME = {13},
	EDITION = {Second},
	PUBLISHER = {Springer-Verlag, New York},
	YEAR = {1992},
	PAGES = {x+376},
	ISBN = {0-387-97845-3},
}

\bib{Angelt}{article}{
    AUTHOR = {Angeltveit, V.},
     TITLE = {Enriched {R}eedy categories},
   JOURNAL = {Proc. Amer. Math. Soc.},
    VOLUME = {136},
      YEAR = {2008},
    NUMBER = {7},
     PAGES = {2323--2332},
}

\bib{elements}{book}{
    AUTHOR = {Assem, I.},
     author={Simson, D.},
      author={Skowro\'{n}ski, A.},
 TITLE = {Elements of the representation theory of associative algebras.
              {V}ol. 1},
    SERIES = {London Mathematical Society Student Texts},
    VOLUME = {65},
      NOTE = {Techniques of representation theory},
 PUBLISHER = {Cambridge University Press, Cambridge},
      YEAR = {2006},
     PAGES = {x+458},
      ISBN = {978-0-521-58423-4; 978-0-521-58631-3; 0-521-58631-3},
}

\bib{Bass}{book}{
    AUTHOR = {Bass, H.},
     TITLE = {Algebraic {$K$}-theory},
 PUBLISHER = {W. A. Benjamin, Inc., New York-Amsterdam},
      YEAR = {1968},
     PAGES = {xx+762},
}

\bib{Becker}{article}{
	AUTHOR = {Becker, H.},
	TITLE = {Models for singularity categories},
	JOURNAL = {Adv. Math.},
	VOLUME = {254},
	YEAR = {2014},
	PAGES = {187--232},
	ISSN = {0001-8708},
}

\bib{Bel}{article}{
    AUTHOR = {Beligiannis, A.},
    Author = {Reiten, I.},
     TITLE = {Homological and homotopical aspects of torsion theories},
   JOURNAL = {Mem. Amer. Math. Soc.},
    VOLUME = {188},
      YEAR = {2007},
    NUMBER = {883},
     PAGES = {viii+207},
}

\bib{Moerd}{article}{
    AUTHOR = {Berger, C.},
    AUTHOR = {Moerdijk, I.},
     TITLE = {On an extension of the notion of {R}eedy category},
   JOURNAL = {Math. Z.},
    VOLUME = {269},
      YEAR = {2011},
    NUMBER = {3-4},
     PAGES = {977--1004},
}

\bib{CPS}{article}{
    AUTHOR = {Cline, E.},
     AUTHOR = {Parshall, B.},
      AUTHOR = {Scott, L.},
     TITLE = {Finite-dimensional algebras and highest weight categories},
   JOURNAL = {J. Reine Angew. Math.},
    VOLUME = {391},
      YEAR = {1988},
     PAGES = {85--99},
}

\bib{CF07}{article}{
   author={Colpi, R.},
   author={Fuller, K. R.},
   title={Tilting objects in abelian categories and quasitilted rings},
   journal={Trans. Amer. Math. Soc.},
   volume={359},
   date={2007},
   number={2},
   pages={741--765},
}

\bib{Conde18}{article}{
    AUTHOR = {Conde, T.},
     TITLE = {{$\Delta$}-filtrations and projective resolutions for the
              {A}uslander-{D}lab-{R}ingel algebra},
   JOURNAL = {Algebr. Represent. Theory},
    VOLUME = {21},
      YEAR = {2018},
    NUMBER = {3},
     PAGES = {605--625},
}

\bib{CDSnew}{misc}{
Author = {Conde, T.},
Author = {Dalezios, G.},
Author = {Koenig, S.},
Title = {Triangular decompositions: Reedy algebras and quasi-hereditary algebras},
Note = {Preprint https://arxiv.org/abs/2411.12912}
}

\bib{CoupekStovicek}{article}{
   author={\v Coupek, P.},
   author={\v{S}{\fontencoding{T1}\selectfont \v{t}}ov\'i\v{c}ek, J.},
   title={Cotilting sheaves on Noetherian schemes},
   journal={Math. Z.},
   volume={296},
   date={2020},
   number={1-2},
   pages={275--312},
}

\bib{DR}{incollection}{
  Author = {Dlab, V.},
 Author = {Ringel, C. M.}, 
 Title = {The module theoretical approach to quasi-hereditary algebras},
 BookTitle = {Representations of algebras and related topics. Proceedings of the Tsukuba international conference, held in Kyoto, Japan, 1990},
 ISBN = {0-521-42411-9},
 Pages = {200--224},
 Year = {1992},
 Publisher = {Cambridge: Cambridge University Press},
}

\bib{DR2}{article}{
    AUTHOR = {Dlab, V.},
    Author = {Ringel, C.~M.},
     TITLE = {Quasi-hereditary algebras},
   JOURNAL = {Illinois J. Math.},
    VOLUME = {33},
      YEAR = {1989},
    NUMBER = {2},
     PAGES = {280--291},
}
		
\bib{DyckhoffTholen}{article}{
   author={Dyckhoff, R.},
   author={Tholen, W.},
   title={Exponentiable morphisms, partial products and pullback
   complements},
   journal={J. Pure Appl. Algebra},
   volume={49},
   date={1987},
   number={1-2},
   pages={103--116},
}

\bib{EEGR-injective-quivers}{article}{
    AUTHOR = {Enochs, E.},
     author={Estrada, S.},
     author={Garc\'\i{}a Rozas, J.-R.},
     TITLE = {Injective representations of infinite quivers. {A}pplications},
   JOURNAL = {Canad. J. Math.}, 
    VOLUME = {61},
      YEAR = {2009},
    NUMBER = {2},
     PAGES = {315--335},
      ISSN = {0008-414X},
}

\bib{enochs-jenda2}{book}{
   author={Enochs, E. E.},
   author={Jenda, O. M. G.},
   title={Relative homological algebra. Volume 2},
   series={De Gruyter Expositions in Mathematics},
   volume={54},
   publisher={Walter de Gruyter GmbH \& Co. KG, Berlin},
   date={2011},
   pages={xii+96},
   isbn={978-3-11-021522-9},
}

\bib{MR2100360}{article}{
    AUTHOR = {Enochs, E.},
     author = {Oyonarte, L.},
      author = {Torrecillas, B.},
     TITLE = {Flat covers and flat representations of quivers},
   JOURNAL = {Comm. Algebra},
    VOLUME = {32},
      YEAR = {2004},
    NUMBER = {4},
     PAGES = {1319--1338},
      ISSN = {0092-7872},
}

\bib{Gil_survey}{article}{
    AUTHOR = {Gillespie, J.},
     TITLE = {Hereditary abelian model categories},
   JOURNAL = {Bull. Lond. Math. Soc.},
    VOLUME = {48},
      YEAR = {2016},
    NUMBER = {6},
     PAGES = {895--922},
}

 \bib{Gobel-Trlifaj}{book}{
    AUTHOR = {G\"{o}bel, R.},
    author = {Trlifaj, J.},
     TITLE = {Approximations and endomorphism algebras of modules. {V}olume
              2},
    SERIES = {De Gruyter Expositions in Mathematics},
    VOLUME = {41},
   EDITION = {extended},
 PUBLISHER = {Walter de Gruyter GmbH \& Co. KG, Berlin},
      YEAR = {2012},
     PAGES = {i--xxiv and 459--972},
      ISBN = {978-3-11-021810-7; 978-3-11-021811-4},
}

\bib{Hir}{book}{
    AUTHOR = {Hirschhorn, P.~S.},
     TITLE = {Model categories and their localizations},
    SERIES = {Mathematical Surveys and Monographs},
    VOLUME = {99},
 PUBLISHER = {American Mathematical Society, Providence, RI},
      YEAR = {2003},
     PAGES = {xvi+457},
      ISBN = {0-8218-3279-4},
}

\bib{hj2019}{article}{
  AUTHOR =	 {Holm, H.},
  author={J\o rgensen, P.},
TITLE = {Model categories of quiver representations},
   JOURNAL = {Adv. Math.},
    VOLUME = {357},
      YEAR = {2019},
     PAGES = {Article no.~106826, 46pp.},
      ISSN = {0001-8708},
}

\bib{hj2016}{article}{
  AUTHOR =	 {Holm, H.},
  author={J\o rgensen, P.},
     TITLE = {Cotorsion pairs in categories of quiver representations},
   JOURNAL = {Kyoto J. Math.},
    VOLUME = {59},
      YEAR = {2019},
    NUMBER = {3},
     PAGES = {575--606},
}

\bib{HJ_qshaped}{article}{
  AUTHOR =	 {Holm, H.},
  author={J\o rgensen, P.},
  title = {The Q-shaped derived category of a ring},
JOURNAL = {J. London Math. Soc. (2)},
 Volume = {106},
  YEAR = {2022},
Number = {4},
Pages = {3263–3316}, 
}

\bib{hoveybook}{book}{
    AUTHOR = {Hovey, M.},
     TITLE = {Model categories},
    SERIES = {Mathematical Surveys and Monographs},
    VOLUME = {63},
 PUBLISHER = {American Mathematical Society},
   ADDRESS = {Providence, RI},
      YEAR = {1999},
     PAGES = {xii+209},
      ISBN = {0-8218-1359-5},
}

\bib{hovey}{article}{
    AUTHOR = {Hovey, M.},
     TITLE = {Cotorsion pairs, model category structures, and representation
              theory},
   JOURNAL = {Math. Z.},
    VOLUME = {241},
      YEAR = {2002},
    NUMBER = {3},
     PAGES = {553--592},
      ISSN = {0025-5874},
}

\bib{Kelly}{article}{
 Author = {Kelly, G. M.},
 Title = {{Basic concepts of enriched category theory}},
 Journal = {{Repr. Theory Appl. Categ.}},
 Number = {10},
 Pages = {1--136},
 Year = {2005},
 Publisher = {Mount Allison University, Department of Mathematics and Computer Science, Sackville, NB}
}

\bib{Koe}{article}{
    AUTHOR = {Koenig, S.},
     TITLE = {Exact {B}orel subalgebras of quasi-hereditary algebras {I}},
      NOTE = {With an appendix by Leonard Scott},
   JOURNAL = {Math. Z.},
    VOLUME = {220},
      YEAR = {1995},
    NUMBER = {3},
     PAGES = {399--426},
}

\bib{Koe2}{article}{
    AUTHOR = {Koenig, S.},
     TITLE = {Exact {B}orel subalgebras of quasi-hereditary algebras {II}},
   JOURNAL = {Comm. Algebra},
  FJOURNAL = {Communications in Algebra},
    VOLUME = {23},
      YEAR = {1995},
    NUMBER = {6},
     PAGES = {2331--2344},
}

\bib{Mitchell}{article}{
    AUTHOR = {Mitchell, B.},
     TITLE = {Rings with several objects},
   JOURNAL = {Adv. Math.},
    VOLUME = {8},
      YEAR = {1972},
     PAGES = {1--161},
      ISSN = {0001-8708},
}

\bib{Murfet_ringoids}{misc}{
  AUTHOR =	 {Murfet, D.},
  TITLE =	 {Rings with several objects},
  YEAR= {2006},
  NOTE = {Notes available at \url{http://www.therisingsea.org/notes/RingsWithSeveralObjects.pdf}},
}

\bib{O-R}{article}{
 Author = {Oberst, U.},
 Author ={R\"ohrl, H.},
 Title = {Flat and coherent functors},
 Journal = {J. Algebra},
 ISSN = {0021-8693},
 Volume = {14},
 Pages = {91--105},
 Year = {1970},
 Publisher = {Elsevier (Academic Press), San Diego, CA},
}

\bib{odabasi}{article}{
    AUTHOR = {Odaba\c{s}\i , S.},
     TITLE = {Completeness of the induced cotorsion pairs in categories of
              quiver representations},
   JOURNAL = {J. Pure Appl. Algebra},
    VOLUME = {223},
      YEAR = {2019},
    NUMBER = {10},
     PAGES = {4536--4559},
}

\bib{PositselskiRosicky}{article}{
   author={Positselski, L.},
   author={Rosick\'y, J.},
   title={Covers, envelopes, and cotorsion theories in locally presentable
   abelian categories and contramodule categories},
   journal={J. Algebra},
   volume={483},
   date={2017},
   pages={83--128},
}

\bib{riehl_book}{book}{
    AUTHOR = {Riehl, E.},
     TITLE = {Categorical homotopy theory},
    SERIES = {New Mathematical Monographs},
    VOLUME = {24},
 PUBLISHER = {Cambridge University Press, Cambridge},
      YEAR = {2014},
     PAGES = {xviii+352},
}

\bib{riehl}{article}{
    AUTHOR = {Riehl, E.},
    author ={Verity, D.},
     TITLE = {The theory and practice of {R}eedy categories},
   JOURNAL = {Theory Appl. Categ.},
    VOLUME = {29},
      YEAR = {2014},
     PAGES = {256--301},
      ISSN = {1201-561X},
}

\bib{Salce-cotorsion}{article}{
   author={Salce, L.},
   title={Cotorsion theories for abelian groups},
   conference={
      title={Symposia Mathematica, Vol. XXIII},
      address={Conf. Abelian Groups and their Relationship to the Theory of
      Modules, INDAM, Rome},
      date={1977},
   },
   book={
      publisher={Academic Press, London-New York},
   },
   date={1979},
   pages={11--32},
}

\bib{Shul}{misc}{
Author = {Shulman, M.},
Title = {Reedy categories and their generalizations},
Note = {Preprint, \url{https://arxiv.org/abs/1507.01065}}
}

\bib{SaSt11}{article}{
	AUTHOR = {Saor\'{\i}n, M.},
    AUTHOR =  {\v{S}{\fontencoding{T1}\selectfont \v{t}}ov\'i\v{c}ek, J.},
    TITLE = {On exact categories and applications to triangulated adjoints
		and model structures},
	JOURNAL = {Adv. Math.},
	VOLUME = {228},
	YEAR = {2011},
	NUMBER = {2},
	PAGES = {968--1007},
	ISSN = {0001-8708},
}

\bib{Jan_survey}{incollection}{
 AUTHOR =  {\v{S}{\fontencoding{T1}\selectfont \v{t}}ov\'i\v{c}ek, J.},
 Title = {Exact model categories, approximation theory, and cohomology of quasi-coherent sheaves},
 BookTitle = {Advances in representation theory of algebras. Selected papers of the 15th international conference on representations of algebras and workshop (ICRA XV, Bielefeld, Germany, August 8--17 2012), EMS Series of Congress Reports, European Mathematical Society Publishing House},
 ISBN = {978-3-03719-125-5},
 Pages = {297--367},
 Year = {2014},
 Publisher = {Z{\"u}rich: European Mathematical Society (EMS)},
}

\end{biblist}
\end{bibdiv}
\end{document}